\documentclass[reqno]{amsart}

\usepackage{xcolor}
\usepackage{amssymb,amsmath,amsthm,amsfonts}
\usepackage{latexsym}
\usepackage{bbm}
\usepackage{mathrsfs}
\usepackage{hyperref}
\usepackage{makecell}
\usepackage{multirow}
\usepackage{diagbox}
\usepackage{braket}
\usepackage{enumitem}

\makeatletter\renewcommand*{\eqref}[1]{%
	\hyperref[{#1}]{\textup{\tagform@{\!\!\ref*{#1}}}}%
}\makeatother 

\makeatletter

\@addtoreset{equation}{section}
\makeatother
\theoremstyle{plain}
\newtheorem{theorem}{Theorem}[section]
\newtheorem{lemma}[theorem]{Lemma}
\newtheorem{proposition}[theorem]{Proposition}
\newtheorem{corollary}[theorem]{Corollary}
\theoremstyle{definition}

\newtheorem{remark}[theorem]{Remark}

\newcommand{\les}{\lesssim}

\def\ac{\mathop{\mathrm{ac}}\nolimits}

\def\BMO{{\mathop{\mathrm{BMO}}}}
\def\R{{\mathbb{R}}}

\def\<{{\langle}}
\def\>{{\rangle}}

\makeatletter
\renewcommand{\l@section}{\@tocline{1}{0pt}{1em}{2.5pc}{}}
\renewcommand{\l@subsection}{\@tocline{2}{0pt}{2em}{2pc}{}}
\makeatother

\title[Endpoint Mapping Properties]
{Endpoint Mapping Properties of Wave Operators for
Two-Dimensional Schr\"odinger Operators}
\author{Han Cheng, Changxing Miao, Xiaohua Yao}

\address {Han Cheng, Institute of Applied Physics and Computational Mathematics, Beijing, 100088, People's Republic of China}
\email{chmathh@163.com}

\address {Changxing Miao, School of Mathematics and Physics,
University of Science and Technology Beijing,
Beijing 100083, China}
\email{miao\_changxing@ustb.edu.cn}

\address {Xiaohua Yao, School of Mathematics and Statistics, Key Laboratory of Nonlinear Analysis and Applications (Ministry of Education), Central China Normal University, Wuhan, 430079, P.R. China}
\email{yaoxiaohua@ccnu.edu.cn}
\date{\today}
\keywords{$L^p$ theory, Wave operator, Schr\"odinger operator, Threshold singularity }

\begin{document}
	\begin{abstract}
We establish sharp endpoint mapping properties for the wave operators 
$W_\pm(H,-\Delta)$ of two-dimensional Schr\"odinger operators 
$H=-\Delta+V$ with real-valued decaying potentials $V$. Together with 
the known non-endpoint $L^p$ theory, our results give a complete 
classification of the $L^p$ mapping properties of the two-dimensional 
wave operators, and reveal an unexpected reversal of the usual threshold 
paradigm at the endpoints $p=1$ and $p=\infty$.

When zero is a regular point of $H$, the wave operators fail to be 
bounded on $L^1(\R^2)$ and on $L^\infty(\R^2)$, but they satisfy the 
natural substitute estimates of Calder\'on--Zygmund type:
$$
L^1(\R^2)\longrightarrow L^{1,\infty}(\R^2),
\qquad
\mathcal{H}^1(\R^2)\longrightarrow L^1(\R^2),
\qquad
L^\infty(\R^2)\longrightarrow \mathrm{BMO}(\R^2).
$$
When zero is instead a threshold singularity of the first kind---an 
s-wave resonance with no other threshold obstruction, the wave 
operators are bounded on both endpoint spaces $L^1(\R^2)$ and  $L^\infty(\R^2)$. Thus, in dimension two, 
an s-wave resonance improves the endpoint 
behavior of the wave operators, in sharp contrast with dimensions 
$n\ge3$, where the only regular case is the favorable one. The reversal is 
explained by a simple principle that unifies the known endpoint results 
in all dimensions: the favorable case for endpoint boundedness is the 
one in which $H$ and $H_0$ share the same threshold type, and the 
two-dimensional free operator $-\Delta$ itself has a first-kind 
threshold singularity, with the constant function as its s-wave 
resonance.

We also determine the endpoint behavior in the remaining zero-energy 
spectral configurations of $H$. A p-wave resonance obstructs both the 
$L^1$- and the $L^\infty$-boundedness of the wave operators, while in 
the zero-eigenvalue case we obtain necessary and sufficient conditions 
for endpoint boundedness, expressed in terms of the presence of s- and 
p-wave resonances and of explicit second-order harmonic moment 
cancellations satisfied by the zero-energy eigenfunctions.

The proofs combine sharp asymptotics for oscillatory integrals with 
logarithmic symbols at low energy with new physical-space kernel 
estimates at high energy, the latter yielding in particular the 
boundedness of the high-energy component at both endpoints.
\end{abstract}
	\maketitle
	
	\tableofcontents  
\section{Introduction}
\subsection{Background and motivations}
Let $H_0=-\Delta$ and $H=H_0+V$ be the free and perturbed Schr\"odinger operators, respectively, where $V$ is a real-valued potential. In the framework of time-dependent scattering theory, if $|V(x)|\les \langle x\rangle^{-\delta}$ for some $\delta>1$ (an Agmon potential), then $(0,\infty)$ is contained in the absolutely continuous spectrum of $H$, and the wave operators, defined as the strong limits in $L^2(\R^n)$,
\[
W_\pm=W_\pm(H,H_0)
:=
\operatorname*{s-lim}_{t\to\pm\infty}
e^{itH}e^{-itH_0},
\]
exist and are asymptotically complete (see, e.g., Agmon \cite{Agmon} or Reed and Simon \cite{Reed-Simon-III}). Moreover, they satisfy the identities
\[
W_\pm W_\pm^{*}=P_{\ac}(H),
\qquad
W_\pm^{*}W_\pm=I,
\]
where $P_{\ac}(H)$ denotes the projection onto the absolutely continuous spectral subspace of $H$. In this work, we consider the $L^p$ theory of wave operators.

The $L^p$ mapping properties of wave operators are of interest from several complementary perspectives. A first motivation comes from the intertwining property. For a bounded Borel function $m$, one has
\begin{equation}\label{eq:intro-intertwining}
	m(H)P_{\ac}(H)
	=
	W_\pm m(H_0)W_\pm^*.
\end{equation}
Consequently, estimates for the free Fourier multiplier $m(-\Delta)$ may be transferred to the perturbed operator:
\[
\bigl\|m(H)P_{\ac}(H)\bigr\|_{L^p\to L^q}
\leq
\|W_\pm\|_{L^q\to L^q}
\|m(-\Delta)\|_{L^p\to L^q}
\|W_\pm^*\|_{L^p\to L^p}.
\]
This transference principle applies to spectral multipliers, Sobolev estimates, and evolution operators arising in the Schr\"odinger, wave, and Klein--Gordon equations. In particular, the $L^\infty$ bound of the wave operators is precisely what is needed to transfer free $L^1$--$L^\infty$ dispersive estimates directly through \eqref{eq:intro-intertwining}.

A second, structural perspective is provided by the explicit factorization of wave operators. The wave operators admit the representation
\begin{equation}\label{eq:intro-distorted-Fourier}
	W_\pm=\mathcal{F}_\mp^{*}\,\mathcal{F},\qquad\mathcal{F}_\mp=\mathcal{F} W_\pm^*
\end{equation}
where $\mathcal{F}$ denotes the ordinary Fourier transform and $\mathcal{F}_\pm$ the distorted Fourier transform associated with $H$ (see \cite{Agmon,Kuroda}). Identity \eqref{eq:intro-distorted-Fourier} reveals that the $L^p$ theory of wave operators provides a natural paradigm for investigating the mapping properties of distorted Fourier transforms. This perspective has proved particularly fruitful in the analysis of nonlinear dispersive equations with external potentials, where the distorted Fourier transform serves as the natural substitute for the flat Fourier transform; see, for instance, Chen--Pusateri \cite{Chen-Pusat}, Donninger--Krieger \cite{Donninger-Krie}, Germain--Pusateri--Rousset \cite{Ger-Pu-Rou} and Pusateri--Soffer \cite{Pusa-Soffer} on modified scattering and long-time asymptotics for nonlinear equations with potentials, and the references therein.

The study of the $L^p$ boundedness of wave operators was initiated by Yajima in his seminal works \cite{Yajima-JMSJ-95, Yajima-1995-even} for spatial dimensions $n\ge3$. Over the past three decades, this topic has been extensively studied by many authors, and the results depend sensitively on the spatial dimension and on the presence or absence of resonances or eigenfunctions at zero energy; we review these developments in detail in Subsection~\ref{known results} below.

Despite this extensive body of work, the following endpoint problems have remained widely open, to the best of our knowledge, even under strong decay and smoothness assumptions on $V$.
\begin{itemize}
	\item[($a$)] In $\R^2$, determining the $L^p$-boundedness or unboundedness of wave operators at the endpoints $p=1$ and $p=\infty$ under all possible spectral conditions at zero.
	\item[($b$)] In $\R^4$, determining the $L^1$-unboundedness of wave operators when $H$ has a resonance.
	\item[($c$)] In dimensions $n\ge 3$, characterizing necessary and sufficient conditions for the $L^\infty$-boundedness of wave operators when $H$ has only zero-energy eigenfunctions.
\end{itemize}
Among these, the two-dimensional problem $(a)$ constitutes the largest remaining piece 
of the $L^p$ theory of wave operators and has remained open for a long time. Its exceptional difficulty comes from the 
criticality of dimension two: it is the only dimension in which the spatial dimension 
coincides with the order of the operator. As a consequence, the free Green function is 
logarithmic rather than polynomially decaying as in higher dimensions, and the free 
operator already carries a threshold singularity. Two-dimensional Schr\"odinger operators therefore exhibit the richest 
threshold structure among all dimensions, and problem $(a)$ has resisted all prior 
approaches 
\cite{Erdogan-Goldberg-Green-JFA-2018,Goldberg-Green-2016,Yajima-2016-3d,Yajima-2022}: 
the obstruction is structural, and it persists in both the low-energy and the 
high-energy regimes (see Subsection~\ref{sec:outline}). The purpose of this paper is 
to settle problem $(a)$.

\subsection{Main results}
The endpoint behavior of the wave operators depends on the spectral condition of $H$ at zero energy. To state our results, we first recall the precise notions of threshold obstructions in $\R^2$ (see, e.g., \cite{Erdogan-Goldberg-Green-JFA-2018, JN01}).

A zero-energy eigenfunction of $H$ is an $L^2$-solution of the equation
\begin{equation}\label{eq:Hil equation}
	H\psi=-\Delta \psi + V\psi = 0
\end{equation}
in the sense of distributions. A \emph{threshold resonance} is a distributional solution of \eqref{eq:Hil equation} that is bounded but not square-integrable. Such resonances are classified into two types. An \emph{s-wave resonance} is a solution $\psi\in L^\infty(\mathbb{R}^2)$ of \eqref{eq:Hil equation} that admits a decomposition $\psi=c+\Lambda$, where $c$ is a nonzero constant and $\Lambda\in L^p(\mathbb{R}^2)$ for some $p\in(2,\infty)$. A \emph{p-wave resonance} is a solution $\psi$ of \eqref{eq:Hil equation} with $\psi \in L^p(\mathbb{R}^2) \setminus L^2(\mathbb{R}^2)$ for all $2 < p \le \infty$.

We refer collectively to threshold resonances and zero-energy eigenfunctions as \emph{threshold obstructions}. According to the spectral behavior of $H$ at zero energy, we distinguish the following four configurations:
\begin{itemize}
	\item If $H$ has neither a threshold resonance nor a zero-energy eigenvalue, then zero is called a \emph{regular point} of $H$.
	
	\item If $H$ admits an s-wave resonance but neither a p-wave resonance nor a zero-energy eigenfunction, then zero is called a \emph{first-kind threshold singularity}.
	
	\item If $H$ admits a p-wave resonance but no zero-energy eigenfunction, then zero is called a \emph{second-kind threshold singularity}.
	
	\item If $H$ admits a nontrivial zero-energy eigenfunction, then zero is called a \emph{zero-energy eigenvalue} of $H$.
\end{itemize}

Throughout the paper, $L^{1,\infty}(\R^2)$ denotes the weak $L^1$ space, $\mathcal{H}^1(\R^2)$ denotes the real Hardy space, and $\BMO(\R^2)$ denotes its dual, the space of functions of bounded mean oscillation. We write $\langle x\rangle=(1+|x|^2)^{1/2}$, and $a+$, respectively $a-$, denotes $a+\varepsilon$, respectively $a-\varepsilon$, for some arbitrarily small fixed $\varepsilon>0$.

The main objective of this paper is to determine the endpoint behavior of the two-dimensional wave operators according to the spectral structure at zero. Our first main result reveals a sharp and somewhat unexpected distinction between the regular and first-kind threshold cases.

\begin{theorem}\label{thm-main result-1}
	Let $H=-\Delta+V$ be a Schr\"odinger operator on $\R^2$, and suppose that $|V(x)|\lesssim \langle x\rangle^{-6-}$. Then the following statements hold.
	
	\medskip
	
	\noindent
	\emph{(i)}
	If zero is a regular point of $H$, then the wave operators $W_\pm(H,-\Delta)$ are bounded on neither $L^1(\R^2)$ nor $L^\infty(\R^2)$. They nevertheless extend to bounded operators with the mapping properties
	\[
	L^1(\R^2)\to L^{1,\infty}(\R^2),\quad
	\mathcal{H}^1(\R^2)\to L^1(\R^2),\quad\text{and}\quad
	L^\infty(\R^2)\to \BMO(\R^2).
	\]
	
	\medskip
	
	\noindent
	\emph{(ii)}
	If zero is a first-kind threshold singularity of $H$, then $W_\pm(H,-\Delta)$ are bounded on both $L^1(\R^2)$ and $L^\infty(\R^2)$.
	
	By real interpolation with the $L^2$ bound, together with duality, it follows that $W_\pm(H,-\Delta)$ are bounded on $L^p(\R^2)$ for all $1<p<\infty$ in the regular case, and for all $1\le p\le\infty$ in the first-kind threshold case.
\end{theorem}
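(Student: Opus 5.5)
We will work with the stationary representation of the wave operators,
\[
W_\pm = I - \frac{1}{\pi i}\int_0^\infty R_V^{\mp}(\lambda^2)\,V\,\bigl[R_0^+(\lambda^2)-R_0^-(\lambda^2)\bigr]\lambda\,d\lambda ,
\]
and split it with a smooth cutoff $\chi(\lambda)$ into a low-energy part $W^{L}_\pm$ and a high-energy part $W^{H}_\pm$. The high-energy part is independent of the threshold structure, and we aim to show that $W^{H}_\pm$ is bounded on both $L^1(\R^2)$ and $L^\infty(\R^2)$. To do so we expand $R_V^\mp=\sum_{k<2m}(-1)^k R_0^\mp(VR_0^\mp)^k + \text{tail}$ by a finite Born series. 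For each finite Born term we compute the kernel $K(x,y)$ in physical space. This uses the explicit Hankel asymptotics $R_0^\pm(\lambda^2)(x,y)\sim e^{\pm i\lambda|x-y|}(\lambda|x-y|)^{-1/2}$ for $\lambda|x-y|\gtrsim1$ and the logarithmic behaviour for $\lambda|x-y|\lesssim 1$. We then integrate by parts in $\lambda$ to obtain an admissible bound $\sup_x\int|K(x,y)|\,dy+\sup_y\int|K(x,y)|\,dx<\infty$. Since the $\lambda$-integral is cut off away from zero, the logarithms never appear. The tail is handled with the limiting absorption principle: we use weighted $L^2$ bounds on $R_V^\pm$ with $\lambda^{-N}$ decay, gained from the extra free resolvents, and convert them into an admissible kernel bound. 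The decay $\langle x\rangle^{-6-}$ is used to absorb the weights.

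For the low energy part we use the resolvent expansion of $M^\pm(\lambda)=U+vR_0^\pm(\lambda^2)v$ near zero, with $v=|V|^{1/2}$ and $U=\operatorname{sgn}V$, as in the Jensen--Nenciu / Erdo\u{g}an--Green framework. The free resolvent has the form $R_0^\pm(\lambda^2)=g^\pm(\lambda)P+G_0+o(1)$, with $g^\pm(\lambda)=-\frac{1}{2\pi}\log\lambda+c^\pm$ and $P$ the rank-one projection onto $v$.

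In the regular case, $M^\pm(\lambda)^{-1}=h^\pm(\lambda)^{-1}S+QD_0Q+O(\lambda^{2-})$ with $h^\pm\sim\log\lambda$. Substituting into the symmetric formula $R_V=R_0-R_0vM^{-1}vR_0$, the leading term is $-\int\chi(\lambda)\,\lambda\,h^\pm(\lambda)^{-1}\,R_0^\mp V\cdots$. The logarithmic symbol produces a kernel that decays like $|x|^{-2}$ but not better, up to $1/\log$ factors. We therefore extract an explicit operator of the form $T f(x)=\int \Omega(x,y)|x|^{-2}\cdots$, a Calder\'on--Zygmund-type kernel (in $x$, for $y$ in the support of $V$). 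We show its kernel is non-integrable, which gives unboundedness on $L^1$ and, by duality with $W^*_\pm$, on $L^\infty$. Concretely, testing against an approximate identity yields a kernel $\asymp |x|^{-2}(\log|x|)^{-1}$ or $|x|^{-2}$ on a large set, which is not in $L^1$. On the other hand, the kernel satisfies the H\"ormander smoothness condition $\int_{|x-y|>2|y-y'|}|K(x,y)-K(x,y')|\,dx\le C$ together with $L^2$ boundedness. The standard Calder\'on--Zygmund theory then gives $L^1\to L^{1,\infty}$ and $\mathcal H^1\to L^1$, and duality of the adjoint gives $L^\infty\to\BMO$. The remainder $O(\lambda^{2-})$ terms give admissible kernels.

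In the first-kind case the expansion changes to $M^{-1}=h^{-1}S+ S_1 D_1S_1 \cdot\bigl(\text{constant}+O(1/\log\lambda)\bigr)$. The key point is that the s-wave resonance cancels the $\log\lambda$ of the free projection $g^\pm P$. The leading term then carries no logarithmic symbol, and the $|x|^{-2}$ part of the kernel coming from $P$ (the constant resonance of $-\Delta$) exactly cancels against the resonance term. Verifying this cancellation is the main obstacle. We would compute the sharp asymptotics of oscillatory integrals $\int_0^\infty \chi(\lambda)\lambda\,a(\log\lambda)\,J_0(\lambda r)\,d\lambda$ as $r\to\infty$, with leading term $\sim a'(\log(1/r))\,r^{-2}$. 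These asymptotics show that non-constant symbols $a$ (regular case) give non-integrable tails, while the first-kind symbols are constant to leading order and produce integrable kernels $O(r^{-2}(\log r)^{-2})$. This yields $L^1$ and $L^\infty$ boundedness in case (ii). Interpolation and duality then finish the statement.
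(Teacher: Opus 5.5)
\textbf{The high-energy step.} Your argument fails at the first Born term, which is exactly the new two-dimensional difficulty. Its amplitude is $\lambda\,(\lambda|x-z|)^{-1/2}(\lambda|z-y|)^{-1/2}=|x-z|^{-1/2}|z-y|^{-1/2}$, of order $\lambda^{0}$. Integrating by parts in $\lambda$ therefore only gives $\bigl|\int_0^\infty e^{i\lambda\rho}\tilde\chi(\lambda/\lambda_0)\,d\lambda\bigr|\lesssim|\rho|^{-1}\langle\rho\rangle^{-N}$ with $\rho=|x-z|-|z-y|$. The resulting majorant $\int|V(z)|\,|x-z|^{-1/2}|z-y|^{-1/2}|\rho|^{-1}\,dz$ diverges logarithmically across the line $\{|x-z|=|z-y|\}$ wherever $V$ does not vanish near it. Only the part with phase $|x-z|+|z-y|\ge|x-y|$ is harmless. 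The kernel is a principal value of radial Hilbert-transform type, so the absence of logarithms at high energy does not help. The paper writes $V=(V-V_0)+V_0$ with a Gaussian $V_0$ of the same integral. The mean-zero part is covered by Yajima's representation, whose kernel $K(t,\omega)$ lies in $L^1(\mathbb{R}\times\mathbb{S}^1)$ once $\widehat V(0)=0$. For $V_0$, the variable $z$ is parametrized by the hyperbolas $\{\rho=\mathrm{const}\}$ with foci $x,y$, the $\lambda$-integral is split at the scale $\sec\theta$, and one integrates by parts in $\rho$ using $\nabla V_0$; this yields a Schur-admissible kernel (Lemma~\ref{lem:admi of main term in high}). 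Direct oscillatory-integral bounds of the kind you describe suffice only for the second Born term and the tail.

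\textbf{The low-energy steps.} In the regular case your $L^1$ counterexample mechanism (columns $\asymp|x|^{-2}(\log|x|)^{-1}$ for bounded $y$) is sound. However, $\mathcal W^{\mathrm L}_{\mathrm{sing}}$ is not a Calder\'on--Zygmund operator on $\mathbb{R}^2$.
\begin{itemize}
\item For large $|x|,|y|$ its kernel is, up to lower-order terms, a multiple of $\int_0^{\lambda_0}\lambda\,h_+(\lambda)^{-1}\chi(\lambda/\lambda_0)H_0^+(\lambda|x|)J_0(\lambda|y|)\,d\lambda$. This concentrates along the whole circle $|x|=|y|$: for $|y|=\mu$ and $|x|=\mu+\delta$ with $1\ll\delta\le\sqrt{\mu}$ it has size $\approx(\mu\,\delta\log\delta)^{-1}$. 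This is the computation of Lemma~\ref{lem:k-shoulder-lower} with $H_0^+,J_0,h_+$ in place of $H_1^+,J_1,g_1^+$.
\item Let $y'$ be $y$ moved radially outward by $\sqrt{\mu}$, and integrate over $x$ on the far side of the origin with $|x|-\mu\in[C,\sqrt{\mu}/C]$. This gives $\int_{|x-y|>2|y-y'|}|K(x,y)-K(x,y')|\,dx\gtrsim\log\log\mu$. So H\"ormander's condition fails, and none of the three weak-type bounds follows from standard CZ theory.
\item The paper instead factors out the multiplier $h_+(\sqrt{-\Delta})^{-1}\chi(\sqrt{-\Delta}/\lambda_0)$, bounded on $\mathcal H^1$ and $L^p$ by Lemma~\ref{lem-Fou-multiplier}. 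It computes the remaining operator exactly as an average of translates of the operator with kernel $\frac{i}{4\pi}\delta(|x|^2-|y|^2)-\frac{1}{4\pi^2}\operatorname{p.v.}(|x|^2-|y|^2)^{-1}$. This is a Hilbert transform in the variable $|x|^2$, and circular averages of $\mathcal H^1(\mathbb{R}^2)$-atoms are $\mathcal H^1(\mathbb{R})$-atoms in that variable (Lemma~\ref{lem-W0s-bound}).
\item The same shoulder, not the $|x|^{-2}$ tail, is what destroys $L^\infty$-boundedness. The rows $\int|K(x,y)|\,dy$ stay bounded for $x$ in compact sets (Remark~\ref{eq:rem-to-Linfty}). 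So your appeal to duality must be replaced by the $\log\log\mu$ growth at $|x|\approx\mu$, which the paper obtains with the test functions $\mathbf{1}_{B(0,\mu)}$.
\end{itemize}

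In the first-kind case your expansion of $M^{-1}$ is incorrect. By Lemma~\ref{thm-M inverse-even}\,(ii) the $h_\pm^{-1}$ terms cancel identically, since $NS_1D_1S_1N=N$. The cancellation you plan to verify at the kernel level is thus an exact algebraic identity, and no term has $P$ on both sides. But the leading term $-h_\pm(\lambda)S_1D_1S_1$ grows like $\log\lambda$, and your criterion (symbols non-constant in $\log\lambda$ give non-integrable $r^{-2}$ tails) would condemn exactly this term.
\begin{itemize}
\item The term is harmless because of the moment cancellation $S_1v=Qv=0$. Through the expansions of $\mathcal Q_1vR_0^\pm$ in Section~\ref{sec:low-energy-kernel}, this gives kernels $\lesssim\langle x\rangle^{-1/2}\langle y\rangle^{-1/2}\langle\langle x\rangle-\langle y\rangle\rangle^{-2+}$ even for the symbol $h_+$ (Lemma~\ref{Lemm: bound of N_0}).
\item The one-sided term $S_1D_1S_1N$ has a non-admissible kernel and needs the multiplier argument of Lemma~\ref{Lemm: bound of N_1} at $p=\infty$.
\end{itemize}
None of this is visible in a one-variable asymptotic in $r$, because these kernels depend on $\langle x\rangle$, $\langle y\rangle$ and $\langle x\rangle\pm\langle y\rangle$ separately.
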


Along the way, we also establish the endpoint boundedness of the high-energy component of the two-dimensional wave operators (Proposition~\ref{pro:bound for high}), which was previously known only in the non-endpoint range $1<p<\infty$; see Yajima \cite{Yajima-99-CMP}.

Our second theorem addresses the remaining two spectral conditions at zero energy in two dimensions.

\begin{theorem}\label{thm-main result-2}
	Let $H=-\Delta+V$
	be a Schr\"odinger operator on $\R^2$, and suppose that
	$|V(x)|\lesssim \langle x\rangle^{-8-}$.
	Then the following statements hold.
	
	\medskip
	
	\noindent
	\emph{(i)}
	If zero is a second-kind threshold singularity of $H$, then $W_\pm(H,-\Delta)$ are unbounded on both $L^1(\R^2)$ and $L^\infty(\R^2)$.
	
	\medskip
	
	\noindent
	\emph{(ii)}
	Suppose that zero is a zero-energy eigenvalue of $H$. Then $W_\pm(H,-\Delta)$ are bounded on $L^1(\R^2)$ if and only if $H$ has an s-wave resonance but no p-wave resonance.  $W_\pm(H,-\Delta)$ are bounded on $L^\infty(\R^2)$ if and only if $H$ has an s-wave resonance but no p-wave resonance and every zero-energy eigenfunction $\psi$ satisfies
	\begin{equation}\label{eq:condi_for_eigen-0}
		\int_{\R^2} x_1x_2\,V(x)\psi(x)\,dx=0
		\quad\text{and}\quad
		\int_{\R^2} \bigl(x_1^2-x_2^2\bigr)V(x)\psi(x)\,dx=0.
	\end{equation}
    Here, \eqref{eq:condi_for_eigen-0} is equivalent to
	\begin{equation*}
		\int_{\R^2} P(x)\,V(x)\psi(x)\,dx = 0
		\qquad\text{for all }P\in\mathcal{A}_2,
	\end{equation*}
	where $\mathcal{A}_2$ denotes the space of homogeneous harmonic polynomials of degree $2$ on $\R^2$.
\end{theorem}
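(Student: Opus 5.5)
The proof will follow the scheme that gives Theorem~\ref{thm-main result-1}. Using the stationary representation
\[
W_\pm = I - \frac{1}{\pi i}\int_0^\infty R_V^\mp(\lambda^2)\,V\,\bigl[R_0^+(\lambda^2)-R_0^-(\lambda^2)\bigr]\lambda\,d\lambda ,
\]
we split into the energies $\lambda\ge\lambda_0$ and $\lambda\le\lambda_0$ with a smooth cutoff. The high-energy part is handled by Proposition~\ref{pro:bound for high}. Its proof uses only the decay of $V$ and never the zero-energy structure, so it gives boundedness on $L^1$ and $L^\infty$ under the stronger assumption $\langle x\rangle^{-8-}$. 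Everything therefore reduces to the low-energy part. Near $\lambda=0$ we expand the operator $M^\pm(\lambda)=U+vR_0^\pm(\lambda^2)v$ inverted by symmetric resolvent identities (Jensen--Nenciu). In the singular cases the projections $S_1\supset S_2\supset S_3$ appear: $S_1$ corresponds to the s-wave resonance, $S_2-S_3$ to the p-wave resonances, and $S_3$ to the eigenfunctions. This gives
\[
M^\pm(\lambda)^{-1} = \frac{S_3D_3S_3}{\lambda^2} + \frac{\Gamma_1}{\lambda^2\log\lambda}+\frac{\Gamma_2}{\lambda}+ h^\pm(\log\lambda)^{-1}S_1 + \cdots,
\]
with the usual coefficient operators.

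The low-energy kernel becomes a finite sum of oscillatory integrals $\int_0^\infty \chi(\lambda)\,a(\lambda,\log\lambda)\,e^{\pm i\lambda(|x-y_1|\pm|y-y_2|)}\,\lambda^k\,d\lambda$, integrated against $v$-weighted operators. For each singular term we will find the exact leading asymptotics of the kernel $K(x,y)$ in the regimes $|x|\gg|y|$, $|y|\gg|x|$, and $|x|\sim|y|$. This uses the sharp asymptotics of oscillatory integrals with logarithmic symbols developed for Theorem~\ref{thm-main result-1}.

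The criterion we test is the standard one. $L^1$-boundedness is equivalent to $\sup_y\int|K(x,y)|\,dx<\infty$ and $L^\infty$-boundedness to $\sup_x\int|K(x,y)|\,dy<\infty$. Unboundedness follows once a leading term fails integrability in the relevant variable, for example a kernel behaving like $\Omega(\theta)|x|^{-2}$ for large $|x|$ with a nonzero angular profile. For the p-wave term the factor $\Gamma_2/\lambda$ involves the dipole moments $\int x_jV\psi$, $j=1,2$, which are nonzero when a p-wave resonance is present. We expect the corresponding kernel to have a homogeneous degree $-2$ tail $\frac{\langle x,\cdot\rangle}{|x|^{3}}$ in both the $x$- and $y$-directions, and it cannot be cancelled by the other terms. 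This yields statement (i), and the "only if" part of (ii) regarding p-waves.

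In the eigenvalue case, statement (ii), the leading term $S_3D_3S_3/\lambda^2$ requires Taylor-expanding $R_0^\pm(\lambda^2)(x,y)$ in $y$ to second order. The orthogonality $P_0VS_3=0$ and the vanishing of first moments remove the lower orders. The second-order moments split into a trace part $|y|^2$ and harmonic parts in $\mathcal{A}_2$.

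We expect the following asymmetry. In the $x$-integration ($L^1$) the harmonic quadrupole terms combine with the oscillatory factor into an integrable kernel. In the $y$-integration ($L^\infty$) they produce a tail $P(\hat x)|x|^{-2}$-like in $y$, which is nonintegrable unless \eqref{eq:condi_for_eigen-0} holds. The trace part $\int|y|^2V\psi$ should be absorbed by the s-wave logarithmic structure exactly when the s-wave resonance exists. Without the s-wave resonance, the regular-type term $(\log\lambda)^{-1}$ is replaced by the worse behaviour that makes the regular case unbounded in Theorem~\ref{thm-main result-1}(i). This worse behaviour gives the necessity of the s-wave condition at both endpoints.

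The main obstacle is this last step: the exact cancellations among the $\lambda^{-2}$, $(\lambda^2\log\lambda)^{-1}$ and $\lambda^{-1}$ terms in the eigenvalue case. These cancellations determine the sharp necessary-and-sufficient conditions. To handle them we would first compute each term's kernel asymptotics explicitly. We would then verify sufficiency by showing that all remaining terms are dominated by $\langle x\rangle^{-2-}$-type integrable bounds. We would verify necessity by exhibiting a test function, such as a bump translated far away, on which the surviving leading term produces a nonintegrable or unbounded output.
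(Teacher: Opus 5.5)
Your skeleton matches the paper: stationary formula, high/low split with Proposition~\ref{pro:bound for high}, and a Jensen--Nenciu expansion at low energy. Your quadrupole heuristic is also the right one. The eigenvalue term is harmless in the $L^1$ direction because $\int x^\alpha V\psi=0$ for $|\alpha|\le1$. In the $L^\infty$ direction, $\mathcal{F}^{-1}\bigl(\chi(|\eta|/\lambda_0)P(\eta)/|\eta|^2\bigr)$ has a non-integrable $\Omega(\hat y)|y|^{-2}$ tail whenever $0\neq P\in\mathcal A_2$.

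\textbf{The s-wave mechanism is misplaced.} The trace part $\int|y|^2V\psi$ is harmless in every case, since it only produces a multiple of the Schwartz function $\mathcal{F}^{-1}\chi(|\cdot|/\lambda_0)$, and it has nothing to do with the s-wave. The s-wave acts in a different block. For $L^2=(I-S_2)L^2\oplus S_2L^2$, the $(I-S_2)$-block of $M^+(\lambda)^{-1}$ reproduces the regular or first-kind expansion of Lemma~\ref{thm-M inverse-even}, with $S_1$ replaced by $S_1-S_2$. Without an s-wave this block contains $h_+(\lambda)^{-1}P$, i.e.\ exactly the operator $\mathcal W^{\mathrm L}_{\mathrm{sing}}$ that destroys both endpoints in Theorem~\ref{thm-main result-1}(i). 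With an s-wave this term is absent, and the s-wave contributes $-h_+(\lambda)(S_1-S_2)D_1(S_1-S_2)$. That term is of size $|\log\lambda|$ and is harmless since $(S_1-S_2)v=0$. Your expansion inverts this: it attaches a $(\log\lambda)^{-1}S_1$ term to the s-wave and includes a $\lambda^{-1}$ term that does not occur. In fact the off-diagonal blocks are only $\tilde{\mathcal O}_2(\log^2\lambda)$.

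\textbf{Sufficiency at $p=\infty$ needs exact identities, not kernel majorants.} Consider the terms with $P$ as rightmost projection, including the $\Xi P$ part of $\mathcal L_{10}^+$, which needs the refined expansion \eqref{eq:special mathcalL10}. For these, oscillatory-integral bounds give only $\langle x\rangle^{-1}\langle y\rangle^{-2+}$, which is not integrable in $y$. The paper first evaluates the $\lambda$-integral exactly as the multiplier $f_1(\sqrt{-\Delta})\chi(\sqrt{-\Delta}/\lambda_0)$ with $f_1\in\{1,h_+^{-1}\}$. It then uses the sharp $|x|^{-2}\log^{-2}|x|$ kernel decay of Lemma~\ref{lem-Fou-multiplier}. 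Likewise, the $L^\infty$ bound for $\mathcal W^{\mathrm L}_{\mathrm{e,sing}}$ rests on exact identities: $vS_3D_3S_3v=V\mathcal P_eV$, $\mathcal N_{4,m}=\mathcal P_e\chi(\sqrt{-\Delta}/\lambda_0)$, and the duality $\mathcal N_{4,r,0}^*=-\chi(\sqrt{-\Delta}/\lambda_0)\mathcal W^{\mathrm L}_{\mathrm{e,sing}}$. These reduce everything to $\chi(\sqrt{-\Delta}/\lambda_0)\Psi_j\in L^1$.

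\textbf{The p-wave obstruction is not a $|x|^{-2}$ tail.} The p-wave singular kernel is $k(\langle x\rangle,\langle y\rangle)T(x,y)$, where $k$ is an order-one Hankel--Bessel integral against $\lambda g_1^+(\lambda)^{-1}$. For each fixed $y$ it decays faster than $|x|^{-2}$, so every column is integrable. The blow-up is instead non-uniform in $y$:
\begin{itemize}
\item On the shoulder $\langle x\rangle=\langle y\rangle+\delta$ with $\delta\in[\delta_1,\langle y\rangle^{1/2}]$, one has $|k|\gtrsim(\langle y\rangle\delta\log\delta)^{-1}$. This comes from $\int_0^{\lambda_0}g_1^+(\lambda)^{-1}\chi(\lambda/\lambda_0)e^{i\lambda\delta}\,d\lambda=-i(\delta\log\delta)^{-1}+\ldots$.
\item One then freezes $T$ on an angular sector, using a basis of $(S_2-S_3)L^2$ that decouples the moments $\int x_jv\varphi$.
\item The column norms then grow like $\log\log\langle y\rangle$.
\end{itemize}

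\textbf{The Schur criterion does not apply to $\mathcal W^{\mathrm L}_{\mathrm{sing}}$.} Your criterion presupposes a locally integrable kernel, which $\mathcal W^{\mathrm L}_{\mathrm{sing}}$ lacks: it contains $\delta(|x|^2-|y|^2)$ and $\mathrm{p.v.}(|x|^2-|y|^2)^{-1}$. The paper instead shows that $\int\mathcal W^{\mathrm L}_{\mathrm{sing}}f\,dx$ diverges for $\hat f=\chi(2|\cdot|/\lambda_0)$, and exhibits $\log\log\mu$ growth on $\mathbf{1}_{B(0,\mu)}$.

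\textbf{Non-cancellation is the missing idea.} The step you flag as the main obstacle is where an actual argument is needed. The dangerous interactions are not inside the $S_2$-block; they are between $\mathcal W^{\mathrm L}_{\mathrm{sing}}$ (present when there is no s-wave) and the $S_2$-block singular terms. The paper handles them in three ways:
\begin{itemize}
\item It tests against $f_k$ with $\hat f_k=\chi(2|\cdot|/\lambda_0)(1-\chi(k|\cdot|))$. The p-wave term annihilates these in the pairing with $1$ because $\int v\phi_j=0$, while the $\mathcal W^{\mathrm L}_{\mathrm{sing}}$ pairing diverges.
\item It applies the spatial cutoff $\chi(|\cdot|/r)$, which makes $\mathcal W^{\mathrm L}_{\mathrm{sing}}$ bounded on $L^\infty$ but not $\mathcal W^{\mathrm L}_{\mathrm{e,sing}}$.
\item For $L^\infty$ with a p-wave, it invokes Yajima's unboundedness on $L^p$ for $p>2$.
\end{itemize}
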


Theorem~\ref{thm-main result-2} shows that the $L^1$ endpoint detects the presence of a p-wave resonance: such a resonance produces a non-integrable leading contribution to the wave operators. In the zero-eigenvalue case, however, the mere absence of a p-wave resonance is not sufficient: an s-wave resonance must also be present in order to provide the cancellation required for $L^1$ boundedness.
The $L^\infty$ endpoint is even more sensitive, as it further depends on the second-order harmonic moments of the zero-energy eigenfunctions.

Taken together, Theorems~\ref{thm-main result-1} and \ref{thm-main result-2} show that the endpoint mapping properties are governed by a subtle interaction between the type of threshold obstruction and the moment cancellations encoded in the generalized eigenfunctions. In conjunction with the known non-endpoint $L^p$ results reviewed in Subsection~\ref{known results} below, they yield a complete classification of the $L^p$ mapping properties of the two-dimensional wave operators: under the decay assumptions above, $W_{\pm}$ fail to be bounded on $L^p(\mathbb{R}^2)$ if and only if at least one of the following conditions holds:
\begin{itemize}
	\item $H$ has no s-wave resonance and $p\in\{1,\infty\}$;
	\smallskip
	\item $H$ has a p-wave resonance and $p\in\{1\}\cup(2,\infty]$;
	\smallskip
	\item $H$ has a zero-energy eigenfunction for which \eqref{eq:condi_for_eigen-0} fails, and $p=\infty$.
\end{itemize}
The classification is summarized in Table~\ref{tab:Lp-classification}.

\begin{table}[ht]
	\centering
	\small
	\renewcommand{\arraystretch}{1.4}
	\setlength{\tabcolsep}{5pt}
	\setlength{\arrayrulewidth}{0.6pt}
	\begin{tabular}{
			|>{\centering\arraybackslash}p{0.16\textwidth}
			|>{\centering\arraybackslash}p{0.28\textwidth}
			|c|c|c|c|}
		\hline
		\textbf{Zero type}
		& \textbf{Subcase}
		& $\boldsymbol{p=1}$
		& $\boldsymbol{1<p\le 2}$
		& $\boldsymbol{2<p<\infty}$
		& $\boldsymbol{p=\infty}$
		\\ \hline
		Regular
		& ---
		& U & B & B & U
		\\ \hline
		First kind
		& ---
		& B & B & B & B
		\\ \hline
		Second kind
		& ---
		& U & B & U & U
		\\ \hline
		\multirow{4}{*}{Eigenvalue}
		& no s-wave and no p-wave resonance
		& U & B & B & U
		\\ \cline{2-6}
		& presence of a p-wave resonance
		& U & B & U & U
		\\ \cline{2-6}
		& s-wave, no p-wave, and \eqref{eq:condi_for_eigen-0} holds
		& B & B & B & B
		\\ \cline{2-6}
		& s-wave, no p-wave, and \eqref{eq:condi_for_eigen-0} fails
		& B & B & B & U
		\\ \hline
	\end{tabular}
	\caption{The $L^p$-mapping properties of the two-dimensional Schr\"odinger wave operators. Here $\mathrm{B}$ and $\mathrm{U}$ stand for boundedness and unboundedness, respectively.}
	\label{tab:Lp-classification}
\end{table}

We make some comments on Theorems~\ref{thm-main result-1} and~\ref{thm-main result-2}.

\begin{remark}[{\it Reversal of the threshold paradigm in two dimensions}]
A striking feature of Theorems~\ref{thm-main result-1} and~\ref{thm-main result-2} 
is that the usual threshold paradigm is reversed in dimension two. In dimensions 
$n\ge3$, the regular case is the favorable one: threshold resonances or zero-energy eigenvalues typically 
obstruct the endpoint boundedness of the wave operators. In dimension two, however, the opposite occurs. If zero is a 
regular point, then $W_\pm$ are unbounded on both $L^1(\R^2)$ and $L^\infty(\R^2)$; 
if zero is a first-kind threshold singularity, then $W_\pm$ are bounded on both 
endpoint spaces. Thus, in dimension two, the presence of an s-wave resonance improves 
the endpoint behavior of the wave operators, instead of destroying it.

Upon closer inspection, however, this reversal is entirely natural. The underlying principle is the same in 
every dimension: the wave operators compare the perturbed evolution with the free 
one, and their endpoint boundedness favors the case in which 
$H$ and $H_0$ share the same threshold type.  In dimension two, the free operator $H_0=-\Delta$ already has a first-kind 
threshold singularity at zero, with the constant function as its s-wave resonance. 
Consequently, when $H$ has the same threshold type, the low-energy behavior of the 
two evolutions matches at the threshold, and the term responsible for the endpoint 
failure, namely the operator $\mathcal W_{\mathrm{sing}}^{\mathrm L}$ of 
Subsection~\ref{sec:outline}, disappears from the expansion. When zero is a regular 
point of $H$, this matching is lost, and the surviving singular term destroys the 
endpoint bounds. 

The same principle also explains the known results in all other 
dimensions. For $n\ge3$, zero is a regular point of $H_0$, so the principle favors 
the regular case, which is exactly what the higher-dimensional theory shows. For 
$n=1$, the free operator again has a threshold resonance, and the endpoint bounds 
hold only in the resonant case, under Weder's condition \eqref{eq:Jost-endpoint}, 
which requires the zero-energy Jost solution of $H$ to coincide with the free one at 
$-\infty$. In this sense, dimension two reverses the conclusions of the threshold 
paradigm but not the paradigm itself: it is the same principle, seen in the only 
dimension where the free operator itself has a first-kind threshold singularity.

This improvement is remarkable also because it is specific to the wave operators in dimension two. For problems involving $H$ alone, there is no free counterpart to match, 
and no such phenomenon occurs. For instance, the time decay 
of the Schr\"odinger group, Erdo\u{g}an--Green \cite{Erd-Gre-13-cmp} and Murata 
\cite{Murata-82} proved that, when zero is a regular point,
\[
\bigl\|e^{itH}P_{\mathrm{ac}}(H)\bigr\|_
{\mathbb B(L^2_\sigma(\R^2),L^2_{-\sigma}(\R^2))}
\les
\frac{1}{|t|\log^2|t|},
\qquad
t\ge2,\quad \sigma>3,
\]
while in the presence of a first-kind threshold singularity, the weighted decay 
deteriorates to $|t|^{-1}$. Moreover, the dispersive estimate
\begin{equation*}
	\bigl\|e^{itH}P_{\mathrm{ac}}(H)\bigr\|_
	{\mathbb B(L^1(\R^2),L^\infty(\R^2))}
	\les |t|^{-1}
\end{equation*}
holds in both cases \cite{Erd-Gre-13,Schlag-2D-dis}, with the same decay rate as the 
free propagator $e^{itH_0}$. Hence, for the Schr\"odinger group, a first-kind 
threshold singularity is never helpful and may even be harmful, in sharp contrast 
with its effect on the wave operators.	

We believe that the principle behind this phenomenon---namely, that the 
endpoint boundedness of the wave operators favors the case in which $H$ and 
the comparison operator share the same threshold type---is not specific to 
potential perturbations of the Laplacian, but should govern the endpoint 
behavior of wave operators for other classes of perturbations as well.
\end{remark}

\begin{remark}[{\it Examples of the first-kind threshold singularity}]
	In two dimensions, there are many potentials for which $H$ possesses an s-wave resonance but neither p-wave resonances nor zero-energy eigenfunctions.  For instance, let $\Lambda$ be a positive Schwartz function and define
	\[
	V_\varepsilon = \frac{\varepsilon \Delta \Lambda}{1 + \varepsilon \Lambda}, \qquad \varepsilon > 0.
	\]
	For each $\varepsilon > 0$, the operator $H_\varepsilon = -\Delta + V_\varepsilon$ possesses an s-wave resonance $c+\varepsilon \Lambda$. We claim that $H_\varepsilon$ has no p-wave resonances or zero-energy eigenfunctions for sufficiently small $\varepsilon$. Indeed, by \cite[Lemma~6.4]{JN01}, if $\psi$ denotes either a p-wave resonance or a zero-energy eigenfunction, then with $v = \sqrt{|V_\varepsilon|}$ and $U = \operatorname{sgn}(V_\varepsilon)$, the function $f = U v \psi$ belongs to $L^2(\mathbb{R}^2)$ and satisfies
	\begin{equation}\label{eq:the identity trans}
		U f - \frac{1}{2\pi} v \int_{\mathbb{R}^2} \log(|x-y|)\,(v f)(y) \, dy = 0
	\end{equation}
	together with
	\[
	\psi = \frac{1}{2\pi} \int_{\mathbb{R}^2} \log(|x-y|)\,(v f)(y) \, dy.
	\]
	The integral operator with kernel $v(x)\log(|x-y|)v(y)$ is Hilbert--Schmidt, and a direct computation shows that its Hilbert--Schmidt norm is $\mathcal{O}(\varepsilon)$ as $\varepsilon \to 0$. Hence, for sufficiently small $\varepsilon$, equation \eqref{eq:the identity trans} forces $f\equiv 0$, and consequently $\psi\equiv 0$.
\end{remark}

\subsection{Known results}\label{known results}

We briefly review the known results on the $L^p$-boundedness of Schr\"odinger wave operators.
A \emph{zero-energy resonance} is a distributional solution of \eqref{eq:Hil equation} 
belonging to $L^2_{-1/2-}(\mathbb{R}^n)\setminus L^2(\mathbb{R}^n)$ in dimension $n=1, 3$, 
and to $L^2_{-0-}(\mathbb{R}^4)\setminus L^2(\mathbb{R}^4)$ in dimension $n=4$.

$\bullet$ \textbf{Dimension $n=1$.}
Weder \cite{Weder-99-CMP} proved, under suitable decay assumptions on $V$, that the wave operators $W_\pm$ are bounded on $L^p(\mathbb R)$ for every $1<p<\infty$, both in the regular case and in the presence of a zero-energy resonance. He further showed that, if zero is resonant and the zero-energy Jost solution satisfies
\begin{equation}\label{eq:Jost-endpoint}
	\lim_{x\to-\infty} f_1(x,0)=1,
\end{equation}
then $W_\pm$ are bounded on $L^1(\mathbb R)$ and $L^\infty(\mathbb R)$. Galtbayar and Yajima \cite{Gal-Yaj-00} and D'Ancona and Fanelli \cite{AF06} subsequently established $L^p$-boundedness for $1<p<\infty$ under alternative, and in part weaker, assumptions on $V$.

Weder's low-energy decomposition exhibits Hilbert-transform terms in the remaining cases, which suggests the failure of strong endpoint boundedness. The actual unboundedness on $L^1(\mathbb R)$ and $L^\infty(\mathbb R)$ in the regular case, as well as in the resonant case where \eqref{eq:Jost-endpoint} fails, was established only recently by Huang and Yao \cite{Huang-Yao-2026}. Together with Weder's positive endpoint result, this completes the classification of the $L^p$-boundedness of the one-dimensional wave operators under the assumptions considered there. We recall that, for potentials satisfying $|V(x)|\les\langle x\rangle^{-2-}$, zero cannot be an $L^2$-eigenvalue in dimension one; see, for example, \cite{Deift-T}.


$\bullet$ \textbf{Dimension $n=2$.}
When zero is a regular point of $H$, Yajima \cite{Yajima-99-CMP} and Jensen--Yajima \cite{Jensen-Yajima-02-CMP} proved that $W_\pm$ are bounded on $L^p(\mathbb R^2)$ for every $1<p<\infty$. Erdo\u{g}an, Goldberg, and Green \cite{Erdogan-Goldberg-Green-JFA-2018} showed that the same conclusion remains valid if zero is a first-kind threshold singularity, or a zero-energy eigenvalue with no threshold resonance present.

Yajima \cite{Yajima-2022} subsequently obtained a sharp classification in the non-endpoint range: $W_\pm$ are bounded on $L^p(\mathbb R^2)$ for all $1<p<\infty$ if and only if $H$ has no p-wave resonance. If a p-wave resonance is present, then $W_\pm$ are bounded for $1<p\le 2$ and unbounded for $2<p<\infty$. Prior to the present work, the endpoint cases $p=1$ and $p=\infty$ remained unresolved for general decaying potentials.


$\bullet$ \textbf{Dimension $n=3$.}
If zero is a regular point, Yajima \cite{Yajima-JMSJ-95} established the boundedness of $W_\pm$ on $L^p(\mathbb R^3)$ for the full range $1\le p\le\infty$. Beceanu and Schlag \cite{Beceanu-Schlag-20} later obtained the same conclusion under substantially weaker assumptions on the potential.

If a zero-energy resonance is present, Yajima \cite{Yajima-2016,Yajima-2016-3d} proved that $W_\pm$ are bounded on $L^p(\mathbb R^3)$ for $1<p<3$, but are unbounded on $L^1(\mathbb R^3)$ and on $L^p(\mathbb R^3)$ for $3\le p\le\infty$.

Suppose instead that zero is an eigenvalue but no zero-energy resonance is present. Then $W_\pm$ are bounded on $L^p(\mathbb R^3)$ for $1\le p<3$. Moreover, they are bounded for $3\le p<\infty$ if and only if every zero-energy eigenfunction $\psi$ satisfies
\begin{equation}\label{vanishing-condi}
	\int_{\mathbb R^3} x^\alpha V(x)\psi(x)\,dx=0,
	\qquad |\alpha|\le 1.
\end{equation}
If this condition fails, $W_\pm$ are unbounded on $L^p(\mathbb R^3)$ for $3\le p<\infty$. Yajima \cite{Yajima-2016-3d} and Erdo\u{g}an-Green-Lamaster \cite{Erd-Gre-arxiv} further proved that $W_\pm$ are bounded on $L^\infty(\mathbb R^3)$ provided that condition \eqref{vanishing-condi} holds for all $|\alpha|\le 2$ and for every zero-energy eigenfunction $\psi$.


$\bullet$ \textbf{Dimension $n=4$.}
When zero is a regular point, Yajima \cite{Yajima-1995-even} proved that $W_\pm$ are bounded on $L^p(\mathbb R^4)$ for all $1\le p\le\infty$. If a zero-energy resonance is present, Yajima \cite{Yajima-2022-4} showed that $W_\pm$ are bounded on $L^p(\mathbb R^4)$ for $1<p\le 2$ and unbounded for $2<p\le\infty$. 

Suppose that zero is an eigenvalue but no resonance is present. The results of Jensen--Yajima \cite{Jensen-Yajima-08-PLMS} and Goldberg--Green \cite{Goldberg-Green-2017} imply that $W_\pm$ are bounded on $L^p(\mathbb R^4)$ for $1\le p<4$ in general. If every zero-energy eigenfunction $\psi$ satisfies the analogue of condition \eqref{vanishing-condi} in $\mathbb R^4$, then $W_\pm$ are bounded on $L^p(\mathbb R^4)$ for every $1\le p<\infty$. Conversely, Yajima \cite{Yajima-2022-4} proved that, if this first-moment condition fails, then $W_\pm$ are unbounded on $L^p(\mathbb R^4)$ for $4<p<\infty$.
Erdo\u{g}an-Green-Lamaster \cite{Erd-Gre-arxiv} obtained the sufficient condition for $L^\infty$-boundedness.

$\bullet$ \textbf{Dimensions $n\ge 5$.}
In these dimensions, non-$L^2$ zero-energy resonances do not occur. If zero is a regular point, the wave operators are bounded on $L^p(\mathbb R^n)$ throughout the full range $1\le p\le\infty$; see \cite{Finco-Yajima-2006-even,Yajima-JMSJ-95,Yajima-1995-even,Yajima-JMSJ-2006}.

If zero is an eigenvalue, Finco--Yajima \cite{Finco-Yajima-2006-even,Yajima-JMSJ-2006} and Goldberg--Green \cite{Goldberg-Green-2016} proved that $W_\pm$ are bounded on $L^p(\mathbb R^n)$ for $1\le p<n/2$ without additional cancellation assumptions. If every zero-energy eigenfunction $\psi$ satisfies
\[
\int_{\mathbb R^n} V(x)\psi(x)\,dx=0,
\]
then $W_\pm$ are bounded for $1\le p<n$. If, in addition,
\[
\int_{\mathbb R^n} x_j V(x)\psi(x)\,dx=0,
\qquad j=1,\ldots,n,
\]
then they are bounded for every $1\le p<\infty$. In \cite{Erd-Gre-arxiv}, the authors obtain $L^\infty$-boundedness under the condition 
$\int_{\mathbb{R}^n} x^{\alpha} V(x)\psi(x)\,dx=0$ for all $|\alpha|\le 2$.  Yajima \cite{Yajima-2016} further showed that these moment conditions are also necessary in the corresponding upper ranges: the zeroth-moment condition is necessary for $n/2\le p<n$, while the zeroth- and first-moment conditions are necessary for $n\le p<\infty$.

$\bullet$ \textbf{Higher order Schr\"odinger operators.}
Recent progress has also been made on the $L^p$-boundedness of wave operators for higher-order Schr\"odinger operators $(-\Delta)^m+V$, $m\geq2$. Goldberg and Green \cite{GG21} treated the three-dimensional fourth-order case, while Erdo\u{g}an and Green \cite{EG22,EG23} extended the analysis to higher dimensions. Further results for the low-dimensional fourth-order case were obtained by Mizutani, Wan and Yao \cite{MWY23, MWY23-1, MWY23-2} and Galtbayar and Yajima \cite{GY24, GY26}. More recently, the effects of threshold singularities for general higher-order Schr\"odinger operators were studied in \cite{Erd-Gre-arxiv, CSWY25}.

\subsection{Ideas of the proof}\label{sec:outline}
We outline the main ideas underlying the proofs of Theorems~\ref{thm-main result-1} and~\ref{thm-main result-2}. In the course of the proofs we introduce three new ingredients: a physical-space treatment of the two-dimensional high-energy component, based on a hyperbolic parametrization of the phase; sharp asymptotics for oscillatory integrals and Fourier multipliers with logarithmic symbols; and, in the eigenvalue case, a spherical-harmonic decomposition of the low-energy singular component, which identifies the second-order harmonic moments of $V\psi$ as the precise obstruction to the endpoint bounds. We highlight the role of each ingredient below.

By the symmetry $\overline{W_+f}=W_-\overline{f}$, it suffices to study $W_-$. Our proof begins with the stationary representation of the wave operators (see, e.g., Kuroda \cite{Kuroda}):
\begin{equation}\label{eq-stationary formula}
	\begin{aligned}
		W_-(H,-\Delta)f
		&=
		f-\frac{1}{\pi i}
		\int_0^\infty
		\lambda R^+(\lambda^2)V
		\bigl(R_0^+(\lambda^2)-R_0^-(\lambda^2)\bigr)f
		\,d\lambda  \\
		&=:f-\mathcal Wf ,
	\end{aligned}
\end{equation}
where the limiting resolvent operators $R^\pm(\lambda^2)$ and $R_0^\pm(\lambda^2)$ are defined by
\[
R^\pm(\lambda^2)
=
\lim_{\varepsilon\to0^+}
\bigl(H-(\lambda^2\pm i\varepsilon)\bigr)^{-1},
\qquad
R_0^\pm(\lambda^2)
=
\lim_{\varepsilon\to0^+}
\bigl(-\Delta-(\lambda^2\pm i\varepsilon)\bigr)^{-1}.
\]

Let $\chi$ be a smooth cut-off function satisfying
\begin{equation}\label{cut-off-fun}
	\chi(\lambda)=1
	\quad\text{for }|\lambda|\leq1/2,
	\quad
	\chi(\lambda)=0
	\quad\text{for }|\lambda|\geq1, \quad \widetilde\chi(\lambda)=1-\chi(\lambda).
\end{equation}
We now decompose $\mathcal W$ into its low- and high-energy components:
\begin{equation*}
	\begin{aligned}
		\mathcal W^{\mathrm L}f
		&=
		\frac{1}{\pi i}
		\int_0^\infty
		\lambda R^+(\lambda^2)V
		\bigl(R_0^+(\lambda^2)-R_0^-(\lambda^2)\bigr)f\,
		\chi(\lambda/\lambda_0)\,d\lambda,\\
		\mathcal W^{\mathrm H}f
		&=
		\frac{1}{\pi i}
		\int_0^\infty
		\lambda R^+(\lambda^2)V
		\bigl(R_0^+(\lambda^2)-R_0^-(\lambda^2)\bigr)f\,
		\widetilde\chi(\lambda/\lambda_0)\,d\lambda,
	\end{aligned}
\end{equation*}
where $\lambda_0>0$ is a fixed suitable parameter.

From the stationary representation \eqref{eq-stationary formula} we have the decomposition
\[
W_-=I-\mathcal W^{\mathrm H}-\mathcal W^{\mathrm L}.
\]
Here the identity operator $I$ is trivially bounded on every $L^p$. Consequently, the endpoint behavior of $W_-$ on $L^1$ and $L^\infty$ is determined by that of the high-energy component $\mathcal W^{\mathrm H}$ and the low-energy component $\mathcal W^{\mathrm L}$. Theorems~\ref{thm-main result-1} and~\ref{thm-main result-2} are established once the following five propositions are proved.

The positive results read as follows.

\begin{proposition}\label{pro:bound for high}
	Suppose the potential satisfies $|V(x)|\les \langle x\rangle^{-\beta}$ with $\beta>\frac72$. Then, for any fixed $\lambda_0>0$, $\mathcal W^{\mathrm H}$ is bounded on $L^p(\R^2)$ for all $1\le p\le\infty$.
\end{proposition}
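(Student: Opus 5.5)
We plan to prove that $\mathcal W^{\mathrm H}$ has an integral kernel $K(x,y)$ satisfying the uniform Schur-type bounds
\[
\sup_x\int_{\R^2}|K(x,y)|\,dy+\sup_y\int_{\R^2}|K(x,y)|\,dx<\infty .
\]
This gives boundedness on $L^1$ and $L^\infty$, and hence on every $L^p$. Following Yajima's approach, we expand $R^+(\lambda^2)$ by a finite Born series,
\[
R^+=\sum_{j=0}^{2m-1}(-1)^j R_0^+(VR_0^+)^j+(R_0^+V)^mR^+(VR_0^+)^m,
\]
and substitute it into the stationary formula. This splits $\mathcal W^{\mathrm H}$ into finitely many Born terms $\mathcal W_j$, which involve only free resolvents, and one remainder term. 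Each piece is then treated separately.

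\emph{Born terms.} For $\mathcal W_1$ we write the kernel explicitly:
\[
K_1(x,y)=\int V(z)\int_0^\infty \lambda\,\widetilde\chi(\lambda/\lambda_0)\,R_0^+(\lambda^2)(x,z)\,\bigl[R_0^+-R_0^-\bigr](\lambda^2)(z,y)\,d\lambda\,dz .
\]
The two-dimensional free resolvent kernel is $\tfrac i4H_0^{(1)}(\lambda|x-z|)$, and the difference $R_0^+-R_0^-$ has kernel $\tfrac i2J_0(\lambda|z-y|)$. For large $\lambda r$ we use the asymptotic expansions
\[
H_0^{(1)}(s)=e^{is}s^{-1/2}\omega_+(s),\qquad J_0(s)=\sum_\pm e^{\pm is}s^{-1/2}\omega_\pm(s),
\]
with symbol-type $\omega$. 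The $\lambda$-integral then becomes an oscillatory integral with phase $\lambda(|x-z|\pm|z-y|)$ and amplitude of size $\lambda\cdot(\lambda|x-z|)^{-1/2}(\lambda|z-y|)^{-1/2}$. The $\lambda$ factors cancel, leaving $|x-z|^{-1/2}|z-y|^{-1/2}$ times a Fourier transform in $\lambda$ of a symbol of order $0$. Integrating by parts in $\lambda$ gives decay $\langle |x-z|\pm|z-y|\rangle^{-N}$; the cut-off $\widetilde\chi$ makes the small-$\lambda$ region harmless, but the order-$0$ symbol produces a kernel that is only integrable in the distributional sense near the "light cone" $|x-z|=|z-y|$, or $|x-z|+|z-y|=0$.

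This is where we use the hyperbolic/elliptic parametrization suggested in the introduction. For fixed $z$ we parametrize $y$ (or $x$) by $(s,\theta)$, where $s=|x-z|-|z-y|$ is a hyperbolic coordinate with foci $x$ and $z$ and $\theta$ runs along the hyperbola. We then bound $\int|K_1|\,dy$ by $\int|V(z)|\,|x-z|^{-1/2}\,dz$ times a one-dimensional integral in $s$ of $\langle s\rangle^{-N}$ against the Jacobian. The Jacobian contributes a factor $|z-y|^{1/2}$-type growth, which is compensated by the $|z-y|^{-1/2}$ amplitude. The decay hypothesis $\beta>7/2$ should be exactly what makes the resulting $z$-integrals, with weights like $\langle z\rangle^{3/2}$ from the Jacobians and from $|x-z|^{-1/2}$, converge uniformly in $x$. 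Near the diagonal, where $\lambda|x-z|\lesssim1$, we use instead the logarithmic small-argument bounds on $H_0^{(1)}$; these are locally integrable and contribute a bounded Schur kernel directly. The higher Born terms $\mathcal W_j$ are handled in the same way: the intermediate resolvents $R_0^+(\lambda^2)(z_i,z_{i+1})$ are treated as smooth symbols in $\lambda$ with only $\lambda^{-1/2}$ decay each. The extra inverse powers of $\lambda$ make the $\lambda$-integrals absolutely convergent once $j$ is large enough, and the chain of $z$-integrals is controlled by iterated weighted bounds of the form $\int|V(z)|\,|x-z|^{-1/2}\,dz\lesssim1$.

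\emph{Remainder term.} For the remainder we use the limiting absorption principle for large $\lambda$,
\[
\|\partial_\lambda^k R^+(\lambda^2)\|_{L^2_\sigma\to L^2_{-\sigma}}\lesssim\lambda^{-1},
\]
together with pointwise $\lambda$-derivative bounds on $VR_0^\pm(\lambda^2)(\cdot,y)$ in weighted $L^2$, which gain $\lambda^{-1/2}$ per resolvent. Writing the remainder kernel as
\[
\int_0^\infty e^{i\lambda(\pm|x|\pm|y|)}\,\langle \Phi_x(\lambda),\,R^+\,\Psi_y(\lambda)\rangle\,d\lambda
\]
and integrating by parts twice in $\lambda$ should give a bound of the form
\[
|K(x,y)|\lesssim \langle x\rangle^{-1/2}\langle y\rangle^{-1/2}\,\bigl\langle |x|-|y|\bigr\rangle^{-2}\,\lambda\text{-integrable factors},
\]
which is again Schur-integrable in both variables.

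\emph{Main obstacle.} The hardest step is the first Born term. Its $\lambda$-symbol has order $0$, so there is no absolute convergence and no Calder\'on--Zygmund cancellation to fall back on, and endpoint boundedness depends entirely on the geometry of the hyperbolic phase. To handle it, we would first try to isolate the singular set $|x-z|=|z-y|$, do one integration by parts in $\lambda$ away from it, and near it exploit the one-dimensional integrability in the hyperbolic coordinate $s$.
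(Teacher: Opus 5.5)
Your proposal has a genuine gap at the step you yourself flag as the main obstacle: the first Born term cannot be controlled by absolute values, whatever coordinates you use.

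Its leading amplitude $\lambda\cdot(\lambda|x-z|)^{-1/2}(\lambda|z-y|)^{-1/2}$ has order zero. So, up to a nonzero constant and smoother corrections, the $\lambda$-integral produces
\[
\int_0^\infty e^{i\lambda\rho}\widetilde\chi(\lambda/\lambda_0)\,d\lambda=\pi\delta(\rho)+i\,\mathrm{p.v.}\,\rho^{-1}+(\text{Schwartz}),\qquad \rho=|x-z|-|z-y|.
\]
This is a genuine $|\rho|^{-1}$ singularity in a single transversal variable, and it is not integrable. For fixed $x,z$ and $y=z+t\omega$,
\[
\int|z-y|^{-1/2}|\rho|^{-1}\langle\rho\rangle^{-N}\,dy=2\pi\int_0^\infty t^{1/2}\bigl|\,|x-z|-t\,\bigr|^{-1}\langle |x-z|-t\rangle^{-N}\,dt=\infty .
\]
Hence no Jacobian bookkeeping makes a Schur bound built on $|V(z)|$ converge, and ``one-dimensional integrability in $s$'' is false. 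Note also that for fixed $x,z$ the level sets of $\rho$ in $y$ are circles centered at $z$; hyperbolas with foci $x,y$ appear only when $z$ varies with $x,y$ fixed. What you face is a superposition of radial Hilbert transforms centered at the points $z$, and you must use the principal-value cancellation in the $z$-integral. That requires regularity in $\rho$ of $V$ along the hyperbolas $\{z:|x-z|-|z-y|=\rho\}$, which a merely decaying $V$ does not provide.

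The paper overcomes this with two ideas missing from your plan. First, it removes the mean: $V=(V-V_0)+V_0$ with $V_0=\pi^{-1}\widehat V(0)e^{-|x|^2}$. For the mean-zero part it uses Yajima's representation $\mathcal W^{\mathrm H}_0f(x)=\frac{i}{4\pi}\int_{\mathbb S^1}\int_0^\infty K(t+2x\cdot\omega,\omega)\,(\widetilde\chi(\sqrt{-\Delta}/\lambda_0)f)(x+t\omega)\,dt\,d\omega$, with $K(t,\omega)=\int_0^\infty\widehat V(r\omega)e^{itr/2}\,dr$. The only part of $K$ not in $L^1(\R\times\mathbb S^1)$ is $\widetilde\chi(t)\,2i\widehat V(0)/t$, the Fourier-side counterpart of your $1/\rho$ singularity, and it vanishes when $\widehat V(0)=0$. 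This step needs only $\langle x\rangle^sV\in L^2$ for some $s>1$, not smoothness. Second, for the smooth Gaussian $V_0$, on the region $|\rho|\le\min\{1,|x-y|/2\}$ the paper parametrizes $z$ by $(\rho,\theta)$ on hyperbolas with foci $x,y$. It splits the $\lambda$-integral at the scale $\lambda\sim\sec\theta$ and, on the high-frequency piece, integrates by parts in $\rho$, so the derivative falls on $V_0$, on $\tau$ and on the cutoffs. This converts $|\rho|^{-1}$ into $|\rho|^{-1/2}$ and yields an admissible kernel (Lemma~\ref{lem:admi of main term in high}). Your treatment of the higher Born terms and the remainder is consistent with the paper, which bounds the second Born term directly via $|\rho_\pm|^{-1/2}\langle\rho_\pm\rangle^{-2}$ and cites Yajima for the last term. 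The gap is confined to the first Born term, but that term is the heart of the proposition.
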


\begin{proposition}\label{pro:bound for low energy}
	Suppose that $|V(x)|\les\langle x\rangle^{-6-}$. Then the following statements hold.
	
	\smallskip
	\noindent
	\emph{(i)} If zero is a regular point of $H$, then $\mathcal W^{\mathrm L}$ is bounded from $\mathcal H^1(\R^2)$ to $L^1(\R^2)$, from $L^1(\R^2)$ to $L^{1,\infty}(\R^2)$, and from $L^\infty(\R^2)$ to $\BMO(\R^2)$.
	
	\smallskip
	\noindent
	\emph{(ii)} If zero is a first-kind threshold singularity of $H$, then $\mathcal W^{\mathrm L}$ is bounded on $L^p(\R^2)$ for $1\le p\le\infty$.
\end{proposition}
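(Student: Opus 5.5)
We will prove Proposition~\ref{pro:bound for low energy} by expanding $\mathcal W^{\mathrm L}$ with the two-dimensional low-energy resolvent expansions of Jensen--Nenciu and Erdo\u{g}an--Goldberg--Green. Write $v=|V|^{1/2}$, $U=\operatorname{sgn}V$ and $M^\pm(\lambda)=U+vR_0^\pm(\lambda^2)v$. The symmetric resolvent identity gives
\[
R^+(\lambda^2)V=R_0^+(\lambda^2)v\,M^+(\lambda)^{-1}v .
\]
Hence $\mathcal W^{\mathrm L}f$ becomes an integral
\[
\int_0^\infty \lambda\chi(\lambda/\lambda_0)\,R_0^+(\lambda^2)v\,M^+(\lambda)^{-1}v\bigl(R_0^+-R_0^-\bigr)(\lambda^2)f\,d\lambda .
\]
Here $R_0^+(\lambda^2)(x,y)=\frac i4 H_0^{(1)}(\lambda|x-y|)$, and $R_0^+-R_0^-$ has kernel $\frac i2J_0(\lambda|x-y|)$. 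We insert the expansion of $M^+(\lambda)^{-1}$ as $\lambda\to0$.

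In the regular case,
\[
M^+(\lambda)^{-1}=h^+(\lambda)^{-1}S+QD_0Q+\mathcal O\bigl(\lambda^{2}\log^2\lambda\bigr),
\]
where $h^+(\lambda)=a\log\lambda+z$, $S$ is the finite rank projection built from $P=\|V\|_1^{-1}v\langle\cdot,v\rangle$, and $Q=I-P$. With a first-kind singularity,
\[
M^+(\lambda)^{-1}=h^+(\lambda)S_1D_1S_1+\text{(terms of the type above)}+\text{error},
\]
where $S_1$ is the rank-one projection onto the s-wave resonance.

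We would then split $\mathcal W^{\mathrm L}$ into a singular leading term $\mathcal W^{\mathrm L}_{\mathrm{sing}}$ and a remainder.

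\textbf{Remainder.} For each remainder term, after expanding $H_0^{(1)}$ and $J_0$ in $\lambda$, the kernel has the form
\[
K(x,y)=\int\!\!\int v(x_1)v(y_1)\,\Phi(x,x_1,y,y_1)\,dx_1\,dy_1 ,
\]
with
\[
\Phi=\int_0^\infty \chi(\lambda)\,e^{\pm i\lambda|x-x_1|}e^{\pm i\lambda|y-y_1|}\,a(\lambda,\dots)\,d\lambda .
\]
Here $a$ is a symbol with logarithmic factors. We need the uniform bounds $\sup_y\int|K(x,y)|\,dx<\infty$ and $\sup_x\int|K(x,y)|\,dy<\infty$ (Schur's test). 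For these we use sharp asymptotics of oscillatory integrals $\int_0^\infty\chi(\lambda)\lambda^k(\log\lambda)^{-m}e^{i\lambda r}\,d\lambda$ as $r\to\infty$. Each gain of $(\log\lambda)^{-1}$ or of a power of $\lambda$ (from $Q$-cancellations $Qv=0$, which produce extra $\lambda$ via Taylor expansion of the free kernels) should give decay of order $r^{-1}(\log r)^{-2}$, or $r^{-2}$. Combined with the $\lambda^{-1/2}$ behaviour of the Hankel functions, this gives integrability in two dimensions away from the diagonal. The decay $\langle x\rangle^{-6-}$ of $V$ is what allows the moments created by these Taylor expansions.

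\textbf{Leading term, regular case.} We would show that $\mathcal W^{\mathrm L}_{\mathrm{sing}}$ comes from the $h^+(\lambda)^{-1}S$ term combined with the constant parts of the free kernels. Up to integrable kernels, it should reduce to a Fourier multiplier whose symbol is homogeneous of degree zero modulo logarithms, for example $\chi(|\xi|)\,\frac{\log|\xi|}{h(|\xi|)}$ composed with a rank-one averaging against $v$. Its kernel satisfies Calder\'on--Zygmund estimates ($|\nabla K|\lesssim|x-y|^{-3}$, with $|K|\sim|x-y|^{-2}$ not integrable). The CZ theory then yields the bounds $L^1\to L^{1,\infty}$, $\mathcal H^1\to L^1$ and $L^\infty\to\BMO$, since $L^2$ boundedness is already known.

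\textbf{Leading term, first-kind case.} The expansion of $M^+(\lambda)^{-1}$ is reorganized, and the $\log\lambda$ coming from $h^+$ exactly cancels the $\log\lambda$ of the free Hankel kernel. This is the matching of threshold types: the resonance constant plays the role of the free s-wave. So the non-integrable CZ piece disappears, and all terms have Schur-integrable kernels, giving boundedness for $1\le p\le\infty$.

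\textbf{Main obstacle.} The step we expect to be hardest is the sharp analysis of the oscillatory integrals with logarithmic symbols. We must prove that the surviving kernel in the regular case is exactly a CZ kernel, and that in the first-kind case the cancellation is exact to an order giving $L^1$-integrable tails rather than merely $r^{-2}$ ones. We would first try to write each log-symbol as $\int_0^\infty$ against $\lambda^{s}$ (a Mellin-type representation), which converts it to homogeneous pieces with explicit Fourier transforms.
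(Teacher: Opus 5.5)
Your overall architecture (stationary formula, insertion of the low-energy expansion of $M^+(\lambda)^{-1}$, Schur bounds for terms carrying a $Q$- or $S_1$-cancellation) matches the paper. The treatment of the leading term in the regular case, however, has a genuine gap: the singular operator is misidentified, and the Calder\'on--Zygmund route cannot work.

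\textbf{The regular-case singular term.} The contribution of $h_+(\lambda)^{-1}P$ is not, modulo integrable kernels, a Fourier multiplier composed with averaging against $v$. A radial symbol that is homogeneous of degree zero up to logarithms is a constant up to $\log^{-1}$ corrections. Concretely, $\chi(|\xi|)\log|\xi|/h(|\xi|)$ is a constant times $\chi(|\xi|)$ plus a constant times $h(|\xi|)^{-1}\chi(|\xi|)$. Both have integrable kernels; the second decays like $|x|^{-2}\log^{-2}|x|$ (Lemma~\ref{lem-Fou-multiplier}). Such an operator is therefore bounded on $L^1$ and $L^\infty$. It has no $|x-y|^{-2}$ kernel, and it could not produce the endpoint failure of Proposition~\ref{pro:unbound}.

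What the paper does instead:
\begin{itemize}
\item It moves $h_+(\lambda)^{-1}\chi(\lambda/\lambda_0)$ onto the spectral measure and peels it off as the harmless multiplier $\mathcal{M}_h=h_+(\sqrt{-\Delta})^{-1}\chi(\sqrt{-\Delta}/\lambda_0)$.
\item A Parseval computation then shows that what remains, after averaging over translates by $|V|$ on both sides, has kernel $\frac{i}{4\pi}\delta(|x|^2-|y|^2)-\frac{1}{4\pi^2}\operatorname{p.v.}(|x|^2-|y|^2)^{-1}$. This is a Hilbert transform in the radial variable $t=|x|^2$.
\item This kernel is not translation invariant and is singular on the whole circle $\{|y|=|x|\}$ (e.g.\ near $y=-x$, where $|x-y|=2|x|$). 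So the standard CZ size and smoothness conditions on $\mathbb{R}^2$ fail.
\item The endpoint bounds come from transference to one dimension. The map $\rho\mapsto\int_{\mathbb{S}^1}a(\sqrt{\rho}\,\theta)\,d\theta$ sends $\mathcal{H}^1(\mathbb{R}^2)$-atoms to multiples of $\mathcal{H}^1(\mathbb{R})$-atoms and $L^1(\mathbb{R}^2)$ into $L^1(\mathbb{R})$. Hence the $\mathcal{H}^1\to L^1$ and weak-$(1,1)$ bounds of the Hilbert transform carry over, and $L^\infty\to\mathrm{BMO}$ follows by duality.
\item For the weak-type bound one cannot average over translates, since $L^{1,\infty}$ is not normable. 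For $|x|\ge 1/2$ the paper replaces $R_0^+(\lambda^2)(x,w)$ by $R_0^+(\lambda^2)(x,0)$. For $|x|\le 1$ it argues directly, using that $(c_0-\frac{1}{2\pi}\log\lambda)/h_+(\lambda)$ is a constant plus a multiple of $h_+(\lambda)^{-1}$.
\end{itemize}

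\textbf{The first-kind case.} Your mechanism here is also not the right one.
\begin{itemize}
\item The growing term $-h_+(\lambda)S_1D_1S_1$ does not cancel against the logarithm in the Hankel kernel. It is harmless because $S_1v=0$ removes the constant (and $\log\lambda$) part of both free kernels, so it is an $\mathcal{N}_0$-type term with $f_0=-h_+$.
\item The endpoint bounds hold because the singular block is absent altogether. The full expansion contains $h_+^{-1}N-h_+^{-1}NS_1D_1S_1N$, and these cancel exactly since $NS_1D_1S_1N=N$ (using $S_1D_0=S_1$, $PS_1=0$, $PTS_1D_1S_1TP=P$). Without this identity your ``terms of the type above'' would still contain $h_+^{-1}P$.
\end{itemize}

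\textbf{Schur admissibility of the remaining terms.} Not every remaining term is Schur-admissible by pointwise oscillatory bounds.
\begin{itemize}
\item A one-sided integral $\int_0^\infty\chi(\lambda)e^{i\lambda r}(\log\lambda)^{-1}\,d\lambda$ decays only like $(r\log r)^{-1}$, not $r^{-1}(\log r)^{-2}$. This is the source of the non-integrability of the $P$--$P$ term.
\item Consider terms with $Q$ or $S_1$ on the left and $P$ on the right, such as $h_+^{-1}QD_0QTP$ or the $S_1D_1S_1TP$ part of $S_1D_1S_1N$. For these the available pointwise bound $\langle x\rangle^{-1}\langle y\rangle^{-2+}$ is not integrable in $y$.
\item There the paper obtains the $L^\infty$ bound by splitting off a finite-rank piece $\sum_i\Phi_i(x)\langle f_1(\sqrt{-\Delta})\chi(\sqrt{-\Delta}/\lambda_0)f,\,v\varphi_i\rangle$ with $\Phi_i\in L^\infty$. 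It then uses the $L^\infty$-boundedness of these logarithmic multipliers for $f_1\in\{1,h_+^{-1}\}$.
\end{itemize}
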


\begin{proposition}\label{pro:bound for low energy-1}
	Assume that $|V(x)|\les\langle x\rangle^{-8-}$ and that zero is either a threshold singularity of the second kind or a zero-energy eigenvalue of $H$. Then the following statements hold.
	
	\smallskip
	\noindent
	\emph{(i)} $\mathcal W^{\mathrm L}$ is bounded on $L^p(\R^2)$ for all $1<p\le2$, and also for all $2<p<\infty$ provided that $H$ has no p-wave resonance.
	
	\smallskip
	\noindent
	\emph{(ii)} If $H$ has an s-wave resonance but no p-wave resonance, then $\mathcal W^{\mathrm L}$ is also bounded on $L^1(\R^2)$.
	
	\smallskip
	\noindent
	\emph{(iii)} If $H$ has an s-wave resonance but no p-wave resonance, and every zero-energy eigenfunction satisfies \eqref{eq:condi_for_eigen-0}, then $\mathcal W^{\mathrm L}$ is further bounded on $L^\infty(\R^2)$.
\end{proposition}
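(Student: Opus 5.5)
The plan is to start from the low-energy resolvent expansion of $M^\pm(\lambda)=U+vR_0^\pm(\lambda^2)v$ in the singular cases, following Jensen--Nenciu \cite{JN01} and Erdo\u{g}an--Goldberg--Green \cite{Erdogan-Goldberg-Green-JFA-2018}. Using the symmetric resolvent identity $R^+V=R_0^+v\,M^+(\lambda)^{-1}v$, we insert the expansion of $M^+(\lambda)^{-1}$ for $0<\lambda<\lambda_0$. When zero is a second-kind singularity or an eigenvalue, this expansion has the schematic form
\[
M^+(\lambda)^{-1}=\frac{S_2D_2S_2}{\lambda^2}+\frac{\text{(p-wave/mixed terms)}}{\lambda^2\log\lambda}+\frac{S_1(\cdot)S_1}{\log\lambda}+h^{\pm}(\lambda)^{-1}S+\mathcal{O}(\cdots).
\]
Here $S_1$ projects onto the resonance space and $S_2\le S_1$ onto the eigenspace, and the remainder has enough $\lambda$-regularity because $|V|\les\langle x\rangle^{-8-}$. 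The operator $\mathcal W^{\mathrm L}$ then splits into a finite sum of terms $\int_0^\infty \chi(\lambda/\lambda_0)\lambda R_0^+ v\,T(\lambda)\,v(R_0^+-R_0^-)\,d\lambda$. We estimate the kernel of each term separately, using the Bessel/Hankel asymptotics of $R_0^\pm$.

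For part (i), the remainder and regular-type terms have kernels bounded by an admissible $L^1$-integrable majorant in the Yajima sense, $\sup_x\int|K|dy+\sup_y\int|K|dx<\infty$ up to convolution-type pieces. These pieces are handled as in the regular case of Proposition~\ref{pro:bound for low energy}. The $1/\log\lambda$ s-wave terms produce Calder\'on--Zygmund-type or bounded pieces, which are bounded for $1<p<\infty$. The eigenvalue term $\lambda^{-2}S_2D_2S_2$ is treated by expanding $R_0^\pm(\lambda^2)(x,y)$ around $y=0$. The orthogonality $S_2 v=0$, $S_2 vx_j=0$, and the cancellation $P_{S}$ absorb two powers of $\lambda$. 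The leftover pieces are oscillatory integrals of the form $\int e^{i\lambda(|x|\pm|y|)}\chi(\lambda)\,a(\lambda,x,y)\,d\lambda$, with logarithmic symbols. For these we apply the sharp oscillatory-integral asymptotics with logarithmic symbols developed earlier. The terms in these kernels that are only $\langle x\rangle^{-2}$ come from the p-wave projection, since $\psi\sim x_j/|x|^2$. They behave like Riesz-transform type kernels of $x/|x|^2$ tensor type, which are bounded for $p\le2$ but fail for $p>2$. When no p-wave resonance is present these terms vanish, and the remaining kernels decay like $\langle x\rangle^{-3}$ in the relevant variable, giving $L^p$ bounds for all $1<p<\infty$.

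For part (ii), the key point is the same matching mechanism as in Proposition~\ref{pro:bound for low energy}(ii). When an s-wave resonance is present, the coefficient of the $1/\log\lambda$ term combines with the free logarithm in $R_0$. The combination cancels the singular Hilbert-type part $\mathcal W^{\mathrm L}_{\mathrm{sing}}$, whose presence is what forced weak-type bounds in the regular case. I would then show that all remaining kernels satisfy $\sup_y\int|K(x,y)|\,dx<\infty$, which is the $L^1$ bound. The eigenfunction contributions require a separate check here. After the cancellations, their kernels are $\les\langle x\rangle^{-3}\langle y\rangle^{-?}$ plus terms of the form $\frac{1}{|x|^2}\Omega(\hat x)$ arising from second-order Taylor terms, and integrability in $x$ for fixed $y$ is what is needed.

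Part (iii) requires $\sup_x\int|K|\,dy<\infty$. The second-order Taylor term of the eigenfunction piece produces a kernel $\sim\langle x\rangle^{-2}\sum_{|\alpha|=2}c_\alpha(\hat y)\int y^\alpha V\psi$ that fails integrability in $y$. I would decompose $y^\alpha$ into spherical harmonics as $|y|^2\cdot$const $+\;\mathcal A_2$ components. The radial part cancels against the s-wave term, using $\int |y|^2V\psi$ relations coming from $H\psi=0$. The $\mathcal A_2$ part vanishes exactly under \eqref{eq:condi_for_eigen-0}. This spherical-harmonic bookkeeping, and especially verifying that the degree-zero part genuinely cancels with the logarithmic s-wave contribution, is the step I expect to be the main obstacle.
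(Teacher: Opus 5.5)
\textbf{Verdict: genuine gaps.} Your outline has the right skeleton: expand $M^+(\lambda)^{-1}$, sort terms by the moment cancellations of the projections, and estimate kernels with logarithmic oscillatory bounds. However, it misses the structure on which the endpoint claims rest.

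\textbf{Missing off-diagonal blocks.} Your expansion has no blocks coupling $S_2L^2$ (p-wave plus eigenfunctions) to its complement. In fact $M^+(\lambda)^{-1}$ contains $(I-S_2)\mathcal L^+_{01}(\lambda)S_2+S_2\mathcal L^+_{10}(\lambda)(I-S_2)$, both of size $\log^2\lambda$. The second block carries $P$ on the right, so there is no cancellation against $v(R_0^+-R_0^-)$. With a generic coefficient like $\log^2\lambda$, termwise kernel estimates give only $\langle x\rangle^{-1}\langle y\rangle^{-2+}$ or $\langle x\rangle^{-5/2+}\langle y\rangle^{-1/2}$, neither integrable in $y$. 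Worse, such a coefficient in front of $P$ would pair $f$ with $\log^2(\sqrt{-\Delta})\chi(\sqrt{-\Delta}/\lambda_0)|V|$, which is not integrable since $\int|V|\neq0$; so $L^\infty$ would genuinely fail. Part (iii) needs the refined fact that, with an s-wave and no p-wave resonance, $\mathcal L^+_{10}(\lambda)=\tilde{\mathcal O}_2(\log^2\lambda)Q+\Xi P+\tilde{\mathcal O}_2(\lambda^2)$ with $\Xi$ independent of $\lambda$. Then the $P$-part is a term with $f_1\equiv1$, handled by the Fourier-multiplier argument in the endpoint case of Lemma~\ref{Lemm: bound of N_1}. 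This, together with the removal of $h_+(\lambda)^{-1}P$ from the $(I-S_2)$ block, is where the s-wave hypothesis is actually used. Two corrections to your expansion:
\begin{itemize}
\item The s-wave term is $-h_+(\lambda)(S_1-S_2)D_1(S_1-S_2)$, of size $\log\lambda$, not $1/\log\lambda$.
\item The removal of $h_+^{-1}P$ is the algebraic identity $N(S_1-S_2)D_1(S_1-S_2)N=N$ inside $M^{-1}$, not a combination with the free logarithm.
\end{itemize}

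\textbf{The obstacle you flag in (iii) is not an obstacle.} No cancellation of the radial second moment against the s-wave term is needed, and the s-wave hypothesis plays no role in the eigenfunction term. Write $vS_3D_3S_3v=V\mathcal P_eV$ and $R_0^+(\lambda^2)=(-\Delta)^{-1}+\lambda^2(-\Delta)^{-1}R_0^+(\lambda^2)$. The singular eigenvalue term then splits into two parts:
\begin{itemize}
\item an $L^\infty$-bounded remainder, requiring no condition; one piece is handled by duality, its adjoint being $-\chi(\sqrt{-\Delta}/\lambda_0)\mathcal W^{\mathrm L}_{\mathrm e,\mathrm{sing}}$, which is $L^1$-bounded;
\item exactly $\mathcal P_e\chi(\sqrt{-\Delta}/\lambda_0)$, which is $L^\infty$-bounded if and only if $\chi(\sqrt{-\Delta}/\lambda_0)\Psi_j\in L^1$ for each eigenfunction.
\end{itemize}
Since $\widehat{V\Psi_j}(\eta)=\mathbf P_j(\eta)+\mathcal O(|\eta|^3)$, the radial part of $\mathbf P_j$ becomes a constant symbol after division by $|\eta|^2$, giving a Schwartz kernel. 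Its $\mathcal A_2$-part gives a nonconstant degree-zero symbol with $|y|^{-2}$ tails. So \eqref{eq:condi_for_eigen-0} alone decides the matter.

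\textbf{The kernel form you allow in (ii).} Terms $|x|^{-2}\Omega(\hat x)$ are not integrable on $\R^2$, so if they survived, the Schur bound in $x$ would fail. The $L^1$ bound for the eigenvalue term instead comes from the second-order cancellation $S_3(vx^\alpha)=0$, $|\alpha|\le1$, on both sides. This makes the kernel integrable in $x$ uniformly in $y$, with no second-moment hypothesis.

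\textbf{The p-wave mechanism in (i).} Riesz-transform-type kernels cannot explain a dichotomy at $p=2$, since they are bounded on every $L^p$, $1<p<\infty$. The term $\lambda^{-2}g_1^+(\lambda)^{-1}N_1$ has only first-order cancellation, and its kernel is of size $\langle x\rangle^{-1}\langle y\rangle^{-1}$ for $|x|\lesssim|y|$, which is not Schur-admissible. One needs Lorentz-space kernel bounds with off-diagonal Marcinkiewicz interpolation for $1<p<2$, with $p=2$ coming from unitarity.
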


\begin{remark}
	Propositions~\ref{pro:bound for high} and~\ref{pro:bound for low energy-1} together provide an alternative proof of the positive result in \cite{Yajima-2022}, under a weaker decay assumption on $V$.
\end{remark}

As these statements indicate, the obstruction to the endpoint boundedness comes entirely from the low-energy component. The negative results for $\mathcal W^{\mathrm L}$ read as follows.

\begin{proposition}\label{pro:unbound}
	Suppose that $|V(x)|\les\langle x\rangle^{-6-}$ and that zero is a regular point of $H$. Then $\mathcal W^{\mathrm L}$ is unbounded on $L^1(\R^2)$ and $L^\infty(\R^2)$.
\end{proposition}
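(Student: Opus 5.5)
Our plan is to isolate from $\mathcal W^{\mathrm L}$ an explicit singular operator $\mathcal W^{\mathrm L}_{\mathrm{sing}}$ that is unbounded at both endpoints, and to show that the remainder $\mathcal W^{\mathrm L}-\mathcal W^{\mathrm L}_{\mathrm{sing}}$ is bounded on $L^1$ and $L^\infty$. Once this is done, unboundedness of $\mathcal W^{\mathrm L}$ is equivalent to unboundedness of $\mathcal W^{\mathrm L}_{\mathrm{sing}}$.

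\textbf{Step 1: low-energy expansion.} We start from the symmetric resolvent identity $R^+V=R_0^+v\,M^+(\lambda)^{-1}v$, with $M^+(\lambda)=U+vR_0^+(\lambda^2)v$. In the regular case the known two-dimensional expansion (Jensen--Nenciu, Erdo\u{g}an--Green) reads
\[
M^+(\lambda)^{-1}=h^+(\lambda)^{-1}S+QD_0Q+O(\lambda^{2}\log\lambda),\qquad h^+(\lambda)=a\log\lambda+z,
\]
where $S$ is a rank-one operator built from $P=\|V\|_1^{-1}v\langle\cdot,v\rangle$ and $D_0$. This is valid for $|V|\lesssim\langle x\rangle^{-6-}$.

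We insert this expansion into $\mathcal W^{\mathrm L}$. The terms $QD_0Q$ and the error terms carry enough vanishing moments or powers of $\lambda$ that their kernels are integrable uniformly in the appropriate variable. We treat them as in the proof of Proposition~\ref{pro:bound for low energy}: Schur tests and oscillatory integral bounds give boundedness on all $L^p$, $1\le p\le\infty$. This leaves the leading term
\[
\mathcal W^{\mathrm L}_{\mathrm{sing}}f=\frac{1}{\pi i}\int_0^\infty\lambda\chi(\lambda/\lambda_0)\,h^+(\lambda)^{-1}R_0^+(\lambda^2)vSv\bigl(R_0^+-R_0^-\bigr)(\lambda^2)f\,d\lambda .
\]

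\textbf{Step 2: reduce to a logarithmic Fourier multiplier.} Up to kernels bounded on $L^1$ and $L^\infty$, we replace $vSv$ by the rank-one operator $c\,V\otimes V$, i.e.\ we freeze $y$ at the origin. The differences generate additional $|y|$ factors, which are absorbed by the decay of $V$. The operator then factors as $K\circ T$. Here $T$ is a Fourier multiplier with symbol
\[
m(\xi)=\frac{\chi(|\xi|/\lambda_0)}{a\log|\xi|+z},
\]
up to a constant coming from $(R_0^+-R_0^-)V\sim J_0(\lambda|x|)\widehat V$. The factor $K$ is the outgoing resolvent kernel, which is applied to $V$ and retains the same $\lambda$-structure.

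Using the sharp asymptotics for oscillatory integrals with logarithmic symbols established earlier, we show that the kernel of $\mathcal W^{\mathrm L}_{\mathrm{sing}}$ behaves like
\[
\frac{C}{|x|^2\log^2|x|}\,\Phi(\hat x,\ldots)\quad\text{for large }|x|,
\]
with $C\neq0$ exactly because $h^+$ is nondegenerate, i.e.\ zero is regular. The factor $\log^{-2}$ makes this kernel borderline: $\int_{|x|>2}|x|^{-2}\log^{-2}|x|\,dx<\infty$, so the tail is integrable. The failure must therefore be sought in the joint behaviour in $(x,y)$, namely a kernel of the type $|x-y|^{-2}$ whose associated Calder\'on--Zygmund operator has nonzero mean on spheres. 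Concretely, we aim to identify $\mathcal W^{\mathrm L}_{\mathrm{sing}}$ modulo bounded operators as $c\,\mathcal R\circ m(D)$, where $\mathcal R$ is a nontrivial operator of Riesz type, or a $\log$-multiplier difference such as $\log|D|$ restricted to low frequencies.

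\textbf{Step 3: test functions.} For $L^1$ we take an approximate identity $f_\epsilon=\epsilon^{-2}\phi(x/\epsilon)$ and show $\|\mathcal W^{\mathrm L}f_\epsilon\|_{L^1}\to\infty$. It suffices that the limiting kernel $K(x,0)$ is not in $L^1$ near the relevant scale, or that the $L^1$ norm grows like $\log\log(1/\epsilon)$. For $L^\infty$ we argue by duality, since $\mathcal W^{\mathrm L}$ bounded on $L^\infty$ would make $(\mathcal W^{\mathrm L})^*$ bounded on $L^1$. Alternatively, we test directly on $f=\mathbf 1_{|x|<R}$ with $R\to\infty$ and show that $\mathcal W^{\mathrm L}f(0)$ grows like $\log\log R$.

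\textbf{Main obstacle.} The hardest step is Step 2: computing the exact leading asymptotics of the logarithmic-symbol oscillatory integral precisely enough to identify a non-vanishing, non-integrable part, and separating it cleanly from the remainder terms, which are bounded only marginally. I would first try to show $\mathcal W^{\mathrm L}_{\mathrm{sing}}f\approx c\,(\widehat V(0))\,g_0(x)\int f\,m$-type behaviour, verifying nonvanishing via $a\neq0$.
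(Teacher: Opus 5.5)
Your Step~1 is the same reduction the paper uses: every term except $h_+(\lambda)^{-1}P$ is bounded on all $L^p$, $1\le p\le\infty$, by Lemmas~\ref{Lemm: bound of N_0}--\ref{Lemm: bound of N_2}. Steps~2--3, however, contain no working mechanism, and the asymptotic claim that drives them is wrong.

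\textbf{The $L^1$ part.} For fixed $y$, the column $\mathcal W^{\mathrm L}_{\mathrm{sing}}(\cdot,y)$ decays only like $|x|^{-2}(\log|x|)^{-1}$, not like $|x|^{-2}(\log|x|)^{-2}$. To see this, write $\lambda H_0^+(\lambda r)=r^{-1}\partial_\lambda[\lambda H_1^+(\lambda r)]$, integrate by parts, and use $H_1^+(s)\sim 2/(\pi i s)$ near $s=0$. This gives $\int_0^\infty\lambda h_+(\lambda)^{-1}\chi(\lambda/\lambda_0)H_0^+(\lambda r)\,d\lambda\sim c\,(r^2\log r)^{-1}$ with $c\neq0$. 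This tail is not integrable, and it is the entire $L^1$ obstruction. There is therefore no Riesz-type or Calder\'on--Zygmund component to look for. The true singular structure (Lemma~\ref{lem-W0s-bound}) is a Hilbert transform in the radial variable $|x|^2$, composed with $h_+(\sqrt{-\Delta})^{-1}\chi(\sqrt{-\Delta}/\lambda_0)$. That multiplier is bounded on $L^1$ and $L^\infty$ but its symbol vanishes at $\xi=0$, so your factorization $K\circ T$ cannot by itself give unboundedness.

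The paper's $L^1$ argument is a one-line Fourier computation. Since $\int_{\R^2}R_0^+(\lambda^2)|V|\,dx=-\|V\|_{L^1}\lambda^{-2}$, take $f=\mathcal F^{-1}\bigl(\chi(2|\cdot|/\lambda_0)\bigr)$. Then $\int_{\R^2}\mathcal W^{\mathrm L}_{\mathrm{sing}}f\,dx$ equals $-\frac{\|V\|_{L^1}}{2\pi}\int_0^{\lambda_0/4}\frac{d\lambda}{\lambda h_+(\lambda)}+\mathcal O(1)$, which diverges, so $\mathcal W^{\mathrm L}_{\mathrm{sing}}f\notin L^1$. Your approximate identity works for exactly this reason, already for a single fixed $\epsilon$. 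Letting $\epsilon\to0$ plays no role and produces no $\log\log(1/\epsilon)$ growth, because $\mathcal W^{\mathrm L}$ only sees frequencies $\le\lambda_0$.

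\textbf{The $L^\infty$ part.} The duality remark only relocates the difficulty, and your concrete test fails. At $x=0$, and indeed at any fixed $x$, the logarithm in $[R_0^+(\lambda^2)|V|](0)=-\frac{\|V\|_{L^1}}{2\pi}\log\lambda+\mathcal O(1)$ cancels the one in $h_+(\lambda)$. Hence the row $\mathcal W^{\mathrm L}_{\mathrm{sing}}(0,\cdot)$ is integrable, and $\mathcal W^{\mathrm L}\mathbf 1_{\{|x|<R\}}$ stays uniformly bounded near $x=0$ as $R\to\infty$. This is Remark~\ref{eq:rem-to-Linfty}: $\chi(|\cdot|/r)\mathcal W^{\mathrm L}_{\mathrm{sing}}$ is bounded on $L^\infty$ for each fixed $r$.

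The growth lives on the annulus $|x|\in[R-\tfrac12,R+\tfrac12]$.
\begin{itemize}
\item There the factor $J_1(R\lambda)$, coming from $\widehat{\mathbf 1_{B(0,R)}}(\xi)=2\pi RJ_1(R|\xi|)/|\xi|$, combines with $H_0^+(\lambda|x|)$ into the slowly varying phase $e^{i\lambda(|x|-R)}$.
\item Lemma~\ref{lem:est of K-m} then gives $\int_{1/R}^{\lambda_0}\frac{e^{i\lambda\delta}}{\lambda h_+(\lambda)}\,d\lambda=\frac{2\pi}{\|V\|_{L^1}}\log\log R+\mathcal O(1)$.
\item Consequently $|\mathcal W^{\mathrm L}_{\mathrm{sing}}\mathbf 1_{B(0,R)}(x)|=\tfrac12\log\log R+\mathcal O(1)$ on that annulus.
\end{itemize}
To isolate this term, the paper replaces $|V|/\|V\|_{L^1}$ on the outgoing side by $\frac4\pi\mathbf 1_{B(0,1/2)}$. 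The mean-zero difference is $L^\infty$-bounded by the argument of Lemma~\ref{Lemm: bound of N_1}, and the remaining radial kernel $K_\mu$ is expanded in Lemma~\ref{lem:asy of K tau}.

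These two explicit computations are the missing ideas in your proposal: integrating the output against $1$ for $L^1$, and evaluating near the sphere $|x|=R$ rather than at the origin for $L^\infty$.
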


\begin{proposition}\label{pro:unbound-1}
	Assume that $|V(x)|\les\langle x\rangle^{-8-}$.
	
	\smallskip
	\noindent
	\emph{(i)} If $H$ does not have an s-wave resonance or possesses a p-wave resonance, then $\mathcal W^{\mathrm L}$ is unbounded on $L^1(\R^2)$.
	
	\smallskip
	\noindent
	\emph{(ii)} If $H$ does not have an s-wave resonance, or if some zero-energy eigenfunction of $H$ fails to satisfy \eqref{eq:condi_for_eigen-0}, then $\mathcal W^{\mathrm L}$ is unbounded on $L^\infty(\R^2)$.
\end{proposition}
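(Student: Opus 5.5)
The plan is to isolate, in each of the configurations listed, an explicit leading operator in the low-energy expansion of $\mathcal W^{\mathrm L}$ whose kernel visibly fails the endpoint bounds, and to show that everything else is bounded on $L^1$ and $L^\infty$. First I would expand $R^+(\lambda^2)$ near $\lambda=0$ through the symmetric resolvent identity $R^+V=vM^+(\lambda)^{-1}v$, with $M^+(\lambda)=U+vR_0^+(\lambda^2)v$. I would use the Jensen--Nenciu type expansion of $M^+(\lambda)^{-1}$, which in the singular cases involves the projections $S_1\supseteq S_2\supseteq S_3$ onto the s-wave, p-wave and eigenfunction pieces. The decay $\langle x\rangle^{-8-}$ gives enough terms with $o(\lambda^2)$ errors. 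Inserting this into the stationary formula splits $\mathcal W^{\mathrm L}$ into a finite sum of model operators of the form
\[
\int_0^\infty \lambda\,\chi(\lambda/\lambda_0)\,R_0^+(\lambda^2)\,v\,\Gamma(\lambda)\,v\,\bigl(R_0^+-R_0^-\bigr)(\lambda^2)\,d\lambda ,
\]
where each $\Gamma(\lambda)$ is an explicit symbol in $\lambda$, $\log\lambda$ and $(\log\lambda+c)^{-1}$, times a finite-rank operator. The remainder terms should be handled by the same kernel estimates used for Proposition~\ref{pro:bound for low energy-1}, giving $L^1$ and $L^\infty$ boundedness.

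For each obstruction I would identify the operator to test.
\begin{itemize}
\item \emph{No s-wave resonance} (regular, or an eigenvalue without s-wave). The leading term is $\mathcal W^{\mathrm L}_{\mathrm{sing}}$. Its kernel behaves at large $|x|,|y|$ like a homogeneous Calder\'on--Zygmund kernel $K(x-y)$ with $\int_{S^1}K\neq0$ or with nonintegrable tail, coming from $\int\lambda\chi(\lambda)(\log\lambda)^{-1}\cdots$, i.e.\ a Fourier multiplier with symbol like $(\log|\xi|)^{-1}$ localized to low frequency. Here the same sharp asymptotics for logarithmic oscillatory integrals as in Proposition~\ref{pro:unbound} should give a $|x|^{-2}$ tail. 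I would simply reuse that argument, since without an s-wave resonance the leading coefficient (essentially $\langle v,\cdot\rangle$ against $P v$ with nonzero $\|V\|_1$-type constant) does not cancel.
\item \emph{p-wave resonance.} The $S_2$ term contains $\lambda^{-2}(\log\lambda)^{-1}$-type or $\lambda^{-2}$ contributions paired with the first moments $\int x_jV\psi$. These produce a kernel of size $|x|^{-1}|y|^{-1}$ or $\langle x\rangle^{-2}$-homogeneous of degree $-2$ with angular factor $x_j/|x|$. Testing on $f=\mathbf 1_{B(0,1)}$ gives $\mathcal W^{\mathrm L}f(x)\sim c\,x_j/|x|^{2}$, which is not in $L^1$.
\item \emph{Failure of \eqref{eq:condi_for_eigen-0}.} In the $S_3$ term I would expand $R_0^\pm$ in spherical harmonics. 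The degree-$2$ harmonic moments pair with $\lambda^2$ against $\lambda^{-2}$, leaving a kernel $\sim P(x-y)/|x-y|^{4}$-like, with $P\in\mathcal A_2$ built from $\int P\,V\psi$. This is an even Riesz-type kernel. Testing with $f=\mathbf 1_{\{|y|>1\}}\,\overline{P(y)}/|y|^2$, bounded, gives a logarithmically divergent value at $x=0$. The same construction serves for the no-s-wave $L^\infty$ case with $P$ replaced by the regular-case kernel.
\end{itemize}
Unboundedness on $L^\infty$ follows via duality from a non-$L^1$ bound of the adjoint when convenient.

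To make the tests rigorous I would take $f_R$ a normalized bump of radius $R^{-1}$ (for $L^1$) or the truncation of the critical function to $1<|y|<R$ (for $L^\infty$). I would then show that the leading term grows like $\log R$ while all other terms stay $O(1)$ uniformly in $R$. The latter uses the boundedness already established for the remainders, together with Proposition~\ref{pro:bound for high}.

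The main obstacle is proving that the leading coefficients really are nonzero and are not cancelled by other terms of the same order. In particular, in the eigenvalue case the $S_3$ contribution is mixed with $S_1,S_2$ cross terms at the same power of $\lambda$. For this I would first try the orthogonality relations of the projections, $S_3vx_j=0$ in the absence of a p-wave, and $\langle v,S_3\cdot\rangle=0$. These should reduce the surviving coefficient exactly to the Gram-type matrix $\bigl(\int P\,V\psi_i\bigr)$ over $P\in\mathcal A_2$, which is nonzero precisely when \eqref{eq:condi_for_eigen-0} fails.
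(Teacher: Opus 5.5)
\textbf{There is a genuine gap: the p-wave step of (i) fails.} You claim $\mathcal W^{\mathrm L}\mathbf 1_{B(0,1)}(x)\sim c\,x_j/|x|^2$, but this cannot hold. The operator $\mathcal W^{\mathrm L}=I-W_--\mathcal W^{\mathrm H}$ is bounded on $L^2(\R^2)$, and even on $L^p$ for $1<p\le 2$ when a p-wave is present. On the other hand, $x_j/|x|^2\notin L^2(\{|x|>1\})$.

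Your power counting loses a factor of $\lambda$. Since $J_0$ is even and $(S_2-S_3)v$ annihilates constants, the dipole coefficient in $(S_2-S_3)v\bigl(R_0^+-R_0^-\bigr)(\lambda^2)(\cdot,y)$ is a multiple of $\lambda J_0'(\lambda\langle y\rangle)=-\lambda J_1(\lambda\langle y\rangle)=\mathcal O(\lambda^2\langle y\rangle)$, not $\mathcal O(\lambda)$. This absorbs the $\lambda^{-2}g_1^+(\lambda)^{-1}$ singularity whenever $y$ stays bounded. The leading p-wave kernel $k(\langle x\rangle,\langle y\rangle)T(x,y)$ satisfies $|k(\nu,\mu)|\lesssim\mu\,\nu^{-3}(\log\nu)^{-1}$ for bounded $\mu$ and $\nu\to\infty$, so its columns with $|y|$ bounded are integrable. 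Consequently, when both an s-wave and a p-wave resonance are present (so that $\mathcal W^{\mathrm L}_{\mathrm p,\mathrm{sing}}$ is the only $L^1$-unbounded piece), one actually has $\mathcal W^{\mathrm L}\mathbf 1_{B(0,1)}\in L^1$.

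No fixed compactly supported test function detects the p-wave obstruction at $p=1$. It appears only as $\log\log\mu$ growth of the column norms for $|y|\sim\mu\to\infty$. This is the missing idea, and it requires four ingredients:
\begin{itemize}
\item the sharp lower bound $|k(\mu+\delta,\mu)|\ge c\,(\mu\,\delta\log\delta)^{-1}$ on the shoulder $\delta\in[\delta_1,\sqrt\mu\,]$ (Lemma~\ref{lem:k-shoulder-lower}), which rests on $\int_0^{\lambda_0}g_1^+(\lambda)^{-1}\chi(\lambda/\lambda_0)e^{i\lambda\delta}\,d\lambda=-i(\delta\log\delta)^{-1}+\mathcal O(\delta^{-1}(\log\delta)^{-2})$;
\item a decoupling of the first moments making $|T(x,y)|\ge c_0$ on an angular sector;
\item integration over the resulting cone;
\item the endpoint Schur identity $\|\cdot\|_{L^1\to L^1}=\operatorname{ess\,sup}_y\int|\mathcal I_{\mathrm p,\mathrm{sing}}(x,y)|\,dx$.
\end{itemize}

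\textbf{Second gap: the mixed configurations.} Your scheme ``the leading term grows while everything else stays $\mathcal O(1)$ by the bounds already proved'' breaks down when two separately unbounded singular operators coexist and could cancel. These are $\mathcal W^{\mathrm L}_{\mathrm{sing}}$ (from $h_+(\lambda)^{-1}P$, present whenever there is no s-wave resonance) together with $\mathcal W^{\mathrm L}_{\mathrm p,\mathrm{sing}}$ at $p=1$, or together with $\mathcal W^{\mathrm L}_{\mathrm e,\mathrm{sing}}$ or the p-wave term at $p=\infty$. The Gram-matrix cancellation between $S_3$ and $S_1,S_2$ cross terms that you propose to rule out is not the relevant issue. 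The paper separates the singular pieces by the choice of test:
\begin{itemize}
\item At $p=1$, it pairs with $1$ against $f_k$ with $\hat f_k(\xi)=\chi(2|\xi|/\lambda_0)\bigl(1-\chi(k|\xi|)\bigr)$. The p-wave part annihilates these because $\widehat{v\phi_j}(0)=0$, while $\langle\mathcal W^{\mathrm L}_{\mathrm{sing}}f_k,1\rangle\to\infty$.
\item At $p=\infty$ with an eigenvalue, it localizes to a ball, where $\chi(|\cdot|/r)\mathcal W^{\mathrm L}_{\mathrm{sing}}$ is $L^\infty$-bounded but $\chi(|\cdot|/r)\mathcal W^{\mathrm L}_{\mathrm e,\mathrm{sing}}$ is not.
\item At $p=\infty$ with a p-wave (a case you do not address), it uses Yajima's unboundedness for $2<p<\infty$ plus interpolation with $L^2$.
\end{itemize}

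\textbf{Smaller corrections.} The eigenvalue obstruction is not a convolution kernel $P(x-y)/|x-y|^4$. Modulo $L^\infty$-bounded terms it is the finite-rank operator $f\mapsto\sum_j\Psi_j\,\langle f,\chi(\sqrt{-\Delta}/\lambda_0)\Psi_j\rangle$. Failure of \eqref{eq:condi_for_eigen-0} means some $\chi(\sqrt{-\Delta}/\lambda_0)\Psi_j$ has a non-integrable $|y|^{-2}$ tail with degree-two harmonic angular profile. Your annulus test with that profile is the right one, but the output is a nonzero eigenfunction times $\log R$. You must therefore evaluate where that eigenfunction does not vanish, not at $x=0$.

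Similarly, in the no-s-wave case the multiplier $(\log|\xi|)^{-1}\chi$ has an integrable kernel. The real obstruction is the radial Hilbert transform $\mathrm{p.v.}\,(|x|^2-|y|^2)^{-1}$, with $\log\log$ rather than $\log$ growth. Your mis-description is harmless only because you cite Lemma~\ref{pro:unb of Wsing}.
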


\subsubsection*{High energy.}
The high-energy component illustrates most clearly why the two-dimensional endpoint problem had resisted the existing approaches. For any fixed cutoff $\lambda_0>0$, the works \cite{Gal-Yaj-00,Weder-99-CMP,Yajima-JMSJ-95,Yajima-1995-even} established that $\mathcal W^{\mathrm H}$ is bounded on $L^p(\R^n)$ for the full range $1\le p\le\infty$ in all dimensions $n\neq2$. In dimension two, by contrast, the only previously known result is due to Yajima \cite{Yajima-99-CMP}, who obtained $L^p$-boundedness for $1<p<\infty$ by a method in frequency space, while the endpoint cases $p=1,\infty$ remained open. We close this gap by giving a physical-space treatment of $\mathcal W^{\mathrm H}$, which reaches both endpoints (see Proposition~\ref{pro:bound for high}).

To prove the boundedness of $\mathcal W^{\mathrm H}$, we first use the resolvent identity
\[
R^+(\lambda^2)
=
R_0^+(\lambda^2)
-
R_0^+(\lambda^2)VR^+(\lambda^2)
+
R_0^+(\lambda^2)VR_0^+(\lambda^2)VR^+(\lambda^2),
\]
together with the stationary representation formula to decompose $\mathcal W^{\mathrm H}$ into three terms:
\begin{align*}
	\mathcal W^{\mathrm H}_0
	&:=
	\frac{1}{\pi i}
	\int_0^\infty
	\lambda R_0^+(\lambda^2)V
	\bigl(R_0^+(\lambda^2)-R_0^-(\lambda^2)\bigr)
	\widetilde\chi(\lambda/\lambda_0)\,d\lambda,\\
	\mathcal W^{\mathrm H}_1
	&:=
	\frac{1}{\pi i}
	\int_0^\infty
	\lambda R_0^+(\lambda^2)VR_0^+(\lambda^2)V
	\bigl(R_0^+(\lambda^2)-R_0^-(\lambda^2)\bigr)
	\widetilde\chi(\lambda/\lambda_0)\,d\lambda,\\
	\mathcal W^{\mathrm H}_2
	&:=
	\frac{1}{\pi i}
	\int_0^\infty
	\lambda R_0^+(\lambda^2)VR_0^+(\lambda^2)VR^+(\lambda^2)V
	\bigl(R_0^+(\lambda^2)-R_0^-(\lambda^2)\bigr)
	\widetilde\chi(\lambda/\lambda_0)\,d\lambda.
\end{align*}
Thanks to the extra decay in $\lambda$ from
\[
R_0^\pm(\lambda^2)(x,y)
=
\mathcal O\bigl(\lambda^{-1/2}|x-y|^{-1/2}\bigr),
\]
the last two terms $\mathcal W^{\mathrm H}_1$ and $\mathcal W^{\mathrm H}_2$ can be controlled directly by oscillatory integral estimates. The main difficulty arises in the study of $\mathcal W^{\mathrm H}_0$: the standard stationary phase method cannot be applied directly, since a naive estimate of the oscillatory integral
\[
\int_0^\infty
e^{i\lambda(|x-z|-|z-y|)}
\widetilde\chi(\lambda/\lambda_0)\,d\lambda
\]
produces a singularity $\frac{1}{|x-z|-|z-y|}$. This singularity behaves like a radial Hilbert transform and is far too rough to yield $L^1$ or $L^\infty$ bounds; it is this obstruction that keeps the endpoint cases out of reach of the frequency-space approach.

Two new devices overcome this difficulty. The first is a Gaussian modulation of the potential. Recall that, although not explicitly stated in \cite{Yajima-99-CMP}, the $L^p$-boundedness of $\mathcal W^{\mathrm H}_0$ for all $1\le p\le\infty$ under the extra condition $\int_{\R^2}V\,dx=0$ essentially follows from Yajima's proof therein. Motivated by this fact, we write $V=(V-V_0)+V_0$ with
\[
V_0(x)
=
\frac1\pi
\left(\int_{\R^2}V\,dx\right)e^{-|x|^2},
\]
so that $V-V_0$ has zero mean and its contribution is covered by the zero-mean case of Yajima's argument. The boundedness of $\mathcal W^{\mathrm H}_0$ is thereby reduced to that of its $V_0$-part, whose kernel is a sum of terms of the form
\begin{equation*}
	\frac{ic^+c^\pm}{16\pi}
	\int_{\R^2}\int_0^\infty
	\frac{
		e^{i\lambda(|x-z|-|z-y|)}V_0(z)
	}{
		|x-z|^{1/2}|z-y|^{1/2}
	}
	\widetilde\chi(\lambda/\lambda_0)
	\,d\lambda\,dz.
\end{equation*}
The crucial point is that $V_0$ is smooth; it is precisely this regularity that enables the second device.

The second device is a hyperbolic parametrization of the phase: on the dangerous region, the integration variable $z$ is parametrized by the level sets $\{|x-z|-|z-y|=\rho\}$, which are hyperbolas with foci $x$ and $y$, and the resulting one-dimensional integral in $\rho$ is treated by a frequency-dependent decomposition combined with integration by parts in the spatial variables. This produces enhanced decay in $\lambda$ and shows that the resulting kernel, which the naive stationary-phase estimate pronounces inadmissible, is in fact a Schur-admissible kernel; see Section~\ref{sec:High-bound} for the details.

We emphasize that, apart from Yajima's zero-mean result used in the first step, the argument is carried out entirely in physical space: to our knowledge, it yields the first endpoint estimates for the two-dimensional high-energy component, and the hyperbolic parametrization of the phase level sets appears to be new in this context. The picture of the high-energy component is thereby completed across all dimensions.

\subsubsection*{Low energy.}
To consider the low energy part, let
\[
M^\pm(\lambda):=U+vR_0^\pm(\lambda^2)v,
\]
and
\[
v(x):=|V(x)|^{1/2},
\qquad
U(x):=\operatorname{sgn}V(x)=
\begin{cases}
	\ \ 1, & \text{if } V(x)\ge0,\\
	-1, & \text{if } V(x)<0.
\end{cases}
\]
For the low-energy part $\mathcal W^{\mathrm L}$, using the resolvent identity
\[
R^+(\lambda^2)
=
R_0^+(\lambda^2)v
\bigl(M^+(\lambda)\bigr)^{-1}v,
\]
we obtain the representation
\begin{equation}\label{eq:station rep for low}
	\mathcal W^{\mathrm L}
	=
	\frac{1}{\pi i}
	\int_0^\infty
	R_0^+(\lambda^2)v
	\bigl(M^+(\lambda)\bigr)^{-1}v
	\bigl(R_0^+(\lambda^2)-R_0^-(\lambda^2)\bigr)
	\lambda\chi(\lambda/\lambda_0)\,d\lambda.
\end{equation}
Let
\[
\mathscr E(\lambda)
:=
R_0^+(\lambda^2)v
\bigl(M^+(\lambda)\bigr)^{-1}v
\bigl(R_0^+(\lambda^2)-R_0^-(\lambda^2)\bigr).
\]
The core difficulty in studying $\mathcal W^{\mathrm L}$ lies in the refined analysis of the asymptotic behavior of the kernel $\mathscr E(\lambda)(x,y)$ as $\lambda\to0^+$.

\medskip
\noindent
\textbf{Positive results.} As an illustration, we consider the case where zero is a regular point or a first-kind threshold singularity. We first derive the asymptotic expansion of $\bigl(M^+(\lambda)\bigr)^{-1}$ (see Lemma~\ref{thm-M inverse-even}); substituting it into the stationary representation formula \eqref{eq:station rep for low} reduces the problem to analyzing the boundedness of each term separately. Apart from the term $h_+(\lambda)^{-1}P$ in \eqref{eq-rep-W_s}, all remaining terms are bounded on both $L^1(\R^2)$ and $L^\infty(\R^2)$. The term $h_+(\lambda)^{-1}P$, on the other hand, gives rise to the singular contribution
\[
\mathcal W_{\mathrm{sing}}^{\mathrm L}
=
\frac{1}{\pi i}
\int_0^\infty
\lambda h_+(\lambda)^{-1}\chi(\lambda/\lambda_0)
R_0^+(\lambda^2)vPv
\bigl(R_0^+(\lambda^2)-R_0^-(\lambda^2)\bigr)
\,d\lambda.
\]
In the regular case, this operator is only weakly bounded (see the negative results in Section \ref{sec:unbound}); when zero is a first-kind threshold singularity, it is absent from the expansion altogether, and the strong endpoint bounds are recovered. See Section~\ref{sec:Low-bound} for the full details.

\medskip
\noindent
\textbf{Negative results.} To show the unboundedness of $W_\pm$ when zero is a regular point, we first note that for $1\le p\le\infty$, $W_\pm$ are bounded on $L^p(\R^2)$ if and only if $\mathcal W^{\mathrm L}$ is bounded on $L^p(\R^2)$. Thus, we reduce the unboundedness of $W_\pm$ to that of $\mathcal W_{\mathrm{sing}}^{\mathrm L}$. We shall show the $L^1$-unboundedness of $\mathcal W_{\mathrm{sing}}^{\mathrm L}$ by proving that
\[
\mathcal W_{\mathrm{sing}}^{\mathrm L}
\bigl(\mathcal F^{-1}\bigl(\chi(2|\cdot|/\lambda_0)\bigr)\bigr)
\notin L^1(\R^2)
\]
using a frequency method, and the $L^\infty$-unboundedness of $\mathcal W_{\mathrm{sing}}^{\mathrm L}$ by proving that for the characteristic functions $\Psi_\mu$ of the balls $B(0,\mu)$,
\[
\bigl|\mathcal W_{\mathrm{sing}}^{\mathrm L}\Psi_\mu(x)\bigr|
=
-\frac{1}{2}\log\log\mu+\mathcal O(1)
\quad
\text{for } |x|\in[\mu-\tfrac12,\mu+\tfrac12]
\text{ and } \mu\gg1.
\]
In particular, the endpoint failure exhibits an explicitly identified double-logarithmic profile, which shows that the weak-type bounds of Proposition~\ref{pro:bound for low energy}\,\textup{(i)} are sharp. See Section~\ref{sec:unbound} for the full details.

In the presence of a p-wave resonance, the singular component associated with these resonances is the
operator
\[
\mathcal{W}_{\mathrm{p}, \mathrm{sing}}^{\mathrm{L}}=\frac{1}{\pi i}\int_0^{\infty}
\lambda^{-1} g_1^{+}(\lambda)^{-1} \chi(\lambda/\lambda_0)\,
R_0^{+}(\lambda^2)v(S_2-S_3)D_2(S_2-S_3)v
\bigl(R_0^{+}(\lambda^2)-R_0^{-}(\lambda^2)\bigr)\, d\lambda,
\]
where $S_2$, $S_3$ are the finite-rank projections associated with the threshold spectral subspaces.
Expanding the two resolvent kernels reduces it, modulo $L^1$-bounded cross terms, to a separable
kernel $\mathcal{I}_{\mathrm{p},\mathrm{sing}}(x,y)=k(\langle x\rangle,\langle y\rangle)T(x,y)$: an
order-one Bessel--Hankel radial factor tensored with a first-order moment pairing. A finite-rank
decoupling of the moments freezes $T$ to a nonzero constant on a narrow angular sector, while the
same logarithmic asymptotics as above, now of order one, yield the lower bound
$|k(\nu,\mu)|\gtrsim(\mu\,\delta\log\delta)^{-1}$ along the shoulder $\nu=\mu+\delta$,
$\delta\in[\delta_1,\sqrt{\mu}\,]$. Integrating over the associated cone then gives column norms
$\int_{\mathbb{R}^2}|\mathcal{I}_{\mathrm{p},\mathrm{sing}}(x,y)|\,dx\gtrsim\log\log\mu$ for
$\langle y\rangle\sim\mu$, and the endpoint Schur identity turns this double-logarithmic growth into
the $L^1$-unboundedness of Proposition~\ref{pro:unbound-1}\,\textup{(i)}.

In the eigenvalue case, the blow-up mechanism has a distinctly algebraic flavor. Expanding the singular kernel into spherical harmonics---equivalently, into homogeneous harmonic polynomials---shows that at the critical order of the expansion only the degree-two channel survives: the singular component of $\mathcal W^{\mathrm L}$ is governed by the moments
\[
\int_{\R^2}P(x)V(x)\psi(x)\,dx,
\]
with $P$ ranging over the homogeneous harmonic polynomials of degree two, i.e. over $\operatorname{span}\{x_1x_2,\ x_1^2-x_2^2\}$. When these moments vanish for every zero-energy eigenfunction $\psi$, the singular component disappears and the $L^\infty$ bound is recovered; when one of them fails to vanish, the surviving degree-two harmonic produces an explicit logarithmic blow-up on suitable test data, which yields the unboundedness.

\medskip
\noindent
\textbf{Logarithmic symbols.} A further ingredient, which pervades both the positive and the negative arguments above, is a family of sharp asymptotic estimates for oscillatory integrals and Fourier multipliers whose symbols involve logarithmic factors such as $\log\lambda$ and $(\log\lambda)^{-1}$. Symbols of this type are generated universally by the two-dimensional resolvent expansions, and the classical multiplier theorems do not apply to them at the endpoints. The required estimates are developed in Sections~\ref{sec:Pre},~\ref{sec:low-energy-kernel} and~\ref{sec:unbound}, and may be of independent interest.

\subsection{Notations}
We list the common notations and conventions used throughout the paper as follows:
\begin{itemize}
	\item
	The notation $A\lesssim B$ means that $A\leq CB$ for some constant
	$C>0$ independent of the relevant variables. The notation
	$A\lesssim_l B$ means that $A\leq C_l B$, where the constant
	$C_l>0$ may depend on $l$.
	
	\item The notation $f(x)=\mathcal{O}(g(x))$ means that
	\[
	|f(x)|\leq C|g(x)|
	\]
	for some constant $C>0$ independent of the relevant variables.
	
	\item The notation $a+$, respectively $a-$, denotes $a+\varepsilon$, respectively $a-\varepsilon$, for some arbitrarily small fixed $\varepsilon>0$.
	
	\item We set
	\[
	\mathbb{N}_0=\{0,1,2,\ldots\},
	\qquad
	\mathbb{Z}=\{0,\pm1,\pm2,\ldots\}.
	\]
	
	\item For $1\leq p\leq\infty$, we write
	$L^p=L^p(\mathbb{R}^n;\mathbb{C}).$
	For $\sigma\in\mathbb{R}$, the weighted space $L^2_\sigma(\mathbb{R}^n)$ is defined by
	\[
	L^2_\sigma(\mathbb{R}^n)
	=
	\left\{
	f:\langle x\rangle^\sigma f(x)\in L^2(\mathbb{R}^n)
	\right\}.
	\]
	The notation $L^{p,q}(\mathbb{R}^n)$ denotes the Lorentz space for
	$1\leq p,q\leq\infty$. In particular,
	$L^{1,\infty}(\mathbb{R}^n)$ denotes the weak $L^1$ space.
	Moreover, $\mathcal{H}^1(\mathbb{R}^n)$ denotes the real Hardy
	space, while $\mathrm{BMO}(\mathbb{R}^n)$ denotes its dual, the
	space of functions of bounded mean oscillation; see, for example,
	\cite{graf,graf2}.
	
	\item The space $C_c^\infty(\mathbb{R}^n)$ denotes the space of
	smooth compactly supported functions on $\mathbb{R}^n$, while
	$\mathcal{S}(\mathbb{R}^n)$ denotes the Schwartz space.
	
	\item For Banach spaces $X$ and $Y$, $\mathbb{B}(X,Y)$ denotes the
	space of bounded linear operators from $X$ to $Y$, and
	$\mathbb{B}(X)=\mathbb{B}(X,X).$
	
	\item Both $\mathcal{F}f$ and $\widehat{f}$ denote the Fourier
	transform of $f$, defined by
	\[
	\mathcal{F}f(\xi)
	=
	\widehat{f}(\xi)
	=
	\int_{\mathbb{R}^n}
	e^{-ix\cdot\xi}f(x)\,dx.
	\]
	Similarly, both $\mathcal{F}^{-1}g$ and $\check{g}$ denote the
	inverse Fourier transform of $g$, defined by
	\[
	\mathcal{F}^{-1}g(x)
	=
	\check{g}(x)
	=
	\frac{1}{(2\pi)^n}
	\int_{\mathbb{R}^n}
	e^{ix\cdot\xi}g(\xi)\,d\xi.
	\]
\end{itemize}

The rest of the paper is organized as follows. Section~\ref{sec:Pre} provides preliminary material, including properties of the free resolvent and several Fourier multiplier and oscillatory integral estimates that will be used throughout the paper. Section~\ref{sec:High-bound} is devoted to proving the boundedness of the high-energy component $\mathcal{W}^{\mathrm{H}}$. Section~\ref{sec:low-energy-kernel} develops the properties of the kernels of the free resolvents for small $\lambda$, as a preparation for the two sections that follow. Sections~\ref{sec:Low-bound} and~\ref{low-bound II} establish the boundedness of $\mathcal{W}^{\mathrm{L}}$, and Section~\ref{sec:unbound} establishes the unboundedness of the wave operators. Appendix~\ref{sec:proof of exp lemma} contains the proof of the resolvent expansion lemma used in Section~\ref{low-bound II}. Finally, Appendix~\ref{sec:pro for lemm in sec2} collects the proofs of the auxiliary lemmas stated in Section~\ref{sec:Pre}.

\section{Preliminaries}\label{sec:Pre}
In this section, we fix the notation for symbol classes, recall the 
asymptotic behavior of the two-dimensional free resolvent kernel, and 
collect the oscillatory integral and Fourier multiplier estimates that 
will be used throughout the paper.

\subsection{Free resolvent kernels, Hankel and Bessel functions}\label{Sec:some Technical lemmas}
We denote by
\[
f^{(k)}(z)=\frac{d^k}{dz^k}f(z)
\]
the $k$-th derivative of a function $f$. We write 
$f(\lambda)=\mathcal{O}_m(g(\lambda))$ for 
$\lambda\in\Omega\subset\mathbb{R}$ if
\begin{equation*}
	\left|f^{(k)}(\lambda)\right|\lesssim_k |g(\lambda)|\,|\lambda|^{-k},
	\qquad \lambda\in\Omega,\quad k=0,1,\dots,m.
\end{equation*}
The notation extends naturally to $m=\infty$; for brevity, we also write 
$\mathcal{O}(g(\lambda))=\mathcal{O}_0(g(\lambda))$.

Recall that the integral kernels of the free resolvent 
$R_0^{\pm}(\lambda^2)$ are given by
\begin{equation}\label{eq:reso and Hankel-2}
	R_0^{\pm}(\lambda^2)(x,y) = \pm\frac{i}{4} H_0^{\pm}(\lambda|x-y|),  
	\quad H_0^{\pm}(z)=J_0(z)\pm iY_0(z).
\end{equation}
In particular,
\begin{equation*}
	R_0^{+}(\lambda^2)(x,y)-R_0^{-}(\lambda^2)(x,y) 
	= \frac{i}{2} J_0(\lambda|x-y|).
\end{equation*}
Thus, the analysis of the free resolvent kernels reduces to that of the 
Hankel and Bessel functions.

We now record some properties of the Hankel and Bessel functions; these 
facts are standard and can be found, for instance, in \cite{Abra, GR02, 
	Schlag-2D-dis, Yajima-99-CMP}. For $0<z\les 1$, the Hankel functions 
$H_0^{\pm}(z)$ admit the asymptotic expansion
\begin{equation}\label{eq:asy-Hanel-0}
	H_0^{\pm}(z) = 1 \pm \frac{2i}{\pi}\gamma \pm \frac{2i}{\pi}\log\frac{z}{2}
	\mp \frac{i}{2\pi}\bigl(z^2\log z + \beta_{\pm} z^2\bigr) 
	+ \mathcal{O}_{\infty}(z^3),
\end{equation}
where $\beta_{\pm}=\big(\gamma-1-\ln 2\big)\mp\frac{i\pi}{2}\in \mathbb{C}\setminus\R$. For $z \gtrsim 1$, the 
Hankel functions can be expressed as
\begin{equation}\label{eq:asy-Hanel-1}
	H_0^{\pm}(z) =e^{\pm iz}\Theta^{\pm}(z)
	=\sqrt{\frac{2}{\pi}}\, \frac{e^{\pm i\bigl(z-\pi/4\bigr)}}{z^{1/2}} 
	+ e^{\pm iz}\mathcal{O}_{\infty}(z^{-3/2}),
\end{equation}
with
\begin{equation}\label{eq:est for Theta-0}
	\Theta^{\pm}(z)=\mathcal{O}_{\infty}(z^{-1/2}).
\end{equation}
Combined with \eqref{eq:asy-Hanel-0}, this shows that the representation 
$H_0^{\pm}(z)=e^{\pm iz}\Theta^{\pm}(z)$, with $\Theta^{\pm}$ satisfying 
\eqref{eq:est for Theta-0}, in fact holds for all $z\in(0,\infty)$. We 
shall also need the following variant of \eqref{eq:asy-Hanel-0}, valid on 
the whole half-line:
\begin{equation*}
	H_0^{\pm}(z) = 1 \pm \frac{2i}{\pi}\gamma \pm \frac{2i}{\pi}
	\log\frac{z}{2} + \zeta_0^{\pm}(z), \qquad z\in (0,\infty),
\end{equation*}
where, for every $k\in \mathbb{N}_0$, the remainders $\zeta_0^{\pm}(z)$ 
satisfy
\begin{equation}\label{eq:est for r-0}
	\frac{d^k}{dz^k} \zeta_0^{\pm}(z)=
	\begin{cases}
		\mathcal{O}\bigl(|z|^{3/2 - k}\bigr), & |z| \lesssim 1,\\[6pt]
		\mathcal{O}\bigl(|z|^{\max\{3/2 - k,\ -1/2\}}\bigr), & |z| \gtrsim 1.
	\end{cases}
\end{equation}

\subsection{Fourier transform and oscillatory integral estimates}
In this subsection, we collect several auxiliary estimates for 
oscillatory integrals and Fourier multipliers with logarithmic symbols, 
which will be used in the proof of our main results; their proofs are 
deferred to Appendix~\ref{sec:pro for lemm in sec2}.

\begin{lemma}\label{lem-oscillatory estimate}
	Let $0<\lambda_0<1$, and suppose that 
	$m(\lambda)= \mathcal{O}_{[b]+2}\big(|\log\lambda|^{-a} \lambda^b\big)$ 
	for $\lambda\in (0,\lambda_0)$ and some fixed $a\ge 0$ and $b>-1$. Then
	\begin{equation}\label{eq.oscillatory-est}
		\left|\int_0^\infty e^{i\lambda x} m(\lambda) \chi(\lambda/\lambda_0) 
		\, d\lambda\right| \lesssim_{m,\lambda_0} 
		\frac{1}{\langle x\rangle^{b+1}\log^{a}(2+|x|)}, \quad x \in \mathbb{R},
	\end{equation}
	where the cut-off function $\chi$ is given in \eqref{cut-off-fun}.
\end{lemma}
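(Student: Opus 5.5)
For $|x|\lesssim 1$ the bound is immediate. Since $m(\lambda)=\mathcal O(|\log\lambda|^{-a}\lambda^b)$ with $b>-1$ and $a\ge 0$, the integrand is absolutely integrable on $(0,\lambda_0)$. Hence the integral is bounded by a constant depending on $m$ and $\lambda_0$. So it suffices to treat $|x|\ge 2$, and by symmetry (conjugation) $x\ge 2$.

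For large $x$ I split the integral at the scale $\lambda\sim 1/x$, using a smooth cutoff:
\[
\int_0^\infty e^{i\lambda x}m(\lambda)\chi(\lambda/\lambda_0)\,d\lambda
=\int_0^\infty e^{i\lambda x}m(\lambda)\chi(\lambda/\lambda_0)\chi(\lambda x)\,d\lambda
+\int_0^\infty e^{i\lambda x}m(\lambda)\chi(\lambda/\lambda_0)\widetilde\chi(\lambda x)\,d\lambda .
\]
\textbf{Low piece.} Estimate it by absolute values. Since $\lambda\le 1/x$, we have $|\log\lambda|\ge \log x$, so the first term is at most
\[
(\log x)^{-a}\int_0^{1/x}\lambda^b\,d\lambda\lesssim x^{-b-1}(\log x)^{-a},
\]
using $b>-1$.

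\textbf{High piece.} Integrate by parts $N=[b]+2$ times, writing $e^{i\lambda x}=(ix)^{-1}\partial_\lambda e^{i\lambda x}$. There are no boundary terms: the integrand vanishes near $\lambda=0$ because of $\widetilde\chi(\lambda x)$, and near $\lambda=\lambda_0$ because of $\chi(\lambda/\lambda_0)$. Each derivative may fall on one of three factors:
\begin{itemize}
\item on $m$, giving $|m^{(k)}|\lesssim |\log\lambda|^{-a}\lambda^{b-k}$ by hypothesis;
\item on $\widetilde\chi(\lambda x)$, giving $x^j\,\widetilde\chi^{(j)}(\lambda x)$, which is supported where $\lambda\sim 1/x$ and is therefore comparable to $\lambda^{-j}$;
\item on $\chi(\lambda/\lambda_0)$, giving harmless bounded factors supported in $\lambda\sim\lambda_0$.
\end{itemize}
Altogether the high piece is bounded by
\[
x^{-N}\int_{1/(2x)}^{\lambda_0}|\log\lambda|^{-a}\lambda^{b-N}\,d\lambda .
\]
Since $b-N<-1$, this integral is dominated by its lower endpoint. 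The claim is that it is $\lesssim (\log x)^{-a}x^{N-b-1}$, which gives the total bound $x^{-b-1}(\log x)^{-a}$.

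\textbf{Main obstacle.} The only delicate point is justifying that last integral estimate with the logarithmic weight. Substitute $\lambda=s/x$, so $s\in(1/2,\lambda_0 x)$. The integral becomes
\[
x^{N-b-1}\int_{1/2}^{\lambda_0x}s^{b-N}\,\bigl|\log x-\log s\bigr|^{-a}\,ds .
\]
Split the $s$-range as follows.
\begin{itemize}
\item For $s\le\sqrt{x}$ we have $|\log(s/x)|\ge\tfrac12\log x$. Then the integrable factor $s^{b-N}$ gives a contribution $\lesssim(\log x)^{-a}$.
\item For $\sqrt x\le s\le\lambda_0x$ we have $|\log(s/x)|\ge|\log\lambda_0|>0$, so the log factor is bounded. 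The power factor is then small, $\lesssim x^{(b-N+1)/2}$. This decays polynomially and so beats $(\log x)^{-a}$.
\end{itemize}
This is where $\lambda_0<1$ is used: it keeps $\log\lambda$ away from zero on the support, so the $|\log\lambda|^{-a}$ factor in the symbol never blows up. Combining the low and high pieces yields \eqref{eq.oscillatory-est}.
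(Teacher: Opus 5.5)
Your proposal is correct and follows essentially the same route as the paper: trivial bound for bounded $|x|$, a smooth split at $\lambda\sim 1/x$, a direct estimate of the low piece, and $[b]+2$ integrations by parts with vanishing boundary terms for the high piece. The one place you go beyond the paper is the log-weighted integral $\int_{1/(2x)}^{\lambda_0}|\log\lambda|^{-a}\lambda^{b-N}\,d\lambda$, which the paper dispatches only with ``since $b-k<-1$''; your splitting at $s=\sqrt{x}$, using $\lambda_0<1$ to keep $|\log\lambda|$ bounded below, supplies that justification cleanly.
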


\begin{lemma}\label{lem-oscillatory estimate-h}
	Let $\lambda_0>0$ and $b\in \R$ be fixed, and assume that 
	$m(\lambda)= \mathcal{O}_{[b]+2}\big(\lambda^b\big)$ for 
	$\lambda\in (\lambda_0/2, \infty)$. Then, for every $k\in\mathbb{N}_0$ 
	and all $x\in\mathbb{R}$,
	\begin{equation}\label{eq.oscillatory-est-h}
		\left|\int_0^\infty e^{i\lambda x} m(\lambda) \tilde{\chi}(\lambda/\lambda_0) 
		\, d\lambda \right| \lesssim_{m,\lambda_0,k}
		\begin{cases}
			\dfrac{1}{\langle x\rangle^{k+1}|x|^{b+1}}, & \text{if } b \neq -1,\\[8pt]
			\dfrac{1}{\langle x\rangle^{k+1}\bigl(1+|\log |x||\bigr)}, & \text{if } b = -1,
		\end{cases}
	\end{equation}
	where the cut-off function $\tilde{\chi}$ is given in \eqref{cut-off-fun}.
\end{lemma}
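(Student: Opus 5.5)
We prove \eqref{eq.oscillatory-est-h} by splitting into $|x|\ge 1$, where we integrate by parts in $\lambda$, and $|x|<1$, where we cut the $\lambda$-integral at $\lambda\sim |x|^{-1}$. Set
\[
I(x):=\int_0^\infty e^{i\lambda x}\,m(\lambda)\,\tilde\chi(\lambda/\lambda_0)\,d\lambda ,
\]
understood as an oscillatory (Abel-summed) integral when $b\ge -1$. Since $\tilde\chi(\lambda/\lambda_0)$ vanishes for $\lambda\le \lambda_0/2$, the integrand is supported where $m=\mathcal O_{[b]+2}(\lambda^b)$, so no boundary terms at $\lambda=0$ arise.

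\emph{Large $|x|$.} For $|x|\ge1$ we integrate by parts $N$ times using $e^{i\lambda x}=(ix)^{-1}\partial_\lambda e^{i\lambda x}$. Each step produces a factor $|x|^{-1}$ and one more derivative falling on $m(\lambda)\tilde\chi(\lambda/\lambda_0)$. Derivatives hitting $\tilde\chi$ are compactly supported in $[\lambda_0/2,\lambda_0]$ and harmless. By the symbol bounds, derivatives hitting $m$ gain $\lambda^{-1}$ each, giving integrands of size $\lambda^{b-j}$. Once $b-N<-1$, the integral converges absolutely and $|I(x)|\lesssim |x|^{-N}$. To reach the exponent $k+1+(b+1)$ in the statement, we take $N>\max(b+1,\,k+b+2)$. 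This uses the symbol bounds for $m$ to order $N$ (implicitly assumed when $k$ is large, the constant being allowed to depend on $k$). Since $\langle x\rangle\sim|x|$ here, this gives $\langle x\rangle^{-k-1}|x|^{-b-1}$, and likewise $\langle x\rangle^{-k-1}(1+|\log|x||)^{-1}$ when $b=-1$, because any power of $|x|$ dominates the logarithm.

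\emph{Small $|x|$, case $b>-1$.} For $0<|x|<1$ we split with a smooth cutoff at $\lambda\sim|x|^{-1}$. On $\lambda\lesssim|x|^{-1}$ we estimate trivially by $\int_{\lambda_0/2}^{|x|^{-1}}\lambda^b\,d\lambda\lesssim |x|^{-b-1}$. On $\lambda\gtrsim|x|^{-1}$ we integrate by parts $[b]+2$ times. Each step costs $|x|^{-1}$ and gains $\lambda^{-1}$, either from the symbol bounds or from derivatives of the cutoff, which are of size $|x|$ on its transition region. This gives
\[
|x|^{-([b]+2)}\int_{|x|^{-1}}^\infty \lambda^{b-[b]-2}\,d\lambda\sim |x|^{-b-1}.
\]
The number of derivatives here is exactly the order $[b]+2$ assumed in the lemma, which is why that order appears. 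Since $\langle x\rangle\sim1$, this is \eqref{eq.oscillatory-est-h}.

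\emph{Small $|x|$, case $b\le -1$.} When $b<-1$ the integral converges absolutely. The same cutoff decomposition shows that the piece $\lambda\gtrsim|x|^{-1}$ contributes $O(|x|^{-b-1})$, so $I(x)$ is $I(0)$ plus $O(|x|^{-b-1})$, with the main contribution coming from $\lambda\lesssim |x|^{-1}$. When $b=-1$, the piece $\lambda\in[\lambda_0/2,|x|^{-1}]$ has size $\int\lambda^{-1}\,d\lambda\sim\log|x|^{-1}$, while the tail is $O(1)$ after one integration by parts.

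\textbf{Main obstacle.} The delicate step is the small-$|x|$ regime with $b\le-1$. Read literally, the right side of \eqref{eq.oscillatory-est-h} tends to $0$ as $x\to0$ in that range. However, for $b<-1$ the limit $I(0)=\int m\,\tilde\chi\,d\lambda$ is generically nonzero, and for $b=-1$ the computation above gives $|I(x)|\sim\log|x|^{-1}$, which grows. So the literal $|x|\to0$ bound cannot hold in general, and what the argument actually proves near $0$ is $O(1)$ for $b<-1$ and $O(1+|\log|x||)$ for $b=-1$. Before using the estimate for $b\le-1$, one must therefore check that it is only invoked for $|x|\gtrsim1$, where $\langle x\rangle\sim|x|$ and the large-$|x|$ proof gives exactly \eqref{eq.oscillatory-est-h}. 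Near the origin the estimate is correct and sharp only for $b>-1$.
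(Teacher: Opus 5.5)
Your argument is correct and has the same skeleton as the paper's proof: cut the $\lambda$-integral at a frequency scale, bound the low piece by its support, and integrate by parts on the high piece. You cut at a different scale, and yours is the one that actually works. The paper copies the cut $\lambda\sim\langle x\rangle^{-1}$ from the low-energy Lemma~\ref{lem-oscillatory estimate} and omits the details. Against $\tilde\chi(\lambda/\lambda_0)$ this is harmless for large $|x|$, where the low piece simply vanishes. For $|x|<1$, however, it cuts at $\lambda\sim 1$, and integrating by parts over $\lambda\gtrsim 1$ then gives only $|x|^{-[b]-2}$, which is strictly weaker than $|x|^{-b-1}$ as $x\to 0$. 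Your cut at $\lambda\sim|x|^{-1}$ produces the sharp rate, and that rate is genuinely needed later. With $|x|^{-[b]-2}$ in its place, the factors $|\rho|^{-1}$ (for $\mathcal{T}_1^{\pm}$) and $|\rho|^{-1/2}$ (for $\mathcal{L}_1$, $\mathcal{L}_2$, $\mathcal{W}^{\mathrm{H}}_1$) would become non-integrable.

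Your caveats are also correct. For $b\le -1$ the displayed bound fails as $x\to0$: $m=\lambda^{-2}$ gives $I(0)\neq 0$, and $m=\lambda^{-1}$ gives $|I(x)|\sim\log|x|^{-1}$. The paper does use the case $b=-1$ near $\rho=0$ (for $\mathcal{T}_2^{\pm}$ and $\mathcal{T}_3^{\pm}$), but in the form $(1+|\log|\rho||)\langle\rho\rangle^{-2}$, with the logarithm in the numerator. That is exactly what your small-$|x|$ computation yields. So the right fix is not to restrict to $|x|\gtrsim 1$, but to state the bounds $\langle x\rangle^{-k-1}(1+|\log|x||)$ for $b=-1$ and $\langle x\rangle^{-k-1}$ for $b<-1$. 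Likewise, the decay $\langle x\rangle^{-k-1}$ needs symbol bounds of order about $k+b+2$. This exceeds the assumed order $[b]+2$ already at $k=0$ when $b\notin\mathbb{Z}$, not only for large $k$. It is harmless, since every amplitude to which the lemma is applied satisfies bounds of all orders.
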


\begin{lemma}\label{eq-fourier-mulitiplier-1} 
	Let $0<\lambda_0<1$ and $a>0$ be fixed, and assume that 
	$m(\lambda)=\mathcal{O}_{[\tfrac{n+1}{2}]+1}\big(|\log\lambda|^{-a}\big)$ 
	for $\lambda\in (0,\lambda_0)$. Then the Fourier multiplier 
	$m(\sqrt{-\Delta})\chi\big(\sqrt{-\Delta}/\lambda_0\big)$ is bounded on 
	$\mathcal{H}^1(\mathbb{R}^n)$, $\mathrm{BMO}(\mathbb{R}^n)$, and 
	$L^p(\mathbb{R}^n)$ for all $1\le p\le \infty$.
\end{lemma}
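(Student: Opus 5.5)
The plan is to show that the convolution kernel $K=\mathcal F^{-1}\bigl[m(|\xi|)\chi(|\xi|/\lambda_0)\bigr]$ lies in $L^1(\R^n)$. Once that is done, every claimed bound follows at once. Convolution with an $L^1$ kernel is bounded on $L^p$ for $1\le p\le\infty$. It is also bounded on $\mathcal H^1$, because it commutes with the Riesz transforms: $\|R_j(K*f)\|_1=\|K*R_jf\|_1\le\|K\|_1\|R_jf\|_1$. Boundedness on $\mathrm{BMO}$ then follows by duality, since the adjoint multiplier $\overline{m}$ satisfies the same hypotheses. So everything reduces to a pointwise bound on $K$.

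We write $\Phi(\lambda)=m(\lambda)\chi(\lambda/\lambda_0)$, which is supported in $[0,\lambda_0]$, and use the radial Fourier inversion formula
\[
K(x)=c_n\int_0^\infty \Phi(\lambda)\,\lambda^{n-1}(\lambda|x|)^{-\frac{n-2}{2}}J_{\frac{n-2}{2}}(\lambda|x|)\,d\lambda .
\]
\textbf{Bounded region.} For $|x|\le 1$ the Bessel factor is bounded and $\Phi$ is bounded, so $|K(x)|\lesssim 1$.

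\textbf{Large $|x|$.} Here we use a dyadic decomposition in $\lambda$, which handles the logarithmic symbol. We write $\Phi=\sum_{j\ge j_0}\Phi_j$ with $\Phi_j$ supported where $\lambda\sim 2^{-j}$. On each piece the symbol estimates give $|\partial^k\Phi_j|\lesssim j^{-a}2^{jk}$ for $k\le N:=[\frac{n+1}{2}]+1$. The corresponding kernel $K_j(x)=2^{-jn}\check\phi_j(2^{-j}x)$ has a rescaled profile $\phi_j$, uniformly smooth up to order $N$ with size $j^{-a}$. Integrating by parts $N$ times (in the radial variable, using the Bessel asymptotics, or directly in $\xi$ since $\Phi_j(|\xi|)$ is smooth away from the origin) gives
\[
|K_j(x)|\lesssim j^{-a}\,2^{-jn}\,\min\bigl(1,(2^{-j}|x|)^{-N}\bigr).
\]
Since $N>\frac{n}{2}$ only gives $L^2$-type control, I would instead use $N$ derivatives in the sharper form $\langle 2^{-j}x\rangle^{-N}$ with $N\ge\frac{n+1}{2}$. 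Applied to the full oscillatory tail, the Bessel asymptotics give the extra decay $|x|^{-\frac{n-1}{2}}$, which I would combine with $\frac{n+1}{2}$ further integrations by parts to reach $\langle 2^{-j}x\rangle^{-n-}$ decay. This is the delicate point, and I expect it to be the main obstacle.

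\textbf{Summation.} Each $K_j$ has $\|K_j\|_1\lesssim j^{-a}$, and these norms are not summable when $a\le1$. So I would not sum the $L^1$ norms. Instead, for fixed $x$ with $|x|\sim 2^{k}$ I would sum pointwise. The terms with $j\ge k$ contribute $\sum_{j\ge k}j^{-a}2^{-jn}\lesssim k^{-a}2^{-kn}$. For the terms with $j<k$, the cancellation from integrating by parts brings in the derivative of the symbol, and each derivative of $|\log\lambda|^{-a}$ gains a factor $(\lambda|\log\lambda|)^{-1}$. This gives
\[
|K(x)|\lesssim |x|^{-n}\bigl(\log|x|\bigr)^{-a-1},
\]
which is integrable at infinity precisely because $a>0$. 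To carry this out, I would first isolate the non-oscillatory endpoint contribution at $\lambda\to0$. Writing $m(\lambda)=\int_\lambda^{\lambda_0}(-m')+m(\lambda_0)$ and using $|m'(\lambda)|\lesssim\lambda^{-1}|\log\lambda|^{-a-1}$ turns $K$ into a superposition $\int_0^{\lambda_0}m'(s)\,\mathcal F^{-1}[\mathbf 1_{|\xi|<s}\cdot\text{smooth}]\,ds$ of smoothed ball multipliers at scale $s$. Each such kernel is $s^n\psi(sx)$ with $|\psi(y)|\lesssim\langle y\rangle^{-\frac{n+1}{2}}$ after smoothing, which I would upgrade to $\langle y\rangle^{-n-1}$ using the smooth cutoff $\chi$ in place of the sharp ball. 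Integrating in $s$ then yields
\[
\int_0^{\lambda_0} s^{-1}|\log s|^{-a-1}s^n\langle s|x|\rangle^{-n-1}\,ds\sim |x|^{-n}(\log|x|)^{-a-1},
\]
which gives the claim. The regularity order $[\frac{n+1}{2}]+1$ is exactly what is needed to justify this smoothing and the integration by parts.
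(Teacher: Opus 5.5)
\textbf{The gap.} It is in the summation step. There you use $|m'(\lambda)|\lesssim\lambda^{-1}|\log\lambda|^{-a-1}$, i.e.\ that ``each derivative of $|\log\lambda|^{-a}$ gains $(\lambda|\log\lambda|)^{-1}$.'' This holds for the model symbol $|\log\lambda|^{-a}$, but the hypothesis does not give it. The assumption $m=\mathcal{O}_N(|\log\lambda|^{-a})$ only provides $|m^{(k)}(\lambda)|\lesssim\lambda^{-k}|\log\lambda|^{-a}$, with no extra logarithm when $k\ge1$.

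Without that gain, your dyadic scheme yields only $|K(x)|\lesssim|x|^{-n}(\log|x|)^{-a}$ for $|x|\gg1$. This is exactly the paper's bound: it splits $J_{\frac n2-1}$ into two half-waves and applies Lemma~\ref{lem-oscillatory estimate} with $b=\frac{n-1}{2}$. Such a bound is integrable at infinity only when $a>1$.

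The loss cannot be repaired, because the statement fails for $0<a\le1$. Take $m(\lambda)=\lambda^{i}|\log\lambda|^{-a}$, or its real part $|\log\lambda|^{-a}\cos(\log\lambda)$. It satisfies the hypothesis to every order, since $(\lambda\partial_\lambda)^k m=\mathcal{O}(|\log\lambda|^{-a})$. Rescale $\xi=\eta/|x|$ and let $|x|\to\infty$; the factor $(1-\log|\eta|/\log|x|)^{-a}$ tends to $1$. This gives $K(x)=c_n|x|^{-n-i}(\log|x|)^{-a}(1+o(1))$ with $c_n=2^{i}\pi^{-n/2}\Gamma(\frac{n+i}{2})/\Gamma(-\frac{i}{2})\neq0$, the constant in $\mathcal{F}^{-1}(|\xi|^{i})=c_n|x|^{-n-i}$. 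Hence $K\notin L^1$, and the multiplier is unbounded on $L^1$ for $a\le1$.

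So the lemma is valid only for $a>1$, or under the stronger hypothesis $|m^{(k)}|\lesssim\lambda^{-k}|\log\lambda|^{-a-1}$ for $k\ge1$. The paper's own argument has the same limitation, since $(1+\log\langle x\rangle)^{-a}|x|^{-\frac{n-1}{2}}\langle x\rangle^{-\frac{n+1}{2}}$ is not integrable for $a\le1$. This is harmless for its application to $h_+(\lambda)^{-1}-(b_0\log\lambda)^{-1}=\mathcal{O}(|\log\lambda|^{-2})$, where $a=2$.

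\textbf{What survives.} For $a>1$ your argument is simpler than you make it. Contrary to your remark, $N=[\frac{n+1}{2}]+1>\frac n2$ derivatives do suffice for each piece. Cauchy--Schwarz against $\langle y\rangle^{-N}\in L^2$, together with Plancherel on the rescaled profile, gives $\|K_j\|_{L^1}\lesssim j^{-a}$. Since $\sum_j j^{-a}<\infty$, no pointwise summation is needed.

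Under the strengthened hypothesis, the clean route to $|x|^{-n}(\log|x|)^{-a-1}$ is the paper's proof of Lemma~\ref{lem-Fou-multiplier}. One integration by parts, via $\lambda^{\frac n2}J_{\frac n2-1}(\lambda|x|)=|x|^{-1}\partial_\lambda\bigl[\lambda^{\frac n2}J_{\frac n2}(\lambda|x|)\bigr]$, moves the derivative onto the symbol. Lemma~\ref{lem-oscillatory estimate} then applies with $a+1$.

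Your ``smoothed ball'' superposition does not work as written. The identity $m(\lambda)=m(\lambda_0)-\int_\lambda^{\lambda_0}m'(s)\,ds$ produces the sharp multipliers $\mathbf{1}_{\{|\xi|<s\}}$, whose kernels decay only like $\langle y\rangle^{-\frac{n+1}{2}}$. Replacing them by smooth cutoffs changes the identity.
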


\begin{lemma}\label{lem-Fou-multiplier}
	Let $a>0$ and $0<\lambda_0<1$ be fixed. Then
	\[
	\left|\mathcal{F}^{-1}\!\left((\log|\cdot|)^{-a}
	\chi\!\left(\tfrac{|\cdot|}{\lambda_0}\right)\right)(x)
	\right|\les_{a,n,\lambda_0}\frac{1}{|x|^n\log^{a+1}|x|},\quad |x|\gg 1.
	\]
	Accordingly, the Fourier multiplier operator 
	$\log^{-a}(\sqrt{-\Delta})\, \chi\big(\sqrt{-\Delta}/\lambda_0\big)$ is 
	bounded on $\mathcal{H}^1(\mathbb{R}^n)$, $\mathrm{BMO}(\mathbb{R}^n)$, 
	and $L^p(\mathbb{R}^n)$ for every $1\le p\le \infty$.
\end{lemma}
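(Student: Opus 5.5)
This is Lemma~\ref{lem-Fou-multiplier}. The plan is to prove the pointwise kernel bound first and then get the operator bounds from it. Let $K=\mathcal F^{-1}\bigl((\log|\cdot|)^{-a}\chi(|\cdot|/\lambda_0)\bigr)$. The symbol is radial, so
\[
K(x)=c_n|x|^{-(n-2)/2}\int_0^\infty m(\lambda)\chi(\lambda/\lambda_0)J_{(n-2)/2}(\lambda|x|)\lambda^{n/2}\,d\lambda,\qquad m(\lambda)=(\log\lambda)^{-a}.
\]
On $(0,\lambda_0)$ we have $\lambda<1$, so $\log\lambda<0$. We read $(\log\lambda)^{-a}$ as $|\log\lambda|^{-a}$ up to a unimodular constant, which does not affect any bound.

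\textbf{Step 1 (scaling out $|x|$).} Change variables $\lambda=s/|x|$ to get
\[
K(x)=c_n|x|^{-n}\int_0^\infty m(s/|x|)\chi\bigl(s/(\lambda_0|x|)\bigr)J_\nu(s)s^{n/2}\,ds,\qquad \nu=\tfrac{n-2}{2},
\]
and write $F(s)=J_\nu(s)s^{n/2}$. The key identity is $s^{n/2}J_\nu(s)=\frac{d}{ds}\bigl(s^{n/2}J_{n/2}(s)\bigr)$. Call $G(s)=s^{n/2}J_{n/2}(s)$. Then $G(0)=0$, and $G$ is oscillatory with $|G(s)|\lesssim s^{(n-1)/2}$ for large $s$.

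\textbf{Step 2 (integration by parts).} Integrate by parts once, moving the derivative onto $\phi(s)=m(s/|x|)\chi(s/(\lambda_0|x|))$. There is no boundary term. The derivative is
\[
\phi'(s)=|x|^{-1}m'(s/|x|)\chi(\cdot)+\text{cutoff terms},\qquad m'(\lambda)=a\lambda^{-1}|\log\lambda|^{-a-1}.
\]
So the principal part is $\int G(s)\,s^{-1}\bigl|\log(s/|x|)\bigr|^{-a-1}\chi\,ds$, which gains one logarithm. The cutoff terms are supported where $s\sim|x|$; there $m$ is bounded and smooth.

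\textbf{Step 3 (estimating the pieces).} Split at $s=1$ and at $s=\sqrt{|x|}$.
\begin{itemize}
\item For $s\le 1$: $|G(s)|\lesssim s^n$, and $|\log(s/|x|)|\ge\log|x|$. This part contributes $O(\log^{-a-1}|x|)$.
\item For $1\le s\le\sqrt{|x|}$: here $\log(s/|x|)\sim\log|x|$. Write $J_{n/2}$ as $s^{-1/2}e^{\pm is}$ times a symbol. Repeated integration by parts in $s$ then gains powers of $s^{-1}$. Each derivative of $|\log(s/|x|)|^{-a-1}$ costs $s^{-1}$ and another logarithm, so this region also gives $O(\log^{-a-1}|x|)$.
\item For $s\ge\sqrt{|x|}$, up to $s\sim\lambda_0|x|$: integrate by parts $N$ times using the oscillation $e^{\pm is}$. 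The amplitude is a symbol of order $(n-1)/2-1$ in $s$, apart from the factor $|\log(s/|x|)|^{-a-1}$. That factor is singular only near $s=|x|$, which the cutoff excludes since $\lambda_0<1$. The resulting bound is $O(|x|^{-M})$.
\end{itemize}
Summing the three regions gives $|K(x)|\lesssim|x|^{-n}\log^{-a-1}|x|$.

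I expect the main obstacle to be making the middle region rigorous without losing logarithms. My first attempt would handle it with a smooth dyadic decomposition in $s$: on each dyadic shell, integrate by parts in $s$ against the oscillation and sum a geometric series.

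\textbf{Step 4 (operator bounds).} For $|x|\lesssim 1$, $K$ is bounded, since the symbol is integrable. Combined with Step 3, this gives $K\in L^1(\mathbb R^n)$, because $\int^\infty r^{-1}\log^{-a-1}r\,dr<\infty$ when $a>0$. Hence the multiplier is bounded on $L^p$ for $1\le p\le\infty$. Convolution with an $L^1$ kernel also preserves $\mathcal H^1$ (it commutes with Riesz transforms) and, by duality, $\BMO$. Equivalently, these bounds follow from Lemma~\ref{eq-fourier-mulitiplier-1}, because $(\log\lambda)^{-a}$ satisfies the symbol hypotheses of that lemma.
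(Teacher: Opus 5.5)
Your proof is correct and is essentially the paper's argument. It uses the same Bessel-recurrence integration by parts, which turns $(\log\lambda)^{-a}$ into $a\lambda^{-1}|\log\lambda|^{-a-1}$, followed by an oscillatory estimate split at $\lambda\sim|x|^{-1}$. The differences are minor:
\begin{itemize}
\item You carry out that oscillatory estimate by hand in $s=\lambda|x|$ instead of quoting Lemma~\ref{lem-oscillatory estimate}. With smooth cutoffs at $s\sim 1$ and $s\sim\sqrt{|x|}$ the middle-region amplitude satisfies $|\partial_s^k(\cdot)|\lesssim s^{(n-3)/2-k}\log^{-a-1}|x|$, so direct integration by parts suffices and no dyadic decomposition is needed.
\item Your argument also covers $n=1$ without the paper's separate treatment.
\end{itemize}

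Do not rely on your closing remark that the operator bounds follow from Lemma~\ref{eq-fourier-mulitiplier-1}. Its symbol hypotheses only give kernel decay $|x|^{-n}\log^{-a}|x|$, which is not integrable for $0<a\le 1$. It is precisely the extra logarithm from your integration by parts that puts the kernel in $L^1$.
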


For an operator $T$, we write $T(x,y)$ for its integral kernel. Such a 
kernel is called \emph{admissible} if
\begin{equation*}
	\sup_{x\in\mathbb{R}^n}\int_{\mathbb{R}^n}|T(x,y)|\,dy
	+\sup_{y\in\mathbb{R}^n}\int_{\mathbb{R}^n}|T(x,y)|\,dx<\infty.
\end{equation*}
The classical Schur lemma then reads as follows.

\begin{lemma}[Schur's lemma]\label{Schur Lemma}
	If $T$ has an admissible kernel, then it is bounded on 
	$L^p(\mathbb{R}^n)$ for every $1\le p\le\infty$, with
	\[
	\|T\|_{L^p\to L^p}\le \sup_{x\in\mathbb{R}^n}
	\int_{\mathbb{R}^n}|T(x,y)|\,dy+\sup_{y\in\mathbb{R}^n}
	\int_{\mathbb{R}^n}|T(x,y)|\,dx.
	\]
\end{lemma}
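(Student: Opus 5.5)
The plan is to derive the statement by duality and interpolation from the two Schur sums, treating them as operator norms on the endpoint spaces. Write $A:=\sup_x\int|T(x,y)|\,dy$ and $B:=\sup_y\int|T(x,y)|\,dx$. The proof has three steps.

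First, the $L^\infty$ endpoint. For $f\in L^\infty$ and every $x$, the pointwise estimate
\[
|Tf(x)|\le\int|T(x,y)||f(y)|\,dy\le A\|f\|_{L^\infty}
\]
holds. This gives $\|T\|_{L^\infty\to L^\infty}\le A$.

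Second, the $L^1$ endpoint. For $f\in L^1$, Tonelli's theorem gives
\[
\int|Tf(x)|\,dx\le\int|f(y)|\int|T(x,y)|\,dx\,dy\le B\|f\|_{L^1}.
\]
This gives $\|T\|_{L^1\to L^1}\le B$. In both endpoint cases the absolute integrability, which follows from admissibility, is what justifies defining $Tf$ by the absolutely convergent integral.

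Third, the range $1<p<\infty$. I will not interpolate abstractly, so as to avoid constant bookkeeping; instead I will apply Hölder's inequality directly. Split $|T|=|T|^{1/p'}|T|^{1/p}$ and apply Hölder in $y$:
\[
|Tf(x)|\le\Bigl(\int|T(x,y)|\,dy\Bigr)^{1/p'}\Bigl(\int|T(x,y)||f(y)|^p\,dy\Bigr)^{1/p}.
\]
Next raise this to the $p$-th power, integrate in $x$, and use Tonelli again. This yields
\[
\|Tf\|_p\le A^{1/p'}B^{1/p}\|f\|_p.
\]
By the weighted AM–GM inequality, $A^{1/p'}B^{1/p}\le A/p'+B/p\le A+B$. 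This is exactly the bound claimed.

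There is no real obstacle here. The only point needing care is measurability and the use of Tonelli/Fubini, which are justified by the nonnegativity of the integrands. One should also note that the stated constant $A+B$ is not sharp, the sharp one being $A^{1/p'}B^{1/p}$, but it suffices for the claim.
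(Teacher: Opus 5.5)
Your argument is correct and complete. The pointwise bound gives $\|T\|_{L^\infty\to L^\infty}\le A$, Tonelli gives $\|T\|_{L^1\to L^1}\le B$, and the splitting $|T|=|T|^{1/p'}|T|^{1/p}$ with H\"older yields $\|T\|_{L^p\to L^p}\le A^{1/p'}B^{1/p}\le A+B$ for $1<p<\infty$.

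The paper gives no proof of this lemma; it simply quotes the classical Schur test. Your direct H\"older argument is the standard one, and it produces the same constant $A^{1/p'}B^{1/p}$ that Riesz--Thorin interpolation between the two endpoint bounds would give. One small point: your opening sentence promises duality and interpolation, but the proof you actually write uses neither, which is harmless.
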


We will also need the following integral estimate.

\begin{lemma}[{\cite[Lemma 3.8]{GV06}}]\label{lemma-GV}
	Let $n\ge 1$, $k<n$ and $k+\ell>n$. Then
	\begin{equation*}
		\int_{\mathbb{R}^n}|x-y|^{-k}\langle y\rangle^{-\ell}\,dy 
		\les_{n,k,\ell}  \langle x\rangle^{-\min\{k,\,k+\ell-n\}}.
	\end{equation*}
\end{lemma}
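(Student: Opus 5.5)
The estimate follows by splitting into $|x|\le 2\langle y\rangle$ (when $\langle y\rangle$ is large) and its complement, and treating small $|x|$ separately, where only local integrability matters.

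Fix $x\in\R^n$ and write $I(x)=\int|x-y|^{-k}\langle y\rangle^{-\ell}\,dy$.

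\emph{Case $|x|\le 2$.} Split at $|x-y|\le 1$. On $\{|x-y|\le 1\}$ we have $\langle y\rangle^{-\ell}\lesssim 1$, because $\langle y\rangle\sim 1$ there. Since $k<n$, the integral of $|x-y|^{-k}$ over the unit ball is finite. On $\{|x-y|>1\}$ we have $|x-y|\gtrsim\langle y\rangle$, since $|x|\le 2$. The integrand is then $\lesssim\langle y\rangle^{-k-\ell}$, which is integrable because $k+\ell>n$. Hence $I(x)\lesssim 1\sim\langle x\rangle^{-\min\{k,k+\ell-n\}}$.

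\emph{Case $|x|>2$.} Decompose $\R^n$ into three regions.

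On $A=\{|y|\le |x|/2\}$ we have $|x-y|\sim|x|$, so the contribution is at most
\[
|x|^{-k}\int_{|y|\le|x|/2}\langle y\rangle^{-\ell}\,dy .
\]
This integral is $\lesssim 1$ if $\ell>n$. It is $\lesssim\log|x|$ if $\ell=n$, and $\lesssim|x|^{n-\ell}$ if $\ell<n$.

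On $B=\{|x-y|\le|x|/2\}$ we have $\langle y\rangle\sim|x|$, so the contribution is at most
\[
|x|^{-\ell}\int_{|z|\le|x|/2}|z|^{-k}\,dz\sim|x|^{n-k-\ell}.
\]

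On the remaining region $C$ we have $|y|>|x|/2$ and $|x-y|>|x|/2$. Also $|x-y|\le|x|+|y|\le 3|y|$, so $|x-y|\lesssim|y|$. Together these give $|x-y|\gtrsim\langle y\rangle$ (using $|x|>2$), so the integrand is $\lesssim|y|^{-k-\ell}$. Therefore the contribution is
\[
\lesssim\int_{|y|>|x|/2}|y|^{-k-\ell}\,dy\sim|x|^{n-k-\ell},
\]
again using $k+\ell>n$.

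\emph{Collecting the cases.} If $\ell<n$, every term is $\lesssim|x|^{n-k-\ell}$. Here $k+\ell-n<k$, so this equals $\langle x\rangle^{-\min\{k,k+\ell-n\}}$. If $\ell>n$, the bound is $|x|^{-k}+|x|^{n-k-\ell}\lesssim|x|^{-k}$, and $\min\{k,k+\ell-n\}=k$.

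\emph{The obstacle.} The boundary case $\ell=n$ is the one delicate point: region $A$ produces $|x|^{-k}\log|x|$, whereas the claimed bound is $\langle x\rangle^{-k}$. I would not try to remove this logarithm. The stated estimate is genuinely false in this case, since the contribution of $A$ really is $\sim|x|^{-k}\log|x|$. The lemma must therefore be read with $\ell\ne n$ (as in the cited source), or with an $\epsilon$-loss in the exponent when $\ell=n$. In this paper's applications $\ell$ can always be taken different from $n$, because the decay exponent of $V$ can be lowered slightly.
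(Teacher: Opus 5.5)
Your proof is correct. The paper gives no argument of its own for this lemma; it simply imports the bound from \cite{GV06}, and your three-region decomposition is the standard proof behind such estimates.

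You are also right about the borderline case. If $\ell=n$, which forces $0<k<n$, the region $|y|\le|x|/2$ alone gives
\[
\int_{\mathbb{R}^n}|x-y|^{-k}\langle y\rangle^{-n}\,dy\ \ge\ \Bigl(\tfrac{3|x|}{2}\Bigr)^{-k}\int_{|y|\le|x|/2}\langle y\rangle^{-n}\,dy\ \gtrsim\ |x|^{-k}\log|x| ,\qquad |x|\gg1,
\]
so the lemma as printed needs the extra hypothesis $\ell\neq n$. This does not affect the paper, because every application has $\ell>n$ strictly, thanks to the ``$+$'' in the decay assumptions. Examples are $n=2$, $\ell=2+$ in Lemma~\ref{lem:est-vR-0} and Corollary~\ref{lem:Taylor-scheme}, and $n=1$, $\ell=2-$ or $\tfrac32+$ in the Schur bounds for $\mathcal{N}_0$ and $\mathcal{N}_2$. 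It is this strict inequality, not any lowering of the decay exponent, that keeps the applications safe. Pushing $\ell$ below $n$ would only give the weaker exponent $k+\ell-n$.

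Two small repairs are needed.

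\emph{Region $C$.} The lower bound $|x-y|\gtrsim\langle y\rangle$ does not follow from $|x-y|\lesssim|y|$. Argue instead by cases: if $|y|\ge2|x|$, then $|x-y|\ge|y|-|x|\ge|y|/2$; if $|y|<2|x|$, then $|x-y|>|x|/2>|y|/4$.

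\emph{Negative $k$.} The hypotheses allow $k\le 0$, and the paper uses this case. The bound $\|v\,|\cdot-y|^{3/2}\|_{L^2}\lesssim\langle y\rangle^{3/2}$ in the proof of Lemma~\ref{lem:est-vR-0} is the case $k=-3$, and $k=-\tfrac12$ appears in the Schur bounds. For negative $k$ you need the upper comparisons: $|x-y|\lesssim\langle y\rangle$ in the case $|x|\le2$, $|x-y|>1$, and $|x-y|\lesssim|y|$ in region $C$. Both are immediate, so state all comparabilities two-sidedly and your argument then covers the full range $k<n$.
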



\section{High-energy estimates}\label{sec:High-bound}

In this section we prove Proposition~\ref{pro:bound for high}, namely, that the high-energy component $\mathcal{W}^{\mathrm{H}}$ is bounded on $L^p(\mathbb{R}^2)$ for the full range $1\le p\le \infty$.

By the stationary representation formula \eqref{eq-stationary formula},
\[
\mathcal{W}^{\mathrm{H}} = \frac{1}{\pi i}\int_0^{+\infty}\lambda R^{+}(\lambda^{2})V\big(R_0^+(\lambda^{2})-R_0^-(\lambda^{2})\big) \tilde{\chi}(\lambda/\lambda_0) \, d\lambda.
\]
Inserting the resolvent identity
\[
R^{+}(\lambda^{2})=R_0^{+}(\lambda^{2})-R_0^{+}(\lambda^{2})VR^{+}(\lambda^{2})+R_0^{+}(\lambda^{2})VR_0^{+}(\lambda^{2})VR^{+}(\lambda^{2})
\]
into this formula yields the decomposition $\mathcal{W}^{\mathrm{H}} = \mathcal{W}^{\mathrm{H}}_0 - \mathcal{W}^{\mathrm{H}}_1 + \mathcal{W}^{\mathrm{H}}_2$, where
\begin{align*}
	\mathcal{W}^{\mathrm{H}}_0 &:= \frac{1}{\pi i}\int_0^{+\infty}\lambda R_0^{+}(\lambda^{2})V\big(R_0^+(\lambda^{2})-R_0^-(\lambda^{2})\big) \tilde{\chi}(\lambda/\lambda_0) \, d\lambda, \\
	\mathcal{W}^{\mathrm{H}}_1 &:= \frac{1}{\pi i}\int_0^{+\infty}\lambda R_0^{+}(\lambda^{2})VR_0^{+}(\lambda^{2})V\big(R_0^+(\lambda^{2})-R_0^-(\lambda^{2})\big) \tilde{\chi}(\lambda/\lambda_0) \, d\lambda, \\
	\mathcal{W}^{\mathrm{H}}_2 &:= \frac{1}{\pi i}\int_0^{+\infty}\lambda R_0^{+}(\lambda^{2})VR_0^{+}(\lambda^{2})VR^{+}(\lambda^{2})V\big(R_0^+(\lambda^{2})-R_0^-(\lambda^{2})\big) \tilde{\chi}(\lambda/\lambda_0) \, d\lambda.
\end{align*}

It was shown by Yajima \cite[Proposition~3.1]{Yajima-99-CMP} that $\mathcal{W}^{\mathrm{H}}_2$ is bounded on $L^p(\mathbb{R}^2)$ for all $1 \leq p \leq \infty$ whenever $|V(x)| \lesssim \langle x \rangle^{-\beta}$ with $\beta > \frac{7}{2}$. Proposition~\ref{pro:bound for high} therefore follows from the corresponding bounds for $\mathcal{W}^{\mathrm{H}}_0$ and $\mathcal{W}^{\mathrm{H}}_1$, which are the content of Propositions~\ref{lem:boundedness of W-1} and~\ref{lem:boundedness of W-2} below.

\begin{proposition}\label{lem:boundedness of W-1}
	Suppose that $|V(x)| \lesssim \langle x \rangle^{-\beta}$ with $\beta > \frac{7}{2}$. Then the operator $\mathcal{W}^{\mathrm{H}}_0$ is bounded on $L^p(\mathbb{R}^2)$ for all $1\le p\le \infty$.
\end{proposition}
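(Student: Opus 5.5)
We follow the strategy announced in Subsection~\ref{sec:outline}. First we split the potential as
\[
V=(V-V_0)+V_0,\qquad V_0(x)=\frac1\pi\Big(\int_{\R^2}V\,dx\Big)e^{-|x|^2}.
\]
This splits $\mathcal W^{\mathrm H}_0$ accordingly, since the operator is linear in $V$. The function $V-V_0$ has zero mean and still satisfies $|V-V_0|\les\langle x\rangle^{-\beta}$. For this part we invoke Yajima's argument from \cite{Yajima-99-CMP}, which is carried out in frequency space. There the kernel of $\mathcal W^{\mathrm H}_0$ is written through $\widehat V(\xi-\eta)$ times a multiplier in $|\xi|^2-|\eta|^2$. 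The zero-mean condition $\widehat V(0)=0$ removes the Hilbert-transform-type singularity on the diagonal, which leaves an integrable kernel after Fourier transform. We then check that this yields an admissible kernel in the sense of Lemma~\ref{Schur Lemma}, hence bounds for all $1\le p\le\infty$. It remains to treat the $V_0$-part.

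For the $V_0$-part we insert $R_0^+(\lambda^2)(x,z)=\frac i4 e^{i\lambda|x-z|}\Theta^+(\lambda|x-z|)$ and $\frac i2J_0=\frac i4\big(e^{i\lambda|z-y|}\Theta^++e^{-i\lambda|z-y|}\Theta^-\big)$, using \eqref{eq:asy-Hanel-1}. This produces oscillatory integrals with phase $\lambda(|x-z|\mp|z-y|)$. The $+$ phase $|x-z|+|z-y|\ge|x-y|$ is nondegenerate. We integrate by parts in $\lambda$ using Lemma~\ref{lem-oscillatory estimate-h} with $b=0$ (the factor $\lambda\cdot\lambda^{-1/2}\lambda^{-1/2}$), and then integrate the resulting bound against $V_0(z)$ using Lemma~\ref{lemma-GV}. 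This gives a kernel bounded by roughly $\langle x-y\rangle^{-3}$ away from the diagonal, with a mild singularity near it, which is admissible. Near $|x-z|\les\lambda^{-1}$ or $|z-y|\les\lambda^{-1}$ we use the small-argument expansion \eqref{eq:asy-Hanel-0} (only relevant for $\lambda\sim\lambda_0$) and handle it crudely.

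The main obstacle is the $-$ phase $\rho=|x-z|-|z-y|$, where a naive bound gives $|\rho|^{-1}$. We localize near $\rho=0$, which is the hyperbola region, and change variables $z\mapsto(\rho,s)$, with $\rho\in(-|x-y|,|x-y|)$ labelling the confocal hyperbolas with foci $x,y$ and $s$ a coordinate along each branch. The $\lambda$-integral becomes $\int e^{i\lambda\rho}F(\rho,\lambda)\,d\rho$, where
\[
F(\rho,\lambda)=\int V_0(z)\,\Theta^+(\lambda|x-z|)\,\Theta^-(\lambda|z-y|)\,J(\rho,s)\,ds
\]
and $J$ is the Jacobian. Because $V_0$ is a Gaussian, $F$ is smooth in $\rho$. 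We then decompose dyadically in $|\rho|$ relative to $\lambda^{-1}$. On $|\rho|\les\lambda^{-1}$ we simply bound the integral. On $|\rho|\gg\lambda^{-1}$ we integrate by parts in $\rho$, equivalently in the spatial variable $z$ along $\nabla\rho$, gaining powers of $(\lambda|\rho|)^{-1}$. The effect is that the contribution of $\int d\lambda$ turns into an absolutely integrable quantity in place of the $1/\rho$ principal value.

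The delicate points are the degeneration of the Jacobian near the foci and near the segment $[x,y]$, and uniformity in $|x-y|$. We handle the degeneration by cutting out balls of radius $\sim1$ around $x$ and $y$, where the crude bound applies, and by using the Gaussian decay of $V_0$ to restrict to $|z|\les\langle x\rangle^{\epsilon}$ effectively. The goal is a bound of the form $|K_0(x,y)|\les|x-y|^{-1}\mathbf 1_{|x-y|\le1}+\langle x\rangle^{-3/2}\langle y\rangle^{-3/2}\langle x-y\rangle^{-1/2-}$ or similar. This is admissible after Lemma~\ref{lemma-GV}, and Lemma~\ref{Schur Lemma} then concludes the proof for all $1\le p\le\infty$.
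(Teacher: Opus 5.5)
Your architecture matches the paper's: the Gaussian splitting $V=(V-V_0)+V_0$, Yajima's zero-mean argument for $V-V_0$, the trivial bound for $\rho_+\ge|x-y|$, and hyperbolic coordinates for $\rho_-$. There are, however, two genuine problems in the part that matters.

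\textbf{The $\rho_-$ estimate near $\rho=0$ does not close.} This is the one new estimate, and it fails as you set it up. After the change of variables, the leading amplitude is (up to constants) $G(\rho,\theta)=V_0\,\tau\sec\theta$. It is independent of $\lambda$ and nonzero at $\rho=0$.

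Suppose you cut with $\psi(\lambda\rho)$ at $|\rho|\sim\lambda^{-1}$. For each $\theta$, the inner piece is $\lambda^{-1}G(0,\theta)\int_{\mathbb R}e^{it}\psi(t)\,dt+o(\lambda^{-1})$. Its $\lambda$-integral over $[\lambda_0/2,\infty)$ therefore diverges logarithmically, certainly once absolute values are taken. On the outer piece, integrating by parts creates the term with $\partial_\rho\psi(\lambda\rho)$, and that term is exactly the opposite of the inner piece. Finiteness comes only from this cancellation: the principal value of $1/\rho$ acting on a $G$ that is smooth across $\rho=0$. A cut along $|\rho|\sim\lambda^{-1}$ followed by absolute values destroys it.

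Moreover, a gain of $(\lambda|\rho|)^{-1}$ per integration by parts presumes that $G$ varies on the scale $|\rho|$. In fact $\partial_\rho[V_0(u(\rho,\theta)+\frac{x+y}{2})]$ is of size $\sec\theta$ (the first component of $\partial_\rho u$ is $\frac12\sec\theta$), and $|\partial_\rho\tau|\lesssim\tau/r$. So one integration by parts gains only $(\sec\theta+r^{-1})/\lambda$, which is useless for $\lambda\lesssim\sec\theta$.

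The missing idea is the paper's split of the \emph{frequency} variable at the $\rho$-independent threshold $\lambda\sim\sec\theta$:
\begin{itemize}
\item For $\lambda\lesssim\sec\theta$ there is no integration by parts. Lemma~\ref{lem-oscillatory estimate-h} bounds the $\lambda$-integral by $(\sec\theta)^{1/2}|\rho|^{-1/2}$.
\item For $\lambda\gtrsim\sec\theta$ one integrates by parts once in $\rho$ over the whole compactly supported, $\lambda$-independent $\rho$-range, so no boundary or cutoff terms appear. The remaining $\lambda^{-1}$-amplitude integral is bounded by $(\sec\theta)^{-1/2}|\rho|^{-1/2}$.
\end{itemize}
The two halves balance and give the kernel $\mathfrak L$ of Lemma~\ref{lem:admi of main term in high}, with the integrable singularity $|\rho|^{-1/2}$.

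The paper also does two things you would need. It removes $\{|\rho|\gtrsim r\}$ beforehand; that region, not the segment $[x,y]$, is where the coordinates \eqref{eq:tranform-to-hyper} degenerate ($\sqrt{r^2-\rho^2}\to0$), and there $|\rho|^{-1}\lesssim r^{-1}$ already suffices. It also isolates the leading Hankel term $\mathcal T_1^-$, whose $\lambda$-amplitude is constant, from $\mathcal T_2^\pm,\mathcal T_3^\pm,\mathcal T_4^\pm$, which carry an extra $\lambda^{-1}$ and are admissible directly.

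\textbf{The bound you aim for is false.} No refinement of the argument can reach it. Take $|x|=|y|=R\gg1$ with $\hat x\perp\hat y$, so the bisector of $[x,y]$ passes through the origin. On the support of $V_0$ one has $|x-z|^{-1/2}|z-y|^{-1/2}\approx R^{-1}$ and $\rho\approx-z\cdot(\hat x-\hat y)$. The leading-order computation then gives $\mathcal W^{\mathrm H,V_0}_0(x,y)\approx c\,R^{-1}\widehat V(0)\int_0^\infty\tilde\chi(\lambda/\lambda_0)e^{-\lambda^2/2}\,d\lambda$ with $c\neq0$. That is size $R^{-1}$, whereas your bound gives $R^{-7/2}$.

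The kernel is a shell kernel. It decays rapidly off $\{\bigl||x|-|y|\bigr|\lesssim1\}$, and on $\{|x|=|y|\}$ it is roughly $\langle x\rangle^{-1/2}\langle y\rangle^{-1/2}|\hat x-\hat y|^{-1}$. Its admissibility comes from the thinness of this shell, not from decay in $\langle x\rangle$, $\langle y\rangle$ or $\langle x-y\rangle$. This is exactly what the proof of Lemma~\ref{lem:admi of main term in high} uses. It integrates in $x$ in polar coordinates about $z$; there $|\rho|\le1$ confines $|x-z|$ to $[|z-y|-1,|z-y|+1]$, and $|\rho|^{-1/2}$ is integrable in the radial variable.
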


\begin{proof}
	The starting point is the following fact, which is not explicitly stated in \cite{Yajima-99-CMP} but is essentially contained in the proof of \cite[Proposition~2.1]{Yajima-99-CMP}: under the additional assumption
	\[
	\widehat{V}(0)=\int_{\mathbb{R}^2} V(x)\,dx=0,
	\]
	the operator $\mathcal{W}^{\mathrm{H}}_0$ is $L^p$-bounded for all $1 \le p \le \infty$. To recall the argument, \cite[Lemma~2.1]{Yajima-99-CMP} gives the representation
	\[
	\mathcal{W}^{\mathrm{H}}_0f(x) = \frac{i}{4\pi} \int_{\mathbb{S}^1} d\omega \int_{0}^{\infty} K(t + 2x\cdot\omega, \omega) \big(\tilde{\chi}(\sqrt{-\Delta}/\lambda_0)f\big)(x + t\omega)\, dt,
	\]
	where $\mathbb{S}^1$ denotes the unit circle and the kernel $K(t,\omega)$ is given by
	\[
	K(t,\omega) = \int_{0}^{\infty} \widehat{V}(r\omega) e^{i t r / 2}\, dr.
	\]
	As in the proof of \cite[Proposition~2.1]{Yajima-99-CMP}, integration by parts in $r$ gives the decomposition
	\begin{equation}\label{eq-deco-for-K}
		K(t,\omega) = \chi(t)K(t,\omega) + \tilde{\chi}(t)\frac{2i\widehat{V}(0)}{t} + \tilde{\chi}(t)\frac{2i}{t}\int_{0}^{\infty} e^{i t r / 2} \frac{\partial}{\partial r}\widehat{V}(r\omega)\,dr,
	\end{equation}
	with $\chi$ and $\tilde{\chi}$ as defined in \eqref{cut-off-fun}. Since $\langle x \rangle^s V(x) \in L^2(\mathbb{R}^2)$ for some $s>1$ by the assumption $|V(x)| \lesssim \langle x \rangle^{-\beta}$ with $\beta>\frac72$, the first and third summands on the right-hand side of \eqref{eq-deco-for-K} belong to $L^1(\mathbb{R} \times \mathbb{S}^1)$, while the middle summand vanishes precisely when $\widehat{V}(0)=0$. Hence $K\in L^1(\mathbb{R}\times\mathbb{S}^1)$ in the mean-zero case, and since translations and the reflections $x \mapsto x_\omega := x - 2(x\cdot\omega)\omega$ preserve the $L^p$-norm, Minkowski's inequality yields the $L^p$-boundedness of $\mathcal{W}^{\mathrm{H}}_0$ for every $1 \leq p \leq \infty$; we refer to \cite[Proposition~2.1]{Yajima-99-CMP} for the details.
	
	To reduce the general case to the mean-zero case, we introduce the Gaussian $V_0(x) = a e^{-|x|^2}$ with the amplitude $a = \frac{1}{\pi}\widehat{V}(0)$, chosen so that
	\[
	\widehat{(V - V_0)}(0)=\int_{\mathbb{R}^2} \big(V(x) - V_0(x)\big)\,dx = 0.
	\]
	By the case just treated, the operator
	\[
	\frac{1}{\pi i} \int_0^{+\infty} \lambda R_0^{+}(\lambda^2) (V-V_0) \big( R_0^+(\lambda^2) - R_0^-(\lambda^2) \big) \tilde{\chi}(\lambda/\lambda_0) \, d\lambda
	\]
	is $L^p$-bounded for all $1 \leq p \leq \infty$, and the $L^p$-boundedness of $\mathcal{W}^{\mathrm{H}}_0$ is reduced to that of
	\[
	\mathcal{W}^{\mathrm{H},V_0}_{0} := \frac{1}{\pi i} \int_{0}^{+\infty} \lambda R_0^+(\lambda^2) V_0 \big(R_0^+(\lambda^2)-R_0^-(\lambda^2)\big) \tilde{\chi}(\lambda/\lambda_0)\, d\lambda.
	\]
	The advantage of replacing $V$ by the Gaussian $V_0$ is that the resulting kernel can be analyzed directly: the smoothness of $V_0$ allows us to integrate by parts in the spatial variables, and we will show that the kernel of $\mathcal{W}^{\mathrm{H},V_0}_{0}$ is admissible.
	
	Recall (cf. \cite{Schlag-2D-dis, Yajima-99-CMP}) that in two dimensions the integral kernels of the free resolvents $R_0^{\pm}(\lambda^2)$ are given by
	\begin{equation}
		R_0^{\pm}(\lambda^2)(x,y) = \pm\frac{i}{4} H_0^{\pm}(\lambda|x-y|),
	\end{equation}
	where $H_0^{\pm}=J_0\pm iY_0$ are the Hankel functions of order zero, with $J_0$ and $Y_0$ the Bessel functions of the first and second kind of order zero, respectively. By \cite[Chapter~9]{Abra},
	\begin{equation}\label{eq:asy-Hanel-2}
		H_0^\pm(z) = e^{\pm iz}\Theta^{\pm}(z) =  \sqrt{\frac{2}{\pi}}\, \frac{e^{\pm i(z-\pi/4)}}{z^{1/2}} + e^{\pm iz}\Theta_r^\pm(z), \qquad z\in (0,\infty),
	\end{equation}
	where the functions $\Theta^{\pm}$ and $\Theta_r^\pm$ satisfy
	\[
	\frac{d^k}{dz^k}\Theta^{\pm}(z)=\mathcal{O}\bigl(z^{-\frac12 - k}\bigr),\qquad
	\frac{d^k}{dz^k}\Theta_r^{\pm}(z)= \mathcal{O}\bigl(z^{-\frac32 - k}\bigr),\qquad k\in \mathbb{N}_0.
	\]
	Inserting the expansion \eqref{eq:asy-Hanel-2} into the definition of $\mathcal{W}^{\mathrm{H},V_0}_0$, one finds that its kernel is a finite sum of terms of the following four types:
	\begin{align*}
		\mathcal{T}_1^\pm(x,y) &:= \frac{ i c^+c^\pm}{ 16\pi }\int_{\mathbb{R}^2}\int_0^{+\infty} e^{i \lambda \rho_{\pm}} \frac{  V_0(z) }{|x-z|^{1/2} |z-y|^{1/2}} \, \tilde{\chi}(\lambda/\lambda_0)  \, d\lambda \, dz, \\[4pt]
		\mathcal{T}_2^\pm(x,y) &:= \frac{ ic^+}{ 16\pi }\int_{\mathbb{R}^2}\int_0^{+\infty}
		e^{i\lambda  \rho_{\pm}} \frac{ V_0(z)  \Theta_r^{\pm}(\lambda|z-y|)}{|x-z|^{1/2}} \, \lambda^{1/2} \,\tilde{\chi}(\lambda/\lambda_0)  \, d\lambda \, dz, \\[4pt]
		\mathcal{T}_3^\pm(x,y) &:= \frac{ i c^\pm}{ 16 \pi } \int_{\mathbb{R}^2}\int_0^{+\infty}
		e^{i \lambda \rho_{\pm}}   \frac{\Theta_r^{+}(\lambda|x-z|) V_0(z)}{  |z-y|^{1/2} } \, \lambda^{1/2} \, \tilde{\chi}(\lambda/\lambda_0)  \, d\lambda \, dz, \\[4pt]
		\mathcal{T}_4^\pm(x,y) &:= \frac{ i }{ 16 \pi }\int_{\mathbb{R}^2}\int_0^{+\infty}
		e^{i \lambda \rho_{\pm}}  V_0(z) \Theta_r^{+}(\lambda|x-z|) \Theta_r^{\pm}(\lambda|z-y|) \, \lambda \, \tilde{\chi}(\lambda/\lambda_0)  \, d\lambda \, dz,
	\end{align*}
	where $\rho_{\pm} := |x-z| \pm |z-y|$ and $c^\pm:= \sqrt{\frac{2}{\pi}} \, e^{\mp \pi i/4}$. The proposition is therefore a consequence of Lemmas~\ref{lem:boundedness of W^V_0-1} and~\ref{lem:boundedness of W^V_0-2} below.
\end{proof}

\begin{lemma}\label{lem:boundedness of W^V_0-1}
	The operators associated with the kernels $\mathcal{T}_2^\pm(x,y)$, $\mathcal{T}_3^\pm(x,y)$ and $\mathcal{T}_4^\pm(x,y)$ are bounded on $L^p(\mathbb{R}^2)$ for all $1\le p\le \infty$.
\end{lemma}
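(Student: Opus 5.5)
The plan is to show that each of the kernels $\mathcal{T}_2^\pm,\mathcal{T}_3^\pm,\mathcal{T}_4^\pm$ is admissible and then conclude with Lemma~\ref{Schur Lemma}. In every one of these terms at least one factor $\Theta_r^\pm$ is present, and it carries the improved decay $\mathcal{O}(z^{-3/2-k})$. So the $\lambda$-integrand is one half power better than in $\mathcal{T}_1^\pm$, and we can integrate by parts in $\lambda$ alone, using Lemma~\ref{lem-oscillatory estimate-h}. Hyperbolic parametrization is not needed here. Since $V_0$ is a Gaussian, the $z$-integrals cost nothing in decay.

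Take $\mathcal{T}_2^\pm$ first and fix $x,y,z$. The $\lambda$-amplitude is
\[
m(\lambda)=\lambda^{1/2}\Theta_r^\pm(\lambda|z-y|),
\]
and it obeys
\[
|\partial_\lambda^k m(\lambda)|\lesssim \lambda^{-1-k}|z-y|^{-3/2}
\]
in the regime $\lambda|z-y|\gtrsim1$. The kernel sees only $\lambda\ge\lambda_0/2$. When $|z-y|\lesssim 1/\lambda_0$ we instead use the bound $\Theta_r^\pm(w)=\mathcal{O}(w^{-3/2})$ as stated, which gives a singularity $|z-y|^{-3/2}$. This is not locally integrable in two dimensions for a pure $|\cdot|^{-3/2}$? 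It is integrable, since $3/2<2$. Thus $m=\mathcal{O}_\infty(\lambda^{-1})|z-y|^{-3/2}$ on the support of $\tilde\chi(\lambda/\lambda_0)$. Lemma~\ref{lem-oscillatory estimate-h} with $b=-1$ and any $k$ then bounds the $\lambda$-integral by
\[
\frac{|z-y|^{-3/2}}{\langle\rho_\pm\rangle^{k+1}\bigl(1+|\log|\rho_\pm||\bigr)}.
\]
The logarithmic singularity at $\rho_\pm=0$ is harmless, because it will be integrated in $z$.

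Next we bound the kernel by
\[
|\mathcal{T}_2^\pm(x,y)|\lesssim\int_{\mathbb{R}^2}\frac{e^{-|z|^2}\,(1+|\log|\rho_\pm||)}{|x-z|^{1/2}\,|z-y|^{3/2}\,\langle\rho_\pm\rangle^{2}}\,dz.
\]
We check $\sup_y\int dx$ and $\sup_x\int dy$. For $\rho_+=|x-z|+|z-y|$ we have $\langle\rho_+\rangle^{-2}\lesssim\langle x-z\rangle^{-2}$, which gives more than enough decay. For $\rho_-=|x-z|-|z-y|$, with $z$ essentially bounded and $|x|$ large, we have $|\rho_-|\ge|x|-C-|z-y|$. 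The $\langle\rho_-\rangle^{-2}$ decay then fails only in an annulus of width $\mathcal{O}(1)$ where $|x-z|\approx|z-y|$. There, integrating in $x$ over the annulus of radius $\approx|y|$ gives mass $\approx|y|$. This is compensated by the factor
\[
|x-z|^{-1/2}|z-y|^{-3/2}\approx|y|^{-2}.
\]
So the total is $\mathcal{O}(1)$ uniformly, and the log factor is integrable across the annulus. The same reasoning, via Lemma~\ref{lemma-GV}, handles the $y$-integral.

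$\mathcal{T}_3^\pm$ is symmetric, with the roles of $x$ and $y$ swapped. $\mathcal{T}_4^\pm$ is easier still: its amplitude is $\mathcal{O}_\infty(\lambda^{-2})$ times $|x-z|^{-3/2}|z-y|^{-3/2}$, so Lemma~\ref{lem-oscillatory estimate-h} with $b=-2$ gives $|\rho|\langle\rho\rangle^{-k-1}$ decay with no singularity at all.

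The main obstacle is the $\rho_-$ term for large $|x|$ and $|y|$, where the phase is stationary on a hyperbola. I will handle it as described: localize to $|\rho_-|\le1$, and pair the annulus measure $\sim|y|$ against the total power $|x-z|^{-1/2}|z-y|^{-3/2}\sim|y|^{-2}$, using the extra half power supplied by $\Theta_r$. This is exactly the half power that is missing in $\mathcal{T}_1^\pm$.
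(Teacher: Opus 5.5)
Your proposal is correct and is essentially the paper's argument. Lemma~\ref{lem-oscillatory estimate-h} with $b=-1$ gives the pointwise bound with the factor $(1+|\log|\rho_\pm||)\langle\rho_\pm\rangle^{-2}$, and the logarithm belongs in the numerator as in your second display, not the denominator as in your first. The paper then gets the Schur norms by integrating in polar coordinates centred at $z$. This makes your annulus count rigorous and uniform in $z$ through $\int_0^\infty t^{1/2}\langle t\pm s\rangle^{-2}(1+|\log|t\pm s||)\,dt\lesssim 1+s^{1/2}$ with $s=|z-y|$, so no ``essentially bounded $z$'' heuristic is needed; $\mathcal{T}_3^\pm$ and $\mathcal{T}_4^\pm$ are handled exactly as you describe.
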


\begin{proof}
	The three kernels are treated by the same argument; we give the details for $\mathcal{T}_2^\pm(x,y)$ only. For $\mathcal{T}_3^\pm$ the roles of $|x-z|$ and $|z-y|$ are interchanged, while for $\mathcal{T}_4^\pm$ the amplitude has even better decay in $\lambda$; the computation below applies verbatim in both cases.
	
	The amplitude of the $\lambda$-integral in $\mathcal{T}_2^\pm$ satisfies
	\begin{equation*}
		\Big|\partial_{\lambda}^k \big(\lambda^{1/2} \tilde{\chi}(\lambda/\lambda_0)\Theta_r^{\pm}(\lambda|z-y|) \big) \Big| \lesssim \lambda^{-1-k} |z-y|^{-3/2}, \qquad k\in \mathbb{N}_0,
	\end{equation*}
	so Lemma~\ref{lem-oscillatory estimate-h} yields the pointwise bound
	\[
	\big| \mathcal{T}_2^\pm(x,y) \big| \lesssim \int_{\mathbb{R}^2}
	\frac{|V_0(z)| \bigl(1+\bigl|\log|\rho_{\pm}|\bigr|\bigr)}{|x-z|^{1/2} |z-y|^{3/2}} \, \langle\rho_{\pm}\rangle^{-2} \, dz,
	\qquad \rho_{\pm} = |x-z| \pm |z-y|.
	\]
	Integrating in $x$ in polar coordinates centered at $z$ and applying Fubini's theorem, we obtain
	\begin{align*}
		\sup_{y}\int_{\mathbb{R}^2}\big| \mathcal{T}_2^\pm(x,y) \big| \, dx
		&\lesssim \sup_{y}\int_{\mathbb{R}^2}\frac{|V_0(z)|}{|z-y|^{3/2}} \int_0^{\infty}
		\frac{t^{1/2}\bigl(1+\bigl|\log|t\pm|z-y||\bigr|\bigr)}{\langle t \pm |z-y|\rangle^{2}} \, dt \, dz \\
		&\lesssim \sup_{y}\int_{\mathbb{R}^2}
		\left( \frac{|V_0(z)|}{|z-y|^{3/2}} + \frac{|V_0(z)|}{|z-y|} \right) dz \lesssim 1,
	\end{align*}
	where the elementary inequality $t^{1/2} \lesssim \bigl|t \pm |z-y|\bigr|^{1/2} + |z-y|^{1/2}$ was used to control the $t$-integral by a constant multiple of $1+|z-y|^{1/2}$. A symmetric argument gives
	\[
	\sup_{x}\int_{\mathbb{R}^2}\big| \mathcal{T}_2^\pm(x,y) \big| \, dy \lesssim 1,
	\]
	and the $L^p$-boundedness of the operators associated with $\mathcal{T}_2^\pm(x,y)$ for all $1 \le p \le \infty$ follows from Schur's lemma.
\end{proof}

\begin{lemma}\label{lem:boundedness of W^V_0-2}
	The operators associated with $\mathcal{T}_1^\pm(x,y)$ are bounded on $L^p(\mathbb{R}^2)$ for all $1\le p\le \infty$.
\end{lemma}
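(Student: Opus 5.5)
Both sign choices are handled by showing that $\mathcal{T}_1^\pm$ are admissible kernels and then invoking Lemma~\ref{Schur Lemma}. The "$+$" case is easy. There $\rho_+=|x-z|+|z-y|\ge|x-y|$. Applying Lemma~\ref{lem-oscillatory estimate-h} with $b=0$, $m\equiv1$ and any $k$ gives
\[
\Bigl|\int_0^\infty e^{i\lambda\rho_+}\tilde\chi(\lambda/\lambda_0)\,d\lambda\Bigr|\lesssim \rho_+^{-1}\langle\rho_+\rangle^{-k-1}.
\]
Since $\rho_+\ge |x-z|$ and $\rho_+\ge|z-y|$, the kernel is bounded by
\[
\int \frac{V_0(z)}{|x-z|^{1/2}|z-y|^{1/2}}\,\langle |x-z|+|z-y|\rangle^{-3}\rho_+^{-1}\,dz .
\]
Integrating in $x$ (polar coordinates about $z$) and then in $z$ against the Gaussian gives a bound uniform in $y$. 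The $y$-integral is symmetric, and the local singularity $\rho_+^{-1}|x-z|^{-1/2}$ is integrable in two dimensions.

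The "$-$" case is the genuine difficulty, because $\rho_-=|x-z|-|z-y|$ vanishes on a hyperbola branch with foci $x,y$. The naive bound $|\rho_-|^{-1}$ is not integrable across that curve. I would not take absolute values inside the $z$-integral. Instead, for $|x-y|\ge 1$ I change variables $z\mapsto(\rho,\theta)$, with $\rho=\rho_-\in(-|x-y|,|x-y|)$ and $\theta$ a transverse coordinate along the hyperbola (for example elliptic--hyperbolic coordinates with foci $x,y$). This rewrites the kernel as
\[
\int_0^\infty\tilde\chi(\lambda/\lambda_0)\int e^{i\lambda\rho}\,F(\rho;x,y)\,d\rho\,d\lambda,
\qquad
F(\rho;x,y)=\int \frac{V_0(z)J(\rho,\theta)}{|x-z|^{1/2}|z-y|^{1/2}}\,d\theta ,
\]
where $J$ is the Jacobian. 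The $\lambda$-integral then becomes $\widehat{\tilde\chi}$-type: it equals $\pi\lambda_0\delta$-mass minus a Schwartz function of $\lambda_0\rho$. The kernel therefore reduces to $\pi F(0;x,y)$ minus a smooth average of $F$ in $\rho$, and the problem is converted into pointwise bounds on $F$ and $\partial_\rho F$.

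Equivalently, and more in the spirit of the paper, I would split $\lambda$ dyadically at scale $2^j$. On each piece I integrate by parts in $\rho$ (that is, in $z$ along $\nabla_z\rho_-$) as many times as needed. Each integration by parts costs a $z$-derivative of $V_0(z)|x-z|^{-1/2}|z-y|^{-1/2}J$. Since $V_0$ is Gaussian, derivatives hitting $V_0$ are harmless. Derivatives hitting the distance factors or $J$ cost $|x-z|^{-1}$, $|z-y|^{-1}$, or $|\nabla_z\rho_-|^{-1}$, and
\[
|\nabla_z\rho_-|=\Bigl|\tfrac{z-x}{|z-x|}-\tfrac{z-y}{|z-y|}\Bigr|
\]
degenerates only when $z$ lies on the line through $x,y$ outside the segment. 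For $z$ localized near the origin, where $V_0$ lives, the factors behave like $|x|^{-1/2}|y|^{-1/2}$. The target bound is
\[
|\mathcal{T}_1^-(x,y)|\lesssim \langle x\rangle^{-1/2}\langle y\rangle^{-1/2}\,\langle\, |x|-|y|\,\rangle^{-N}\,\Phi(\angle(x,y)),
\]
plus Schur-integrable errors. Here $\Phi$ must be integrable against $|x|\,d|x|\,d\theta$ after the radial decay is used. Because the radial weight $\langle x\rangle^{-1/2}\langle y\rangle^{-1/2}$ is insufficient, the angular factor must supply an extra $|x|^{-1}$. It should come from stationary phase in the angular $z$-direction: when $\rho_-\approx 0$ near $z\approx0$, we have $\rho_-\approx |x|-|y|-z\cdot(\hat x-\hat y)$, so the oscillation is nondegenerate unless $\hat x\approx\hat y$, and it gives decay $(\lambda|x||\hat x-\hat y|)^{-N}$ through the smooth $V_0$. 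For $|x-y|\lesssim1$ or $|x|,|y|\lesssim1$, the region is compact modulo the Gaussian, and the crude $\rho^{-1}$ bound plus local integrability suffices.

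The main obstacle is the regime $\hat x\approx\hat y$, $|x|\approx|y|\gg1$. There the phase $z\cdot(\hat x-\hat y)$ has small gradient, and $\rho_-$ is controlled only by $|x|-|y|$ together with the quadratic term $\sim|z_\perp|^2(|x|^{-1}-|y|^{-1})$. I would handle it by exact integration of the $\lambda$-integral, which is the $\delta$-plus-Schwartz structure above. This leaves
\[
\int V_0(z)\,|x-z|^{-1/2}|z-y|^{-1/2}\,\lambda_0\widehat{\tilde\chi}(\lambda_0\rho_-)\,dz .
\]
Its size is about $|x|^{-1}$ times the measure of $\{z:|\rho_-|\lesssim1\}\cap\operatorname{supp}V_0$. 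That measure is $\lesssim \min\{1,(|x||\hat x-\hat y|)^{-1}\}\langle |x|-|y|\rangle^{-N}$, because $\rho_-$ is linear in $z$ with slope $|\hat x-\hat y|$. Integrating in $x$ over $|x|\sim R$ and angle, $\int \min\{1,(R\theta)^{-1}\}\,R\,d\theta\sim\log R$, which leaves $R^{-1}\log R$ per radial unit against a radial window of width $O(1)$. This is the borderline I expect to fight. If it fails, I would exploit the oscillation of $\widehat{\tilde\chi}$, which has zero integral since $\tilde\chi(0)=0$, to gain one more power of $(R\theta)^{-1}$. This uses the smoothness of $V_0$ once more, and it is the step I would verify most carefully.
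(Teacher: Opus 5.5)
Your bound for $\mathcal{T}_1^+$ is fine. Using $\rho_+\ge|x-z|$ instead of the paper's $\rho_+\ge|x-y|$ works equally well. The argument for $\mathcal{T}_1^-$, however, has a genuine gap, and several claims it rests on are false.

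First, $\int_0^\infty e^{i\lambda\rho}\tilde\chi(\lambda/\lambda_0)\,d\lambda$ is not a $\delta$-mass minus a Schwartz function. It equals $\pi\delta(\rho)+i\,\mathrm{p.v.}\,\rho^{-1}-g(\rho)$, where $g(\rho)=\int_0^\infty e^{i\lambda\rho}\chi(\lambda/\lambda_0)\,d\lambda$ is smooth but decays only like $i\rho^{-1}$. The principal-value term is exactly the radial Hilbert-transform singularity that defeats the naive estimate. So your kernel is $\pi F(0)+i\,\mathrm{p.v.}\!\int F(\rho)\rho^{-1}\,d\rho-\int gF\,d\rho$, and everything hinges on quantitative regularity of $F$ in $\rho$, which you never estimate.

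Second, your size estimates carry a spurious factor $|x|$. On the bulk of $V_0$ one has $\nabla_z\rho_-\approx\hat y-\hat x$. Hence non-stationary phase gives only $(\lambda\theta)^{-N}$ with $\theta:=|\hat x-\hat y|$, and $\{|\rho_-|\lesssim1\}$ contains a fixed fraction of the mass of $V_0$ once $s:=|x|-|y|=O(1)$. Worse, the Jacobian $|\nabla_z\rho_-|^{-1}\sim\theta^{-1}$ makes the kernel genuinely large. Linearizing the phase gives $\mathcal{T}_1^-(x,y)\approx c\,(|x||y|)^{-1/2}\int_0^\infty e^{i\lambda s}\,\widehat{V_0}(\lambda(\hat x-\hat y))\,\tilde\chi(\lambda/\lambda_0)\,d\lambda$. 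For $|s|\le\theta$ and $|x|^{-1}\ll\theta\ll1$ this has size $\sim(|x|\theta)^{-1}$ (when $\widehat V(0)\neq0$). That contradicts both your bound $|x|^{-1}\min\{1,(|x|\theta)^{-1}\}$ and your target with an extra angular factor $|x|^{-1}$. In this regime the kernel is comparable to $(|x|\max\{\theta,|s|\})^{-1}$ for $|s|\lesssim1$ and decays rapidly in $s$ beyond. It is admissible only because $(|x||y|)^{-1/2}$ already cancels the polar Jacobian and $1/\max\{\theta,|s|\}$ is integrable in $(\theta,s)$; no extra angular decay exists or is needed.

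Third, in your ``compact'' regime the crude $|\rho_-|^{-1}$ bound diverges for every $x\neq y$. The reason is that $\rho_-$ vanishes to first order on the bisector $\{|x-z|=|z-y|\}$, which always meets the everywhere-positive Gaussian. Finally, you leave the decisive regime explicitly unverified.

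What is missing is a mechanism that exploits cancellation across $\{\rho_-=0\}$ while tracking the degeneration of $\nabla_z\rho_-$ along the level curves. The paper first handles $\{|\rho|\ge1/2\}$ and $\{|\rho|\gtrsim r\}$ (with $r=|x-y|$) by the crude bound from Lemma~\ref{lem-oscillatory estimate-h}. This gives the admissible majorants $\int\frac{|V_0|}{|x-z|^{1/2}|z-y|^{1/2}}\langle\rho\rangle^{-2}\,dz$ and $r^{-1}\langle r\rangle^{-2}$, so cancellation is needed only on $\{|\rho|\le\min\{1,r/2\}\}$.

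There it uses exact hyperbolic coordinates $u=(\frac{\rho}{2}\sec\theta,\frac{\sqrt{r^2-\rho^2}}{2}\tan\theta)$. In these coordinates $|x-z|^{-1/2}|z-y|^{-1/2}\,dz$ becomes a constant times $\tau\sec\theta\,d\rho\,d\theta$, and a short law-of-cosines computation gives $\tau=2|\nabla_z\rho_-|^{-1}$, with $\tau\sim\sec\theta$. The paper then splits frequencies at the $\rho$-independent threshold $\lambda\sim\sec\theta$.

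Below the threshold, the oscillatory lemma directly gives the integrable singularity $|\rho|^{-1/2}$, times $\tau^{3/2}\sec\theta$. Above it, one integration by parts in $\rho$ is legitimate because the cutoff does not depend on $\rho$. It gains $\lambda^{-1}$ and again leaves $|\rho|^{-1/2}$, at the cost of $|\partial_\rho V_0|\lesssim\tau|\nabla V_0|$ and $|\partial_\rho\tau|\lesssim\tau/r$. The resulting majorant $\mathfrak{L}$ is proved admissible in Lemma~\ref{lem:admi of main term in high}, using $\tau\lesssim|x-z|^{1/2}|z-y|^{1/2}/r$ and a case analysis in $|z-y|$.

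Your dyadic integration-by-parts idea could work only along these lines. It would need a frequency threshold tied to the local size of $|\nabla_z\rho_-|^{-1}$ and a genuine treatment of the $\rho$-singularity, and the proposal supplies neither.
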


\begin{proof}
	By Schur's lemma, it suffices to show that the kernels $\mathcal{T}_1^\pm(x,y)$ are admissible.
	
	\smallskip
	\noindent\emph{Step 1: the kernel $\mathcal{T}_1^+$.} Since $\rho_+\ge |x-y|$ and $u\mapsto u^{-1}\langle u\rangle^{-2}$ is decreasing on $(0,\infty)$, Lemma~\ref{lem-oscillatory estimate-h} yields
	\[
	\bigl|\mathcal{T}_1^+(x,y)\bigr| \lesssim \int_{\mathbb{R}^2} \frac{|V_0(z)|}{|x-z|^{1/2}|z-y|^{1/2}} \, |\rho_{+}|^{-1}\langle \rho_{+} \rangle^{-2} \, dz \lesssim |x-y|^{-1}\langle x-y \rangle^{-2},
	\]
	where we also used the uniform bound $\int_{\mathbb{R}^2}\frac{|V_0(z)|}{|x-z|^{1/2}|z-y|^{1/2}}\,dz\lesssim 1$, which follows from the rapid decay of $V_0$ and the local integrability of $|\,\cdot\,|^{-1/2}$. Since $|x-y|^{-1}\langle x-y\rangle^{-2}$ is admissible, so is $\mathcal{T}_1^+$.
	
	\smallskip
	\noindent\emph{Step 2: the kernel $\mathcal{T}_1^-$.} Write $r:=|x-y|$ and $\rho:=\rho_-=|x-z|-|z-y|$ for brevity. Inserting the partition of unity
	\[
	1=\tilde\chi(\rho)+\chi(\rho)\bigl(1-\chi(2\rho/r)\bigr)+\chi(\rho)\chi(2\rho/r)
	\]
	into the $z$-integral defining $\mathcal{T}_1^-$, we estimate the three resulting pieces separately; on the support of the three cutoffs one has, respectively, $|\rho|\ge 1/2$, $|\rho|\le 1$ with $|\rho|\ge r/4$, and $|\rho|\le\min\{1,\,r/2\}$.
	
	\medskip
	\noindent$\bullet$ \emph{The region $\{|\rho| \ge 1/2\}$.} On this region, integration by parts in $\lambda$ shows that the corresponding piece of $\mathcal{T}_1^{-}(x,y)$ is bounded by
	\[
	\int_{\mathbb{R}^2} \frac{|V_0(z)|}{|x-z|^{1/2}|z-y|^{1/2}} \, \langle \rho \rangle^{-2} \, dz
	\]
	(the factor $|\rho|^{-1}$ produced by the oscillatory estimate is absorbed since $|\rho|\ge 1/2$). Passing to polar coordinates $y=z+t\omega$ in the $y$-integral, we find
	\begin{align*}
		\sup_{x} \int_{\mathbb{R}^2} \int_{\mathbb{R}^2} \frac{|V_0(z)|}{|x-z|^{1/2}|z-y|^{1/2}} \, \langle \rho \rangle^{-2} \, dz\, dy
		&\lesssim \sup_{x} \int_{\mathbb{R}^2}\frac{|V_0(z)|}{|x-z|^{1/2}} \int_{0}^{\infty} t^{1/2}\langle t-|x-z| \rangle^{-2} \, dt\, dz
		\\
		&\lesssim \sup_{x} \int_{\mathbb{R}^2}\left(|V_0(z)|+\frac{|V_0(z)|}{|x-z|^{1/2}}\right)dz
		\lesssim 1,
	\end{align*}
	where we used $\int_0^\infty t^{1/2}\langle t-s\rangle^{-2}\,dt\lesssim 1+s^{1/2}$ for $s\ge 0$. The same computation with the roles of $x$ and $y$ interchanged gives
	\[
	\sup_{y} \int_{\mathbb{R}^2} \int_{\mathbb{R}^2}
	\frac{|V_0(z)|}{|x-z|^{1/2}|z-y|^{1/2}}\,
	\langle \rho \rangle^{-2} \, dz\, dx
	\lesssim 1,
	\]
	so this piece of $\mathcal{T}_1^{-}(x,y)$ is admissible.
	
	\medskip
	\noindent$\bullet$ \emph{The region $\{|\rho| \le 1,\ |\rho|\gtrsim r\}$.} We are reduced to the kernel
	\begin{equation}\label{eq:kerne-1}
		\int_{\{|\rho| \le 1,\ |\rho| \gtrsim r\}} \int_0^{+\infty} e^{i \lambda \rho} \frac{  V_0(z) }{|x-z|^{1/2} |z-y|^{1/2}} \, \tilde{\chi}(\lambda/\lambda_0)  \, d\lambda \, dz.
	\end{equation}
	On this region $|\rho|^{-1}\langle\rho\rangle^{-2}\lesssim r^{-1}\langle r\rangle^{-2}$, so Lemma~\ref{lem-oscillatory estimate-h} gives
	\[
	\bigl|\eqref{eq:kerne-1}\bigr| \lesssim \int_{\{|\rho| \le 1,\ |\rho| \gtrsim r\}} \frac{|V_0(z)|}{|x-z|^{1/2}|z-y|^{1/2}} \, |\rho|^{-1} \langle \rho \rangle^{-2} \, dz \lesssim r^{-1}\langle r \rangle^{-2},
	\]
	which is again an admissible kernel.
	
	\medskip
	\noindent$\bullet$ \emph{The region $\{|\rho| \le 1,\ |\rho|\le r/2\}$.} It remains to estimate the kernel
	\begin{equation}\label{eq:restri to small pho-1}
		\mathcal{L}(x,y):=\int_{\mathbb{R}^2} \int_0^{+\infty} \frac{e^{i \lambda \rho} V_0(z)}{|x-z|^{1/2} |z-y|^{1/2}} \, \chi(\rho)\chi\!\left(\frac{2\rho}{r}\right) \tilde{\chi}\!\left(\frac{\lambda}{\lambda_0}\right) d\lambda \, dz.
	\end{equation}
	Note that on the support of the cutoffs,
	\begin{equation}\label{eq:support of region 3}
		|\rho|\le \min\{1,\,r/2\},\qquad\text{hence }\ r^2-\rho^2\sim r^2.
	\end{equation}
	By the change of variables $z = u + \frac{x+y}{2}$, the integral \eqref{eq:restri to small pho-1} becomes
	\begin{equation}\label{eq:restri to small pho-1-change}
		\int_{\mathbb{R}^2} \int_0^{+\infty} \frac{e^{i\lambda \rho} \, V_0\!\left(u+\frac{x+y}{2}\right)}{\big|\frac{x-y}{2}-u\big|^{1/2} \big|u+\frac{x-y}{2}\big|^{1/2}} \, \chi(\rho)\chi\!\left(\frac{2\rho}{r}\right) \tilde{\chi}\!\left(\frac{\lambda}{\lambda_0}\right) d\lambda \, du,
	\end{equation}
	where $\rho=\big|\frac{x-y}{2}-u\big|-\big|u+\frac{x-y}{2}\big|$. The bounds produced below involve $x$ and $y$ only through the distances $|x-y|$, $|x-z|$, $|z-y|$ and through integrals of $|V_0|$ and $|\nabla V_0|$, all of which are invariant under rigid motions of the triple $(x,y,z)$. After a translation and a rotation we may therefore assume that
	\[
	x=\Big(-\frac{r}{2},\,0\Big),\qquad y=\Big(\frac{r}{2},\,0\Big),\qquad r=|x-y|,
	\]
	throughout the remainder of the proof.
	
	For $0 < |\rho| < r$, the equation $\big|\frac{x-y}{2}-u\big| - \big|u+\frac{x-y}{2}\big| = \rho$ defines one branch of a hyperbola with foci $x$ and $y$. We parametrize $u$ on the region $\{u\in\mathbb{R}^2 : |\rho| \le r\}$ by
	\begin{equation}\label{eq:tranform-to-hyper}
		u = \left( \frac{\rho}{2}\sec\theta,\; \frac{\sqrt{r^2-\rho^2}}{2}\tan\theta \right), \quad \theta \in \left(-\frac{\pi}{2},\frac{\pi}{2}\right),
	\end{equation}
	in which
	\[
	|x-z| = \frac{r\sec\theta + \rho}{2}, \qquad |z-y| = \frac{r\sec\theta - \rho}{2}, \qquad |x-z|\,|z-y|=\frac{r^2\sec^2\theta-\rho^2}{4},
	\]
	and the Jacobian determinant is
	\[
	\det(J) = \frac{\sec\theta\,(r^2\sec^2\theta - \rho^2)}{4\sqrt{r^2-\rho^2}} = \frac{|x-z|\,|z-y|\,\sec\theta}{\sqrt{r^2-\rho^2}}.
	\]
	Thus, up to a constant factor, the integral \eqref{eq:restri to small pho-1-change} becomes
	\begin{align}\label{eq:restri to small pho-1-1}
		\mathcal{L}=\int_{-\frac{\pi}{2}}^{\frac{\pi}{2}} \int_0^\infty \int_{\mathbb{R}} e^{i\lambda\rho} \, V_0\!\left(u(\rho,\theta)+\frac{x+y}{2}\right) \tau\sec\theta \, \chi(\rho)\chi\!\left(\frac{2\rho}{r}\right) d\rho \; \tilde{\chi}\!\left(\frac{\lambda}{\lambda_0}\right) d\lambda \, d\theta,
	\end{align}
	where $u(\rho,\theta)$ is given by \eqref{eq:tranform-to-hyper} and
	\[
	\tau=\tau(\rho,\theta):=\frac{\sqrt{r^2\sec^2\theta-\rho^2}}{\sqrt{r^2-\rho^2}}
	\qquad\Bigl(\,=\frac{2\sqrt{|x-z|\,|z-y|}}{\sqrt{r^2-\rho^2}}\,\Bigr).
	\]
	Note that $\tau\ge 1$ and, by \eqref{eq:support of region 3}, $\tau\sim\sec\theta$ on the support of the integrand. We now insert the decomposition $1 = \chi\big(\frac{\lambda}{\sec\theta}\big) + \tilde{\chi}\big(\frac{\lambda}{\sec\theta}\big)$ into \eqref{eq:restri to small pho-1-1} and write $\mathcal{L}=\mathcal{L}_1+\mathcal{L}_2$, where $\mathcal{L}_1$ and $\mathcal{L}_2$ carry the cutoffs $\chi(\frac{\lambda}{\sec\theta})$ and $\tilde{\chi}(\frac{\lambda}{\sec\theta})$, respectively. The scale $\sec\theta$ is chosen here, rather than the comparable quantity $\tau$, because it does not depend on $\rho$; this will allow us to integrate by parts in $\rho$ when treating $\mathcal{L}_2$.
	
	For $\mathcal{L}_1$, we perform the $\lambda$-integration first. In view of the derivative bound
	\[
	\sup_{\sigma\ge 1} \sigma^{-1/2} \left| \frac{d^k}{d\lambda^k} \chi\!\left(\frac{\lambda}{\sigma}\right) \right| \lesssim \lambda^{-1/2 - k}, \qquad k \in \mathbb{N}_0,
	\]
	Lemma~\ref{lem-oscillatory estimate-h}, applied with $\sigma=\sec\theta$, yields
	\begin{equation}\label{eq:restri to small pho-1-2-1}
		\left|\mathcal{L}_1\right|\lesssim \int_{-\frac{\pi}{2}}^{\frac{\pi}{2}} \int_{\mathbb{R}} \frac{|V_0| \,\tau^{3/2} \sec\theta}{|\rho|^{1/2}} \, \chi(\rho)\chi\!\left(\frac{2\rho}{r}\right) d\rho \, d\theta,
	\end{equation}
	where we used $\tau(\sec\theta)^{3/2}\sim \tau^{3/2}\sec\theta$, and $|V_0|$ is evaluated at $u(\rho,\theta)+\frac{x+y}{2}$.
	
	For $\mathcal{L}_2$, we integrate by parts in $\rho$ (the boundary terms vanish, the integrand being compactly supported in $\rho$) and are led to
	\[
	\mathcal{L}_2 = i \int_{-\frac{\pi}{2}}^{\frac{\pi}{2}} \int_{\mathbb{R}} \int_0^\infty e^{i\lambda\rho} \, \tilde{\chi}\!\left(\frac{\lambda}{\sec\theta}\right) \tilde{\chi}\!\left(\frac{\lambda}{\lambda_0}\right) \frac{d\lambda}{\lambda} \;
	\partial_{\rho}\!\Big(\tau \, V_0 \, \chi(\rho)\chi\!\left(\frac{2\rho}{r}\right)\Big) d\rho \; \sec\theta \, d\theta.
	\]
	In view of
	\[
	\sup_{\sigma\ge 1} \sigma^{1/2} \left| \partial_\lambda^k \tilde{\chi}\!\left(\frac{\lambda}{\sigma}\right) \right| \lesssim \lambda^{1/2 - k}, \qquad k \in \mathbb{N}_0,
	\]
	the amplitude $\lambda^{-1}\tilde{\chi}(\lambda/\sec\theta)\,\tilde{\chi}(\lambda/\lambda_0)$ obeys the symbol estimates of order $-1/2$ with constant $(\sec\theta)^{-1/2}$, so Lemma~\ref{lem-oscillatory estimate-h} gives
	\[
	\left|\mathcal{L}_2\right|\lesssim \int_{-\frac{\pi}{2}}^{\frac{\pi}{2}} \int_{\mathbb{R}} \frac{\sec\theta}{\sqrt{|\rho|\,\tau}} \,
	\left| \partial_{\rho}\!\Big(\tau \, V_0 \, \chi(\rho)\chi\!\left(\frac{2\rho}{r}\right)\Big) \right| d\rho \, d\theta,
	\]
	where we used $(\sec\theta)^{1/2}\sim\sec\theta/\sqrt{\tau}$. By the Leibniz rule,
	\[
	\partial_{\rho}\!\Big(\tau \, V_0 \, \chi(\rho)\chi\!\left(\frac{2\rho}{r}\right)\Big)
	= (\partial_{\rho}\tau) \, V_0 \, \chi(\rho)\chi\!\left(\frac{2\rho}{r}\right)
	+ \tau \, (\partial_{\rho}V_0) \, \chi(\rho)\chi\!\left(\frac{2\rho}{r}\right)
	+ \tau \, V_0 \, \partial_{\rho}\!\Big(\chi(\rho)\chi\!\left(\frac{2\rho}{r}\right)\Big).
	\]
	On the support of $\chi(\rho)\chi(2\rho/r)$, direct computations using \eqref{eq:support of region 3} show that
	\[
	|\partial_\rho V_0| = \left| \nabla V_0 \cdot \left( \frac{\sec\theta}{2},\; -\frac{\rho\tan\theta}{\sqrt{r^2-\rho^2}} \right) \right| \lesssim |\nabla V_0| \, \tau,
	\]
	\[
	|\partial_\rho \tau| \le \frac{|\rho|\sqrt{r^2\sec^2\theta-\rho^2}}{(r^2-\rho^2)^{3/2}} + \frac{|\rho|}{\sqrt{r^2-\rho^2}\,\sqrt{r^2\sec^2\theta-\rho^2}} \lesssim \frac{\tau}{r},
	\]
	and
	\[
	\left| \partial_{\rho}\!\Big(\chi(\rho)\chi\!\left(\frac{2\rho}{r}\right)\Big) \right| \lesssim 1 + \frac{1}{r}.
	\]
	Therefore
	\[
	\frac{\sec\theta}{\sqrt{|\rho|\,\tau}} \left| \partial_{\rho}\!\Big(\tau \, V_0 \, \chi(\rho)\chi\!\left(\frac{2\rho}{r}\right)\Big) \right|
	\lesssim \frac{|\nabla V_0|\,\tau^{3/2}\sec\theta}{\sqrt{|\rho|}}
	+ \frac{|V_0|\sqrt{\tau}\,\sec\theta}{r\sqrt{|\rho|}}
	+ \frac{|V_0|\sqrt{\tau}\,\sec\theta}{\sqrt{|\rho|}},
	\]
	and combining this with \eqref{eq:restri to small pho-1-2-1} we arrive at
	\begin{equation}\label{eq:restri to small pho-1-4}
		\left|\mathcal{L}(x,y)\right|\lesssim \int_{-\frac{\pi}{2}}^{\frac{\pi}{2}} \int_{|\rho|\le \min\{1,\, r/2\}} \left( \frac{(|\nabla V_0|+|V_0|)\,\tau^{3/2}\sec\theta}{\sqrt{|\rho|}}
		+ \frac{|V_0|\sqrt{\tau}\,\sec\theta}{r\sqrt{|\rho|}}
		+ \frac{|V_0|\sqrt{\tau}\,\sec\theta}{\sqrt{|\rho|}} \right) d\rho \, d\theta.
	\end{equation}
	The right-hand side is the kernel $\mathfrak{L}(x,y)$ of Lemma~\ref{lem:admi of main term in high} below; that lemma yields the admissibility of $\mathcal{L}$, and hence of $\mathcal{T}_1^{-}$. Together with Step 1, this completes the proof.
\end{proof}

\begin{lemma}\label{lem:admi of main term in high}
	Let $r=|x-y|$, and let $\rho=|x-z|-|z-y|$, $\tau=\frac{2(|x-z|\,|z-y|)^{1/2}}{\sqrt{r^2-\rho^2}}$ and $u(\rho,\theta)$ be as in the proof of Lemma~\ref{lem:boundedness of W^V_0-2}. Then the kernel
	\begin{equation*}
		\mathfrak{L}(x,y)=\int_{-\frac{\pi }{2}}^{\frac{\pi }{2}}\int_{|\rho|\le \min\{1, r/2 \}} \left(\frac{\big(|V_0|+|\nabla V_0|\big) \tau^{3/2} \sec\theta}{\sqrt{|\rho|}}
		+\frac{|V_0|\sqrt{\tau}\,\sec\theta}{ r \sqrt{|\rho|}}
		+\frac{|V_0|\sqrt{\tau} \,\sec\theta }{\sqrt{|\rho|}}\right) d\rho \, d\theta,
	\end{equation*}
	where $V_0$ and $\nabla V_0$ are evaluated at $z=u(\rho,\theta)+\frac{x+y}{2}$, is admissible.
\end{lemma}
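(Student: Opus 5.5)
The plan is to undo the hyperbolic parametrization \eqref{eq:tranform-to-hyper} and return to the original variable $z$, where admissibility reduces to elementary polar-coordinate integrals. Since $dz=\det(J)\,d\rho\,d\theta$ with $\det(J)=|x-z|\,|z-y|\sec\theta/\sqrt{r^2-\rho^2}$, we have
\[
\sec\theta\,d\rho\,d\theta=\frac{\sqrt{r^2-\rho^2}}{|x-z|\,|z-y|}\,dz .
\]
On the support, \eqref{eq:support of region 3} gives $\sqrt{r^2-\rho^2}\sim r$ and $\tau\sim\sec\theta\sim \sqrt{|x-z|\,|z-y|}/r$. Writing $s=|x-z|$ and $t=|z-y|$, the three pieces of $\mathfrak L$ then become
\[
\mathfrak L(x,y)\lesssim\int_{\{|\rho|\le\min\{1,r/2\}\}}
\frac{W(z)}{|\rho|^{1/2}}\Big(\frac{1}{r^{1/2}(st)^{1/4}}+\frac{r^{1/2}}{(st)^{3/4}}\Big)\,dz,
\qquad W:=|V_0|+|\nabla V_0| .
\]
The second summand, coming from the $r^{-1}$ term, picks up $r^{-1}\cdot r^{3/2}$, so it is $\lesssim r^{1/2}(st)^{-3/4}$. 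The third summand of $\mathfrak L$ is dominated by the first.

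Next I use symmetry. Interchanging $x$ and $y$ only flips the sign of $\rho$ and leaves $r$, $\tau$ and $st$ unchanged, so the bound above is symmetric. It therefore suffices to prove
\[
\sup_x\int_{\mathbb R^2}\mathfrak L(x,y)\,dy<\infty .
\]
By Fubini, I fix $x$ and $z$ and integrate in $y$ using polar coordinates $y=z+t\omega$ around $z$. Then $\rho=s-t$, the constraint forces $|s-t|\le 1$, and $dy=t\,dt\,d\omega$. The angular integrals needed are
\[
\int_{\mathbb S^1}|x-z-t\omega|^{-1/2}\,d\omega\lesssim t^{-1/2},
\qquad
\int_{\mathbb S^1}|x-z-t\omega|^{1/2}\,d\omega\lesssim (s+t)^{1/2}.
\]
The first holds because $|x-z-t\omega|^{-1/2}$ has only an integrable singularity at the single point of the circle closest to $x$, of the form $|\omega-\omega_0|^{-1/2}$ after scaling. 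The second is the trivial bound $|x-z-t\omega|\le s+t$.

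With these, the first summand contributes
\[
\int W(z)\int_{|s-t|\le1}\frac{t\cdot t^{-1/2}}{|s-t|^{1/2}(st)^{1/4}}\,dt\,dz
\lesssim\int W(z)\,\frac{\langle s\rangle^{1/2}}{\langle s\rangle^{1/2}}\,dz\lesssim 1 ,
\]
uniformly in $x$, since $t\sim s$ when $s\gtrsim1$. When $s\lesssim1$ the $t$-integral runs over $t\lesssim 2$, and the factors $t^{1/4}s^{-1/4}$ are integrable; $s^{-1/4}$ is integrable in $z$ against $W$. The second summand contributes
\[
\int W(z)\int_{|s-t|\le1} t\,\frac{(s+t)^{1/2}}{|s-t|^{1/2}(st)^{3/4}}\,dt\,dz .
\]
For $s$ large the inner integral is $O(1)$ because $t\sim s$. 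For $s$ small it is bounded by $s^{-3/4}$, which is again integrable against $W$. The Gaussian decay of $W$ absorbs everything else.

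I expect the main obstacle to be the regime where $r$ is small, that is, $y$ close to $x$. There the constraint $|\rho|\le r/2$ and the weights $r^{-1/2}$ interact, and one must check that the angular integration does not produce a logarithm or a worse singularity; equivalently, the degeneracy of the level sets of $\rho$ near the segment $[x,y]$ must be harmless. I would first verify the two angular bounds carefully, including the uniformity as $t\to s$ and as $s\to0$. If a loss appears, I would fall back on the cruder lower bound $r\ge|s-t|$ combined with the $|\rho|^{-1/2}$ factor, integrating near $y=x$ as $r^{-3/2}|\cos\varphi|^{-1/2}$, which is locally integrable in $\mathbb R^2$.
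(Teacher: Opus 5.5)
Your reduction to the $z$ variable and the polar-coordinate argument are sound, but the conversion of the middle term of $\mathfrak{L}$ is wrong. Using $\sec\theta\,d\rho\,d\theta=\sqrt{r^2-\rho^2}\,(st)^{-1}\,dz$ and $\sqrt{\tau}\sim (st)^{1/4}r^{-1/2}$, the term $|V_0|\sqrt{\tau}\sec\theta/(r\sqrt{|\rho|})$ becomes $\sim |V_0|\,|\rho|^{-1/2}\,r^{-1/2}(st)^{-3/4}\,dz$. This is the paper's $\mathfrak{L}_2$. It picks up $r^{-1}\cdot r^{1/2}$, not $r^{-1}\cdot r^{3/2}$.

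The weight $r^{1/2}(st)^{-3/4}$ you wrote is what the \emph{third} term becomes. That one is indeed dominated by the first, since $\tau\ge 1$ together with $r^2-\rho^2\ge\frac34 r^2$ forces $r\lesssim\sqrt{st}$. So your display drops a term that the two weights you keep do not control pointwise: at $s=t=r=\epsilon\ll 1$ its weight is $\epsilon^{-2}$, against $\epsilon^{-1}$ for the other two.

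The fix costs nothing with your own tools. The bound $\int_{\mathbb{S}^1}|x-z-t\omega|^{-1/2}\,d\omega\lesssim t^{-1/2}$ turns the contribution of the dropped term into
$\int W(z)\,s^{-3/4}\int_{|s-t|\le 1} t^{-1/4}|s-t|^{-1/2}\,dt\,dz\lesssim\int W(z)\,s^{-3/4}\langle s\rangle^{-1/4}\,dz\lesssim 1$, uniformly in $x$.

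With this correction your proof is complete, and it takes a genuinely different and lighter route than the paper's. The paper also undoes the hyperbolic parametrization and reduces to $\mathfrak{L}_1,\mathfrak{L}_2,\mathfrak{L}_3$. It then pulls out $r^{-1/2}$ as a separate singular factor and controls it by hand:
\begin{itemize}
\item on $\{|z-y|\le 10\}$, H\"older with exponents $\frac32$ and $3$, using $|x-y|^{-1/2}\in L^3_{\mathrm{loc}}$;
\item on $\{|z-y|>10\}$, a split of the $x$-integral into $|x-y|<2$, $|x-y|\ge s/2$ and $2\le|x-y|<s/2$, the last piece handled in $(r,\rho)$ coordinates via a transversality bound on the Jacobian.
\end{itemize}
Your single observation replaces all of this. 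The integral $\int_{\mathbb{S}^1}|a e_1-\omega|^{-1/2}\,d\omega$ is bounded uniformly in $a\ge 0$, because $|ae_1-\omega|\ge|\sin\phi|$. This absorbs the $r^{\pm 1/2}$ weights in the angular variable. In particular it already covers the small-$r$ regime you flagged as the main obstacle, so no fallback is needed.

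Both arguments rely on the power of $r$ being $\frac12$. Yours is shorter and more transparent. The paper's keeps the $r$- and $\rho$-singularities separated in a way that tracks the level-set geometry, at the price of a multi-region decomposition.
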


\begin{proof}
	Reversing the change of variables \eqref{eq:tranform-to-hyper} shows that
	\[
	\mathfrak{L}(x,y)\lesssim \mathfrak{L}_1(x,y)+\mathfrak{L}_2(x,y)+\mathfrak{L}_3(x,y),
	\]
	where
	\begin{align*}
		\mathfrak{L}_1(x,y)&=  \int_{|\rho|\le \min\{1, r/2 \}}\frac{\tau^{1/2} \big(|V_0(z)|+|\nabla V_0(z)|\big)}{|x-z|^{1/2}|z-y|^{1/2}|\rho|^{1/2}}\,dz,
		\\
		\mathfrak{L}_2(x,y)&= \int_{|\rho|\le \min\{1, r/2 \}}\frac{|V_0(z)|}{r\,|x-z|^{1/2}|z-y|^{1/2}|\rho|^{1/2}\tau^{1/2}}\,dz,
		\\
		\mathfrak{L}_3(x,y)&=\int_{|\rho|\le \min\{1, r/2 \}}\frac{|V_0(z)|}{|x-z|^{1/2}|z-y|^{1/2}|\rho|^{1/2}\tau^{1/2}}\,dz.
	\end{align*}
	Each $\mathfrak{L}_j$ depends on $(x,y)$ only through $r=|x-y|$, $|\rho|=\big||x-z|-|z-y|\big|$, $|x-z|$ and $|z-y|$, and is therefore invariant under the exchange $x\leftrightarrow y$. Hence it suffices to show that $\sup_{y}\|\mathfrak{L}_j(\cdot,y)\|_{L^1_x}\lesssim 1$ for $j=1,2,3$.
	
	\medskip
	\noindent$\bullet$ \emph{Estimate for $\mathfrak{L}_1$.} Using
	\begin{equation}\label{eq:est for tau}
		\tau= \frac{2\,|x-z|^{1/2}|z-y|^{1/2}}{ \sqrt{r^2-\rho^2}} \lesssim \frac{|x-z|^{1/2}|z-y|^{1/2}}{r}
		\qquad (|\rho|\le r/2),
	\end{equation}
	we obtain
	\begin{equation}\label{eq:est for C-1}
		\left|\mathfrak{L}_1(x,y)\right| \lesssim \frac{1}{r^{1/2}}\int_{|\rho|\le 1}\frac{\big(|V_0|+|\nabla V_0|\big)(z)}{|x-z|^{1/4}|z-y|^{1/4}|\rho|^{1/2}}\,dz.
	\end{equation}
	We estimate the $L^1_x$-norm of the right-hand side separately on the regions $\{|z-y|\le 10\}$ and $\{|z-y|>10\}$.
	
	On the region $\{|z-y|\le 10\}$ one has $|x-y|\le 2|z-y|+|\rho|\le 30$. H\"older's inequality with exponents $\frac32$ and $3$, followed by the polar coordinates $x=z+t\omega$ with $s:=|z-y|$ and $t:=|x-z|$, gives
	\begin{equation}\label{eq:in the |z-y|le 10}
		\begin{aligned}
			&\sup_{y}\int_{\mathbb{R}^2}\frac{1}{|x-y|^{1/2}}\int_{\substack{|\rho|\le 1 \\ |z-y|\le 10}}\frac{\big(|V_0|+|\nabla V_0|\big)(z)}{|x-z|^{1/4}|z-y|^{1/4}|\rho|^{1/2}}\,dz\,dx \\
			&\quad \lesssim \sup_{y}\int_{|z-y| \le 10} \left(\int_{|\rho|\le 1} \frac{dx}{|x-z|^{3/8}|\rho|^{3/4}}\right)^{2/3} \frac{\big(|V_0|+|\nabla V_0|\big)(z)}{|z-y|^{1/4}} \, dz \\
			&\quad \lesssim \sup_{y}\int_{|z-y|\le 10} \left(\int_{|t-s|\le 1} \frac{t^{5/8}}{|t-s|^{3/4}}\,dt\right)^{2/3} \frac{\big(|V_0|+|\nabla V_0|\big)(z)}{|z-y|^{1/4}} \, dz \\
			&\quad \lesssim \sup_{y}\int_{\mathbb{R}^2} \frac{\big(|V_0|+|\nabla V_0|\big)(z)}{|z-y|^{1/4}} \, dz \lesssim 1,
		\end{aligned}
	\end{equation}
	where in the last step we used $\int_{|t-s|\le1} t^{5/8}|t-s|^{-3/4}\,dt\lesssim \langle s\rangle^{5/8}\lesssim 1$ for $s\le 10$; the other H\"older factor $\big\||x-y|^{-1/2}\big\|_{L^3(|x-y|\le 30)}$ is finite.
	
	On the region $\{|z-y|>10\}$, the condition $|\rho|\le 1$ forces $|x-z|\in[|z-y|-1,\,|z-y|+1]$, so in particular $|x-z|\sim|z-y|$ and
	\begin{equation}\label{eq:restri to small pho-1-4-2}
		\begin{aligned}
			&\sup_{y}\int_{\mathbb{R}^2}\frac{1}{|x-y|^{1/2}}\int_{\substack{|\rho|\le 1 \\ |z-y|> 10}}\frac{\big(|V_0|+|\nabla V_0|\big)(z)}{|x-z|^{1/4}|z-y|^{1/4}|\rho|^{1/2}}\,dz\,dx \\
			&\quad \lesssim \sup_{y}\int_{|z-y|>10} \left(\int_{|\rho|\le 1}\frac{dx}{|\rho|^{1/2}|x-y|^{1/2}}\right) \frac{\big(|V_0|+|\nabla V_0|\big)(z)}{|z-y|^{1/2}} \, dz.
		\end{aligned}
	\end{equation}
	To bound the inner integral we decompose its domain into the three subregions
	\[
	|x-y| < 2,\qquad  |x-y| \geq \tfrac{s}{2},
	\qquad 2 \leq |x-y| < \tfrac{s}{2},\qquad s=|z-y|.
	\]
	For fixed $|z-y|\ge 10$ and $|x-y|<2$, the angle between $x-z$ and $y-z$ is $\mathcal{O}(|z-y|^{-1})$; H\"older's inequality then gives
	\begin{align*}
		\int_{\substack{|\rho|\leq 1 \\ |x-y|<2}} \frac{dx}{|\rho|^{1/2}|x-y|^{1/2}}
		&\lesssim \left(\int_{\substack{|\rho|\leq 1 \\ |x-y|<2}} \frac{dx}{|\rho|^{3/4}}\right)^{2/3} \left(\int_{\substack{|x-y|\leq 2}} \frac{dx}{|x-y|^{3/2}}\right)^{1/3}
		\\
		&\lesssim \left(\int_{|\theta|\lesssim s^{-1}} \int_{|t-s|\leq 1} \frac{t}{|t-s|^{3/4}} \, dt\, d\theta\right)^{2/3}
		\lesssim 1 \lesssim |z-y|^{1/2},
	\end{align*}
	where we wrote $x-z$ in polar coordinates $(t,\theta)$ centered at $z$, with $s=|z-y|$ and $t=|x-z|$. Next,
	\begin{align*}
		\int_{\substack{|\rho|\leq 1 \\ |x-y|\geq s/2}} \frac{dx}{|\rho|^{1/2}|x-y|^{1/2}}
		\lesssim |z-y|^{-1/2}\int_{|t-s|\leq 1} \frac{t}{|t-s|^{1/2}}\,dt
		\lesssim |z-y|^{1/2},
	\end{align*}
	and, using $(r,\rho)$ as coordinates for $x$ in the remaining subregion,
	\begin{align*}
		\int_{\substack{|\rho|\leq 1 \\ 2 \leq |x-y| < s/2}} \frac{dx}{|\rho|^{1/2}|x-y|^{1/2}}
		\lesssim \int_{r \lesssim s} \int_{|\rho|\leq 1} r^{-1/2}|\rho|^{-1/2}\,d\rho\,dr
		\lesssim |z-y|^{1/2},
	\end{align*}
	where $dx = \left|\frac{\partial x}{\partial(r,\rho)}\right| dr\,d\rho$ and the Jacobian satisfies $\left|\frac{\partial x}{\partial(r,\rho)}\right|\lesssim 1$ on $\{2 \leq |x-y| < |z-y|/2\}$ with $|z-y|\ge 10$: indeed, on this region the circles $\{|x-y|=r\}$ and $\{|x-z|=|z-y|+\rho\}$ intersect at an angle bounded away from $0$ and $\pi$, since $2\le r\le s/2$ and $\big||x-z|-s\big|\le 1$. Combining the three subregions, we conclude that
	\begin{equation*}
		\int_{|\rho|\leq 1} \frac{dx}{|\rho|^{1/2}|x-y|^{1/2}} \lesssim |z-y|^{1/2}
		\qquad (|z-y|\ge 10).
	\end{equation*}
	Inserting this estimate into \eqref{eq:restri to small pho-1-4-2}, we obtain
	\[
	\sup_{y}\int_{|z-y|>10}\frac{1}{|x-y|^{1/2}}\int_{|\rho|\le 1}\frac{\big(|V_0|+|\nabla V_0|\big)(z)}{|x-z|^{1/4}|z-y|^{1/4}|\rho|^{1/2}}\,dz\,dx
	\lesssim \sup_{y} \int_{\mathbb{R}^2} \big(|V_0(z)| + |\nabla V_0(z)|\big) \, dz \lesssim 1,
	\]
	which, together with \eqref{eq:in the |z-y|le 10}, yields
	\begin{equation}\label{eq:admi of C1}
		\sup_{y} \bigl\| \mathfrak{L}_1(\cdot,y) \bigr\|_{L^1_x} \lesssim 1.
	\end{equation}
	
	\medskip
	\noindent$\bullet$ \emph{Estimate for $\mathfrak{L}_2$.} We proceed in the same spirit as for $\mathfrak{L}_1$. By \eqref{eq:est for tau},
	\[
	\left|\mathfrak{L}_2(x,y)\right| \lesssim \frac{1}{r^{1/2}}\int_{|\rho|\le 1}\frac{|V_0(z)|}{|x-z|^{3/4}|z-y|^{3/4}|\rho|^{1/2}}\,dz.
	\]
	On the region $\{|z-y|\le10\}$, H\"older's inequality as above gives
	\begin{align*}
		&\sup_{y}\int_{\mathbb{R}^2}\frac{1}{r^{1/2}}\int_{\substack{|\rho|\le 1 \\ |z-y|\le 10}}\frac{|V_0(z)|}{|x-z|^{3/4}|z-y|^{3/4}|\rho|^{1/2}}\,dz\,dx \\
		&\quad \lesssim \sup_{y}\int_{\mathbb{R}^2} \left(\int_{|\rho|\le 1} \frac{dx}{|x-z|^{9/8}|\rho|^{3/4}}\right)^{2/3} \frac{|V_0(z)|}{|z-y|^{3/4}} \, dz
		\lesssim 1,
	\end{align*}
	since $\int_{|t-s|\le1}t^{-1/8}|t-s|^{-3/4}\,dt\lesssim 1$ uniformly in $s\le 10$; on the region $\{|z-y|>10\}$, the inner integral bound obtained above yields
	\begin{align*}
		&\sup_{y}\int_{\mathbb{R}^2}\frac{1}{r^{1/2}}\int_{\substack{|\rho|\le 1 \\ |z-y|> 10}}\frac{|V_0(z)|}{|x-z|^{3/4}|z-y|^{3/4}|\rho|^{1/2}}\,dz\,dx \\
		&\quad \lesssim \sup_{y}\int_{|z-y|>10} \left(\int_{|\rho|\le 1}\frac{dx}{|\rho|^{1/2}|x-y|^{1/2}}\right) \frac{|V_0(z)|}{|z-y|^{3/2}} \, dz\\
		&\quad \lesssim \sup_{y}\int_{\mathbb{R}^2} \frac{|V_0(z)|}{|z-y|} \, dz \lesssim 1.
	\end{align*}
	These two estimates show that
	\begin{equation}\label{eq:admi of C2}
		\sup_{y}\left\|\mathfrak{L}_2(\cdot,y)\right\|_{L^1_x} \lesssim 1.
	\end{equation}
	
	\medskip
	\noindent$\bullet$ \emph{Estimate for $\mathfrak{L}_3$.} Since $\tau \geq 1$,
	\[
	\left|\mathfrak{L}_3(x,y)\right| \lesssim \int_{|\rho|\le 1}\frac{|V_0(z)|}{|x-z|^{1/2}|z-y|^{1/2}|\rho|^{1/2}}\,dz.
	\]
	Exchanging the order of integration and passing to polar coordinates $x=z+t\omega$ with $s=|z-y|$ and $t=|x-z|$,
	\[
	\int_{|\rho|\le 1}\frac{dx}{|x-z|^{1/2}|\rho|^{1/2}}
	= 2\pi\int_{|t-s|\le 1}\frac{t^{1/2}}{|t-s|^{1/2}}\,dt
	\lesssim \langle s\rangle^{1/2},
	\]
	hence
	\[
	\sup_{y}\left\|\mathfrak{L}_3(\cdot,y)\right\|_{L^1_x}
	\lesssim \sup_y\int_{\mathbb{R}^2} |V_0(z)|\,\frac{\langle s\rangle^{1/2}}{s^{1/2}}\,dz
	\lesssim \sup_y\int_{\mathbb{R}^2} |V_0(z)|\,\bigl(1+|z-y|^{-1/2}\bigr)\,dz
	\lesssim 1.
	\]
	Combining this with \eqref{eq:admi of C1} and \eqref{eq:admi of C2}, we conclude that the kernel $\mathfrak{L}(x,y)$ is admissible. The proof is complete.
\end{proof}

\medskip
It remains to treat $\mathcal{W}^{\mathrm{H}}_1$, for which direct oscillatory integral estimates suffice.

\begin{proposition}\label{lem:boundedness of W-2}
	Suppose that $|V(x)| \lesssim \langle x \rangle^{-\beta}$ with $\beta > \frac{7}{2}$. Then the operator $\mathcal{W}^{\mathrm{H}}_1$ is bounded on $L^p(\mathbb{R}^2)$ for all $1\le p\le \infty$.
\end{proposition}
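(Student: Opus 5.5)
We write the kernel of $\mathcal{W}^{\mathrm{H}}_1$ explicitly and show it is admissible, so that Lemma~\ref{Schur Lemma} applies. Using $R_0^\pm(\lambda^2)(x,y)=\pm\frac i4 H_0^\pm(\lambda|x-y|)=\pm\frac i4 e^{\pm i\lambda|x-y|}\Theta^\pm(\lambda|x-y|)$ and $R_0^+-R_0^-=\frac i2 J_0=\frac i4\bigl(H_0^++H_0^-\bigr)$, the kernel is a finite sum of terms
\[
\mathcal{K}^\pm(x,y)=c\int_{\mathbb{R}^4} V(z_1)V(z_2)\int_0^\infty e^{i\lambda\Phi_\pm}\,\lambda\,\Theta^+(\lambda|x-z_1|)\Theta^+(\lambda|z_1-z_2|)\Theta^\pm(\lambda|z_2-y|)\,\tilde\chi(\lambda/\lambda_0)\,d\lambda\,dz_1dz_2,
\]
with phase $\Phi_\pm=|x-z_1|+|z_1-z_2|\pm|z_2-y|$.

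\emph{Step 1: the $\lambda$-integral.} By \eqref{eq:est for Theta-0}, the amplitude $a(\lambda)=\lambda\,\Theta^+\Theta^+\Theta^\pm\,\tilde\chi(\lambda/\lambda_0)$ satisfies the symbol bounds $|\partial_\lambda^k a|\lesssim \lambda^{-1/2-k}\,|x-z_1|^{-1/2}|z_1-z_2|^{-1/2}|z_2-y|^{-1/2}$. Compared with $\mathcal{W}^{\mathrm{H}}_0$, the extra resolvent contributes the factor $\lambda^{-1/2}$, which makes the symbol order $b=-1/2>-1$ and avoids the logarithmic/Hilbert-transform singularity. Lemma~\ref{lem-oscillatory estimate-h} with $b=-1/2$ and $k=1$ then gives
\[
|\mathcal{K}^\pm(x,y)|\lesssim\int\!\!\int \frac{|V(z_1)||V(z_2)|}{|x-z_1|^{1/2}|z_1-z_2|^{1/2}|z_2-y|^{1/2}}\,|\Phi_\pm|^{-1/2}\langle\Phi_\pm\rangle^{-2}\,dz_1dz_2 .
\]
If more decay is needed, a larger $k$ can be taken.

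\emph{Step 2: Schur integrals.} For fixed $y$ we integrate in $x$ first, using polar coordinates $x=z_1+t\omega$. With $s=|z_1-z_2|\pm|z_2-y|$,
\[
\int_0^\infty t^{1/2}\,|t+s|^{-1/2}\langle t+s\rangle^{-2}\,dt\lesssim 1+|s|^{1/2}.
\]
The singularity $|t+s|^{-1/2}$ is integrable, and the growth is controlled by $t^{1/2}\lesssim|t+s|^{1/2}+|s|^{1/2}$. Since $|s|\le |z_1-z_2|+|z_2-y|$, this leaves
\[
\sup_y\int\!\!\int |V(z_1)||V(z_2)|\,\frac{1+|z_1-z_2|^{1/2}+|z_2-y|^{1/2}}{|z_1-z_2|^{1/2}|z_2-y|^{1/2}}\,dz_1dz_2 .
\]
Each resulting piece is bounded uniformly in $y$ by Lemma~\ref{lemma-GV} together with $\langle z\rangle^{1/2}V\in L^1$, which holds because $\beta>7/2$. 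The integral in $y$ with $x$ fixed is handled symmetrically, using polar coordinates $y=z_2+t\omega$, for which the phase is $t\pm(\ldots)$.

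\emph{Main obstacle.} In the $\Phi_-$ case the cancellation $|\Phi_-|\approx 0$ occurs on a hyperbola-type set. This is harmless here precisely because the exponent of $|\Phi|^{-1}$ is $1/2<1$, so the singularity is integrable after the polar change of variables. Should the symbol bound lose a little, because the $\lambda$-derivatives of $\Theta$ cost $|x-z_1|^{-1}$ near $\lambda|x-z_1|\lesssim1$, I would first use the sharper fact that $\Theta^\pm(z)=\mathcal{O}_\infty(z^{-1/2})$ holds uniformly on $(0,\infty)$, so that no such loss occurs. The remaining care is bookkeeping the growth $|s|^{1/2}$ against the decay of $V$, which is where the hypothesis $\beta>7/2$ enters.
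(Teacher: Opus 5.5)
Your proposal is correct and follows essentially the same route as the paper: the same splitting into two terms with phases $\Phi_\pm$, the symbol bound of order $-1/2$ fed into Lemma~\ref{lem-oscillatory estimate-h} to get $|\Phi_\pm|^{-1/2}\langle\Phi_\pm\rangle^{-2}$, and the same polar-coordinate Schur estimate based on $\int_0^\infty t^{1/2}|t+s|^{-1/2}\langle t+s\rangle^{-2}\,dt\lesssim 1+|s|^{1/2}$. One small correction: the final double integral needs only $V\in L^1$ and $\sup_a\int|V(b)|\,|b-a|^{-1/2}\,db<\infty$, not $\langle z\rangle^{1/2}V\in L^1$, so this step is not where $\beta>7/2$ is used; that hypothesis is needed elsewhere in the high-energy analysis.
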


\begin{proof}
	By Schur's lemma, it suffices to show that the kernel of $\mathcal{W}^{\mathrm{H}}_1$ is admissible.
	
	In view of \eqref{eq:asy-Hanel-2}, up to a constant factor the kernel of $\mathcal{W}^{\mathrm{H}}_1$ is the sum of two terms of the form
	\[
	\int_{\mathbb{R}^2}\int_{\mathbb{R}^2} \int_0^{+\infty} e^{i\lambda \rho_{\pm}} \, V(z)V(w) \, \mathcal{G}^{\pm}(\lambda, |x-z|,|z-w|,|w-y|) \, \lambda \, \tilde{\chi}\big(\tfrac{\lambda}{\lambda_0}\big) \, d\lambda \, dz \, dw,
	\]
	corresponding to the two choices of sign, where $\rho_{\pm} = |x-z| + |z-w| \pm |w-y|$ and
	\[
	\mathcal{G}^{\pm}(\lambda, |x-z|,|z-w|,|w-y|)=\Theta^{+}(\lambda|x-z|)\,\Theta^{+}(\lambda|z-w|)\,\Theta^{\pm}(\lambda|w-y|).
	\]
	By the derivative bounds on $\Theta^{\pm}$,
	\[
	\left|\partial_{\lambda}^k \mathcal{G}^\pm (\lambda, |x-z|,|z-w|,|w-y|)\right| \lesssim \lambda^{-3/2-k} \big(|x-z|\,|z-w|\,|w-y|\big)^{-1/2}, \quad k \in \mathbb{N}_0,
	\]
	so Lemma~\ref{lem-oscillatory estimate-h} yields
	\[
	\big|\mathcal{W}^{\mathrm{H}}_1(x,y)\big|\lesssim
	\int_{\mathbb{R}^2}\int_{\mathbb{R}^2}
	\frac{|V(z)V(w)|\, |\rho_{\pm}|^{-1/2}\langle \rho_{\pm} \rangle^{-2}}
	{\big(|x-z|\,|z-w|\,|w-y|\big)^{1/2}}
	\,dz\,dw.
	\]
	Integrating in $x$ in polar coordinates centered at $z$ and applying Fubini's theorem,
	\begin{align*}
		\sup_{y} \left\|\mathcal{W}^{\mathrm{H}}_1(\cdot,y)\right\|_{L^1_x}
		&\lesssim \sup_{y}\int_{\mathbb{R}^4} \left(\int_{0}^\infty \frac{t^{1/2}}{|t+s_\pm|^{1/2}\langle t+s_\pm\rangle^{2}} \, dt\right) \frac{|V(z)V(w)|}{(|z-w|\,|w-y|)^{1/2}} \, dz \, dw \\
		&\lesssim \sup_{y} \int_{\mathbb{R}^4} \frac{|V(z)V(w)|\,\big(1 + |z-w|^{1/2}+|w-y|^{1/2}\big)}{(|z-w|\,|w-y|)^{1/2}} \, dz \, dw
		\lesssim 1,
	\end{align*}
	where $s_\pm := |z-w| \pm |w-y|$; here we used $\int_0^\infty t^{1/2}|t+s|^{-1/2}\langle t+s\rangle^{-2}\,dt\lesssim 1+|s|^{1/2}$ for $s\in\mathbb{R}$, followed by $|s_\pm|^{1/2}\le |z-w|^{1/2}+|w-y|^{1/2}$, and the last step uses the uniform bound $\sup_{a\in\mathbb{R}^2}\int_{\mathbb{R}^2}|V(b)|\,|b-a|^{-1/2}\,db\lesssim 1$. The estimate $\sup_{x} \|\mathcal{W}^{\mathrm{H}}_1(x,\cdot)\|_{L^1_y}\lesssim 1$ follows in the same way, integrating first in $y$ in polar coordinates centered at $w$. Hence the kernel of $\mathcal{W}^{\mathrm{H}}_1$ is admissible, which completes the proof.
\end{proof}


\section{Low-energy properties of the resolvent kernel}\label{sec:low-energy-kernel}

In this section we establish several lemmas describing the low-energy
behavior of the free resolvent kernels. They will be used in the analysis of
the spectral measure and in the proof of the boundedness of
$\mathcal{W}^{\mathrm{L}}$, and they also account for the decay condition
$|V(x)|\lesssim\langle x\rangle^{-6-}$ imposed in
Theorem~\ref{thm-main result-1}.

The projections appearing in the expansion of $M^+(\lambda)^{-1}$ satisfy
certain vanishing moment conditions. It is therefore natural to work with
operators of the form $\mathcal{Q}_m v$, where $\mathcal{Q}_m$ is an
$L^2$-projection satisfying
\begin{equation}\label{eq:def of Qm}
\mathcal{Q}_m\bigl(v(x)\,x^{\alpha}\bigr)=0\qquad\text{for all }|\alpha|<m.
\end{equation}
Recall that the Hankel asymptotics recorded in
Subsection~\ref{Sec:some Technical lemmas} imply that
$vR_0^\pm(\lambda^2)(\cdot,y)$ belongs to $L^2(\mathbb{R}^2)$ whenever
$|v(x)|\lesssim\langle x\rangle^{-3-}$. Consequently,
$\mathcal{Q}_m vR_0^\pm(\lambda^2)$ has a well-defined distributional kernel,
denoted by
\[
\bigl[\mathcal{Q}_m vR_0^\pm(\lambda^2)\bigr](x,y).
\]

Throughout this section we write $\hat y:=y/|y|$ and use the abbreviations
\begin{equation}\label{eq:abre}
a=\langle y\rangle,\qquad b=|x-y|-a,\qquad
r_y(x)=x\cdot\hat y,\qquad
\tilde{b}=b+r_y(x).
\end{equation}
We now state the three main results of this section. The first is elementary;
the proofs of the other two occupy the remainder of this section and rely on
a unified expansion scheme---the master Lemma~\ref{lem:Taylor-scheme-0} and
Corollaries~\ref{lem:Taylor-scheme} and~\ref{lem:log-expansion}---which
reduces them to the verification of symbol bounds.

\medskip

\begin{lemma}\label{lem:est-vR-0}
Let $|v(x)|\lesssim\langle x\rangle^{-3-}$. Then we have
\begin{equation}\label{eq:expr-vH0}
v(x)R_0^\pm(\lambda^2)(x,y)=e^{\pm i\lambda a}\,\omega_0^\pm(\lambda,x,y),
\end{equation}
where the amplitudes $\omega_0^\pm$ satisfy, for $\lambda\in(0,1/2)$ and $k=0,1,2$,
\begin{align}
\label{eq:omega-est-1}
\bigl\|\partial_\lambda^k\omega_0^\pm(\lambda,\cdot,y)\bigr\|_{L^2(\mathbb{R}^2)}&\lesssim\lambda^{-\frac12-k}a^{-\frac12},\\[8pt]
\label{eq:omega-est-2}
\bigl\|\partial_\lambda^k\omega_0^\pm(\lambda,\cdot,y)\bigr\|_{L^2(\mathbb{R}^2)}&\lesssim(-\log\lambda)\,\lambda^{-k}.
\end{align}
\end{lemma}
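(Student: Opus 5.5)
We define $\omega_0^\pm(\lambda,x,y):=e^{\mp i\lambda a}v(x)R_0^\pm(\lambda^2)(x,y)=\pm\frac{i}{4}e^{\mp i\lambda a}v(x)H_0^\pm(\lambda|x-y|)$ and verify the two bounds directly from the Hankel asymptotics of Subsection~\ref{Sec:some Technical lemmas}. Each $\lambda$-derivative either falls on $e^{\mp i\lambda a}$, producing a factor $a$, or on $H_0^\pm(\lambda|x-y|)$, producing a factor $|x-y|$ times a derivative of $H_0^\pm$. Since $|x-y|\le |x|+|y|\le 2\langle x\rangle a$, we can write both factors as $\lambda^{-1}\cdot(\lambda a)$ or $\lambda^{-1}\cdot(\lambda|x-y|)$. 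The powers of $\langle x\rangle$ that appear (at most two when $k\le 2$) are absorbed by the decay $v\lesssim\langle x\rangle^{-3-}$; this leaves $\langle x\rangle^{-1-}\in L^2(\R^2)$ with room to spare. We split into the regions $\lambda|x-y|\lesssim1$ and $\lambda|x-y|\gtrsim1$.

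\emph{Proof of \eqref{eq:omega-est-1}.} We use the global representation $H_0^\pm(z)=e^{\pm iz}\Theta^\pm(z)$ with $\Theta^\pm=\mathcal O_\infty(z^{-1/2})$ on $(0,\infty)$. Then
\[
\omega_0^\pm=\pm\tfrac i4 v(x)\,e^{\pm i\lambda(|x-y|-a)}\Theta^\pm(\lambda|x-y|).
\]
The phase difference satisfies $||x-y|-a|\le |x|+1\lesssim\langle x\rangle$, so each $\partial_\lambda$ of the exponential costs at most $\langle x\rangle$. This is harmless and is even better than $\lambda^{-1}$ for $\lambda<1/2$. Each $\partial_\lambda$ of $\Theta^\pm(\lambda|x-y|)$ costs $\lambda^{-1}$ by the symbol bounds. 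Hence
\[
|\partial_\lambda^k\omega_0^\pm|\lesssim \langle x\rangle^{k}|v(x)|\,\lambda^{-k}(\lambda|x-y|)^{-1/2}.
\]
It remains to bound $\| \langle x\rangle^{-1-}|x-y|^{-1/2}\|_{L^2_x}\lesssim a^{-1/2}$. This follows from Lemma~\ref{lemma-GV} with $n=2$, $k=1$, $\ell=2+$: we get $\int|x-y|^{-1}\langle x\rangle^{-2-}dx\lesssim\langle y\rangle^{-1}$.

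\emph{Proof of \eqref{eq:omega-est-2}.} On $\lambda|x-y|\ge1$ the bound above gives $\lambda^{-k}$, which is dominated by $(-\log\lambda)\lambda^{-k}$. On $\lambda|x-y|\le1$ we use \eqref{eq:asy-Hanel-0}, or the global form with the remainder $\zeta_0^\pm$:
\[
|H_0^\pm(z)|\lesssim 1+|\log z|,\qquad |z^j\partial_z^jH_0^\pm(z)|\lesssim1\ \ (j\ge1).
\]
We bound $|\log(\lambda|x-y|)|\le -\log\lambda+|\log|x-y||$. The term $\log|x-y|$ is controlled by $\log\langle x\rangle+\log\langle y\rangle$ at large distance and by a square-integrable log singularity near $x=y$. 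The $\log\langle y\rangle$ part is the delicate point, since it is not uniformly bounded. Here we use $|x-y|\le\lambda^{-1}$: then $\log|x-y|\le-\log\lambda$, so the positive part of the logarithm is absorbed into $-\log\lambda$. The negative part, where $|x-y|<1$, is locally $L^2$ uniformly in $y$. The $\lambda$-derivatives on this region act as $\lambda^{-1}z\partial_z$ on $H_0^\pm$, which gives bounded functions, and the factors $\lambda^{-1}(\lambda a)$ coming from $e^{\mp i\lambda a}$ multiply $H_0^\pm$. We must check that $\lambda a\,|H_0^\pm|$ is acceptable. Since $\lambda a\le \lambda(|x-y|+\langle x\rangle)\le 1+\langle x\rangle$, we get $\lambda a\,|H_0^\pm|\lesssim\langle x\rangle(-\log\lambda+\ldots)$, and the powers of $\langle x\rangle$ are again absorbed by $v$.

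The main obstacle is this last bookkeeping: showing that no stray $\log\langle y\rangle$ or factor of $a$ survives in \eqref{eq:omega-est-2}. This is handled by consistently using $|x-y|\le\lambda^{-1}$ and $\lambda a\lesssim\langle x\rangle$ on the small-argument region.
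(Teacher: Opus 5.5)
Your proof is correct. For \eqref{eq:omega-est-1} it is exactly the paper's argument: the global factorization $H_0^\pm(z)=e^{\pm iz}\Theta^\pm(z)$, the phase $e^{\pm i\lambda(|x-y|-a)}$ costing $\langle x\rangle$ per derivative, the symbol $\Theta^\pm$ costing $\lambda^{-1}$, and Lemma~\ref{lemma-GV} for $\|\langle x\rangle^{-1-}|x-y|^{-1/2}\|_{L^2_x}\lesssim a^{-1/2}$.

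For \eqref{eq:omega-est-2} you split differently from the paper. The paper splits in the parameters $(\lambda,y)$. If $\lambda a>1$, it reads \eqref{eq:omega-est-2} off \eqref{eq:omega-est-1}. If $\lambda a\le 1$, it uses the logarithmic expansion on the whole half-line, with the remainder $\zeta_0^\pm$ and its large-argument bounds from \eqref{eq:est for r-0}. It then controls $\|v\log|\cdot-y|\|_{L^2}$ by $1+\log a$ and concludes via $\log a\le-\log\lambda$, $(\lambda a)^{3/2}\le 1$ and $a\le\lambda^{-1}$ for the phase derivatives.

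You instead split the $x$-integration pointwise according to $\lambda|x-y|\gtrless 1$. The far region is covered by the pointwise bound you already proved. On the near region you only need the small-argument behaviour of $H_0^\pm$, and you absorb $\lambda a\lesssim\langle x\rangle$ into $v$ and $\log^+|x-y|\le-\log\lambda$ pointwise. This avoids both the global remainder $\zeta_0^\pm$ and any $\log a$ bookkeeping. The price is a $\lambda$-dependent region in $x$, which is harmless because all $\lambda$-derivatives are estimated pointwise in $(\lambda,x)$. The paper's version, for its part, is global in $x$ and gets one regime for free from \eqref{eq:omega-est-1}.

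Two small remarks. First, on $\{\lambda|x-y|\le 1\}$ one has exactly $|\log(\lambda|x-y|)|=-\log\lambda-\log|x-y|\le-\log\lambda+\log^-|x-y|$. So the positive part of $\log|x-y|$ enters with a favourable sign and need not be absorbed at all. Second, $\langle x\rangle^{-1-}\in L^2(\mathbb{R}^2)$ holds only because of the $\varepsilon$-loss, so there is no room to spare. For $k=2$ this is precisely why the hypothesis is $\langle x\rangle^{-3-}$ rather than $\langle x\rangle^{-3}$.
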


\begin{lemma}\label{lem:est-S0-J0}
Assume $|v(x)|\lesssim\langle x\rangle^{-3-}$. Then for $m=1,2$ we have
\begin{equation}\label{eq:S0vR+R-}
\mathcal{Q}_m v\bigl(R_0^+(\lambda^2)(\cdot,y)-R_0^-(\lambda^2)(\cdot,y)\bigr)
=e^{i\lambda a}\,\mathcal{Q}_m\eta_m^+(\lambda,\cdot,y)+e^{-i\lambda a}\,\mathcal{Q}_m\eta_m^-(\lambda,\cdot,y),
\end{equation}
where the amplitude functions $\eta_m^\pm(\lambda,x,y)$ satisfy, for $\lambda\in(0,1/2)$ and $k=0,1,2$,
\begin{align}
\bigl\|\partial_\lambda^k\eta_m^\pm(\lambda,\cdot,y)\bigr\|_{L^2(\mathbb{R}^2)}&\lesssim\lambda^{m-\frac12-k}a^{-\frac12},\label{eq:est-eta-w-1}\\[4pt]
\bigl\|\partial_\lambda^k\eta_m^\pm(\lambda,\cdot,y)\bigr\|_{L^2(\mathbb{R}^2)}&\lesssim\lambda^{m-k}.\label{eq:est-eta-w-2}
\end{align}
\end{lemma}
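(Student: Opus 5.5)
The plan is to write $R_0^+(\lambda^2)-R_0^-(\lambda^2)$ with kernel $\frac{i}{2}J_0(\lambda|x-y|)$, and to use the oscillatory representation $J_0(z)=\tfrac12\bigl(e^{iz}\Theta^+(z)+e^{-iz}\Theta^-(z)\bigr)$ from \eqref{eq:asy-Hanel-1}. We then factor out $e^{\pm i\lambda a}$ with $a=\langle y\rangle$, so that the remaining phase is $e^{\pm i\lambda b}$ where $b=|x-y|-a$. Accordingly we set
\[
\eta_m^\pm(\lambda,x,y)=\tfrac{i}{4}\,v(x)\Bigl(e^{\pm i\lambda b}\Theta^\pm(\lambda|x-y|)-\mathcal{P}_m^\pm(\lambda,x,y)\Bigr),
\]
where $\mathcal P_m^\pm$ is a correction of the form $v(x)^{-1}\cdot v(x)\,p(x)$ with $p$ a polynomial in $x$ of degree $<m$. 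Such a correction is annihilated by $\mathcal Q_m$ because of \eqref{eq:def of Qm}, so the identity \eqref{eq:S0vR+R-} still holds after subtracting it. The natural choice is the Taylor polynomial in $x$ about $x=0$ of $J_0(\lambda|x-y|)$ (split into its $\pm$ parts), of order $m-1$: for $m=1$ this is the value at $x=0$, and for $m=2$ it also includes the linear term $-\lambda J_0'(\lambda\langle y\rangle)\,x\cdot\hat y$, using $|y|\approx a$. Because $J_0$ is smooth and even, we can subtract the Taylor polynomial of the full $J_0$ and then redistribute it into $\pm$ oscillatory pieces, each carrying $e^{\pm i\lambda a}$ times a symbol.

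The bound \eqref{eq:est-eta-w-2} (without decay in $a$) comes from Taylor's theorem applied to the entire function $J_0$. The $m$-th order remainder of $x\mapsto J_0(\lambda|x-y|)$ is bounded by $\lambda^m|x|^m$, since all derivatives of $J_0$ are bounded. Multiplying by $|v(x)|\lesssim\langle x\rangle^{-3-}$ and taking the $L^2_x$ norm gives a finite result for $m\le2$, since $\langle x\rangle^{-1-}$ is in $L^2$ when $m=2$. Each $\lambda$-derivative introduces either a factor $|x-y|$ or, after we remove the phase $e^{\pm i\lambda a}$, a factor $b$. For this reason it is cleaner to perform the Taylor expansion on the amplitude $\Theta^\pm(\lambda|x-y|)e^{\pm i\lambda b}$, with $|b|\lesssim\langle x\rangle$. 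Then $\partial_\lambda^k$ costs $\lambda^{-k}$ via the symbol-type bounds $\mathcal O_\infty$, together with powers of $\langle x\rangle$ that are absorbed by $v$. We would handle this by running the Taylor remainder in integral form, $\int_0^1(1-s)^{m-1}\partial_s^m[\cdots](sx)\,ds$, and checking symbol bounds under the rescaling.

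The bound \eqref{eq:est-eta-w-1} with the factor $a^{-1/2}$ is relevant when $\lambda a\gtrsim1$; if $\lambda a\lesssim1$ it follows from \eqref{eq:est-eta-w-2}, since then $\lambda^{m}\le\lambda^{m-1/2}a^{-1/2}$. For $\lambda a\gg1$ we split the $x$-region. On $|x|\le a/2$, we write $|x-y|=a+b$ with $b=-x\cdot\hat y+\mathcal O(|x|^2/a)$ and Taylor-expand $e^{\pm i\lambda b}\Theta^\pm(\lambda(a+b))$ in $x$. Each $x$-derivative produces either $\lambda$ from the phase or a factor $a^{-1}$ from $\Theta^\pm$, and $\Theta^\pm=\mathcal O(\lambda^{-1/2}a^{-1/2})$, so the remainder is $\lesssim\lambda^{m-1/2}a^{-1/2}\langle x\rangle^{m}$ (plus terms involving $|x|^2/a$). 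On $|x|>a/2$, we bound the terms crudely and use the decay of $v$, which contributes $\langle x\rangle^{-3-}$ with $\langle x\rangle\gtrsim a$, giving extra powers of $a^{-1}$. There we also use the large-argument asymptotics, with the caution that $|x-y|$ may be small; the local integrability of $\log$ and the boundedness of $J_0$ handle this. The $\lambda$-derivatives are treated as before.

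We expect the main obstacle to be the bookkeeping in the case $m=2$, $\lambda a\gg1$. The Taylor polynomial must be linear in $x$ exactly in the sense of \eqref{eq:def of Qm} (using $x\cdot\hat y$ rather than $b$, which is nonlinear), while the phase $e^{\pm i\lambda a}$ is kept outside. The mismatch $b+x\cdot\hat y=\tilde b=\mathcal O(|x|^2/a+1/a)$ has to be shown to contribute only $\lambda\,\tilde b$ times a symbol, and this is $\lesssim\lambda\langle x\rangle^2/a\le\lambda^{2-1/2}a^{-1/2}\langle x\rangle^2$ because $\lambda a\ge1$. This is exactly where the decay $\langle x\rangle^{-3-}$ of $v$ is used.
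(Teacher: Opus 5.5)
There are two genuine gaps.

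\textbf{The $\pm$ splitting at small argument.} Your amplitudes $\eta_m^\pm=\frac{i}{4}v\bigl(e^{\pm i\lambda b}\Theta^\pm(\lambda|x-y|)-\mathcal P_m^\pm\bigr)$ are built from the Hankel amplitudes $\Theta^\pm(z)=e^{\mp iz}H_0^\pm(z)$. These are logarithmically singular at $z=0$, and the singular parts $\pm\frac{2i}{\pi}\log z$ cancel only in the sum $J_0=\frac12(H_0^++H_0^-)$.

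Hence, where $\lambda|x-y|\lesssim1$, each $\eta_m^\pm$ separately contains a term $c_\pm(\lambda)\,v(x)\log|x-y|$ with $|c_\pm|=\frac{1}{2\pi}$. No correction polynomial of degree $<m$ in $x$ can remove it. For instance, with $y=0$ and $m=1$, $\|\eta_1^+(\lambda,\cdot,0)\|_{L^2}$ stays bounded below as $\lambda\to0$, whatever constant you subtract. This contradicts \eqref{eq:est-eta-w-2}.

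Taylor's theorem for the entire function $J_0$ only controls the sum $e^{i\lambda a}\eta_m^++e^{-i\lambda a}\eta_m^-$. The ``redistribution into $\pm$ pieces'' you mention is exactly the step that needs a mechanism.

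The paper supplies it by splitting $J_0=J_0^++J_0^-$ with $J_0^\pm=\frac12 J_0\chi+\frac12 e^{\pm iz}\Theta^\pm\tilde\chi$. Each piece is smooth at $z=0$. Moreover, $e^{\mp iz}(J_0^\pm)^{(j)}$ are symbols whose order may be taken anywhere in $[-\frac12,0]$, or in $[-\frac12,1]$ for $j=1$. So a single formula yields both \eqref{eq:est-eta-w-1} and \eqref{eq:est-eta-w-2}, with no case distinction on $\lambda a$.

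\textbf{The $\lambda$-derivative count.} The claim that the powers of $\langle x\rangle$ produced by $\partial_\lambda$ are ``absorbed by $v$'' is false. In $L^2(\mathbb{R}^2)$, the weight $|v|\lesssim\langle x\rangle^{-3-}$ absorbs at most $\langle x\rangle^{2}$. Your order-$m$ remainder already carries $\langle x\rangle^{m}$, and every $\partial_\lambda$ that lands on $e^{\pm i\lambda b}$ inside the integral remainder costs up to $|b|\sim|x|$.

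On $\{|x|\le a/2\}$ with $\lambda a\gg1$ one can have $\lambda|x|\gg1$. You then end up with a weight $\langle x\rangle^{m+k}$, which is not square integrable against $v$ once $m+k\ge3$.

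The missing idea is the dichotomy in Lemma~\ref{lem:Taylor-scheme-0}.
\begin{itemize}
\item Where $\lambda|b|\lesssim1$, keep the integral remainder; a phase derivative then costs only $|b|\lesssim\lambda^{-1}$.
\item Where $\lambda|b|\gtrsim1$, do not Taylor expand at all. Bound $e^{\pm i\lambda b}\Pi_0^\pm(\lambda|x-y|)$, $\Pi_0^\pm(\lambda a)$ and $\lambda r_y(x)\Pi_1^\pm(\lambda a)$ separately, and use $|b|^k\le|b|^k(\lambda|b|)^{2-k}=\lambda^{2-k}|b|^2$.
\end{itemize}
This pins the weight at $\langle x\rangle^2$ for $k=0,1,2$.

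The paper also Taylor expands in the single radial variable, $t\mapsto f(\lambda(a+tb))$, at the base point $\lambda a$. This directly produces the admissible polynomial $f(\lambda a)-\lambda r_y(x)f'(\lambda a)$ and never differentiates in $x$ through $x=y$. It also makes your split at $|x|=a/2$ unnecessary. Your handling of the recentering term $\lambda\tilde b\,f'(\lambda a)$ agrees with the paper's.
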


\begin{lemma}\label{lem:est-S0vR0-2}
Assume $|v(x)|\lesssim\langle x\rangle^{-3-}$. Then for $m=1,2$ we have the decomposition
\begin{equation}\label{eq:S0vR-G-0}
\mathcal{Q}_m v\big(R_0^\pm(\lambda^2)(\cdot,y)\big)=e^{\pm i\lambda a}\,\mathcal{Q}_m\omega_m^\pm(\lambda,\cdot,y)-\mathcal{Q}_m\psi_m(\cdot,y)\,\chi(\lambda a),
\end{equation}
where $\omega_m^\pm(\lambda,x,y)$ satisfy, for $\lambda\in(0,1/2)$ and $k=0,1,2$,
\begin{equation}\label{eq:omega-est-3}
\bigl\|\partial_\lambda^k\omega_m^\pm(\lambda,\cdot,y)\bigr\|_{L^2(\mathbb{R}^2)}\lesssim\lambda^{m-\frac12-k}a^{-\frac12},
\end{equation}
and
\begin{equation}\label{eq:psi-est-0}
\|\psi_m(\cdot,y)\|_{L^2(\mathbb{R}^2)}\lesssim a^{-m}.
\end{equation}
\end{lemma}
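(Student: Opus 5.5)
We need to prove Lemma~\ref{lem:est-S0vR0-2}: the decomposition \eqref{eq:S0vR-G-0} of $\mathcal Q_m vR_0^\pm(\lambda^2)(\cdot,y)$, with an oscillatory part of size $\lambda^{m-1/2}a^{-1/2}$ and a non-oscillatory low-frequency part $\psi_m\chi(\lambda a)$ of size $a^{-m}$.

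The plan is to split according to whether $\lambda a\gtrsim1$ or $\lambda a\lesssim1$, writing $1=\chi(\lambda a)+\tilde\chi(\lambda a)$.

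\textbf{The high piece $\tilde\chi(\lambda a)$.} We use the representation $H_0^\pm(z)=e^{\pm iz}\Theta^\pm(z)$ and factor out $e^{\pm i\lambda a}$. This leaves the amplitude
\[
v(x)\,e^{\pm i\lambda(|x-y|-a)}\,\Theta^\pm(\lambda|x-y|)\,\tilde\chi(\lambda a).
\]
We then Taylor expand the amplitude in $x$ around $x=0$ to order $m$. This means expanding $|x-y|-a=b$ and $\Theta^\pm(\lambda|x-y|)$ via the abbreviations \eqref{eq:abre}. The Taylor polynomial of degree $<m$ in $x$, multiplied by $v$, is annihilated by $\mathcal Q_m$ by \eqref{eq:def of Qm}, since its coefficients depend only on $(\lambda,y)$.

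The remainder gains a factor $\lambda|x|$ per order in the phase, and a factor $|x|/|y|$ in $\Theta$. On $\lambda a\gtrsim1$ we have $|x|/a\lesssim\lambda|x|$, so the total gain is $(\lambda\langle x\rangle)^m$. Combined with $|\Theta^\pm(\lambda|x-y|)|\lesssim(\lambda a)^{-1/2}$, this gives $\lambda^{m-1/2}a^{-1/2}\langle x\rangle^m$ times $v$. This is in $L^2$ since $|v|\lesssim\langle x\rangle^{-3-}$ and $m\le2$.

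For the $\lambda$-derivatives, each $\partial_\lambda$ produces a factor $|b|\lesssim\langle x\rangle$ or a $\lambda^{-1}$ from $\Theta$. Since $\langle x\rangle\le\lambda^{-1}\langle x\rangle^{?}$ is not available directly, we instead use the Taylor remainder in integral form, in which each derivative costs $\lambda^{-1}$ times the same structure. The additional weights $\langle x\rangle^{k}$ for $k\le2$ are absorbed by $v$. This yields the $k$-loss $\lambda^{-k}$ in \eqref{eq:omega-est-3}.

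This high piece defines part of $\omega_m^\pm$.

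\textbf{The low piece $\chi(\lambda a)$.} Here $\lambda|x-y|$ is small when $|x|\lesssim a$, so we use \eqref{eq:asy-Hanel-0}:
\[
H_0^\pm(\lambda|x-y|)=c_\pm(\lambda)\pm\tfrac{2i}{\pi}\log|x-y|+\zeta_0^\pm(\lambda|x-y|).
\]
The constant $c_\pm(\lambda)$, containing $\log\lambda$, is killed by $\mathcal Q_m$ because $m\ge1$. We then set
\[
\psi_m(x,y):=\mp\tfrac{i}{4}\cdot\tfrac{2i}{\pi}\,v(x)\bigl[\log|x-y|-(\text{Taylor polynomial in }x\text{ of degree}<m)\bigr],
\]
which is independent of $\pm$ and of $\lambda$, as required. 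The sign is chosen so that \eqref{eq:S0vR-G-0} holds.

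Expanding $\log|x-y|$ around $x=0$ gives a remainder bounded by $(|x|/a)^m$ for $|x|\le a/2$. For $|x|>a/2$ we bound it crudely by $\langle x\rangle^{0+}$ plus the singular $\log|x-y|$. The singular part is handled using $\langle x\rangle^{-3-}$ decay: the region $|x|\gtrsim a$ already gives $a^{-m}$, since $v\langle x\rangle^m\in L^2$, and $\|v\log|\cdot-y|\|_{L^2(|x|>a/2)}\lesssim a^{-2-}\log a$. This gives \eqref{eq:psi-est-0}.

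What remains of the low piece is $\chi(\lambda a)\mathcal Q_m v\,\zeta_0^\pm(\lambda|x-y|)$. We write it as $e^{\pm i\lambda a}\bigl(e^{\mp i\lambda a}\chi(\lambda a)\,v\zeta_0^\pm\bigr)$ and add it to $\omega_m^\pm$. Using \eqref{eq:est for r-0}, $|\zeta_0^\pm(z)|\lesssim z^{3/2}$ for small $z$, together with $\lambda a\lesssim1$, we get
\[
(\lambda a)^{3/2}\le(\lambda a)^{m-1/2}\quad\text{for }m\le2.
\]
This yields $\lambda^{m-1/2}a^{m-1/2}\langle x\rangle^{3/2}$. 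That has the wrong power of $a$, so here we must again subtract the Taylor polynomial of $\zeta_0^\pm(\lambda|x-y|)$ in $x$, which $\mathcal Q_m$ kills. The remainder is $\lesssim\lambda^{m}|x|^{m}\,(\lambda a)^{3/2-m}\cdot\ldots$, and since $\lambda a\lesssim1$ it is bounded by $\lambda^{m-1/2}a^{-1/2}\langle x\rangle^{m+}$. Derivatives in $\lambda$ are handled as before, with $\partial_\lambda\chi(\lambda a)=a\chi'$ costing $a\sim\lambda^{-1}$ on its support.

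\textbf{Main obstacle.} I expect the main difficulty to be this bookkeeping on the intermediate region $\lambda a\sim1$ and at $|x|\gtrsim a$, where neither the small-argument nor the large-argument expansion is uniform. The approach is to perform the Taylor subtraction at the level of the whole function $v(x)H_0^\pm(\lambda|x-y|)$, handled separately on $|x|<a/2$ and $|x|\ge a/2$. On the latter region $\langle x\rangle^{-3-}$ decay supplies the needed powers of $a^{-1}$, and the decay $|v|\lesssim\langle x\rangle^{-3-}$ is exactly what makes the weights $\langle x\rangle^{m+k}$ with $m+k\le4$ square integrable against $v$ after Cauchy–Schwarz.
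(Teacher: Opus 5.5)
Your architecture (the $\chi(\lambda a)/\tilde\chi(\lambda a)$ split, subtracting low-order polynomials that $\mathcal{Q}_m$ annihilates, and putting the logarithm into $\psi_m$) is close to the paper's. However, there is a genuine gap in the $\lambda$-derivative bounds for the high piece, which you noticed but did not resolve. Every $\partial_\lambda$ that falls on the phase $e^{\pm i\lambda b}$ produces a factor $b$. In the integral form of your $x$-Taylor remainder it produces $b(tx)$, and $|b(tx)|$ can be as large as $t|x|$. That is not $\lesssim\lambda^{-1}$ on the range $\lambda^{-1}\ll|x|<a/2$, which is nonempty as soon as $\lambda a\gg1$. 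So the integral form does not make each derivative cost only $\lambda^{-1}$. Estimating this way gives $\lambda^{m-\frac12}a^{-\frac12}|x|^m\max\{\lambda^{-1},\langle x\rangle\}^k$, i.e.\ weights up to $\langle x\rangle^{m+k}$. Your closing claim that such weights with $m+k\le 4$ are square integrable against $v$ is false in $\mathbb{R}^2$: the hypothesis $|v|\lesssim\langle x\rangle^{-3-}$ guarantees $\langle x\rangle^{j}v\in L^2(\mathbb{R}^2)$ only for $j\le 2$. The paper's estimates are calibrated exactly at $\langle x\rangle^{2}$, through $\|v\langle x\rangle^2|x-y|^{-1/2}\|_{L^2_x}\lesssim a^{-1/2}$. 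Hence \eqref{eq:omega-est-3} is not established for $(m,k)=(1,2),(2,1),(2,2)$. The bound is true there, but the Taylor-remainder estimate cannot detect it.

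The missing device is the second regime split in Lemma~\ref{lem:Taylor-scheme-0}, $\lambda|b|\lesssim1$ versus $\lambda|b|\gtrsim1$. For your expansion in $x$ about $0$, the right analogue is $\lambda\langle x\rangle\lesssim1$ versus $\lambda\langle x\rangle\gtrsim1$, because your remainder involves $b(tx)$ for all $t\in[0,1]$.

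In the first region, use the Taylor remainder: phase derivatives then cost $|b(tx)|\lesssim\langle x\rangle\lesssim\lambda^{-1}$, and the weight stays $\langle x\rangle^m$.

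In the second region, do not use the remainder formula at all. Bound the function and the subtracted polynomial separately, and convert the factors $|b|^k\lesssim\langle x\rangle^k$ into powers of $\lambda$ via $1\lesssim(\lambda\langle x\rangle)^{2-k}$. This produces the required power $\lambda^{m-\frac12-k}$ while capping the weight at $\langle x\rangle^2$ for every $k\le 2$.

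With this split added the rest goes through. In particular your $\zeta_0^\pm$ low piece is fine as written, since no $e^{\pm i\lambda b}$ appears there.

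Two smaller repairs are needed. First, expanding at $x=0$ uses the base point $|y|$ instead of $a$, so $\log|y|$ and $x\cdot y/|y|^2$ in your $\psi_m$ (and $x\cdot\nabla g(0)$ in the high piece) blow up as $|y|\to0$. Since $\mathcal{Q}_m$ kills $v$ times any polynomial of degree $<m$, subtract $\log a$ and $x\cdot\hat y/a$ instead, i.e.\ expand in $|x-y|$ about $a$ as the paper does. Second, $(\pm\frac{i}{4})(\pm\frac{2i}{\pi})=-\frac{1}{2\pi}$ for either sign, so $\psi_m$ carries the sign-independent factor $\frac{1}{2\pi}$.
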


\begin{proof}[Proof of Lemma~\ref{lem:est-vR-0}]
Factoring out the oscillation at frequency $a$, we define
$\omega_0^\pm(\lambda,x,y):=e^{\mp i\lambda a}v(x)R_0^\pm(\lambda^2)(x,y)$,
so that \eqref{eq:expr-vH0} holds by definition. By \eqref{eq:asy-Hanel-2}
and \eqref{eq:reso and Hankel-2},
\[
\omega_0^\pm(\lambda,x,y)=\pm\tfrac{i}{4}\,e^{\pm i\lambda b}v(x)\,\Theta^\pm(\lambda|x-y|),\qquad \Theta^\pm(z)=\mathcal{O}_\infty(z^{-1/2}).
\]
Each $\lambda$-derivative falls either on the phase, producing a factor
$b=\mathcal{O}(\langle x\rangle)$, or on the symbol $\Theta^\pm$, producing a
factor $\lambda^{-1}$. Hence Lemma~\ref{lemma-GV} and
$|v(x)|\lesssim\langle x\rangle^{-3-}$ give
\[
\bigl\|\partial_\lambda^k\omega_0^\pm(\lambda,\cdot,y)\bigr\|_{L^2}
\lesssim\lambda^{-\frac12-k}\left\|\frac{\langle x\rangle^2|v(x)|}{|x-y|^{1/2}}\right\|_{L^2(\mathbb{R}^2_x)}
\lesssim\lambda^{-\frac12-k}a^{-\frac12},
\]
which is \eqref{eq:omega-est-1}. When $\lambda a>1$ one has
$\lambda^{-\frac12}a^{-\frac12}\lesssim1\lesssim(-\log\lambda)$, so
\eqref{eq:omega-est-2} follows from \eqref{eq:omega-est-1}.

For $\lambda a\le 1$ we use instead the low-energy expansion of the Hankel
function recorded in Subsection~\ref{Sec:some Technical lemmas}:
\[
\omega_0^\pm(\lambda,x,y)=\pm\tfrac{i}{4}e^{\mp i\lambda a}v(x)\left(1\pm\frac{2i}{\pi}\gamma\pm\frac{2i}{\pi}\log\frac{\lambda|x-y|}{2}+\zeta_0^\pm(\lambda|x-y|)\right),
\quad \zeta_0^\pm(z)=\mathcal{O}_{\infty}(z^{3/2}),
\]
hence
\[
\bigl|\partial_\lambda^k\omega_0^\pm(\lambda,x,y)\bigr|\lesssim\lambda^{-k}|v(x)|\left(1-\log\lambda+\bigl|\log|x-y|\bigr|+\lambda^{3/2}|x-y|^{3/2}\right).
\]
Taking the $L^2$-norm in $x$ and invoking Lemma~\ref{lemma-GV}, we obtain
\[
\bigl\|\partial_\lambda^k\omega_0^\pm(\lambda,\cdot,y)\bigr\|_{L^2}\lesssim\lambda^{-k}\left(-\log\lambda+\log a+\lambda^{3/2}a^{3/2}\right)\lesssim(-\log\lambda)\,\lambda^{-k},
\]
since $\log a\lesssim |\log\lambda|$ and $\lambda^{3/2}a^{3/2}\lesssim1$ when
$\lambda a\le 1$ and $\lambda<1/2$. This proves \eqref{eq:omega-est-2}.
\end{proof}

\medskip

The proofs of Lemmas~\ref{lem:est-S0-J0} and~\ref{lem:est-S0vR0-2} rest on a
single Taylor-expansion principle, applied to
$f\in\{J_0^\pm,\,H_{0,g}^{\pm},\,h_0,\,h_1\}$: expand at the base point
$\lambda a$, recenter the dipole term at $r_y(x)$ via $b=\tilde b-r_y(x)$,
and peel the phase $e^{\pm i\lambda a}$ off the remainder. We isolate this
principle in the following master lemma.

\medskip

To implement the scheme, denote by
\begin{equation}\label{eq:Taylor-remainder}
\mathcal{R}_{m,f}(\lambda,a,b)
=\frac{(\lambda b)^{m}}{(m-1)!}\int_{0}^{1}(1-t)^{m-1}
f^{(m)}\bigl(\lambda a+t\lambda b\bigr)\,dt
\end{equation}
the $m$th-order integral Taylor remainder of $f$ expanded at $\lambda a$, and
set
\begin{equation}\label{eq:seconde- remainders}
\mathcal{A}_{f}^{\pm}(\lambda,a,b)
=e^{\mp i\lambda a}\Bigl[\lambda\tilde b\,f'(\lambda a)
+\mathcal{R}_{2,f}(\lambda,a,b)\Bigr].
\end{equation}
Recall from \eqref{eq:abre} the geometric identity
\begin{equation}\label{eq:geometric-b}
b=\tilde b-r_y(x),
\qquad\text{where}\quad
|b|\lesssim\langle x\rangle,\quad
|\tilde b\,|\lesssim\frac{\langle x\rangle^{2}}{a};
\end{equation}
the bound for $\tilde b$ follows by distinguishing the cases
$|x|\lesssim|y|$ and $|x|\gg|y|$. Applying Taylor's theorem to
$t\mapsto f\bigl(\lambda(a+tb)\bigr)$ at $t=0$ yields
\begin{equation}\label{eq:exp-usual}
f(\lambda|x-y|)=f(\lambda a)-\lambda\,r_y(x)\,f'(\lambda a)
+e^{\pm i\lambda a}\mathcal{A}_{f}^{\pm}(\lambda,a,b).
\end{equation}

\begin{lemma}\label{lem:Taylor-scheme-0}
Suppose that $f$ is smooth on $(0,\infty)$ and that
\begin{equation}\label{eq:assm of f}
e^{\mp iz}f^{(m)}(z)=\Pi_{m}^{\pm}(z)=\mathcal{O}_{\infty}(z^{\sigma_{m}}),
\qquad m=0,1,2.
\end{equation}
Then, for $k=0,1,2$,
\begin{equation}\label{eq:est-Af-1}
\bigl|\partial_{\lambda}^{k}\mathcal{A}_{f}^{\pm}(\lambda,a,b)\bigr|
\lesssim \lambda^{2-k}\langle x\rangle^{2}
\Bigl[(\lambda(a+b))^{\sigma_{0}}+(\lambda a)^{\sigma_{0}}
+(\lambda a)^{\sigma_{1}}\Bigr],
\qquad \lambda|b|\gtrsim 1,
\end{equation}
and, whenever $\sigma_{2}\in[-2,0]$ and $\lambda|b|\lesssim1$,
\begin{equation}\label{eq:est-Af-2}
\begin{aligned}
\bigl|\partial_{\lambda}^{k}\mathcal{A}_{f}^{\pm}(\lambda,a,b)\bigr|
\lesssim\lambda^{2-k}\langle x\rangle^{2}
\begin{cases}
(\lambda a)^{\sigma_{1}-1}+\bigl(\lambda a\bigr)^{\sigma_{2}},
&\sigma_2\in (-2, 0], \\[4pt]
(\lambda a)^{\sigma_{1}-1}
+(\lambda a)^{-\frac32}(\lambda a+\lambda b)^{-\frac12},
&\sigma_2=-2.
\end{cases}
\end{aligned}
\end{equation}
\end{lemma}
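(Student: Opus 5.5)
We work directly from the definition \eqref{eq:seconde- remainders}, writing
\[
\mathcal{A}_f^{\pm}=\lambda\tilde b\,e^{\mp i\lambda a}f'(\lambda a)+e^{\mp i\lambda a}\mathcal{R}_{2,f}(\lambda,a,b),
\]
and we estimate the two pieces separately. The only tools are the hypothesis \eqref{eq:assm of f}, the geometric bounds \eqref{eq:geometric-b} (namely $|b|\lesssim\langle x\rangle$ and $|\tilde b|\lesssim\langle x\rangle^2/a$), and the identity \eqref{eq:exp-usual}. Throughout, each $\lambda$-derivative will be shown to cost a factor $\lambda^{-1}$.

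\emph{The dipole term.} We have $e^{\mp i\lambda a}f'(\lambda a)=\Pi_1^\pm(\lambda a)$, and since $\Pi_1^\pm$ is a symbol, $\partial_\lambda^k\Pi_1^\pm(\lambda a)=\mathcal{O}(\lambda^{-k}(\lambda a)^{\sigma_1})$. Hence
\[
\bigl|\partial_\lambda^k\bigl(\lambda\tilde b\,\Pi_1^\pm(\lambda a)\bigr)\bigr|\lesssim \lambda^{1-k}\frac{\langle x\rangle^2}{a}(\lambda a)^{\sigma_1}=\lambda^{2-k}\langle x\rangle^2(\lambda a)^{\sigma_1-1}.
\]
This is exactly the $(\lambda a)^{\sigma_1-1}$ term in \eqref{eq:est-Af-2}. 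In the regime $\lambda|b|\gtrsim1$ of \eqref{eq:est-Af-1} we argue differently: we do not expand the remainder there, but use \eqref{eq:exp-usual} backwards. This gives
\[
\mathcal{A}_f^\pm=e^{\mp i\lambda a}\bigl[f(\lambda(a+b))-f(\lambda a)+\lambda r_y(x)f'(\lambda a)\bigr].
\]
We rewrite $e^{\mp i\lambda a}f(\lambda(a+b))=e^{\pm i\lambda b}\Pi_0^\pm(\lambda(a+b))$. Each derivative hitting the phase produces a factor $b$, which is at most $\langle x\rangle\le\lambda\langle x\rangle^2$ up to constants, because $\lambda|b|\gtrsim1$ forces $\lambda\langle x\rangle\gtrsim1$. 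Each derivative hitting the symbol produces $\lambda^{-1}$. The term $f(\lambda a)$ gives $(\lambda a)^{\sigma_0}$, and the term with $r_y(x)$ gives $\lambda\langle x\rangle(\lambda a)^{\sigma_1}$. Using $1\lesssim\lambda^2\langle x\rangle^2$ and $\lambda\langle x\rangle\lesssim\lambda^2\langle x\rangle^2$ in this regime, all three contributions are bounded by $\lambda^{2-k}\langle x\rangle^2$ times the bracket in \eqref{eq:est-Af-1}.

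\emph{The Taylor remainder for $\lambda|b|\lesssim1$.} We write
\[
e^{\mp i\lambda a}\mathcal{R}_{2,f}=(\lambda b)^2\int_0^1(1-t)\,e^{\pm it\lambda b}\,\Pi_2^\pm(\lambda(a+tb))\,dt.
\]
Since $\lambda|b|\lesssim1$, derivatives falling on $e^{\pm it\lambda b}$ or on $(\lambda b)^2$ each cost $\lambda^{-1}$. Derivatives falling on $\Pi_2^\pm$ also cost $\lambda^{-1}$, because $\partial_\lambda[\Pi_2^\pm(\lambda c)]=\lambda^{-1}(z\partial_z\Pi_2^\pm)(\lambda c)$. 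Together with $(\lambda b)^2\lesssim\lambda^2\langle x\rangle^2$, this reduces the estimate to
\[
\int_0^1\bigl(\lambda|a+tb|\bigr)^{\sigma_2}\,dt .
\]

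\emph{Main obstacle: the $t$-integral.} The difficulty is that $a+tb$ can vanish or become small, which happens when $|x-y|\ll a$, i.e. $x$ is near $y$; there $b\approx-a$. If $|b|\le a/2$, then $a+tb\sim a$ and the integral is $\lesssim(\lambda a)^{\sigma_2}$. In general, $a+tb$ is linear in $t$ and runs from $a$ to $|x-y|\ge0$, so $a+tb=a-t(a-|x-y|)$. Substituting $s=a+tb$ gives
\[
\int_0^1(\lambda(a+tb))^{\sigma_2}\,dt=\frac{1}{|b|}\int_{|x-y|}^{a}(\lambda s)^{\sigma_2}\,ds .
\]
For $\sigma_2\in(-1,0]$ this is $\lesssim(\lambda a)^{\sigma_2}$ directly. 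For $\sigma_2\in(-2,-1]$ the integral is divergent at $s=0$, and here we use the fact that $|b|$ large forces $\langle x\rangle\gtrsim a$. We trade powers of $\langle x\rangle$ against $a$, for instance by bounding $(\lambda b)^2$ through $\lambda^2|b|\langle x\rangle$ and absorbing $|b|^{-1}$. We expect this to work because $\lambda s$ is only integrated up to $\lambda a$ with the prefactor $1/|b|\sim1/a$. The endpoint $\sigma_2=-2$ is handled by computing $\int s^{-2}\,ds$ exactly. This yields $(\lambda a)^{-1}(\lambda|x-y|)^{-1}$ times $\lambda/|b|$, which we rewrite as $(\lambda a)^{-3/2}(\lambda(a+b))^{-1/2}$ after using $\lambda|b|\lesssim1$ and $\lambda|x-y|\lesssim\lambda a$ to split the exponents. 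This case analysis on $\sigma_2$ is where we expect the real work of the proof to lie.
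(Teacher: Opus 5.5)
Your treatment of the regime $\lambda|b|\gtrsim1$ (via \eqref{eq:exp-usual} and the symbols $\Pi_0^\pm,\Pi_1^\pm$) and of the dipole term $\lambda\tilde b\,\Pi_1^\pm(\lambda a)$ matches the paper. The estimate of the integral remainder for $\lambda|b|\lesssim1$, however, has a genuine gap. You pass from $(\lambda b)^2\int_0^1(1-t)e^{\pm it\lambda b}\Pi_2^\pm(\lambda(a+tb))\,dt$ to $\int_0^1(\lambda(a+tb))^{\sigma_2}\,dt$, and in doing so you discard the weight $(1-t)$. That weight is exactly what compensates for $a+tb$ becoming small near $t=1$ when $x\to y$.

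Without the weight, the reduced integral is genuinely too large. Write $c:=|x-y|=a+b$ and take $b<0$. Your substitution gives $\frac{1}{|b|}\int_c^a(\lambda s)^{\sigma_2}\,ds$, which for $c\ll a$ behaves like
\begin{itemize}
\item[] $(\lambda a)^{-1}\log(a/c)$ at $\sigma_2=-1$,
\item[] $(\lambda a)^{\sigma_2}(a/c)^{-1-\sigma_2}$ for $\sigma_2\in(-2,-1)$,
\item[] $(\lambda a)^{-1}(\lambda c)^{-1}$ at $\sigma_2=-2$.
\end{itemize}
None of these is bounded by the target. Trading $\langle x\rangle$ against $a$ cannot help, because in this region $|b|\approx a\approx\langle x\rangle$ and $\lambda a\lesssim1$, so there is nothing to gain. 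At $\sigma_2=-2$ your final step also runs the wrong way: $\lambda|x-y|\lesssim\lambda a$ gives $(\lambda a)^{-1}(\lambda c)^{-1}\ge(\lambda a)^{-3/2}(\lambda c)^{-1/2}$, and the ratio $(a/c)^{1/2}$ is unbounded.

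This loss is not cosmetic. The function $|x-y|^{-1}$ is not locally square integrable in $\mathbb{R}^2$, so your weaker bound would destroy the $L^2_x$ estimate in Corollary~\ref{lem:log-expansion}, where the case $\sigma_2=-2$ is used for $h_0$.

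The fix is to keep the factor $(1-t)$ and use the identity $a+tb=(1-t)a+t|x-y|$, which gives $a+tb\ge\max\{(1-t)a,\,t|x-y|\}$ for either sign of $b$.
\begin{itemize}
\item[] For $\sigma_2\in(-2,0]$ this yields $(1-t)\bigl(\lambda(a+tb)\bigr)^{\sigma_2}\le(1-t)^{1+\sigma_2}(\lambda a)^{\sigma_2}$, which is integrable in $t$.
\item[] For $\sigma_2=-2$, split $z^{-2}=z^{-3/2}z^{-1/2}$ and apply the two lower bounds to the two factors separately. This gives $(\lambda a)^{-3/2}(\lambda|x-y|)^{-1/2}\int_0^1(1-t)^{-1/2}t^{-1/2}\,dt$.
\end{itemize}
Your derivative bookkeeping (each $\lambda$-derivative costs $\lambda^{-1}$ when $\lambda|b|\lesssim1$) then carries over unchanged. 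No substitution $s=a+tb$ and no case analysis on the size of $|x-y|$ are needed.
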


\begin{proof}
For $\lambda|b|\gtrsim 1$, we combine \eqref{eq:exp-usual} with
\eqref{eq:assm of f} to write
\[
\mathcal{A}_{f}^{\pm}(\lambda,a,b)
=e^{\pm i\lambda b}\Pi_{0}^{\pm}(\lambda a+\lambda b)
-\Pi_{0}^{\pm}(\lambda a)
+\lambda\,r_y(x)\,\Pi_{1}^{\pm}(\lambda a).
\]
Differentiation in $\lambda$ falls either on the symbols $\Pi_{m}^{\pm}$,
costing a factor $\lambda^{-1}$, or on the phase $e^{\pm i\lambda b}$,
costing $|b|$. Since $\lambda|b|\gtrsim1$ and $|b|\lesssim\langle x\rangle$,
the bound \eqref{eq:est-Af-1} follows directly from \eqref{eq:assm of f}.

For $\lambda|b|\lesssim 1$ the phase $e^{\pm i\lambda b}$ is bounded, and it
is more efficient to keep the second-order expansion intact: from
\eqref{eq:Taylor-remainder}, \eqref{eq:seconde- remainders} and
\eqref{eq:assm of f},
\[
\mathcal{A}_{f}^{\pm}(\lambda,a,b)
=\lambda\tilde b\,\Pi_{1}^{\pm}(\lambda a)
+(\lambda b)^{2}\int_{0}^{1}(1-t)\,e^{\pm it\lambda b}\,
\Pi_{2}^{\pm}\bigl(\lambda(a+tb)\bigr)\,dt.
\]
Since $|\tilde b|\lesssim\langle x\rangle^{2}/a$, the symbol bounds
\eqref{eq:assm of f} give
$\bigl|\partial_{\lambda}^{k}\bigl[\lambda\tilde b\,\Pi_{1}^{\pm}(\lambda a)\bigr]\bigr|
\lesssim\lambda^{2-k}\langle x\rangle^{2}(\lambda a)^{\sigma_{1}-1}$,
which is the first term on the right-hand side of \eqref{eq:est-Af-2}.
For the integral term, each $\lambda$-derivative costs at most
$\lambda^{-1}$, and \eqref{eq:geometric-b} gives
$a+tb\ge\max\bigl\{(1-t)a,\,t(a+b)\bigr\}$; inserting
$\Pi_2^\pm=\mathcal{O}_\infty(z^{\sigma_2})$ (for $\sigma_2=-2$, split the
power $-2$ as $-\frac32-\frac12$ and use the two lower bounds separately)
yields
\[
\biggl|\int_{0}^{1}(1-t)\,e^{\pm it\lambda b}\,
\Pi_{2}^{\pm}\bigl(\lambda(a+tb)\bigr)\,dt\biggr|
\lesssim
\begin{cases}
\bigl(\lambda a\bigr)^{\sigma_{2}}, &\sigma_2\in(-2,0],\\[4pt]
(\lambda a)^{-3/2}(\lambda a+\lambda b)^{-1/2}, &\sigma_2=-2,
\end{cases}
\]
since the resulting integrals $\int_{0}^{1}(1-t)^{\sigma_2+1}\,dt$
($\sigma_2>-2$) and $\int_{0}^{1}(1-t)^{-1/2}t^{-1/2}\,dt$ are finite.
Multiplying by $(\lambda b)^2\lesssim\lambda^2\langle x\rangle^2$ gives the
second term of \eqref{eq:est-Af-2}.
\end{proof}

Together with Lemma~\ref{lemma-GV}, this immediately yields the following
corollary.

\begin{corollary}\label{lem:Taylor-scheme}
Let $f$ satisfy the assumptions of Lemma~\ref{lem:Taylor-scheme-0}, and let
$|v(x)|\lesssim\langle x\rangle^{-3-}$.  Then, for $k=0,1,2$,
\begin{equation}\label{eq:est-AF}
\Bigl\|v(x)\,\partial_{\lambda}^{k}\mathcal{A}_{f}^{\pm}(\lambda,a,b)\Bigr\|_{L_{x}^{2}}
\lesssim
\begin{cases}
\lambda^{\frac32-k}a^{-\frac12}
&\text{if }\lambda|b|\gtrsim1\text{ and }
(\sigma_{0},\sigma_{1})=(-\tfrac12,-\tfrac12),\\[4pt]
\lambda^{\frac32-k}a^{-\frac12}
&\text{if }\lambda|b|\lesssim 1\text{ and }
(\sigma_{1},\sigma_{2})=(\tfrac12,-\tfrac12),\\[4pt]
\end{cases}
\end{equation}
and
\begin{equation}\label{eq:est-AF-1}
\Bigl\|v(x)\,\partial_{\lambda}^{k}\mathcal{A}_{f}^{\pm}(\lambda,a,b)\Bigr\|_{L_{x}^{2}}
\lesssim
\begin{cases}
\lambda^{2-k}
&\text{if }\lambda|b|\gtrsim1\text{ and }
(\sigma_{0},\sigma_{1})=(0,0),\\[4pt]
\lambda^{2-k}
&\text{if }\lambda|b|\lesssim1\text{ and }
(\sigma_{1},\sigma_{2})=(1,0).
\end{cases}
\end{equation}
\end{corollary}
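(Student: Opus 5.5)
The statement to prove is Corollary~\ref{lem:Taylor-scheme}. The plan is to insert the pointwise bounds of Lemma~\ref{lem:Taylor-scheme-0} for the specified exponents, multiply by $|v(x)|\lesssim\langle x\rangle^{-3-}$, and take the $L^2_x$ norm. Every resulting weight is a power of $\langle x\rangle$ times a power of $\lambda a$ or $\lambda(a+b)=\lambda|x-y|$. The only genuinely $x$-dependent singular factor is $|x-y|^{\sigma_0}$, and it is handled by Lemma~\ref{lemma-GV}. Throughout, $\lambda\in(0,1/2)$ and $a=\langle y\rangle\ge1$.

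\emph{Case $\lambda|b|\gtrsim1$, $(\sigma_0,\sigma_1)=(-\tfrac12,-\tfrac12)$.} By \eqref{eq:est-Af-1},
\[
|v\,\partial_\lambda^k\mathcal A_f^\pm|\lesssim\lambda^{\frac32-k}\langle x\rangle^{-1-}\bigl(|x-y|^{-1/2}+a^{-1/2}\bigr).
\]
The $a^{-1/2}$ term has $L^2$ norm $\lesssim a^{-1/2}$, since $\langle x\rangle^{-1-}\in L^2(\R^2)$. For the other term, $\|\langle x\rangle^{-1-}|x-y|^{-1/2}\|_{L^2}^2=\int|x-y|^{-1}\langle x\rangle^{-2-}dx\lesssim\langle y\rangle^{-1}$ by Lemma~\ref{lemma-GV} with $k=1$, $\ell=2+$. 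Together these give $\lambda^{\frac32-k}a^{-1/2}$.

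\emph{Case $\lambda|b|\gtrsim1$, $(\sigma_0,\sigma_1)=(0,0)$.} The bracket in \eqref{eq:est-Af-1} is $O(1)$, so the bound is $\lambda^{2-k}\|\langle x\rangle^{-1-}\|_{L^2}\lesssim\lambda^{2-k}$.

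\emph{Case $\lambda|b|\lesssim1$, $(\sigma_1,\sigma_2)=(\tfrac12,-\tfrac12)$.} Here \eqref{eq:est-Af-2} applies with $\sigma_2\in(-2,0]$ and gives $\lambda^{2-k}\langle x\rangle^2\bigl((\lambda a)^{-1/2}+(\lambda a)^{-1/2}\bigr)$, hence $\lambda^{\frac32-k}a^{-1/2}\langle x\rangle^{2}$. Multiplying by $v$ leaves $\langle x\rangle^{-1-}\in L^2$, which yields the claim.

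\emph{Case $\lambda|b|\lesssim1$, $(\sigma_1,\sigma_2)=(1,0)$.} Now \eqref{eq:est-Af-2} gives $\lambda^{2-k}\langle x\rangle^2\bigl((\lambda a)^0+1\bigr)$, and the same $L^2$ computation gives $\lambda^{2-k}$.

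Strictly, the two regimes $\lambda|b|\gtrsim1$ and $\lambda|b|\lesssim1$ should be read as pointwise regions in $x$ for fixed $(\lambda,y)$. The norm is then taken over each region, using the bound valid there, and the corollary is understood this way. The only step needing care is the first case, where $|x-y|^{-1/2}$ must be matched against the available decay of $v^2\langle x\rangle^4$. This is exactly where $|v|\lesssim\langle x\rangle^{-3-}$ is needed: it leaves the weight $\langle x\rangle^{-2-}$, whose exponent satisfies $\ell>n-k=1$ in Lemma~\ref{lemma-GV}, producing the decay $a^{-1}$. The remaining cases are immediate.
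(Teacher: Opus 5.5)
Your proposal is correct and follows the paper's own argument. You substitute the specified exponents into the pointwise bounds \eqref{eq:est-Af-1}--\eqref{eq:est-Af-2} on the corresponding regions of $x$ and multiply by $|v|\lesssim\langle x\rangle^{-3-}$. The only nontrivial input is then $\|v\langle x\rangle^{2}|x-y|^{-1/2}\|_{L^2_x}\lesssim a^{-1/2}$ from Lemma~\ref{lemma-GV}, and all other terms are controlled by $\|\langle x\rangle^{-1-}\|_{L^2}$.
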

\begin{proof}
Inserting the specified exponents into
\eqref{eq:est-Af-1}--\eqref{eq:est-Af-2} and applying the resulting pointwise
bounds on the corresponding regions of~$x$, the claim follows from
$\bigl\|v(x)\langle x\rangle^{2}(a+b)^{-1/2}\bigr\|_{L^2(\mathbb{R}^2_x)}\lesssim a^{-1/2}$,
which is a consequence of Lemma~\ref{lemma-GV} since $a+b=|x-y|$.
\end{proof}

\medskip

\begin{corollary}\label{lem:log-expansion}
Let $h_0(z)=\log z\,\chi(z)$ and $h_1(z)=\log z\,\tilde{\chi}(z)$, and let
$|v(x)|\lesssim\langle x\rangle^{-3-}$.
Then, for $j=0,1$, we have
\begin{equation*}
h_j(\lambda|x-y|)-h_j(\lambda a)+\lambda\,r_y(x)\,h_j'(\lambda a)
=e^{\pm i\lambda a}\,\mathcal{A}_{h_j}^{\pm}(\lambda,a,b),
\end{equation*}
where the remainders satisfy, for all $\lambda\in(0,1/2)$ and $k=0,1,2$,
\begin{equation}\label{eq:est-h0-remainder}
\bigl\|v(x)\partial_\lambda^k\mathcal{A}_{h_0}^{\pm}(\lambda,a,b)\bigr\|_{L_x^2}
\lesssim\lambda^{\frac32-k}a^{-\frac12},
\qquad\text{whenever }\lambda a\gtrsim 1,
\end{equation}
and
\begin{equation}\label{eq:est-h1-remainder}
\bigl\|v(x)\partial_\lambda^k\mathcal{A}_{h_1}^{\pm}(\lambda,a,b)\bigr\|_{L_x^2}
\lesssim\lambda^{\frac32-k}a^{-\frac12},
\qquad\text{whenever }\lambda a\lesssim 1.
\end{equation}
\end{corollary}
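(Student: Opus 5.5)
The identity itself is just \eqref{eq:exp-usual} applied to $f=h_j$, so the content is the two remainder bounds. The plan is to reduce both bounds to Lemma~\ref{lem:Taylor-scheme-0} and Corollary~\ref{lem:Taylor-scheme}. The reduction exploits the fact that on the relevant range of $\lambda a$ each $h_j$ looks like a symbol of order zero with derivatives decaying like powers of $z^{-1}$. The one defect is the unbounded factor $\log z$ itself, which has to be removed by cancellation.

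\textbf{Bound \eqref{eq:est-h0-remainder}.} Here $\lambda a\gtrsim1$, so $h_0(\lambda a)=h_0'(\lambda a)=0$, and the base-point terms of $\mathcal A^\pm_{h_0}$ vanish up to the transition region $\lambda a\sim1$, which I would treat like the small-$\lambda|b|$ case. Thus $e^{\pm i\lambda a}\mathcal A^\pm_{h_0}=h_0(\lambda|x-y|)$ is supported where $\lambda|x-y|\le1$.
\begin{itemize}
\item When $\lambda a\gtrsim1$ and $\lambda|x-y|\le 1$, we must have $|b|\gtrsim a$, so $\lambda|b|\gtrsim1$ and $|x|\gtrsim a$.
\item The pointwise bound is then $|\partial_\lambda^k h_0(\lambda|x-y|)|\lesssim\lambda^{-k}\bigl(1+|\log\lambda|x-y||\bigr)$.
\item The factors $\lambda^{3/2}$ and $a^{-1/2}$ come from the large weight. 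Use $\langle x\rangle\gtrsim a\gtrsim\lambda^{-1}$ together with $|v(x)|\lesssim\langle x\rangle^{-3-}$: spend $\langle x\rangle^{-2}\lesssim\lambda^{3/2}a^{-1/2}$ and keep $\langle x\rangle^{-1-}$ for $L^2$ integrability.
\item The logarithm is only locally singular at $x=y$, so it is square integrable against $\langle x\rangle^{-1-}$, uniformly after the rescaling $\lambda|x-y|\le1$. This uses $|\log\lambda|x-y||\le |\log\lambda|+|\log|x-y||$ and $|\log\lambda|\lesssim \lambda^{-0+}$, absorbed by the extra $\langle x\rangle^{-0+}$.
\end{itemize}

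\textbf{Bound \eqref{eq:est-h1-remainder}.} Here $\lambda a\lesssim1$, so $h_1(\lambda a)$ and $h_1'(\lambda a)$ vanish, up to the transition region again. Now $\mathcal A^\pm_{h_1}=e^{\mp i\lambda a}h_1(\lambda|x-y|)$ is supported where $\lambda|x-y|\ge1/2$, forcing $\lambda|b|\gtrsim1$ and $|x|\gtrsim\lambda^{-1}\gtrsim a$. Proceeding as above gives
\[
|\partial_\lambda^k h_1(\lambda|x-y|)|\lesssim\lambda^{-k}\log(\lambda|x-y|)\lesssim\lambda^{-k}\log(2+\lambda\langle x\rangle).
\]
The weight $\langle x\rangle^{-2}\lesssim\lambda^2\lesssim\lambda^{3/2}a^{-1/2}$ (using $\lambda a\lesssim1$) absorbs this logarithm. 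The derivatives are fine because $h_j^{(m)}(z)=\mathcal O(z^{-m})$ on the support.

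\textbf{Transition region $\lambda a\sim1$.} In this region the full three-term structure is kept, and I would instead invoke Lemma~\ref{lem:Taylor-scheme-0} directly.
\begin{itemize}
\item Write $h_j$ as bounded plus logarithm. Because $\lambda a\sim1$, one has $\log(\lambda|x-y|)=\log(\lambda a)+\log(|x-y|/a)$, and the constant $\log(\lambda a)=\mathcal O(1)$ is harmless.
\item With $\sigma_0=0+$, $\sigma_1=-1$, $\sigma_2=-2$, estimate \eqref{eq:est-Af-2} gives $\lambda^{2-k}\langle x\rangle^2\bigl[1+(\lambda a+\lambda b)^{-1/2}\bigr]$ for $\lambda|b|\lesssim1$. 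Since $\lambda a\sim1$, this is controlled by $\lambda^{3/2-k}a^{-1/2}\langle x\rangle^2|x-y|^{-1/2}$.
\item Estimate \eqref{eq:est-Af-1} handles $\lambda|b|\gtrsim1$ similarly, up to the logarithm, which the argument of the previous two paragraphs absorbs.
\item Lemma~\ref{lemma-GV} then closes as in Corollary~\ref{lem:Taylor-scheme}.
\end{itemize}

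\textbf{Main obstacle.} The delicate point is the logarithm near $\lambda|x-y|\to0$ in the $h_0$ case, together with the $\log$ growth for $h_1$. Neither is a pure power symbol, so Corollary~\ref{lem:Taylor-scheme} cannot be quoted verbatim. I expect to handle both by sacrificing an arbitrarily small power of $\langle x\rangle$, which the $3+$ decay of $v$ allows.
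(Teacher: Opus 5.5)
Your case split can be made to work, but one step fails as written. In the regime $\lambda a\gtrsim 1$ you bound $\partial_\lambda^k h_0(\lambda|x-y|)$ by $\lambda^{-k}(1+|\log\lambda|x-y||)$. The function you must differentiate, however, is $\mathcal{A}^\pm_{h_0}=e^{\mp i\lambda a}h_0(\lambda|x-y|)$. Each derivative that lands on the phase produces a factor $a$, which here is $\gtrsim\lambda^{-1}$ and may be arbitrarily larger. So $\partial_\lambda^2\mathcal{A}^\pm_{h_0}$ contains $a^2h_0(\lambda|x-y|)$, and your budget does not close at $k=2$. After using $\langle x\rangle^{-2}\lesssim a^{-2}$ to cancel $a^2$, you would still need $\|\langle x\rangle^{-1-}h_0(\lambda|\cdot-y|)\|_{L^2}\lesssim(\lambda a)^{-1/2}$, whereas the uniform square-integrability you invoke gives only $\mathcal{O}(1)$. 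The repair is as follows. On the support (take $\lambda a\ge 2$) one has $a\le 2|b|\lesssim\langle x\rangle$ and $\lambda\langle x\rangle\gtrsim 1$. Hence every derivative costs at most $\langle x\rangle$, and $\langle x\rangle^k\lesssim\lambda^{2-k}\langle x\rangle^2$ for $k\le 2$. Combined with $|z^jh_0^{(j)}(z)|\lesssim 1+|\log z|\lesssim z^{-1/2}$, this gives $|\partial_\lambda^k\mathcal{A}^\pm_{h_0}|\lesssim\lambda^{\frac32-k}\langle x\rangle^2|x-y|^{-1/2}$, and Lemma~\ref{lemma-GV} closes the estimate exactly as in Corollary~\ref{lem:Taylor-scheme}.

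The transition-region step is also not justified as written. For $h_0$, $\sigma_0=0+$ is false, since $h_0$ is unbounded at $0$. For $h_1$, Lemma~\ref{lem:Taylor-scheme-0} cannot be quoted at all: $\frac{d}{dz}(e^{\mp iz}\log z)$ has size $\log z$ rather than $z^{-1}\log z$ at infinity, so $e^{\mp iz}h_1^{(m)}$ are not symbols. The real obstruction is the absence of oscillation of $h_1$, not the logarithm.

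None of this case analysis is needed, and the paper avoids it. Because $h_0$ is supported in $[0,1]$, the phase is harmless and $e^{\pm iz}h_0^{(\ell)}$ are genuine symbols of orders $(-\frac12,-1,-2)$, the logarithm being dominated by $z^{-1/2}$. So the lemma applies verbatim on all of $\lambda a\gtrsim 1$, including $\lambda a\in[\frac12,1]$ where $h_0(\lambda a)\neq 0$, with Lemma~\ref{lemma-GV} absorbing the factors $(\lambda|x-y|)^{-1/2}$. For $h_1$ the paper bypasses the lemma. Here $z^jh_1^{(2+j)}(z)$ is bounded for $j\ge 0$, and $h_1'(\lambda a)=0$ unless $\lambda a\ge\frac12$. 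The integral form of the second-order remainder, with the phase $e^{\mp i\lambda a}$ costing only $a\lesssim\lambda^{-1}$, then gives $|\partial_\lambda^k\mathcal{A}^\pm_{h_1}|\lesssim\lambda^{2-k}\langle x\rangle^2$ for all $x$ whenever $\lambda a\lesssim 1$, and $\lambda^{2-k}\lesssim\lambda^{\frac32-k}a^{-\frac12}$ in that regime.
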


\begin{proof}
Since $h_0$ is supported near the origin and $h_1$ away from it, one checks
directly that
$e^{\pm iz}h_0(z)=\mathcal{O}_{\infty}(z^{-1/2})$,
$e^{\pm iz}h_0^{(\ell)}(z)=\mathcal{O}_{\infty}(z^{-\ell})$ for $\ell=1,2$,
and $h_1^{(\ell)}(z)=\mathcal{O}_{\infty}(z)$ for $\ell=0,1$,
$h_1^{(2)}(z)=\mathcal{O}_{\infty}(1)$.
Applying Lemma~\ref{lem:Taylor-scheme-0} with
$(\sigma_0,\sigma_1,\sigma_2)=(-\frac12,-1,-2)$ for $h_0$, and using
Lemma~\ref{lemma-GV} for the terms involving
$(a+b)^{-1/2}=|x-y|^{-1/2}$, every resulting term is
$\lesssim\lambda^{\frac32-k}a^{-\frac12}$ when $\lambda a\gtrsim 1$, which
proves \eqref{eq:est-h0-remainder}. For $h_1$, which is supported away from
zero, a direct computation gives
$\bigl|\partial_{\lambda}^k\mathcal{A}_{h_1}^{\pm}(\lambda,a,b)\bigr|
\lesssim\langle x\rangle^{2}\lambda^{2-k}$ when $\lambda a\lesssim 1$;
since $\lambda^{2-k}\lesssim\lambda^{\frac32-k}a^{-\frac12}$ in this regime,
multiplication by $v(x)$ gives \eqref{eq:est-h1-remainder}.
\end{proof}

\medskip

We now prove Lemmas~\ref{lem:est-S0-J0} and~\ref{lem:est-S0vR0-2}. In each
case \eqref{eq:exp-usual} is applied to
$f\in\{J_0^\pm,\,H_{0,g}^{\pm},\,h_0,\,h_1\}$ and $\mathcal{Q}_m$ is then
applied to $v$ times the expansion; by \eqref{eq:def of Qm} the zeroth-order
term drops out for $m=1,2$ and the dipole term for $m=2$.

\medskip

\begin{proof}[Proof of Lemma~\ref{lem:est-S0-J0}]
Recall that the difference of the free resolvents is expressed by the Bessel
kernel
$\bigl(R_0^+(\lambda^2)-R_0^-(\lambda^2)\bigr)(x,y)=\tfrac{i}{2}J_0(\lambda|x-y|)$.
We decompose $J_0$ by means of the Hankel asymptotics
\eqref{eq:asy-Hanel-0}--\eqref{eq:asy-Hanel-1} (see also
\cite[Chapter~9]{Abra}):
\begin{equation*}
J_0(z)=\zeta_1(z)+\frac{e^{iz}\Theta^+(z)}{2}\tilde{\chi}(z)+\frac{e^{-iz}\Theta^-(z)}{2}\tilde{\chi}(z)
=:J_0^+(z)+J_0^-(z),
\end{equation*}
where $\Theta^\pm(z)=\mathcal{O}_\infty(z^{-1/2})$, the remainder
$\zeta_1\in\mathcal{S}(\mathbb{R})$ satisfies
\begin{equation}\label{eq:est-for-zeta0}
\zeta_1(z)=1-\frac{z^2}{4}+\mathcal{O}_\infty(z^4)\qquad\text{as }z\to0,
\end{equation}
and $J_0^\pm(z):=\frac12\zeta_1(z)+\frac12e^{\pm iz}\Theta^\pm(z)\tilde\chi(z)$.
From \eqref{eq:est-for-zeta0} and the bounds for $\Theta^\pm$, the
derivatives of $J_0^\pm$ admit the unified representation
\begin{equation}\label{eq:rep-J0-pm}
e^{\mp iz}(J_0^\pm)^{(m)}(z)=\Pi_m^\pm(z),\quad m=0,1,2,
\end{equation}
where
\begin{equation}\label{eq:est-for-Pi-0}
\Pi_m^\pm(z)=\mathcal{O}_\infty(z^\sigma)\quad\text{for }m\in\{0,2\}\text{ and every }\sigma\in[-\tfrac12,0],
\end{equation}
and
\begin{equation}\label{eq:est-for-Pi-1}
\Pi_1^\pm(z)=\mathcal{O}_\infty(z^\sigma)\quad\text{for every }\sigma\in[-\tfrac12,1].
\end{equation}
Thus $J_0^+$ satisfies the hypotheses of
Corollary~\ref{lem:Taylor-scheme}, and $J_0^-$ does so by complex
conjugation. Applying \eqref{eq:exp-usual} to $f=J_0^+$ and its complex
conjugate to $f=J_0^-$, and adding the two identities, we obtain
\begin{equation}\label{eq:exp-for-J0-low}
J_0(\lambda|x-y|)-J_0(\lambda a)+\lambda\,r_y(x)\,J_0'(\lambda a)
=e^{i\lambda a}\,\mathcal{A}_{J_0^+}^{+}(\lambda,a,b)
+e^{-i\lambda a}\,\mathcal{A}_{J_0^-}^{-}(\lambda,a,b).
\end{equation}
Applying $\mathcal{Q}_m$ to $v$ times \eqref{eq:exp-for-J0-low} and noting
that $J_0'(z)=e^{iz}\Pi_1^+(z)+e^{-iz}\Pi_1^-(z)$ by \eqref{eq:rep-J0-pm},
we arrive at \eqref{eq:S0vR+R-} with
\begin{align}\label{eq:def-eta1}
\eta_1^\pm(\lambda,x,y)&=\tfrac{i}{2}v(x)\biggl(\mathcal{A}_{J_0^\pm}^\pm(\lambda,a,b)-\lambda\,r_y(x)\,\Pi_1^\pm(\lambda a)\biggr),\\[4pt]
\eta_2^\pm(\lambda,x,y)&=\tfrac{i}{2}v(x)\mathcal{A}_{J_0^\pm}^\pm(\lambda,a,b).\notag
\end{align}

It remains to verify \eqref{eq:est-eta-w-1}--\eqref{eq:est-eta-w-2}. In view
of \eqref{eq:est-for-Pi-0}--\eqref{eq:est-for-Pi-1},
Corollary~\ref{lem:Taylor-scheme} gives
\[
\bigl\|\partial_\lambda^k\bigl[v(x)\mathcal{A}_{J_0^\pm}^\pm(\lambda,a,b)\bigr]\bigr\|_{L^2}
\lesssim\min\bigl\{\lambda^{\frac32-k}a^{-\frac12},\ \lambda^{2-k}\bigr\},
\]
which is admissible in \eqref{eq:est-eta-w-1}--\eqref{eq:est-eta-w-2} for
both $m=1$ and $m=2$, since $\lambda<1/2$. For the dipole term in
\eqref{eq:def-eta1}, \eqref{eq:est-for-Pi-1} with $\sigma=0$ and
$\sigma=-\frac12$ yields
$\bigl|\partial_\lambda^k\bigl[\lambda\Pi_1^\pm(\lambda a)\bigr]\bigr|
\lesssim\min\bigl\{\lambda^{1-k},\,\lambda^{\frac12-k}a^{-\frac12}\bigr\}$;
since $|r_y(x)|\le|x|$, multiplication by
$|v(x)|\lesssim\langle x\rangle^{-3-}$ shows that this term also satisfies
\eqref{eq:est-eta-w-1}--\eqref{eq:est-eta-w-2}. This completes the proof.
\end{proof}


\begin{proof}[Proof of Lemma~\ref{lem:est-S0vR0-2}]
We only treat the $+$ sign; the $-$ case follows by complex conjugation.
We begin with the decomposition
\begin{equation}\label{eq:exp-for-R_000}
\begin{aligned}
 R_0^+(\lambda^2)(x,y)&=\frac{i}{4}H_{0,g}^{+}(\lambda|x-y|)-\frac{1}{2\pi}h_0(\lambda|x-y|) \\[2pt]
 &=\frac{i}{4}H_{0,g}^{+}(\lambda|x-y|)-\frac{1}{2\pi}\Bigl[\chi(\lambda a)\log(\lambda|x-y|)
-\chi(\lambda a)h_1(\lambda|x-y|)
+\tilde{\chi}(\lambda a)h_0(\lambda|x-y|)\Bigr],
\end{aligned}
\end{equation}
where $h_0,h_1$ are as in Corollary~\ref{lem:log-expansion} and
$H_{0,g}^{+}(z):=H_0^{+}(z)-\frac{2i}{\pi}\log z\,\chi(z)$.

Expanding each of $H_{0,g}^+$, $h_0$ and $h_1$ by \eqref{eq:exp-usual} and
applying $\mathcal{Q}_m$ to $v$ times the resulting identity, the
zeroth-order term drops out for $m=1,2$ and the dipole term drops out for
$m=2$: for $f\in\{H_{0,g}^{+},\,h_0,\,h_1\}$,
\begin{equation}\label{eq:exp-for-f-spec}
\mathcal{Q}_m\bigl[v\,f(\lambda|\cdot-y|)\bigr]
=\begin{cases}
-\lambda f'(\lambda a)\,\mathcal{Q}_1(v\,r_y)
+e^{i\lambda a}\,\mathcal{Q}_1\bigl(v\,\mathcal{A}_{f}^{+}\bigr), &m=1, \\[4pt]
e^{i\lambda a}\,\mathcal{Q}_2\bigl(v\,\mathcal{A}_{f}^{+}\bigr), &m=2,
\end{cases}
\end{equation}
where $\mathcal{Q}_m$ acts in the $x$-variable and all amplitudes are
evaluated at $(\lambda,a,b)$, with $b=b(x,y)=|x-y|-a$ as in \eqref{eq:abre}.

For the remaining logarithmic term,
\begin{equation}\label{eq:log-mean-zero}
\frac{1}{2\pi}\mathcal{Q}_m v\bigl[\log(\lambda|\cdot-y|)\bigr]
=\frac{1}{2\pi}\mathcal{Q}_m v\bigl[\log|\cdot-y|\bigr],
\end{equation}
since $\mathcal{Q}_m v=0$ kills the additive constant $\log\lambda$. On the
region $|x-y|\ge a/2$ we expand
\[
\log|x-y|=\log a+\frac{b}{a}+\mathcal{O}\biggl(\frac{\langle x\rangle^2}{a^2}\biggr)
=\log a-\frac{r_y(x)}{a}+\frac{\tilde b}{a}
+\mathcal{O}\biggl(\frac{\langle x\rangle^2}{a^2}\biggr);
\]
on the complementary region $|x-y|\le a/2$ one has $\langle x\rangle\gtrsim a$,
and an elementary estimate gives
$\bigl\|v(x)\log|x-y|\,\mathbf{1}_{\{|x-y|\le a/2\}}\bigr\|_{L^2_x}\lesssim a^{-2}$,
so this contribution is absorbed into $\psi_m$ directly. The constant
$\log a$ drops out for $m=1,2$ and the dipole $r_y/a$ drops out for $m=2$;
we therefore set
\begin{align}\label{eq:def-psi}
\psi_1(x,y):=\frac{1}{2\pi}v(x)\biggl[\frac{b}{a}
+\mathcal{O}\biggl(\frac{\langle x\rangle^2}{a^2}\biggr)\biggr],\quad
\psi_2(x,y):=\frac{1}{2\pi}v(x)\biggl[\frac{\tilde b}{a}
+\mathcal{O}\biggl(\frac{\langle x\rangle^2}{a^2}\biggr)\biggr],\notag
\end{align}
with $b,\tilde b$ from \eqref{eq:abre} viewed as functions of $x$, so that
$\frac{1}{2\pi}\mathcal{Q}_m v\bigl[\log|\cdot-y|\bigr]
=\mathcal{Q}_m\psi_m(\cdot,y)$ up to the absorbed contribution above.

Combining \eqref{eq:exp-for-R_000}, \eqref{eq:exp-for-f-spec} and
\eqref{eq:log-mean-zero}, we arrive at \eqref{eq:S0vR-G-0} with
\begin{equation}\label{eq:def-omega-2}
\omega_2^+(\lambda,x,y)=v(x)\Bigl[\frac{i}{4}\mathcal{A}_{H_{0,g}^{+}}^{+}
+\frac{1}{2\pi}\chi(\lambda a)\mathcal{A}_{h_1}^{+}
-\frac{1}{2\pi}\tilde{\chi}(\lambda a)\mathcal{A}_{h_0}^{+}\Bigr],
\end{equation}
all amplitudes being evaluated at $(\lambda,a,b)$, and
\begin{equation}\label{eq:def-omega-1}
\begin{aligned}
\omega_1^+(\lambda,x,y)=\omega_2^+(\lambda,x,y)
-\lambda\,r_y(x)\,e^{-i\lambda a}v(x)\,G_0(\lambda a),
\end{aligned}
\end{equation}
where $G_0(z):=\tfrac{i}{4}(H_{0,g}^{+})'(z)
-\tfrac{1}{2\pi}\tilde{\chi}(z)h_0'(z)
+\tfrac{1}{2\pi}\chi(z)h_1'(z).$

It remains to verify \eqref{eq:omega-est-3} and \eqref{eq:psi-est-0}.
From the Hankel asymptotics \eqref{eq:asy-Hanel-0}--\eqref{eq:asy-Hanel-1} we
read off
$e^{-iz}(H_{0,g}^{+})^{(m)}(z)=\mathcal{O}_\infty(z^{-1/2})$ for $m=0,2$ and
$e^{-iz}(H_{0,g}^{+})'(z)=\mathcal{O}_\infty(z^{\sigma})$ for
$\sigma\in\{-\frac12,\frac12\}$. The bounds in \eqref{eq:est-AF}, applied
with $(\sigma_0,\sigma_1)=(-\frac12,-\frac12)$ on the region
$\lambda|b|\gtrsim1$ and with $(\sigma_1,\sigma_2)=(\frac12,-\frac12)$ on
$\lambda|b|\lesssim1$, then give, for all $\lambda\in(0,1/2)$ and $k=0,1,2$,
\begin{equation}\label{eq:est-for-H0-remainder}
\Bigl\|v(x)\partial_\lambda^k\mathcal{A}_{H_{0,g}^{+}}^{+}(\lambda,a,b)\Bigr\|_{L_x^2}
\lesssim\lambda^{\frac32-k}a^{-\frac12}.
\end{equation}
In \eqref{eq:def-omega-2} each amplitude is paired with the cut-off of the
regime in which its estimate holds, and differentiating the cut-offs is
harmless since $(\lambda a)^j\chi^{(j)}(\lambda a)$ and
$(\lambda a)^j\tilde{\chi}^{(j)}(\lambda a)$ are bounded for every $j\ge0$;
therefore \eqref{eq:est-for-H0-remainder}, \eqref{eq:est-h0-remainder} and
\eqref{eq:est-h1-remainder} yield \eqref{eq:omega-est-3} for $\omega_2^+$.
For the dipole term in \eqref{eq:def-omega-1},
$G(z):=e^{-iz}G_0(z)=\mathcal{O}_{\infty}(z^{-1/2})$ (the main contribution
is $e^{-iz}(H_{0,g}^+)'(z)=\mathcal{O}_\infty(z^{-1/2})$, while
$\tilde{\chi}h_0'$ and $\chi h_1'$ are smooth and compactly supported in
$(0,\infty)$), so the dipole contribution $-\lambda\,r_y(x)\,v(x)\,G(\lambda a)$
to $\omega_1^+$ satisfies
$\bigl|\partial_{\lambda}^k\bigl[\lambda\,r_y(x)\,G(\lambda a)\bigr]\bigr|
\lesssim\lambda^{\frac12-k}\langle x\rangle a^{-\frac12}$,
and \eqref{eq:omega-est-3} for $\omega_1^+$ follows upon multiplication by
$|v(x)|\lesssim\langle x\rangle^{-3-}$. Finally, \eqref{eq:psi-est-0} is
immediate from $|b|\lesssim\langle x\rangle$,
$|\tilde b|\lesssim\langle x\rangle^2/a$ and
$|v(x)|\lesssim\langle x\rangle^{-3-}$, which give
$|\psi_m(x,y)|\lesssim\langle x\rangle^{-1-}a^{-m}$.
\end{proof}


\begin{remark}\label{rem:rela-of-omegas}
The function $G_0$ in \eqref{eq:def-omega-1} (and its $-$-analogue)
simplifies algebraically: since
$H_{0,g}^{\pm}=H_0^{\pm}\mp\frac{2i}{\pi}h_0$ by definition and
$h_0'+h_1'=\bigl(\log z\,(\chi+\tilde\chi)\bigr)'=\frac{1}{z}$,
\[
\frac{i}{4}(H_{0,g}^{\pm})'\mp\frac{1}{2\pi}\,\tilde{\chi}h_0'
\pm\frac{1}{2\pi}\,\chi h_1'
=\frac{i}{4}(H_0^{\pm})'\pm\frac{\chi(z)}{2\pi z}.
\]
Multiplied by $-\lambda r_y(x)$ and with $\mathcal{Q}_1$ applied, the last
term on the right-hand side becomes
$\mp\frac{\chi(\lambda a)}{2\pi a}\mathcal{Q}_1[v\,r_y](\cdot,y)$,
which has the same form as the
$\mathcal{Q}_1\psi_1(\cdot,y)\chi(\lambda a)$ term in
\eqref{eq:S0vR-G-0} and is absorbed into it; the bound \eqref{eq:psi-est-0}
is unaffected since $|v(x)r_y(x)|/a\lesssim\langle x\rangle^{-2-}a^{-1}$.
Consequently,
\begin{align*}
\mathcal{Q}_1 v\bigl(R_0^\pm(\lambda^2)(\cdot,y)\bigr)
=e^{\pm i\lambda a}\,\mathcal{Q}_1\omega_2^\pm(\lambda,\cdot,y)
-\mathcal{Q}_1\psi_1(\cdot,y)\,\chi(\lambda a)
\mp \tfrac{i}{4}\lambda (H_0^\pm)'(\lambda a)\,\mathcal{Q}_1[v\,r_y](\cdot,y).
\end{align*}
\end{remark}

\begin{remark}\label{remk:on the diff}
The same argument, with the base point $\lambda|y|$ in place of $\lambda a$
(using $|y|\sim\langle y\rangle$ for $|y|\ge1$), shows that for
$|v(x)|\lesssim\langle x\rangle^{-3-}$ and $|y|\ge 1$ one has
\[
vR_0^+(\lambda^2)(\cdot,y)-\tfrac{i}{4}vH_0^+(\lambda|y|)
=e^{i\lambda|y|}\tilde{\omega}_1^+(\lambda,\cdot,y)
+\chi(\lambda a)\,\tilde{\psi}_1(\cdot,y),
\]
where $\tilde{\omega}_1^+$ and $\tilde{\psi}_1$
satisfy \eqref{eq:omega-est-3} and \eqref{eq:psi-est-0}, respectively,
with $m=1$.
\end{remark}

\section{Low-energy estimates I: the regular point and first-kind resonance}\label{sec:Low-bound}

This section is devoted to establishing the boundedness of the low-energy component $\mathcal{W}^{\mathrm{L}}$ when zero is a regular point or a first-kind threshold singularity of $H$, as stated in Proposition~\ref{pro:bound for low energy}. Unless otherwise specified, we assume throughout this section that $|V(x)|\lesssim\langle x\rangle^{-6-}$, in accordance with the hypothesis of Proposition~\ref{pro:bound for low energy}.

Recall the stationary representation of the low-energy part $\mathcal{W}^{\mathrm{L}}$:
\begin{equation}\label{eq:stationary formula-Low-2}
	\mathcal{W}^{\mathrm{L}} = \frac{1}{\pi i}\int_0^{+\infty} \lambda\, R^+(\lambda^2)v \big(M^+(\lambda)\big)^{-1} v \big(R_0^+(\lambda^2) - R_0^-(\lambda^2)\big) \chi(\lambda/\lambda_0)\,d\lambda.
\end{equation}
In view of \eqref{eq:stationary formula-Low-2}, the analysis of $\mathcal{W}^{\mathrm{L}}$ requires a precise description of the operator $R^+(\lambda^2)v \big(M^+(\lambda)\big)^{-1} v \big(R_0^+(\lambda^2) - R_0^-(\lambda^2)\big)$ for $\lambda$ near zero. As a first step, we study the asymptotic behavior of $\big(M^{\pm}(\lambda)\big)^{-1}$ near zero energy.

\subsection{Asymptotic behavior of $(M^{\pm}(\lambda))^{-1}$}

The asymptotic expansion of $(M^{\pm}(\lambda))^{-1}$ in two dimensions was first established in \cite[Theorem 6.2]{JN01}; alternative proofs and refinements can be found in \cite{Schlag-2D-dis, Jensen-Yajima-02-CMP, Erd-Gre-13, Erdogan-Goldberg-Green-JFA-2018}. We remark that the invertibility of $M^{\pm}(\lambda)$ for $\lambda>0$ follows from the absence of embedded eigenvalues; the expansion below provides an alternative, direct proof of this invertibility for small $\lambda$.

To state the expansion in a concise form, we introduce the following notation. For $\Omega\subset\mathbb{R}$, we write $f(\lambda)=\tilde{\mathcal{O}}_k(g(\lambda))$ for a $\mathbb{B}(L^2)$-valued function $f$ if
\begin{equation*}
	\left\|\frac{d^\ell f(\lambda)}{d\lambda^\ell}\right\|_{\mathbb{B}(L^2)}\lesssim_\ell |g(\lambda)|\,|\lambda|^{-\ell},\qquad \lambda\in\Omega,\quad \ell=0,1,\dots,k.
\end{equation*}
We occasionally abuse notation and write $\tilde{\mathcal{O}}_k(\lambda^b)$ for any operator-valued function in this class.

The zero-energy expansion of $(M^{\pm}(\lambda))^{-1}$ involves the integral operators arising from the free resolvent. Define
\begin{align*}
G_0f(x)&:=-\frac{1}{2\pi}\int_{\mathbb{R}^2}f(y)\log|x-y|\,dy,\\
G_1f(x)&:=\frac{1}{8\pi}\int_{\mathbb{R}^2}f(y)|x-y|^2\,dy,\\
G_2f(x)&:=\frac{1}{8\pi}\int_{\mathbb{R}^2}f(y)|x-y|^2\log|x-y|\,dy,
\end{align*}
and set
\begin{equation}\label{eq:def of T}
	T:=U+vG_0v.
\end{equation}

To describe the singularities of $(M^{\pm}(\lambda))^{-1}$ at zero energy, we need a hierarchy of projections on $L^2$ that detect the presence of resonances and eigenfunctions. For the definitions below, the condition $|V(x)|\lesssim\langle x\rangle^{-5-}$ suffices. We introduce the following mutually orthogonal projections on $L^2$:
\begin{align}
	&Pf:=\frac{\langle f,v\rangle}{\|V\|_{L^1}}\,v(x),\qquad Q:=I-P,\notag\\
	\label{eq-def-for-QS}
	&S_1L^2:=\ker(QTQ)\cap QL^2,\\
	\label{eq-def-for-S2}
	&S_2L^2:=\ker(S_1TPTS_1)\cap S_1L^2\quad\text{if }S_1\neq0,\\
	\label{eq-def-for-S3}
	&S_3L^2:=\ker(S_2vG_1vS_2)\cap S_2L^2\quad\text{if }S_2\neq0,
\end{align}
with the convention that $S_{j+1}=0$ whenever $S_j=0$. By construction,
\begin{equation}\label{eq:S2T=0}
	S_2T=TS_2=0.
\end{equation}
Indeed, for $\varphi\in S_1L^2$ one has $QTQ\varphi=0$, hence $T\varphi=PT\varphi=\frac{\langle\varphi,Tv\rangle}{\|V\|_{L^1}}\,v$; it follows that $S_1TPTS_1\varphi=\frac{\langle\varphi,S_1Tv\rangle}{\|V\|_{L^1}}\,S_1Tv$, so the defining condition of $S_2$ forces $\langle\varphi,Tv\rangle=0$ and therefore $T\varphi=0$. Since $T=T^{*}$, this gives \eqref{eq:S2T=0}.
Moreover, for any $\varphi\in S_2L^2\subset QL^2$ one has $\int_{\mathbb{R}^2}v\varphi\,dx=0$, and a direct computation using $|x-y|^2=|x|^2+|y|^2-2x\cdot y$ gives
\[
\langle vG_1v\varphi,\varphi\rangle=-\frac{1}{4\pi}\sum_{i=1}^2\left|\int_{\mathbb{R}^2}x_i v(x)\varphi(x)\,dx\right|^2.
\]
Consequently, the definition of $S_3$ can be rephrased as
\begin{equation}\label{eq:property of S-3}
	S_3L^2=\left\{\varphi\in S_2L^2\;\Big|\;\int_{\mathbb{R}^2}x_i v(x)\varphi(x)\,dx=0\text{ for }i=1,2\right\},
\end{equation}
and in particular
\begin{equation*}
	\dim\bigl((S_2-S_3)L^2\bigr)\le2.
\end{equation*}

The following result from \cite[Lemma 6.4]{JN01} or \cite[Section 5]{Erd-Gre-13} identifies the threshold subspaces of $H$ with the ranges of the projections $S_1-S_2$, $S_2-S_3$ and $S_3$.

\begin{lemma}\label{lem:one to one lem}
	Let $\mathscr{N}_s$, $\mathscr{N}_p$, and $\mathscr{N}_e$ denote the spaces of s-wave resonances, p-wave resonances, and zero-energy eigenfunctions of $H$, respectively. Then the mapping $\psi\mapsto Uv\psi$ induces isomorphisms from $\mathscr{N}_s$ onto $(S_1-S_2)L^2$, from $\mathscr{N}_p$ onto $(S_2-S_3)L^2$, and from $\mathscr{N}_e$ onto $S_3L^2$.
\end{lemma}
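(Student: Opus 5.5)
The plan is to follow the standard Jensen--Nenciu argument, reconstructed in the present notation. \emph{Direction one:} starting from $\psi\in\mathscr N_s\oplus\mathscr N_p\oplus\mathscr N_e$, I produce $\varphi=Uv\psi\in S_1L^2$ and identify the finer projection. \emph{Direction two:} starting from $\varphi\in S_1L^2$, I reconstruct $\psi$. The two maps are then checked to be mutually inverse.

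\emph{Step 1 (from $\psi$ to $\varphi$).} Let $\psi\in L^\infty$ solve $-\Delta\psi+V\psi=0$. By the decay of $V$ we have $V\psi\in L^1$ with enough moments, so $\Delta\psi=V\psi$. The candidate is
\[
\psi=c_0-\tfrac{1}{2\pi}\textstyle\int\log|x-y|\,V\psi(y)\,dy=c_0+G_0(v\varphi),\qquad \varphi=Uv\psi .
\]
The difference of the two sides is harmonic and bounded, hence a constant by Liouville. Here boundedness uses $\int V\psi=0$, which holds because otherwise $G_0(V\psi)\sim -\frac{1}{2\pi}(\int V\psi)\log|x|$ would be unbounded.

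Thus $\langle\varphi,v\rangle=\int V\psi=0$, i.e.\ $\varphi\in QL^2$. Multiplying the representation by $v$ gives $U\varphi+vG_0v\varphi=c_0v$, i.e.\ $T\varphi=c_0v\in PL^2$. Hence $QTQ\varphi=0$ and $\varphi\in S_1L^2$.

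\emph{Step 2 (identifying the finer projection).} By the computation preceding \eqref{eq:S2T=0}, $\varphi\in S_2L^2$ iff $T\varphi=0$, i.e.\ iff $c_0=0$. So $c_0\neq0$ corresponds exactly to the s-wave component, and the surviving constant gives $\psi=c+\Lambda$.

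When $c_0=0$, expand $\log|x-y|=\log|x|-\frac{x\cdot y}{|x|^2}+O(|y|^2/|x|^2)$ and use $\int v\varphi=0$. This gives
\[
\psi(x)=\frac{1}{2\pi}\frac{x}{|x|^2}\cdot\int y\,v\varphi(y)\,dy+O(|x|^{-2}).
\]
Therefore $\psi\in L^2$ iff the first moments vanish, which by \eqref{eq:property of S-3} is exactly $\varphi\in S_3L^2$. Otherwise $\psi\in L^p\setminus L^2$ for all $p>2$, which is the p-wave case.

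\emph{Step 3 (from $\varphi$ to $\psi$).} Conversely, given $\varphi\in S_1L^2$, write $T\varphi=c_0v$ and set $\psi:=c_0+G_0(v\varphi)$. Then $\Delta\psi=-v\varphi=V\psi$, where I use $U^2=1$ and $U\varphi=c_0v-vG_0v\varphi=v\psi$. The asymptotics from Step 2 show $\psi\in L^\infty$ and place it in the correct class.

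Injectivity: $Uv\psi=0$ forces $V\psi=0$, so $\psi$ is a bounded harmonic function, hence a constant $c$. If $c\neq0$, then $c=\psi$ would be an s-wave resonance with $\Lambda=0\in L^p$. This is excluded by the definition only if we quotient out, so I check it directly: it would give $\varphi=0$ while $c_0\neq 0$, i.e.\ $T\cdot 0=c_0v$, a contradiction. So $c=0$.

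\emph{Main obstacle.} The delicate point is the bookkeeping of $\mathscr N_s$ as a space of s-wave resonances modulo the lower types. The isomorphism onto $(S_1-S_2)L^2$ should be read as an isomorphism of $\mathscr N_s/(\mathscr N_p\oplus\mathscr N_e)$, or of a complement, onto $S_1L^2\ominus S_2L^2$. I would handle this by noting that $S_1L^2/S_2L^2$ is detected by the linear functional $\varphi\mapsto c_0(\varphi)$, which is nonzero exactly on s-waves. Hence $\dim(S_1-S_2)\le1$, matching the one-dimensional space of constants.

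The remainder-term estimates in Step 2 need $|V|\lesssim\langle x\rangle^{-5-}$ so that $\int|y|^2|V\psi|<\infty$. These are routine applications of Lemma~\ref{lemma-GV}.
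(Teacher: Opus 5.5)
Your route is the standard Jensen--Nenciu / Erdo\u{g}an--Green argument; the paper gives no proof of its own and simply cites \cite{JN01} and \cite{Erd-Gre-13}. The architecture is right:
\begin{itemize}
\item a Green's-function representation plus Liouville gives $T\varphi=c_0v$ for $\varphi=Uv\psi\in S_1L^2$;
\item the functional $\varphi\mapsto c_0$ cuts out $S_2L^2$;
\item the dipole term cuts out $S_3L^2$ through the first-moment description of $S_3$;
\item the reconstruction $\varphi\mapsto\psi$ gives surjectivity.
\end{itemize}
As written, however, three steps fail and need repair, and the quotient bookkeeping should be extended.

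\textbf{The sign.} Since $-\Delta G_0f=f$, the equation $-\Delta\psi=-V\psi$ gives $\psi=c_0-G_0(v\varphi)=c_0+\frac{1}{2\pi}\int\log|x-y|\,V\psi(y)\,dy$. This matches the paper's remark on examples. With your sign, multiplying by $v$ gives $(U-vG_0v)\varphi=c_0v$, not $T\varphi=c_0v$. In Step~3 the chain $U\varphi=c_0v-vG_0v\varphi=v\psi$ is false for your $\psi$, which has $v\psi=c_0v+vG_0v\varphi$, so $Uv\psi\neq\varphi$ in general. With $\psi:=c_0-G_0(v\varphi)$ you get $v\psi=U\varphi$, hence $Uv\psi=\varphi$ and $\Delta\psi=v\varphi=V\psi$.

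\textbf{The Liouville step is circular.} You use $\int V\psi=0$ to make the difference bounded. You then justify $\int V\psi=0$ by the unboundedness of $G_0(V\psi)$, but that is a contradiction only once you know $\psi+G_0(V\psi)$ is constant. Instead, apply Liouville for harmonic functions of sublinear growth to $w:=\psi+G_0(V\psi)$, which satisfies $|w(x)|\lesssim\log(2+|x|)$. Then $w\equiv c_0$, and only now does $\psi\in L^\infty$ force $\int V\psi=0$.

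\textbf{Eigenfunctions need a priori boundedness.} $\mathscr N_e$ consists of $L^2$ solutions, while Step~1 assumes $\psi\in L^\infty$. For $\psi\in L^2$, the mean value property on unit balls gives $|w(x)|\lesssim\|\psi\|_{L^2}+\log(2+|x|)$, so again $w\equiv c_0$. Then $\psi=c_0-G_0(V\psi)$ is locally bounded, and $\psi\in L^2$ forces $\int V\psi=0$ and $c_0=0$. Hence $\psi\in L^\infty$, and your Step~2 applies.

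\textbf{Quotient bookkeeping.} The non-linearity you noticed for $\mathscr N_s$ affects $\mathscr N_p$ as well: a p-wave resonance plus an eigenfunction is again a p-wave resonance. The clean statement is that $\psi\mapsto Uv\psi$ maps three nested spaces isomorphically onto $S_1L^2\supset S_2L^2\supset S_3L^2$: all bounded solutions, those lying in $L^p$ for every $p>2$, and those lying in $L^2$. The lemma's isomorphisms are then the induced ones on successive quotients.
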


\begin{remark}\label{remk:one-to-one}
	Lemma~\ref{lem:one to one lem} implies that zero is a regular point of $H$ if and only if $S_1=0$, and that zero is a first-kind threshold singularity precisely when $S_1\neq0$ while $S_2=S_3=0$.

	We also note that $S_3(vx^\alpha)=0$ for all $|\alpha|\le 1$: the case $|\alpha|=0$ follows from $S_3L^2\subset QL^2$, and the case $|\alpha|=1$ from \eqref{eq:property of S-3}. Since $V\psi=v\,(Uv\psi)$, Lemma~\ref{lem:one to one lem} therefore yields
	\[
	\int_{\mathbb{R}^2} x^{\alpha} V(x)\psi(x)\,dx=0,\qquad |\alpha|\le 1,
	\]
	for every zero-energy eigenfunction $\psi$ of $H$.
\end{remark}

With these projections at hand, we now introduce the remaining auxiliary operators that appear in the expansion. Set
\begin{align}
	&D_0:=(QTQ+S_1)^{-1}, \quad D_1:=(S_1TPTS_1)^{-1},\notag\\
	\label{eq:def of K}
	&N:=P-PTQD_0Q-QD_0QTP+QD_0QTPTQD_0Q.
\end{align}
Here $QTQ+S_1$ is invertible on $QL^2$ (see \cite{JN01}), and $S_1TPTS_1$ is invertible on $S_1L^2$ whenever $S_2=0$. We also define
\begin{equation}\label{eq:def-of-hpm}
	h_{\pm}(\lambda):=-\frac{\|V\|_{L^1}}{2\pi}\log\lambda+z_0^{\pm},\qquad z_0^{\pm}\in\mathbb{C}\setminus\mathbb{R}.
\end{equation}

The following expansion, which is essentially \cite[Lemma 2.3]{Erdogan-Goldberg-Green-JFA-2018}, gives the precise asymptotic behavior of $(M^{\pm}(\lambda))^{-1}$ near zero energy.

\begin{lemma}\label{thm-M inverse-even}
	Suppose $|V(x)|\lesssim\langle x\rangle^{-5-}$. Then there exists $\lambda_0=\lambda_0(V)\in(0,1)$ such that for all $\lambda\in(0,\lambda_0)$ the operators $(M^{\pm}(\lambda))^{-1}\in\mathbb{B}(L^2)$ admit the following expansions.

	\smallskip
	\noindent\emph{(i)} If zero is a regular point of $H$, then
	\begin{equation}\label{eq-M expansion-even}
		(M^{\pm}(\lambda))^{-1}=h_{\pm}(\lambda)^{-1}N+QD_0Q+\tilde{\mathcal{O}}_2(\lambda^{3/2}).
	\end{equation}

	\smallskip
	\noindent\emph{(ii)} If zero is a first-kind threshold singularity of $H$, then
	\begin{equation}\label{eq-M expansion-even-1-1}
		(M^{\pm}(\lambda))^{-1}=-h_{\pm}(\lambda)S_1D_1S_1-NS_1D_1S_1-S_1D_1S_1N+QD_0Q+\tilde{\mathcal{O}}_2(\lambda^{3/2}).
	\end{equation}
\end{lemma}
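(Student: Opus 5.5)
The approach is the Jensen--Nenciu / Schur complement scheme. First expand $M^{\pm}(\lambda)$ itself. Using \eqref{eq:asy-Hanel-0} together with the remainder bounds \eqref{eq:est for r-0}, I write
\[
R_0^\pm(\lambda^2)(x,y)=g^\pm(\lambda)+G_0(x,y)+\lambda^2\bigl(c^\pm_1 G_1(x,y)+c_2 G_2(x,y)\bigr)+\cdots ,
\]
where $g^\pm(\lambda)=\pm\frac{i}{4}\bigl(1\pm\frac{2i}{\pi}(\gamma-\log 2)\bigr)-\frac{1}{2\pi}\log\lambda$. This gives
\[
M^\pm(\lambda)=g^\pm(\lambda)\|V\|_{L^1}P+T+M_1^\pm(\lambda),\qquad M_1^\pm(\lambda)=\tilde{\mathcal O}_2(\lambda^{3/2}).
\]
The bound on $M_1^\pm$ holds in Hilbert--Schmidt norm because $|V|\lesssim\langle x\rangle^{-5-}$ makes $v(x)|x-y|^{3/2}v(y)$ square integrable. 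The derivatives in $\lambda$ are handled through the $\mathcal O$-symbol bounds on $\zeta_0^\pm$. Note that $g^\pm(\lambda)\|V\|_{L^1}=h_\pm(\lambda)$ with a suitable nonreal constant $z_0^\pm$.

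Next I invert $A(\lambda):=h_\pm(\lambda)P+T$ using the block decomposition $L^2=PL^2\oplus QL^2$.

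\emph{Regular case.} Here $S_1=0$, so $QTQ$ is invertible on $QL^2$ with inverse $QD_0Q$. The Schur complement on the one-dimensional range of $P$ is the scalar $h_\pm+PTP-PTQD_0QTP$. Since $h_\pm$ is logarithmically large, this scalar is invertible for small $\lambda$. Its inverse equals $h_\pm^{-1}$ plus $\tilde{\mathcal O}(h_\pm^{-2})$, and I expect the bookkeeping to fold this correction either into a redefinition of $z_0^\pm$ or into the remainder. Feshbach's formula then gives $A^{-1}=h_\pm^{-1}N+QD_0Q$ up to the correction, where $N$ is exactly the operator in \eqref{eq:def of K}.

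One point needs care: the correction is only $O(1/\log^2\lambda)$, not $O(\lambda^{3/2})$. I would absorb it exactly by choosing the constant $z_0^\pm$ so that $h_\pm$ equals the full scalar Schur complement, namely $z_0^\pm=\|V\|_{L^1}\bigl(\pm\frac i4(\cdots)\bigr)+PTP-PTQD_0QTP$ after normalization. With that choice, $A^{-1}=h_\pm^{-1}N+QD_0Q$ holds exactly. The constant $z_0^\pm$ is nonreal because its imaginary part is $\pm\|V\|_{L^1}/4$. Finally, a Neumann series gives
\[
M^{-1}=A^{-1}-A^{-1}M_1A^{-1}+\cdots ,
\]
and $\|A^{-1}\|$ is uniformly bounded, so the remainder is $\tilde{\mathcal O}_2(\lambda^{3/2})$. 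For the derivative bounds, each derivative of $h_\pm^{-1}$ costs $\lambda^{-1}$, which is acceptable.

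\emph{First-kind case.} Now $QTQ$ is not invertible on $QL^2$. Following Jensen--Nenciu, I use $QTQ+S_1$, whose inverse is $D_0$, and invert $A+S_1$ relative to $Q$ as before. The problem then reduces to a second Schur complement on $S_1L^2$. Through $PTS_1$, this complement equals $-h_\pm^{-1}S_1TPTS_1$ plus a correction.

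Because $S_2=0$, the operator $S_1TPTS_1$ is invertible with inverse $D_1$, so the inverse of the second complement is $-h_\pm S_1D_1S_1$ plus bounded terms. Recombining through the Jensen--Nenciu lemma produces the terms $-h_\pm S_1D_1S_1-NS_1D_1S_1-S_1D_1S_1N+QD_0Q$. The size of the $P$-block, which is $h_\pm^{-1}N$ in the regular case, cancels exactly against contributions from the expansion of the $S_1$-block.

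\emph{Main obstacle.} The hardest step is tracking the logarithmic error terms exactly, so that no $O(1/\log\lambda)$ residues survive beyond those displayed; this is what forces the particular choice of $z_0^\pm$. The Neumann-series perturbation by $M_1$ must also be controlled in the presence of the growing factor $h_\pm$. Since $\lambda^{3/2}\log^2\lambda=O(\lambda^{3/2-})$, this requires either slightly more decay or absorbing the loss, and I would check that the $\lambda^{3/2}$ exponent survives with the $\log$ losses hidden in the implicit constant or by reorganizing the $\lambda^2\log\lambda$ term.
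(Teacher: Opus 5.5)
Your proposal is correct and follows the route the paper relies on. The paper cites \cite[Lemma 2.3]{Erdogan-Goldberg-Green-JFA-2018}, which is proved by this same Feshbach/Jensen--Nenciu scheme, with $h_\pm$ absorbing the scalar Schur-complement constant exactly as you propose; it then uses $PTS_1D_1S_1TP=P$ to obtain $NS_1D_1S_1N=N$, which is the cancellation of the $h_\pm^{-1}$ terms you anticipate in the first-kind case. The $\log^2\lambda$ loss you flag in the first-kind Neumann step cannot be hidden in the implicit constant, but the hypothesis already absorbs it: $|V|\lesssim\langle x\rangle^{-5-\delta}$ gives $\int|V|\langle x\rangle^{3+\delta/2}\,dx<\infty$, hence $M_1^\pm(\lambda)=\tilde{\mathcal{O}}_2(\lambda^{3/2+\delta/4})$, and $\lambda^{\delta/4}\log^2\lambda$ stays bounded.
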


\begin{remark}
	When zero is a first-kind threshold singularity of $H$, \cite[Lemma 2.3]{Erdogan-Goldberg-Green-JFA-2018} in fact gives the expansion
	\begin{equation}\label{eq-M expansion-even-1}
		\begin{aligned}
			(M^{\pm}(\lambda))^{-1}&=-h_{\pm}(\lambda)S_1D_1S_1-NS_1D_1S_1-S_1D_1S_1N\\
			&\quad-h_{\pm}(\lambda)^{-1}NS_1D_1S_1N+h_{\pm}(\lambda)^{-1}N+QD_0Q+\tilde{\mathcal{O}}_2(\lambda^{3/2})
		\end{aligned}
	\end{equation}
	in place of \eqref{eq-M expansion-even-1-1}. Using the identities
	\[
	S_1D_0=S_1,\quad S_1Q=QS_1=S_1,\quad S_1P=PS_1=0,\quad PTS_1D_1S_1TP=P,
	\]
	a direct computation gives $NS_1D_1S_1N=N$, so that \eqref{eq-M expansion-even-1} reduces to \eqref{eq-M expansion-even-1-1}.
\end{remark}

We now insert the expansions of Lemma~\ref{thm-M inverse-even} into the stationary representation \eqref{eq:stationary formula-Low-2}, using also the resolvent identity
\[
	R^+(\lambda^2)v=R_0^+(\lambda^2)v\big(M^+(\lambda)\big)^{-1}U.
\]
The boundedness of $\mathcal{W}^{\mathrm{L}}$ then follows by analyzing the contribution of each resulting term separately.

To organize the analysis, observe that the terms arising from the expansions \eqref{eq-M expansion-even} and \eqref{eq-M expansion-even-1-1} fall into four classes according to the projections involved:
\begin{itemize}
	\item terms whose rightmost projection is either $Q$ or $S_1$;
	\item terms whose leftmost projection is either $Q$ or $S_1$;
	\item terms with $P$ as both the leftmost and the rightmost projection;
	\item the error terms.
\end{itemize}
Recall that $Qv=0$ and $S_1v=0$, and let $\mathcal{Q}_1$ be as in \eqref{eq:def of Qm}. The proof of Proposition~\ref{pro:bound for low energy} is thereby reduced to establishing the boundedness of operators of the following canonical forms:
\begin{align}\label{eq-rep-W0}
	\mathcal{N}_0&=
	\frac{1}{\pi i}\int_0^{+\infty}\lambda f_0(\lambda)\chi(\lambda/\lambda_0)\,R_0^{+}(\lambda^2)v\mathcal{D}\mathcal{Q}_1 v\big(R_0^+(\lambda^{2})-R_0^-(\lambda^{2})\big)\,d\lambda, \\
	\label{eq-rep-W1}
	\mathcal{N}_1&=\frac{1}{\pi i}\int_0^{+\infty}\lambda f_1(\lambda)\chi(\lambda/\lambda_0)\,R_0^{+}(\lambda^2)v\mathcal{Q}_1\mathcal{D}\mathcal{P} v\big(R_0^+(\lambda^{2})-R_0^-(\lambda^{2})\big)\,d\lambda, \\
	\label{eq-rep-W2}
	\mathcal{N}_2&=\frac{1}{\pi i}\int_0^{+\infty}\lambda\,\chi(\lambda/\lambda_0)\,R_0^{+}(\lambda^2)v\Gamma_2(\lambda)v\big(R_0^+(\lambda^{2})-R_0^-(\lambda^{2})\big)\,d\lambda,
\end{align}
and
\begin{equation}\label{eq-rep-W_s}
	\mathcal{W}_\mathrm{sing}^{\mathrm{L}}=\frac{1}{\pi i}\int_0^{+\infty}\lambda h_{+}(\lambda)^{-1}\chi(\lambda/\lambda_0)\,R_0^{+}(\lambda^2)vPv\big(R_0^+(\lambda^{2})-R_0^-(\lambda^{2})\big)\,d\lambda.
\end{equation}
The first two classes above give rise to $\mathcal{N}_0$ and $\mathcal{N}_1$, the third to $\mathcal{W}_{\mathrm{sing}}^{\mathrm{L}}$, and the error terms are collected in $\mathcal{N}_2$. In the preceding integrals $\mathcal{D},\Gamma_2(\lambda)\in\mathbb{B}(L^2)$, $\mathcal{P}$ is a finite-rank projection on $L^2$, and $f_0,f_1$ are scalar functions whose precise hypotheses are stated in Lemmas~\ref{Lemm: bound of N_0} and~\ref{Lemm: bound of N_1} below.

\subsection{Boundedness of $\mathcal{N}_0$, $\mathcal{N}_1$ and $\mathcal{N}_2$}\label{sec:bound-of-N0-N2}

In this subsection we establish the $L^p$-boundedness of the operators
$\mathcal{N}_0$, $\mathcal{N}_1$ and $\mathcal{N}_2$ defined in \eqref{eq-rep-W0}--\eqref{eq-rep-W2}. The argument combines
the low-energy resolvent expansions of
Section~\ref{sec:low-energy-kernel} with the oscillatory integral
estimates of Section~\ref{sec:Pre}: in each case the kernel is written as an
oscillatory integral in $\lambda$ whose amplitude is a pairing of two
low-energy amplitudes, and the required pointwise bounds then follow from
Lemma~\ref{lem-oscillatory estimate}. 

\begin{lemma}\label{Lemm: bound of N_0}
	Let $|V(x)|\lesssim \langle x\rangle^{-6-}$. If $f_0 \in C^{2}(\mathbb{R})$ satisfies
	\begin{equation}\label{eq:assum for f_0}
		\big|\partial_{\lambda}^{k} f_0(\lambda)\big| \lesssim \lambda^{-k-}, \qquad k=0,1,2, \quad 0<\lambda<\lambda_0,
	\end{equation}
	then the operator $\mathcal{N}_0$ given in \eqref{eq-rep-W0} is bounded on $L^p(\mathbb{R}^2)$ for all $1\le p\le\infty$.
\end{lemma}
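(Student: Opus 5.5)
By Schur's lemma (Lemma~\ref{Schur Lemma}) it suffices to show that the kernel $\mathcal{N}_0(x,y)$ is admissible. I will write it as a $\lambda$-integral of an $L^2$-pairing and then estimate each piece of the kernel pointwise with Lemma~\ref{lem-oscillatory estimate}. The pairing is
\[
\mathcal{N}_0(x,y)=\frac{1}{\pi i}\int_0^\infty \lambda f_0(\lambda)\chi(\lambda/\lambda_0)\Big\langle \mathcal{D}\mathcal{Q}_1 v\big(R_0^+-R_0^-\big)(\lambda^2)(\cdot,y),\ \overline{vR_0^+(\lambda^2)(x,\cdot)}\Big\rangle\,d\lambda .
\]
I set $a_x=\langle x\rangle$ and $a_y=\langle y\rangle$, and proceed as follows.

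\textbf{Expanding both factors.} For the right factor I apply Lemma~\ref{lem:est-S0-J0} with $m=1$. This gives $\mathcal{Q}_1 v(R_0^+-R_0^-)(\cdot,y)=\sum_\pm e^{\pm i\lambda a_y}\mathcal{Q}_1\eta_1^\pm$, with $\|\partial_\lambda^k\eta_1^\pm\|_{L^2}\lesssim\min\{\lambda^{1/2-k}a_y^{-1/2},\lambda^{1-k}\}$. For the left factor I apply Lemma~\ref{lem:est-vR-0} with the roles of $x,y$ swapped, using the symmetry of the free resolvent kernel. This gives $vR_0^+(\lambda^2)(x,\cdot)=e^{i\lambda a_x}\omega_0^+$, with $\|\partial^k_\lambda\omega_0^+\|\lesssim\min\{\lambda^{-1/2-k}a_x^{-1/2},(-\log\lambda)\lambda^{-k}\}$. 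The kernel therefore splits into two pieces of the form
\[
\int_0^\infty e^{i\lambda(a_x\pm a_y)}\,m_\pm(\lambda;x,y)\,\chi(\lambda/\lambda_0)\,d\lambda,\qquad m_\pm=\lambda f_0(\lambda)\big\langle \mathcal{D}\mathcal{Q}_1\eta_1^\pm,\overline{\omega_0^+}\big\rangle .
\]
Using $\mathcal{D}\in\mathbb B(L^2)$, Cauchy--Schwarz and Leibniz, I expect the symbol bounds
\[
|\partial_\lambda^k m_\pm|\lesssim \lambda^{-k}\,\lambda^{1-}\min\{\lambda^{-1/2}a_x^{-1/2},|\log\lambda|\}\min\{\lambda^{1/2}a_y^{-1/2},\lambda\},\qquad k\le2 .
\]
Choosing among the four combinations, $m_\pm$ is an $\mathcal{O}_2(\lambda^{b})$ symbol with a size constant. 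Two choices matter: $b=1-$ with constant $a_x^{-1/2}a_y^{-1/2}$, and $b=2-$ (up to a logarithm) with constant $1$.

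\textbf{Pointwise bounds and admissibility.} Lemma~\ref{lem-oscillatory estimate} (with a harmless log, absorbed by the $-$) then yields
\[
|\mathcal{N}_0(x,y)|\lesssim \sum_\pm \min\Big\{\frac{1}{\langle x\rangle^{1/2}\langle y\rangle^{1/2}\langle a_x\pm a_y\rangle^{2-}},\ \frac{1}{\langle a_x\pm a_y\rangle^{3-}}\Big\}.
\]
To check admissibility, I use polar coordinates in $x$ for fixed $y$, so that $dx\sim a_x\,da_x$. The "$+$" term is bounded by $\langle a_x+a_y\rangle^{-3+}$, which is integrable in $x$ uniformly in $y$. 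For the "$-$" term I use the first bound when $a_x\gtrsim1$: then $\int a_x\,a_x^{-1/2}a_y^{-1/2}\langle a_x-a_y\rangle^{-2+}\,da_x\lesssim1$, since near $a_x\sim a_y$ the factor $a_x^{1/2}a_y^{-1/2}$ is bounded, and away from it the decay $\langle a_x-a_y\rangle^{-2+}$ beats $a_x^{1/2}$. The $y$-integral is symmetric.

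\textbf{Main obstacle.} The delicate point is obtaining the two-sided gain $a_x^{-1/2}a_y^{-1/2}$ together with enough $\lambda$-vanishing near $a_x\approx a_y$. Without the $\mathcal{Q}_1$ cancellation, which provides the extra $\lambda^{1/2}$ in $\eta_1$, only $\lambda^{0-}$ would remain. That would give decay $\langle a_x-a_y\rangle^{-1}$, whose integral is only logarithmically divergent and fails Schur's test. If the logarithmic factor from \eqref{eq:omega-est-2} interferes in the small-$\lambda a_x$ regime, I would split the integral with $\chi(\lambda a_x)$ and treat the two pieces separately using \eqref{eq:omega-est-1} and \eqref{eq:omega-est-2}. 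The weight $|V|\lesssim\langle x\rangle^{-6-}$ suffices, since each lemma used only needs $|v|\lesssim\langle x\rangle^{-3-}$.
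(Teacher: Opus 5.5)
Your argument follows the paper's proof in essentially every step:
\begin{itemize}
\item Schur's lemma;
\item Lemma~\ref{lem:est-S0-J0} with $m=1$ on the $y$-factor and Lemma~\ref{lem:est-vR-0} on the $x$-factor;
\item Cauchy--Schwarz, giving an amplitude of order $\lambda^{1-}$, controlled with two derivatives, and of size $\langle x\rangle^{-1/2}\langle y\rangle^{-1/2}$;
\item Lemma~\ref{lem-oscillatory estimate} with $b=1-$;
\item the polar-coordinate Schur computation.
\end{itemize}

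You should discard your second bound $\langle a_x\pm a_y\rangle^{-3+}$, which you use for the $+$ term. Lemma~\ref{lem-oscillatory estimate} with $b=2-$ needs $[b]+2=3$ derivatives of the amplitude. However, $f_0\in C^2$ and Lemmas~\ref{lem:est-vR-0} and~\ref{lem:est-S0-J0} only control $k\le 2$. Two integrations by parts give only $\langle a_x+a_y\rangle^{-2}$, which is not integrable in $x$.

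Dropping it costs nothing. Your first bound already handles both signs, because $\langle a_x-a_y\rangle\le\langle a_x+a_y\rangle$; this is exactly what the paper does. For the same reason, the logarithmic bound \eqref{eq:omega-est-2} and the splitting with $\chi(\lambda a_x)$ are never needed. Indeed, \eqref{eq:omega-est-1} holds for all $\lambda<1/2$, and $a_x=\langle x\rangle\ge 1$ always.
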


\begin{proof}
By Lemma~\ref{Schur Lemma}, it suffices to show that the
kernel of $\mathcal{N}_0$ is admissible.

From \eqref{eq-rep-W0}, the kernel of $\mathcal{N}_0$ admits
the representation
\begin{equation*}
\mathcal{N}_0(x,y)=\frac{1}{\pi i}\int_0^{+\infty} \lambda f_0(\lambda) \chi(\lambda/\lambda_0)
\bigl\langle \mathcal{D}\mathcal{Q}_1v\,\big(R_0^{+}-R_0^{-}\big)(\lambda^2)(\cdot,y),\; vR_0^{-}(\lambda^2)(\cdot, x)\bigr\rangle \, d\lambda ,
\end{equation*}
where the kernel of $\mathcal{Q}_1 v\bigl(R_0^+(\lambda^2)-R_0^-(\lambda^2)\bigr)$ is identified with the function
$\mathcal{Q}_1v\bigl(R_0^+(\lambda^2)(\cdot,y)-R_0^-(\lambda^2)(\cdot,y)\bigr)(z)$.
Since $\mathcal{Q}_1v=0$ and $|v(x)|\lesssim\langle x\rangle^{-3-}$, we may apply the expansion \eqref{eq:S0vR+R-} to the first factor of the pairing and \eqref{eq:expr-vH0} to the second; this yields
	\begin{equation}\label{represn-kernel-of-W2}
		\mathcal{N}_0(x,y)=\sum_{\pm}\frac{\pm 1}{\pi i} \int_0^{+\infty} e^{i\lambda(\langle x\rangle\pm \langle y\rangle)} \bigl\langle \mathcal{D}\mathcal{Q}_1\, \eta_1^{\pm}(\lambda,\cdot,y),\; \omega_0^{-}(\lambda,\cdot,x) \bigr\rangle\, \lambda f_0(\lambda) \chi(\lambda/\lambda_0) \, d\lambda, 
	\end{equation}
	with $\eta_1^{\pm}$ and $\omega_0^{-}$ as in
	Lemmas~\ref{lem:est-S0-J0} and~\ref{lem:est-vR-0}, respectively.

	By the Cauchy--Schwarz inequality and the estimates \eqref{eq:omega-est-1} and \eqref{eq:est-eta-w-1} with
	$m=1$, we have, for $\ell=0,1,2$ and
	$\lambda\in(0,\lambda_0)$,
	\begin{equation*}
		\begin{aligned}
			\Big|\partial_\lambda^{\ell} \bigl\langle \mathcal{D}\mathcal{Q}_1\, \eta_1^{\pm}(\lambda,\cdot,y),\; \omega_0^{-}(\lambda,\cdot,x) \bigr\rangle\Big|
			&\lesssim \sum_{\ell_1+\ell_2=\ell}
			\Bigl\| \partial_\lambda^{\ell_1} \eta_1^{\pm}(\lambda,\cdot,y) \Bigr\|_{L^2}
			\Bigl\| \partial_\lambda^{\ell_2} \omega_0^{-}(\lambda,\cdot,x) \Bigr\|_{L^2} \\
			&\lesssim \lambda^{-\ell} \langle x\rangle^{-\frac12} \langle y\rangle^{-\frac12}.
		\end{aligned}
	\end{equation*}
	In view of \eqref{eq:assum for f_0}, the full amplitude
	$\langle x\rangle^{\frac12} \langle y\rangle^{\frac12}\lambda f_0(\lambda)\chi(\lambda/\lambda_0)\mathcal{K}_0^{\pm}(\lambda,x,y)$
	therefore satisfies the hypotheses of Lemma~\ref{lem-oscillatory estimate}
	with $b=1-$, and \eqref{represn-kernel-of-W2} yields the pointwise bound
	\begin{equation}\label{eq:ptwise-N0}
		\bigl|\mathcal{N}_0(x,y)\bigr| \lesssim \sum_{\pm} \frac{\langle x\rangle^{-\frac12}\langle y\rangle^{-\frac12}}{\langle \langle x\rangle\pm \langle y\rangle \rangle^{2-}}
		\lesssim \frac{\langle x\rangle^{-\frac12}\langle y\rangle^{-\frac12}}{\langle \langle x\rangle - \langle y\rangle\rangle^{2-}},
	\end{equation}
	where in the second inequality we used $\langle \langle x\rangle-\langle y\rangle\rangle\le\langle\langle x\rangle+\langle y\rangle\rangle$.

	It remains to estimate the Schur norms of \eqref{eq:ptwise-N0}. Passing to polar
	coordinates, using $r\langle r\rangle^{-\frac12}\lesssim r^{\frac12}$ and applying Lemma~\ref{lemma-GV}, we obtain
	\begin{align*}
		\sup_{x\in\mathbb{R}^2}\int_{\mathbb{R}^2} |\mathcal{N}_0(x,y)|\,dy
		\lesssim \sup_{x}\, \langle x\rangle^{-\frac12}\int_{0}^{\infty} \frac{r^{\frac12}}{\langle r-\langle x\rangle\rangle^{2-}} \,dr
		\lesssim 1.
	\end{align*}
	Since the bound \eqref{eq:ptwise-N0} is symmetric in $x$ and $y$, the same estimate holds with the roles of $x$ and $y$ interchanged:
	\[
	\sup_{y\in\mathbb{R}^2}\int_{\mathbb{R}^2} |\mathcal{N}_0(x,y)|\,dx \lesssim 1.
	\]
	Thus $\mathcal{N}_0(x,y)$ is admissible, and the proof is complete.
\end{proof}


\begin{lemma}\label{Lemm: bound of N_1}
	Let $|V(x)|\lesssim \langle x\rangle^{-6-}$ and suppose that $f_1 \in C^{2}(\mathbb{R})$ satisfies
	\begin{equation}\label{eq:assum for f_1}
		\big|\partial_{\lambda}^{k} f_1(\lambda)\big| \lesssim \lambda^{-k-}, \qquad k=0,1,2, \quad 0<\lambda<\lambda_0.
	\end{equation}
	Then the operator $\mathcal{N}_1$ given in \eqref{eq-rep-W1} is bounded on $L^p(\mathbb{R}^2)$ for all $1\le p< \infty$.
	If, in addition, $f_1(\lambda)=1$ or $f_1(\lambda)=h_{+}(\lambda)^{-1}$, then
	$\mathcal{N}_1$ is bounded on $L^p(\mathbb{R}^2)$ for the full range
	$1\le p\le\infty$.
\end{lemma}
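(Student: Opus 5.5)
The plan is to follow the proof of Lemma~\ref{Lemm: bound of N_0}, but with the roles of the two sides exchanged. The cancellation $\mathcal{Q}_1 v=0$ now sits on the left, next to $R_0^+(\lambda^2)v$. On the right, $\mathcal{D}\mathcal{P}v\bigl(R_0^+-R_0^-\bigr)$ carries no cancellation, but $\mathcal{P}$ has finite rank.

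\textbf{Step 1: write the kernel as a sum of two pieces.} Write $\mathcal{P}=\sum_j\langle\cdot,e_j\rangle e_j$. Then
\[
\mathcal{N}_1(x,y)=\sum_j\frac{1}{\pi i}\int_0^\infty \lambda f_1(\lambda)\chi(\lambda/\lambda_0)\,\bigl\langle \mathcal{D}e_j,\ \mathcal{Q}_1vR_0^{-}(\lambda^2)(\cdot,x)\bigr\rangle\ \tfrac{i}{2}\bigl\langle vJ_0(\lambda|\cdot-y|),e_j\bigr\rangle\,d\lambda .
\]
For the left factor I apply Lemma~\ref{lem:est-S0vR0-2} with $m=1$. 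This splits it into an oscillatory piece $e^{-i\lambda\langle x\rangle}\mathcal{Q}_1\omega_1^-$, of size $\lambda^{1/2-k}\langle x\rangle^{-1/2}$, and a non-oscillatory piece $-\mathcal{Q}_1\psi_1(\cdot,x)\chi(\lambda\langle x\rangle)$, with $\|\psi_1\|\lesssim\langle x\rangle^{-1}$. For the right factor I use the $J_0^\pm$ splitting from the proof of Lemma~\ref{lem:est-S0-J0}, without subtracting any Taylor terms. This gives $e^{\pm i\lambda\langle y\rangle}$ times amplitudes bounded by $\min\{1,(\lambda\langle y\rangle)^{-1/2}\}\lambda^{-k}$.

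\textbf{Step 2: the oscillatory piece.} Here the phase is $e^{i\lambda(\pm\langle y\rangle-\langle x\rangle)}$. The amplitude $\lambda f_1\cdot\lambda^{1/2}\langle x\rangle^{-1/2}\cdot\min\{1,(\lambda\langle y\rangle)^{-1/2}\}$ has order $b=\tfrac32-$ or $1-$. Lemma~\ref{lem-oscillatory estimate} then gives a bound of the form $\langle x\rangle^{-1/2}\langle y\rangle^{-1/2}\langle\langle x\rangle-\langle y\rangle\rangle^{-2-}$, as in \eqref{eq:ptwise-N0}. This kernel is admissible by the same polar-coordinate computation. The residual regime $\lambda\langle y\rangle\lesssim1$, where only the bound $1$ is available, should still give $\langle x\rangle^{-1/2}\langle\langle x\rangle-\langle y\rangle\rangle^{-5/2+}$ or better, which is also admissible.

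\textbf{Step 3: the non-oscillatory $\psi_1$ piece.} Here the kernel is $\sum_j c_j(x)\,K_x(z,y)$, with $|c_j(x)|\lesssim\langle x\rangle^{-1}$, paired against $v(z)e_j(z)$. The operator with kernel $K_x$ is the Fourier multiplier $\Phi_x(\sqrt{-\Delta})$, where $\Phi_x(\lambda)=f_1(\lambda)\chi(\lambda/\lambda_0)\chi(\lambda\langle x\rangle)$, up to a harmless factor, because $\int\lambda\,J_0(\lambda|z-y|)\,g(\lambda)\,d\lambda$ is the radial Fourier representation.

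The key quantity is $\|\check\Phi_x\|_{L^1}$, uniformly in $x$.

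For $f_1=1$, $\Phi_x$ is a rescaled bump, and $\|\check\Phi_x\|_{L^1}\lesssim1$ uniformly. For $f_1=h_+^{-1}$, after the rescaling $\lambda\mapsto\lambda/\langle x\rangle$ the symbol is a bump times a logarithmic factor satisfying the symbol bounds of Lemma~\ref{eq-fourier-mulitiplier-1} and Lemma~\ref{lem-Fou-multiplier}, uniformly in the scale. Hence again $\|\check\Phi_x\|_{L^1}\lesssim1$.

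This yields $|\mathcal{N}_1^{\psi}f(x)|\lesssim\langle x\rangle^{-1}\|v e_j\|_{L^1}\|f\|_{L^\infty}$, which is the $L^\infty$ bound. For $L^1$, Fubini gives
\[
\int\langle x\rangle^{-1}\bigl|\check\Phi_x*f\bigr|\,dx\le\int|f(y)|\,\sup_{y}\int\langle x\rangle^{-1}|\check\Phi_x(z-y)|\,dx\,dy .
\]
Since $|\check\Phi_x(w)|\lesssim\langle x\rangle^{-2}(1+|w|/\langle x\rangle)^{-N}$, the inner integral is $\lesssim\int\langle x\rangle^{-3}\,dx$ restricted to $\langle x\rangle\gtrsim|w|$, which is $\lesssim1$.

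\textbf{Step 4: general $f_1$ with $|\partial_\lambda^k f_1|\lesssim\lambda^{-k-}$.} Now $\check\Phi_x$ carries an extra factor of roughly $\langle x\rangle^{0+}$. The column bound $\sup_y\int|K|\,dx$ then has the integrable tail $\langle x\rangle^{-3+}$, which still gives the $L^1$ bound. The row bound that $L^\infty$ would need picks up $\langle x\rangle^{-1+}$, which is not uniformly controlled in the way needed. I therefore plan to obtain $L^p$ boundedness for $1\le p<\infty$ by combining the $L^1$ (Schur column) bound with an $L^2$ bound, and interpolating. The $L^2$ bound comes from $\|\langle x\rangle^{-1+}\|_{L^2_x}$-type estimates together with Plancherel in $y$.

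\textbf{Main obstacle.} I expect the hardest step to be Step 3 for $f_1=h_+^{-1}$. It requires showing that the rescaled logarithmic multiplier $h_+(\lambda)^{-1}\chi(\lambda\langle x\rangle)$ has $\check\Phi_x$ bounded in $L^1$ uniformly in $x$. The plan is to apply Lemma~\ref{lem-Fou-multiplier} after the rescaling $\lambda=\mu/\langle x\rangle$, writing $h_+^{-1}=\bigl(c\log\langle x\rangle-c\log\mu+z_0\bigr)^{-1}$. The point to check is that the symbol bounds of Lemma~\ref{eq-fourier-mulitiplier-1} hold uniformly in $\langle x\rangle$. This is plausible because $\operatorname{Im}z_0\neq0$, so the denominator never vanishes.
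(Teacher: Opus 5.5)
\textbf{Gap in Step 4.} Interpolating your $L^1$ bound with an $L^2$ bound only gives $1\le p\le 2$. The lemma claims $1\le p<\infty$ for general $f_1$, and the range $2<p<\infty$ is used later: Lemma~\ref{lem:contri of L10} feeds Proposition~\ref{pro:bound for low energy-1}\,(i).

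For the $\psi_1$-piece with general $f_1$ you have no estimate at any exponent above $2$. Duality does not help either, since you have no $L^\infty$ bound for that piece.

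The missing ingredient is a second pointwise bound, obtained by integrating by parts in $\lambda$ against the phase $e^{\pm i\lambda\langle y\rangle}$. On the support of $\chi(\lambda\langle x\rangle)$ one has $|\partial_\lambda^\ell\chi(\lambda\langle x\rangle)|\lesssim\lambda^{-\ell}$, and $\|\partial_\lambda^\ell\omega_0^\pm(\lambda,\cdot,y)\|_{L^2}\lesssim\lambda^{-1/2-\ell}\langle y\rangle^{-1/2}$. Hence the amplitude $\lambda f_1(\lambda)\chi(\lambda/\lambda_0)\chi(\lambda\langle x\rangle)\langle\mathcal{Q}_1\mathcal{D}\mathcal{P}\omega_0^\pm(\lambda,\cdot,y),\psi_1(\cdot,x)\rangle$ has order $b=\frac12-$ and size $\langle x\rangle^{-1}\langle y\rangle^{-1/2}$. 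Lemma~\ref{lem-oscillatory estimate} then gives $|\mathcal{N}_{1,1}(x,y)|\lesssim\langle x\rangle^{-1}\langle y\rangle^{-2+}$.

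For $2<p<\infty$ one has $\langle x\rangle^{-1}\in L^p(\mathbb{R}^2)$ and $\langle y\rangle^{-2+}\in L^{p'}(\mathbb{R}^2)$, so H\"older gives $L^p$-boundedness. Riesz--Thorin with your $L^1$ bound then covers all $1\le p<\infty$, and the $L^2$ step becomes unnecessary. As written, that $L^2$ step also needs care: $\langle x\rangle^{-1+}\notin L^2(\mathbb{R}^2)$, so you must keep the factor $|c_j(x)|\lesssim\langle x\rangle^{-1}$.

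\textbf{The rest of the proposal.} Steps 1--2 reproduce the paper's treatment of the oscillatory piece. Your ``residual regime'' remark is unnecessary, since $\min\{1,(\lambda\langle y\rangle)^{-1/2}\}\le(\lambda\langle y\rangle)^{-1/2}$ everywhere.

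For the endpoint with $f_1\in\{1,h_+^{-1}\}$ your route differs from the paper's. The paper writes $\chi(\lambda\langle x\rangle)=1-\tilde\chi(\lambda\langle x\rangle)$. The main part then involves only the fixed multiplier $f_1(\sqrt{-\Delta})\chi(\sqrt{-\Delta}/\lambda_0)$, handled directly by Lemmas~\ref{eq-fourier-mulitiplier-1} and~\ref{lem-Fou-multiplier}. The remainder, where $\langle x\rangle^{-1}\lesssim\lambda$, has the pointwise bound $\langle y\rangle^{-5/2}$.

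You instead need $\sup_x\|\check\Phi_x\|_{L^1}<\infty$ for a family of rescaled logarithmic symbols. This is not contained in those lemmas, but it is true. After the rescaling $\lambda=\mu/\langle x\rangle$, the denominator $|b_0|\log(\langle x\rangle/\mu)+z_0^+$ has modulus $\gtrsim 1+\log(1/\mu)+\log\langle x\rangle$ on the support, using $\operatorname{Im}z_0^+\neq0$. One integration by parts in the Hankel representation then gives $|\check m(u)|\lesssim|u|^{-2}(1+\log|u|)^{-2}$ for $|u|\ge1$, uniformly in $x$.

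Your route yields $L^1$ and $L^\infty$ at once; the paper's avoids any parameter-uniform multiplier estimate. One correction: for $f_1=h_+^{-1}$ the bound $|\check\Phi_x(w)|\lesssim\langle x\rangle^{-2}(1+|w|/\langle x\rangle)^{-N}$ is false, because a logarithmic symbol gives only $|w|^{-2}$-type decay. The trivial bound $\|\check\Phi_x\|_{L^\infty}\lesssim\|\Phi_x\|_{L^1}\lesssim\langle x\rangle^{-2}$ is all your column estimate needs, since $\langle x\rangle^{-3}\in L^1(\mathbb{R}^2)$.
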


\begin{proof}
	We first prove the $L^p$-boundedness for $1\le p<\infty$ by writing the
	kernel as an oscillatory integral; the endpoint $p=\infty$ for the two
	special choices of $f_1$ is treated separately at the end.

\medskip

Using $\mathcal{Q}_1=\mathcal{Q}_1^{*}$, the kernel of $\mathcal{N}_1$ can be written as
\begin{equation*}
\mathcal{N}_1(x,y)=\frac{1}{\pi i}\int_0^{+\infty}\lambda f_1(\lambda)\,\chi(\lambda/\lambda_0)
\bigl\langle \mathcal{D}\mathcal{P} v \bigl(R_0^+-R_0^-\bigr)(\lambda^{2})(\cdot,y),\; \mathcal{Q}_1vR_0^{-}(\lambda^2)(\cdot,x) \bigr\rangle\, d\lambda .
\end{equation*}
Since $\mathcal{Q}_1v=0$ and $|v(x)|\lesssim\langle x\rangle^{-3-}$, the
decomposition \eqref{eq:S0vR-G-0} (with $m=1$) applies to the second factor
of the pairing. Substituting it and then moving $\mathcal{Q}_1$ back onto the
first factor (the amplitudes $\omega_1^{-}(\lambda,\cdot,x)$ and $\psi_1(\cdot,x)$ belong to $\mathrm{Ran}\,\mathcal{Q}_1$), we obtain
\begin{align*}
\mathcal{N}_1(x,y)
&=\frac{1}{\pi i}\int_0^{+\infty}\bigl(\mathcal{E}_{1,0}(\lambda)-\mathcal{E}_{1,1}(\lambda)\bigr)(x,y)\,\lambda f_1(\lambda)\,\chi(\lambda/\lambda_0)\,d\lambda\\
&=: \mathcal{N}_{1,0}(x,y) + \mathcal{N}_{1,1}(x,y),
\end{align*}
with
\begin{align*}
\mathcal{E}_{1,0}(\lambda)(x,y)&:=e^{i\lambda\langle x\rangle}\Bigl\langle \mathcal{Q}_1\mathcal{D}\mathcal{P} v \bigl(R_0^+-R_0^-\bigr)(\lambda^{2})(\cdot,y),\; \omega_1^{-}(\lambda,\cdot,x) \Bigr\rangle,\\
\mathcal{E}_{1,1}(\lambda)(x,y)&:=\chi(\lambda\langle x\rangle)\Bigl\langle \mathcal{Q}_1\mathcal{D}\mathcal{P} v \bigl(R_0^+-R_0^-\bigr)(\lambda^{2})(\cdot,y),\; \psi_1(\cdot,x) \Bigr\rangle,
\end{align*}
where $\mathcal{N}_{1,0}$ and $\mathcal{N}_{1,1}$ denote the contributions of $\mathcal{E}_{1,0}$ and $-\mathcal{E}_{1,1}$, respectively.

\medskip
\noindent\emph{Bound for $\mathcal{N}_{1,0}$.}
Expanding $v\bigl(R_0^+-R_0^-\bigr)(\lambda^{2})(\cdot,y)$ by means of \eqref{eq:expr-vH0}, the kernel $\mathcal{N}_{1,0}(x,y)$ can be written as
\[
\mathcal{N}_{1,0}(x,y)=\sum_{\pm}\frac{\pm 1}{\pi i}\int_0^{+\infty} e^{i\lambda(\langle x\rangle \pm \langle y\rangle)}\,\bigl\langle \mathcal{Q}_1\mathcal{D}\mathcal{P}\, \omega_0^{\pm}(\lambda,\cdot,y),\; \omega_1^{-}(\lambda,\cdot,x) \bigr\rangle
\lambda f_1(\lambda)\,\chi(\lambda/\lambda_0) \, d\lambda.
\]
By the Cauchy--Schwarz inequality and the estimates \eqref{eq:omega-est-1}
and \eqref{eq:omega-est-3} (with $m=1$),
\[
\Big|\partial_\lambda^{\ell} \bigl\langle \mathcal{Q}_1\mathcal{D}\mathcal{P}\, \omega_0^{\pm}(\lambda,\cdot,y),\; \omega_1^{-}(\lambda,\cdot,x) \bigr\rangle\Big| \lesssim \lambda^{-\ell} \langle x\rangle^{-\frac12}\langle y\rangle^{-\frac12},
\qquad \ell=0,1,2,\quad \lambda\in(0,\lambda_0).
\]
Therefore Lemma~\ref{lem-oscillatory estimate} with $b=1-$ gives the
pointwise bound
\[
|\mathcal{N}_{1,0}(x,y)| \lesssim \langle x\rangle^{-\frac12} \langle \langle x\rangle - \langle y\rangle\rangle^{-2+} \langle y\rangle^{-\frac12},
\]
which is exactly the bound \eqref{eq:ptwise-N0} obtained for $\mathcal{N}_0$. The Schur estimate above then shows that $\mathcal{N}_{1,0}(x,y)$ is admissible; hence
$\mathcal{N}_{1,0}$ is bounded on $L^p$ for all $1\le p\le\infty$.

\medskip
\noindent\emph{Bound for $\mathcal{N}_{1,1}$ with general $f_1$.}
Using the expansion \eqref{eq:expr-vH0} once more, we have
\begin{equation}\label{eq:ker of N11}
\mathcal{N}_{1,1}(x,y)=\sum_{\pm}\frac{\pm i}{\pi }\int_0^{+\infty} e^{\pm i\lambda\langle y\rangle}\,\chi(\lambda\langle x\rangle)\,\bigl\langle \mathcal{Q}_1\mathcal{D}\mathcal{P}\,  \omega_0^{\pm}(\lambda,\cdot,y),\; \psi_1(\cdot,x) \bigr\rangle
\, \lambda f_1(\lambda)\,\chi(\lambda/\lambda_0) \, d\lambda,
\end{equation}
with
where $\psi_1$ is the function from Lemma~\ref{lem:est-S0vR0-2}. It
follows from \eqref{eq:omega-est-1} and \eqref{eq:psi-est-0} (with $m=1$)
that
	\begin{equation}\label{eq:est of K11}
		\Big|\partial_\lambda^{\ell} \bigl\langle \mathcal{Q}_1\mathcal{D}\mathcal{P}\,  \omega_0^{\pm}(\lambda,\cdot,y),\; \psi_1(\cdot,x) \bigr\rangle\Big| \lesssim \lambda^{-\frac12-\ell} \langle x\rangle^{-1}  \langle y\rangle^{-\frac12},
		\qquad \ell=0,1,2,\quad \lambda\in(0,\lambda_0).
	\end{equation}
	We derive two pointwise bounds from \eqref{eq:ker of N11}. First, since
	$\lambda\langle x\rangle\lesssim 1$ on the support of
	$\chi(\lambda\langle x\rangle)$ and $f_1$ satisfies \eqref{eq:assum for f_1}, a direct computation gives
	\[
	|\mathcal{N}_{1,1}(x,y)| \lesssim \langle x\rangle^{-1}\langle y\rangle^{-\frac12} \int_0^{\langle x\rangle^{-1}} \lambda^{\frac12-} \, d\lambda \lesssim \langle x\rangle^{-\frac52+}\langle y\rangle^{-\frac12}.
	\]
	Consequently
	\[
	\|\mathcal{N}_{1,1}f\|_{L^1}\lesssim\Bigl(\int_{\mathbb{R}^2}\langle x\rangle^{-\frac52+}\,dx\Bigr)\|f\|_{L^1}\lesssim\|f\|_{L^1},
	\]
	which is the $L^1$-boundedness of $\mathcal{N}_{1,1}$. Second, since
	$\bigl|\partial_\lambda^\ell\chi(\lambda\langle x\rangle)\bigr|\lesssim\lambda^{-\ell}$ on the support of $\chi(\lambda\langle x\rangle)$, the
	amplitude in \eqref{eq:ker of N11} satisfies the hypotheses of
	Lemma~\ref{lem-oscillatory estimate} applied to the oscillation
	$e^{\pm i\lambda\langle y\rangle}$ with $b=\frac12-$; this gives
	\[
	|\mathcal{N}_{1,1}(x,y)| \lesssim \langle x\rangle^{-1} \langle y\rangle^{-2+}.
	\]
	For $2<p<\infty$ we have $\langle x\rangle^{-1}\in L^p(\mathbb{R}^2)$ and,
	choosing the loss sufficiently small, $\langle y\rangle^{-2+}\in
	L^{p'}(\mathbb{R}^2)$; H\"older's inequality then shows that
	$\mathcal{N}_{1,1}$ is bounded on $L^p$ for $2<p<\infty$. The
	Riesz--Thorin interpolation theorem now yields the $L^p$-boundedness of
	$\mathcal{N}_{1,1}$ for all $1\le p<\infty$, and together with
	Step~2 this proves the first assertion.

	\medskip
\noindent\emph{The endpoint $p=\infty$ for $f_1(\lambda)=1$ or $f_1(\lambda)=h_{+}(\lambda)^{-1}$.}
	Using the relation $\chi(\lambda\langle x\rangle)=1-\tilde{\chi}(\lambda\langle x\rangle)$, we decompose
	\begin{align*}
		\mathcal{N}_{1,1}(x,y)&=-\frac{1}{\pi i}\int_0^{+\infty}\mathcal{G}(\lambda)(x,y)\, \lambda f_1(\lambda)\,\chi(\lambda/\lambda_0) \, d\lambda \\
		&\qquad+\frac{1}{\pi i}\int_0^{+\infty} \mathcal{G}(\lambda)(x,y)\, \tilde{\chi}(\lambda\langle x\rangle)\,\lambda f_1(\lambda)\,\chi(\lambda/\lambda_0) \, d\lambda \\
		&=: \mathcal{N}_{1,1,m}(x,y) + \mathcal{N}_{1,1,r}(x,y),
	\end{align*}
	with the abbreviation
	\[
	\mathcal{G}(\lambda)(x,y):=\bigl\langle \mathcal{Q}_1\mathcal{D}\mathcal{P} v \bigl(R_0^+-R_0^-\bigr)(\lambda^{2})(\cdot,y),\; \psi_1(\cdot,x) \bigr\rangle.
	\]

	\medskip
	\noindent\emph{Bound for $\mathcal{N}_{1,1,m}$.}
	Since $\mathcal{P}$ has finite rank, we may write
	$\mathcal{P} = \sum_{i=1}^j \varphi_i \langle \cdot,\varphi_i\rangle$ for
	some $\varphi_i\in L^2$. Inserting this into the definition of
	$\mathcal{N}_{1,1,m}$ and using the identity
	\begin{equation}\label{eq:Four_trans_on_S}
		\frac{\lambda}{\pi i}\bigl(R_0^+(\lambda^{2})-R_0^-(\lambda^{2})\bigr)f(z)
		= \bigl(\hat f\, d\sigma_\lambda\bigr)^{\vee}(z),
	\end{equation}
	where $d\sigma_\lambda$ denotes the induced measure on the circle
	$\lambda \mathbb{S}^1$, the $\lambda$-integral can be rewritten in terms of the
	functional calculus of $-\Delta$: up to a universal constant,
	\[
	\mathcal{N}_{1,1,m}f(x)
	= \sum_{i=1}^j \Phi_i(x)
	\int_{\mathbb{R}^2} v(z)\,\overline{\varphi_i}(z)\,\Bigl( f_1(\sqrt{-\Delta})\, \chi\bigl(\sqrt{-\Delta}/\lambda_0\bigr) f\Bigr)(z) \, dz,
	\]
	where
	\[
	\Phi_i(x)=\bigl\langle \mathcal{Q}_1\mathcal{D}\varphi_i,\; \psi_1(\cdot,x) \bigr\rangle
	=\int_{\mathbb{R}^2} (\mathcal{Q}_1\mathcal{D}\varphi_i)(w)\,\overline{\psi_1(w,x)}\,dw .
	\]

	We claim that for the two choices of $f_1$ under consideration, the multiplier
	$f_1(\sqrt{-\Delta})\,\chi(\sqrt{-\Delta}/\lambda_0)$ is bounded on
	$L^\infty(\mathbb{R}^2)$.
	When $f_1(\lambda)=1$, the corresponding multiplier is
	$\chi(\sqrt{-\Delta}/\lambda_0)$ is obviously bounded on $L^\infty$. When
	$f_1(\lambda)=h_{+}(\lambda)^{-1}$, writing $b_0:=-\frac{\|V\|_{L^1}}{2\pi}$ so that $h_{+}(\lambda)=b_0\log\lambda+z_0^+$, we decompose
	\[
	h_{+}(\lambda)^{-1} = \bigl(b_0\log\lambda\bigr)^{-1} + \Bigl( h_{+}(\lambda)^{-1}-\bigl(b_0\log\lambda\bigr)^{-1} \Bigr),
	\]
	where the second term decays like $(\log\lambda)^{-2}$ as $\lambda\to0^+$;
	Lemmas~\ref{eq-fourier-mulitiplier-1} and~\ref{lem-Fou-multiplier} then give
	the $L^\infty$-boundedness of each piece. Moreover, by \eqref{eq:psi-est-0} (with $m=1$),
	\[
	|\Phi_i(x)|\le \|\mathcal{Q}_1\mathcal{D}\varphi_i\|_{L^2}\,\|\psi_1(\cdot,x)\|_{L^2}
	\lesssim \|\varphi_i\|_{L^2}\,\langle x\rangle^{-1}\lesssim 1,
	\]
	and $v\overline{\varphi_i}\in L^1$ since $|v(x)|\lesssim\langle
	x\rangle^{-3-}$ and $\varphi_i\in L^2$. H\"older's inequality and the
	$L^\infty$-boundedness of the multiplier therefore give
	\begin{align*}
		\bigl\|\mathcal{N}_{1,1,m}f\bigr\|_{L^\infty}
		&\lesssim \sum_{i=1}^j \| \Phi_i\|_{L^\infty} \, \| v\varphi_i \|_{L^1} \, \| f \|_{L^\infty}
		\lesssim \| f \|_{L^\infty},
	\end{align*}
	which establishes the $L^\infty$-boundedness of $\mathcal{N}_{1,1,m}$.

	\medskip
	\noindent\emph{Bound for $\mathcal{N}_{1,1,r}$.}
	Expanding $v\bigl(R_0^+-R_0^-\bigr)(\lambda^{2})(\cdot,y)$ by \eqref{eq:expr-vH0} as before, we
	write
	\begin{equation}\label{eq:ker of N11r}
		\mathcal{N}_{1,1,r}(x,y) = \sum_{\pm}\frac{\mp i}{\pi i}\int_0^{+\infty} e^{\pm i\lambda \langle y\rangle} \lambda f_1(\lambda)\,\chi(\lambda/\lambda_0) \tilde{\chi}(\lambda\langle x\rangle)
		 \bigl\langle \mathcal{Q}_1\mathcal{D}\mathcal{P}\,  \omega_0^{\pm}(\lambda,\cdot,y),\; \psi_1(\cdot,x) \bigr\rangle \, d\lambda.
	\end{equation}
	On the support of $\tilde{\chi}(\lambda\langle x\rangle)$ we have
	$\lambda\langle x\rangle\gtrsim1$, hence $\langle x\rangle^{-1}\lesssim\lambda$, and \eqref{eq:est of K11} yields
	\begin{equation*}
		\Bigl|\partial_\lambda^\ell \bigl\langle \mathcal{Q}_1\mathcal{D}\mathcal{P}\,  \omega_0^{\pm}(\lambda,\cdot,y),\; \psi_1(\cdot,x) \bigr\rangle \Bigr|
		\lesssim \lambda^{\frac12-\ell}\langle y\rangle^{-\frac12},
		\qquad \ell=0,1,2,\quad \lambda\in (0,\lambda_0),
	\end{equation*}
	uniformly in $x$. 
	Taking absolute values in \eqref{eq:ker of N11r} already gives
	$|\mathcal{N}_{1,1,r}(x,y)|\lesssim\langle y\rangle^{-\frac12}\lesssim 1$.
	Moreover, integrating by parts twice in $\lambda$ (the boundary terms
	vanish), we obtain
	\[
	|\mathcal{N}_{1,1,r}(x,y)| \lesssim \langle y\rangle^{-\frac52}\int_{0}^{2\lambda_0} \lambda^{-\frac12-}\,d\lambda \lesssim \langle y\rangle^{-\frac52}.
	\]
	Consequently
	$
	\sup_{x\in\mathbb{R}^2} \int_{\mathbb{R}^2}|\mathcal{N}_{1,1,r}(x,y)|\,dy
	\lesssim 1,
	$
	which implies the $L^\infty$-boundedness of
	$\mathcal{N}_{1,1,r}$.

	Combining the bounds for $\mathcal{N}_{1,1,m}$ and
	$\mathcal{N}_{1,1,r}$, we conclude that $\mathcal{N}_{1,1}$ is bounded on
	$L^\infty(\mathbb{R}^2)$ when $f_1(\lambda)=1$ or $f_1(\lambda)=h_{+}(\lambda)^{-1}$; together with
	the boundedness of $\mathcal{N}_{1,0}$ given above, this proves the second assertion.
\end{proof}


\begin{lemma}\label{Lemm: bound of N_2}
	Let $|V(x)|\lesssim \langle x\rangle^{-6-}$. If $\Gamma_2(\lambda)=\tilde{\mathcal{O}}_2(\lambda^{1/2+})$, then the operator $\mathcal{N}_2$ given in \eqref{eq-rep-W2} is bounded on $L^p(\mathbb{R}^2)$ for all $1\le p\le\infty$.
\end{lemma}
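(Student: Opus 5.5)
The plan is to show that the kernel of $\mathcal{N}_2$ is admissible and then apply Lemma~\ref{Schur Lemma}, following the scheme used for $\mathcal{N}_0$ in Lemma~\ref{Lemm: bound of N_0}. Since $\Gamma_2(\lambda)$ carries no projection, neither factor gets a vanishing moment. The gain must therefore come entirely from the factor $\lambda^{1/2+}$ in $\Gamma_2$.

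\textbf{Kernel representation.} We write
\[
\mathcal{N}_2(x,y)=\frac{1}{\pi i}\int_0^{\infty}\lambda\chi(\lambda/\lambda_0)\bigl\langle \Gamma_2(\lambda)v(R_0^+-R_0^-)(\lambda^2)(\cdot,y),\,vR_0^-(\lambda^2)(\cdot,x)\bigr\rangle\,d\lambda .
\]
Expanding both factors by \eqref{eq:expr-vH0} of Lemma~\ref{lem:est-vR-0} gives a sum over signs of
\[
\int_0^\infty e^{i\lambda(\langle x\rangle\pm\langle y\rangle)}\,\mathcal{K}^\pm(\lambda,x,y)\,d\lambda,
\qquad
\mathcal{K}^\pm:=\lambda\chi(\lambda/\lambda_0)\langle\Gamma_2(\lambda)\omega_0^\pm(\lambda,\cdot,y),\omega_0^-(\lambda,\cdot,x)\rangle .
\]

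\textbf{Amplitude bounds.} Cauchy--Schwarz, the Leibniz rule, the hypothesis $\Gamma_2=\tilde{\mathcal O}_2(\lambda^{1/2+})$ and the two estimates \eqref{eq:omega-est-1}, \eqref{eq:omega-est-2} give, for $\ell=0,1,2$,
\[
|\partial_\lambda^\ell\mathcal{K}^\pm|\lesssim \lambda^{1+1/2+-\ell}\min\{\lambda^{-1/2}\langle x\rangle^{-1/2},|\log\lambda|\}\min\{\lambda^{-1/2}\langle y\rangle^{-1/2},|\log\lambda|\}.
\]
Using the first alternative in both factors yields $\lambda^{1/2+-\ell}\langle x\rangle^{-1/2}\langle y\rangle^{-1/2}$. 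Lemma~\ref{lem-oscillatory estimate} with $b=\frac12+$ then gives
\[
|\mathcal{N}_2(x,y)|\lesssim \langle x\rangle^{-1/2}\langle y\rangle^{-1/2}\langle\langle x\rangle-\langle y\rangle\rangle^{-3/2-}.
\]

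\textbf{Schur integrals.} Integrating in polar coordinates, $\langle x\rangle^{-1/2}\int_0^\infty r^{1/2}\langle r-\langle x\rangle\rangle^{-3/2-}dr$ behaves like $\langle x\rangle^{-1/2}\cdot\langle x\rangle^{1/2}$. This is bounded, but only because the tail $\int_{r\gg\langle x\rangle}r^{1/2}r^{-3/2-}dr$ converges thanks to the $0+$. I therefore expect this route to close by symmetry in $x$ and $y$.

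\textbf{Main obstacle.} The risk is that the bound $\lambda^{-1/2}\langle\cdot\rangle^{-1/2}$ is poor when $\lambda\langle x\rangle\lesssim1$. If the $\lambda$-exponent left after the $\ell=2$ derivatives is too weak for Lemma~\ref{lem-oscillatory estimate}, I would use a mixed bound instead. For $\lambda\langle y\rangle\lesssim1$, \eqref{eq:omega-est-2} gives $|\log\lambda|\lambda^{-k}$, and the $\lambda^{1/2+}$ absorbs the logarithm. Splitting with $\chi(\lambda\langle x\rangle)$ and $\chi(\lambda\langle y\rangle)$, the region where both cutoffs are active is handled by a direct size estimate, giving $\int_0^{\min(\langle x\rangle^{-1},\langle y\rangle^{-1})}\lambda^{3/2+}|\log\lambda|^2d\lambda\lesssim\max(\langle x\rangle,\langle y\rangle)^{-5/2+}$. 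This kernel is integrable in each variable uniformly in the other. On the remaining regions I would integrate by parts twice in $\lambda$, as in the proof of Lemma~\ref{Lemm: bound of N_1}, keeping track of derivatives falling on the cutoffs, which cost $\lambda^{-1}$ on their support. This should recover the decay $\langle\langle x\rangle\pm\langle y\rangle\rangle^{-3/2-}$ with the weights $\langle x\rangle^{-1/2}\langle y\rangle^{-1/2}$. Checking that these region estimates combine into admissible kernels is the step I expect to require the most care.
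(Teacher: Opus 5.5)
Your main route is exactly the paper's proof. You expand both factors via \eqref{eq:expr-vH0}, bound the amplitude by $\lambda^{1/2+-\ell}\langle x\rangle^{-1/2}\langle y\rangle^{-1/2}$ using \eqref{eq:omega-est-1} and Cauchy--Schwarz, apply Lemma~\ref{lem-oscillatory estimate} with $b=\frac12+$ to get the $\langle\langle x\rangle-\langle y\rangle\rangle^{-3/2-}$ decay, and close with the symmetric polar-coordinate Schur estimate.

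The fallback in your last paragraph is unnecessary. For $b=\frac12+$ one has $[b]+2=2$, so Lemma~\ref{lem-oscillatory estimate} needs only the $\ell=0,1,2$ bounds $\lambda^{b-\ell}$ you already have. The regime $\lambda\langle x\rangle\lesssim 1$ is therefore handled inside that lemma, and no extra cutoff splitting is required.
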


\begin{proof}
	The argument closely parallels that of Lemma~\ref{Lemm: bound of N_0}.
	Expanding both resolvent factors in \eqref{eq-rep-W2} by \eqref{eq:expr-vH0}, the kernel of
	$\mathcal{N}_2$ can be written as
	\begin{align*}
		\mathcal{N}_2(x,y) &=\sum_{\pm}\frac{\pm 1}{\pi i}\int_0^{+\infty} \lambda\,\chi(\lambda/\lambda_0)\, e^{i\lambda(\langle x\rangle \pm \langle y\rangle)}
		\bigl\langle \Gamma_2(\lambda)\,\omega_0^{\pm}(\lambda,\cdot,y),\; \omega_0^{-}(\lambda,\cdot,x) \bigr\rangle\, d\lambda \\
		&=:\sum_{\pm}\frac{\pm 1}{\pi i}\int_0^{+\infty} \lambda\,\chi(\lambda/\lambda_0)\, e^{i\lambda(\langle x\rangle \pm \langle y\rangle)} \mathcal{K}_2^{\pm}(\lambda,x,y)\, d\lambda.
	\end{align*}
	Since $\Gamma_2(\lambda)=\tilde{\mathcal{O}}_2(\lambda^{1/2+})$, the
	estimate \eqref{eq:omega-est-1} together with the Cauchy--Schwarz
	inequality gives
	\[
	\bigl|\partial_\lambda^{\ell} \mathcal{K}_2^{\pm}(\lambda,x,y)\bigr| \lesssim \lambda^{-\frac12 - \ell +} \langle x\rangle^{-\frac12} \langle y\rangle^{-\frac12},
	\qquad \ell=0,1,2,\quad \lambda\in(0,\lambda_0).
	\]
	The full amplitude
	$\langle x\rangle^{\frac12} \langle y\rangle^{\frac12} \lambda\chi(\lambda/\lambda_0)\mathcal{K}_2^{\pm}(\lambda,x,y)$ therefore
	satisfies the hypotheses of Lemma~\ref{lem-oscillatory estimate} with
	$b=\frac12+$, and we obtain the pointwise bound
	\[
	|\mathcal{N}_2(x,y)|\lesssim\frac{\langle x\rangle^{-\frac12}\langle y\rangle^{-\frac12}}{\langle \langle x\rangle-\langle y\rangle\rangle^{\frac32+}}.
	\]
	This kernel is admissible: indeed, with $a=\langle x\rangle$, polar coordinates
	and Lemma~\ref{lemma-GV} give
	\[
	\int_{\mathbb{R}^2}|\mathcal{N}_2(x,y)|\,dy
	\lesssim a^{-\frac12}\int_0^\infty \frac{r\,\langle r\rangle^{-\frac12}}{\langle r-a\rangle^{\frac32+}}\,dr
	\lesssim a^{-\frac12}\int_0^\infty \frac{r^{\frac12}}{\langle r-a\rangle^{\frac32+}}\,dr
	\lesssim 1,
	\]
	and the other Schur norm is estimated in the same way, since the bound is
	symmetric in $x$ and $y$. Hence $\mathcal{N}_2$ is bounded on $L^p(\mathbb{R}^2)$ for all
	$1\le p\le\infty$.
\end{proof}
	
\subsection{Boundedness of $\mathcal{W}_{\mathrm{sing}}^{\mathrm{L}}$}

In this subsection we establish the mapping properties of the singular operator $\mathcal{W}_{\mathrm{sing}}^{\mathrm{L}}$, which arises in the low-energy decomposition when zero is a regular point of $H$. The operator $\mathcal{W}_{\mathrm{sing}}^{\mathrm{L}}$ contains a singular integral component that cannot be treated by the admissible-kernel methods of the preceding subsection; the key step is to separate this component and to identify it, through a Parseval-type computation, as the Hilbert transform along the radial variable.

\begin{lemma}\label{lem-W0s-bound}
	Let $|V(x)|\lesssim \langle x\rangle^{-6-}$. Then the operator $\mathcal{W}_{\mathrm{sing}}^{\mathrm{L}}$ defined in \eqref{eq-rep-W_s} is bounded from $L^1(\mathbb{R}^2)$ to $L^{1,\infty}(\mathbb{R}^2)$, from $\mathcal{H}^1(\mathbb{R}^2)$ to $L^1(\mathbb{R}^2)$, and from $L^\infty(\mathbb{R}^2)$ to $\mathrm{BMO}(\mathbb{R}^2)$.
\end{lemma}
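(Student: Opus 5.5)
We would exploit that $P$ is rank one, $Pf=\|V\|_{L^1}^{-1}\langle f,v\rangle v$, so the kernel of $\mathcal{W}_{\mathrm{sing}}^{\mathrm{L}}$ factors as
\[
\frac{1}{\pi i\|V\|_{L^1}}\int_0^\infty \lambda h_+(\lambda)^{-1}\chi(\lambda/\lambda_0)\,\bigl(R_0^+(\lambda^2)V\bigr)(x)\,\bigl(V(R_0^+-R_0^-)(\lambda^2)\bigr)(y)\,d\lambda,
\]
where $(R_0^+V)(x)=\int R_0^+(\lambda^2)(x,z)v^2(z)\,dz$. The first step is to split each resolvent factor into a \emph{main} radial part and a remainder. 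On the right we write $V(R_0^+-R_0^-)(\lambda^2)(\cdot,y)$, integrated against $v$, as $\tfrac{i}{2}\|V\|_{L^1}J_0(\lambda|y|)$ plus the $\mathcal{Q}_1$-type remainder from the expansion \eqref{eq:exp-for-J0-low}. On the left we use Remark~\ref{remk:on the diff} to write $(R_0^+V)(x)=\tfrac{i}{4}\|V\|_{L^1}H_0^+(\lambda|x|)+$ remainder for $|x|\ge1$. For $|x|\le1$ we use Lemma~\ref{lem:est-vR-0} directly. Every cross term containing at least one remainder carries an extra factor $\lambda^{1/2}a^{-1/2}$ or $\lambda$. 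We therefore expect these terms to be handled exactly as $\mathcal{N}_0,\mathcal{N}_1,\mathcal{N}_2$: via Lemma~\ref{lem-oscillatory estimate} (now with $a=1$ from $h_+^{-1}$) they give admissible kernels, or kernels of the form $\langle x\rangle^{-1}\chi(\lambda\langle x\rangle)$. The latter are handled by the splitting into a multiplier piece and a remainder as in the end of the proof of Lemma~\ref{Lemm: bound of N_1}, using Lemma~\ref{lem-Fou-multiplier} for the symbol $h_+^{-1}$.

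The main term is then, up to a constant, $\int_0^\infty \lambda h_+(\lambda)^{-1}\chi(\lambda/\lambda_0)H_0^+(\lambda|x|)J_0(\lambda|y|)\,d\lambda$. We would use \eqref{eq:Four_trans_on_S} to identify it with
\[
\mathcal{W}_{\mathrm{sing}}^{\mathrm{L}}f\approx c\,R_0^+(\lambda^2)\delta_0 \text{ paired with } \bigl(h_+^{-1}(\sqrt{-\Delta})\chi(\sqrt{-\Delta}/\lambda_0)f\bigr)\ \text{on spheres}.
\]
More concretely, we write $H_0^+=J_0+iY_0$. The $J_0\otimes J_0$ piece is a radial Fourier multiplier in the sense of the spectral calculus: it equals the composition of $h_+^{-1}(\sqrt{-\Delta})\chi(\sqrt{-\Delta}/\lambda_0)$ with a radial average. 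Lemma~\ref{lem-Fou-multiplier} shows it is bounded on $L^1$, $L^\infty$, $\mathcal{H}^1$ and $\mathrm{BMO}$. The $Y_0$ piece is the genuinely singular one. Using the large-argument asymptotics \eqref{eq:asy-Hanel-1}, the leading product $(|x||y|)^{-1/2}\cos(\lambda|x|-\pi/4)\cos(\lambda|y|-\pi/4)$ produces, after the $\lambda$-integration (Parseval in the radial variable), a kernel
\[
\frac{c}{(|x||y|)^{1/2}}\,\frac{1}{(|x|-|y|)\log(2+||x|-|y||)}+\text{(admissible)}.
\]
This is a radial Hilbert transform weighted by a logarithm, composed with the multiplier $h_+^{-1}$. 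Lemma~\ref{lem-oscillatory estimate} controls the admissible error terms, including the $|x|+|y|$ phases and the small-$\lambda|x|$ logarithmic region from \eqref{eq:asy-Hanel-0}.

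The final and hardest step is to show that this principal part is a Calderón--Zygmund operator on $\mathbb{R}^2$. In polar coordinates, the weights $(|x||y|)^{-1/2}$ and the Jacobian $r\,dr$ match, so the operator reduces to a one-dimensional truncated Hilbert transform in $r$ tensored with angular averaging. We would verify the standard size bound $|K(x,y)|\lesssim|x-y|^{-2}$ off the diagonal and the Hörmander smoothness condition. Here we expect the difficulty to lie in the region $|x|\approx|y|$ with different angles, where $|x-y|$ is not comparable to $||x|-|y||$; the angular averaging should supply the required bounds there. $L^2$-boundedness is already known from $\|W_-\|_{L^2}\le1$ together with the bounds on the other terms. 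Calderón--Zygmund theory then yields $L^1\to L^{1,\infty}$, $\mathcal{H}^1\to L^1$ and, by duality with the adjoint kernel, $L^\infty\to\mathrm{BMO}$. If the Hörmander condition proves delicate, the fallback is to prove $\mathcal{H}^1\to L^1$ directly on atoms, using the cancellation of the atom against the $r$-derivative of the kernel, and to deduce the other two bounds by duality and interpolation.
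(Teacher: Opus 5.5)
There is a genuine gap in your final step. The principal part is \emph{not} a Calder\'on--Zygmund operator on $\mathbb{R}^2$. The failure lies in the pointwise kernel conditions themselves, so angular averaging cannot repair it.

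The kernel depends only on $(|x|,|y|)$, and its slowly decaying part lives on the whole hypersurface $\{|x|=|y|\}$, not on the diagonal $\{x=y\}$. Concretely, take $\mu=|y|$ and $|x|=\mu+\delta$ with $2\le\delta\le\sqrt{\mu}$. Then $|K(x,y)|\sim(\mu\,\delta\log\delta)^{-1}$ for \emph{every} direction of $x$; this is your own formula, and it is correct in this regime.
\begin{itemize}
\item The size condition fails: for $x$ antipodal to $y$ one gets $|K(x,y)|\gg|x-y|^{-2}\sim\mu^{-2}$.
\item H\"ormander's condition fails too. Take $y=\mu e_1$ and $y'=(\mu+\epsilon)e_1$ with $1\ll\epsilon\le\sqrt\mu$. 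The region $\{|x-y|\ge 2|y-y'|\}$ still contains all but an $O(\epsilon/\mu)$ fraction of the annuli $\mu+2\le|x|\le\mu+\epsilon/8$. There $|K(x,y)-K(x,y')|\gtrsim(\mu\,\delta\log\delta)^{-1}$, so the H\"ormander integral is $\gtrsim\log\log\epsilon\to\infty$. This is the same $\log\log$ mechanism that produces the $L^1$-unboundedness.
\end{itemize}

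Your fallback does not close the gap either. An atom argument must use the whole annulus $\{\,||x|-|x_0||\lesssim r_0\}$, of area $\sim r_0|x_0|$, as the exceptional set. Consequently the local $L^2$ step has to be applied to the angular average of the atom, not to the atom itself. Moreover, weak type $(1,1)$ does not follow from $\mathcal{H}^1\to L^1$ plus $L^2$: interpolation gives only strong $L^p$ for $1<p<2$, and duality gives $\BMO$ estimates, not weak $(1,1)$.

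Your principal kernel is also wrong off the shoulder. For $|x|\gg|y|$ the relevant frequencies $\lambda\lesssim|x|^{-1}$ satisfy $\lambda|y|\ll1$, so $J_0(\lambda|y|)\approx1$ and the true kernel is $\approx(|x|^2\log|x|)^{-1}$. Your formula instead gives $(|x|^{3/2}|y|^{1/2}\log|x|)^{-1}$, which is not even in $L^{1,\infty}_x$, so the difference is not admissible.

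The missing idea is to carry out the one-dimensional reduction exactly, in the variable $t=|x|^2$ rather than $r=|x|$. The paper factors $\mathcal{W}_{\mathrm{sing}}^{\mathrm{L}}=\mathcal{I}_{\mathrm{sing}}\circ\mathcal{M}_h$. Via Parseval and Sokhotski--Plemelj it computes that, after averaging against $|V|$ on both sides, the singular kernel is exactly $\frac{i}{4\pi}\delta(|x|^2-|y|^2)-\frac{1}{4\pi^2}\operatorname{p.v.}\frac{1}{|x|^2-|y|^2}$. Since $dx=\frac12\,dt\,d\omega$, the p.v.\ part is literally the Hilbert transform in $t$ of the angular average $\rho\mapsto\int_{\mathbb{S}^1}f(\sqrt\rho\,\theta)\,d\theta$, and its output is radial. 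The three bounds then follow from one-dimensional theory:
\begin{itemize}
\item $\mathcal{H}^1\to L^1$ holds because the angular average of an $\mathcal{H}^1(\mathbb{R}^2)$ atom is a multiple of an $\mathcal{H}^1(\mathbb{R})$ atom in $\rho=|y|^2$.
\item Weak $(1,1)$ follows from the 1D weak $(1,1)$ bound, because level sets of radial functions have measure $\pi$ times their $t$-measure. This needs, as you correctly anticipated, replacing $R_0^+(\lambda^2)|V|$ by $\frac{i}{4}\|V\|_{L^1}H_0^+(\lambda|x|)$ for $|x|\ge\frac12$, since the $|V|$-convolution on the output side is not controlled in $L^{1,\infty}$.
\item $L^\infty\to\BMO$ follows by duality.
\end{itemize}
Your $J_0\otimes J_0$ observation (a multiplier composed with rotational averaging) is correct, but it is unnecessary in this formulation.
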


\begin{proof}
	The proof is divided into two parts. We first establish the $\mathcal{H}^1\to L^1$ and $L^\infty\to\mathrm{BMO}$ bounds by identifying the singular integral structure of $\mathcal{W}_{\mathrm{sing}}^{\mathrm{L}}$; the weak-type $(1,1)$ estimate is then obtained by a refined decomposition of the output variable near and away from the origin. 

	\medskip
	\noindent\textbf{Part I: $\mathcal{H}^1\to L^1$ and $L^\infty\to\mathrm{BMO}$ boundedness.}

	Recall the representation \eqref{eq-rep-W_s}:
	\begin{align*}
		\mathcal{W}_{\mathrm{sing}}^{\mathrm{L}}
		&= \frac{1}{\pi i}\int_0^{+\infty} R_0^{+}(\lambda^2)\, v P v \bigl(R_0^+(\lambda^{2})-R_0^-(\lambda^{2})\bigr)\, \lambda h_{+}(\lambda)^{-1} \chi(\lambda/\lambda_0) \, d\lambda \\
		&= \mathcal{I}_{\mathrm{sing}}\circ \mathcal{M}_{h},
	\end{align*}
	where $\mathcal{M}_{h}:=h_{+}(\sqrt{-\Delta})^{-1}\chi(\sqrt{-\Delta}/\lambda_0)$ and
	\begin{equation}\label{eq:def of I-sing}
		\mathcal{I}_{\mathrm{sing}}:=\frac{1}{\pi i}\int_0^{+\infty} R_0^{+}(\lambda^2)\, v P v\,\lambda \bigl(R_0^+(\lambda^{2})-R_0^-(\lambda^{2})\bigr) \, d\lambda ,
	\end{equation}
	the integral being understood in the sense of tempered distributions (see the computation below). The factorization in the second line follows from the functional calculus and \eqref{eq:Four_trans_on_S}: since $\mathcal{M}_{h}$ acts on the Fourier side of the circle $\lambda\mathbb{S}^{1}$ as multiplication by the scalar $h_{+}(\lambda)^{-1}\chi(\lambda/\lambda_0)$, one has
	\[
		\lambda h_{+}(\lambda)^{-1} \chi(\lambda/\lambda_0)\,\bigl(R_0^+(\lambda^{2})-R_0^-(\lambda^{2})\bigr)
		=\lambda\bigl(R_0^+(\lambda^{2})-R_0^-(\lambda^{2})\bigr)\,\mathcal{M}_{h}.
	\]
	Since $\mathcal{M}_{h}$ is bounded on $\mathcal{H}^1$ and on $L^p(\mathbb{R}^2)$ for all $1\le p\le\infty$ by Lemma~\ref{lem-Fou-multiplier}, it suffices to establish the corresponding mapping properties of $\mathcal{I}_{\mathrm{sing}}$.

	Recall that $Pf=\|V\|_{L^1}^{-1}\langle f,v\rangle\, v$, so that $vPv$ has the integral kernel $\|V\|_{L^1}^{-1}|V|(w)|V|(z)$. For any Schwartz function $f$ we compute the Fourier transform of $\mathcal{I}_{\mathrm{sing}}f$ in $x$. On the one hand, the free resolvent acts on the Fourier side as
	\[
		\widehat{R_0^+(\lambda^2)g}\,(\xi)=\frac{\widehat{g}(\xi)}{|\xi|^2-\lambda^2-i0};
	\]
	on the other hand, \eqref{eq:Four_trans_on_S} and Parseval's formula (with the bilinear pairing $\langle g,h\rangle=\int gh$) give
	\[
		F_f(\lambda):=\frac{1}{\pi i}\bigl\langle |V|,\, \lambda\bigl(R_0^+-R_0^-\bigr)(\lambda^{2})f\bigr\rangle
		= \frac{1}{(2\pi)^2}\int_{\lambda \mathbb{S}^1}\overline{\widehat{|V|}(\eta)}\,\hat f(\eta)\,d\sigma_{\lambda}(\eta).
	\]
	Combining these two identities and applying the coarea formula in $\eta$, we arrive at
	\begin{align*}
		\mathcal{F}(\mathcal{I}_{\mathrm{sing}} f)(\xi)
		&= \frac{\widehat{|V|}(\xi)}{\|V\|_{L^1}}\lim_{\varepsilon\to 0^+}\int_{0}^{+\infty} \frac{F_f(\lambda)\,d\lambda}{|\xi|^2-\lambda^2-i\varepsilon}  \\
		&= \frac{\widehat{|V|}(\xi)}{(2\pi)^2\,\|V\|_{L^1}}
		\lim_{\varepsilon\to 0^+}\int_{\mathbb{R}^2}
		\frac{\overline{\widehat{|V|}(\eta)}\,\hat f(\eta)}{|\xi|^2-|\eta|^2-i\varepsilon}\,d\eta .
	\end{align*}
	Using the elementary identity
	\[
		\frac{1}{s-i\varepsilon}=i\int_0^\infty e^{-it(s-i\varepsilon)}\,dt,\qquad s\in\mathbb{R},\ \varepsilon>0,
	\]
	with $s=|\xi|^2-|\eta|^2$, we obtain
	\[
		\mathcal{F}(\mathcal{I}_{\mathrm{sing}} f)(\xi) = \frac{i\,\widehat{|V|}(\xi)}{(2\pi)^2\,\|V\|_{L^1}}
		\lim_{\varepsilon\to 0^+}\int_0^\infty \int_{\mathbb{R}^2} e^{-it(|\xi|^2-|\eta|^2-i\varepsilon)}
		\,\overline{\widehat{|V|}(\eta)}\,\hat f(\eta)\,d\eta\,dt .
	\]
	Define, in the sense of tempered distributions,
	\[
		\mathbf{T}(x,y):=\frac{i}{(2\pi)^4}\lim_{\varepsilon\to 0^+}\int_0^\infty \int_{\mathbb{R}^2}\int_{\mathbb{R}^2}
		e^{ix\cdot\xi}\,e^{-it(|\xi|^2-|\eta|^2-i\varepsilon)}\,e^{iy\cdot\eta}\,d\xi\,d\eta\,dt .
	\]
	Unwinding the Fourier transforms then yields the kernel representation
	\begin{equation}\label{eq:iden-1}
		\mathcal{I}_{\mathrm{sing}} f(x) = \frac{1}{\|V\|_{L^1}}
		\int_{\mathbb{R}^2}\int_{\mathbb{R}^2} \int_{\mathbb{R}^2}\mathbf{T}(x-w,z-y)\,|V|(z)|V|(w)f(y)\,dz\,dw\,dy .
	\end{equation}
	Since $(e^{-it|\xi|^2})^{\vee}(x)=(4\pi i t)^{-1}e^{i|x|^2/(4t)}$, the $\xi$- and $\eta$-integrals give, with $A:=|x|^2-|y|^2$,
	\[
		\mathbf{T}(x,y)= \frac{i}{(4\pi)^2}\lim_{\varepsilon\to 0^+}\int_0^\infty
		e^{i\frac{A}{4t}-\varepsilon t}\,\frac{dt}{t^{2}}
		= \frac{4i}{(4\pi)^2}\int_0^\infty e^{isA}\,ds,
	\]
	where the second equality results from the change of variables $s=1/(4t)$ (with $t^{-2}dt=4\,ds$) and is understood in the sense of distributions in $A$. The Sokhotski--Plemelj formula (i.e., the distributional Fourier transform of the Heaviside function)
	\[
		\int_0^\infty e^{isA}\,ds = \pi\delta(A) + i\,\operatorname{p.v.}\frac{1}{A}
	\]
	therefore yields
	\begin{equation}\label{rep-for-0-s-3}
    \begin{aligned}
     \mathbf{T}(x,y)&= \frac{4i}{(4\pi)^2}\Bigl(\pi\,\delta(|x|^2-|y|^2)
		+ i\,\operatorname{p.v.}\frac{1}{|x|^2-|y|^2}\Bigr)  \\
		&\quad= \frac{i}{4\pi}\,\delta(|x|^2-|y|^2)
		-\frac{1}{4\pi^{2}}\,\operatorname{p.v.}\frac{1}{|x|^2-|y|^2}.   
    \end{aligned}	
	\end{equation}
	We now reduce $\mathcal{I}_{\mathrm{sing}}$ to the operator $\mathbf{T}$ with kernel $\mathbf{T}(x,y)$. Since $\mathbf{T}$ is even in its second argument, \eqref{eq:iden-1} can be rewritten as
	\[
		\mathcal{I}_{\mathrm{sing}}f(x)=\frac{1}{\|V\|_{L^1}}\int_{\mathbb{R}^2}|V|(w)\,\mathbf{T}\Phi(x-w)\,dw,
	\]
	with $\Phi(u):=\int_{\mathbb{R}^2}|V|(y)\,f(u+y)\,dy$. The translation invariance of $L^1$ and $\mathcal{H}^1$ then gives
	\[
    \|\Phi\|_{\mathcal{H}^1}\le\int_{\mathbb{R}^2}|V|(y)\,\|f(\cdot+y)\|_{\mathcal{H}^1}\,dy=\|V\|_{L^1}\|f\|_{\mathcal{H}^1}
    \]
	and
	\[
		\|\mathcal{I}_{\mathrm{sing}}f\|_{L^1}\le\frac{1}{\|V\|_{L^1}}\int_{\mathbb{R}^2}|V|(w)\,\|\mathbf{T}\Phi(\cdot-w)\|_{L^1}\,dw=\|\mathbf{T}\Phi\|_{L^1}.
	\]
	Hence the $\mathcal{H}^1\to L^1$ boundedness of $\mathcal{I}_{\mathrm{sing}}$ follows from that of $\mathbf{T}$.

	\smallskip
	\noindent\emph{Boundedness of the delta term.} By the coarea formula, for fixed $x\ne0$,
	\[
		\int_{\mathbb{R}^2} \delta(|x|^2-|y|^2)\,f(y)\,dy = \frac{1}{2|x|}\int_{|y|=|x|} f(y)\,dS(y).
	\]
	Polar coordinates show that the $L^1$- and $L^\infty$-norms of this expression are bounded by $\pi\|f\|_{L^1}$ and $\pi\|f\|_{L^\infty}$, respectively; by the Riesz--Thorin interpolation theorem the delta term is therefore bounded on every $L^p$, $1\le p\le \infty$, and in particular from $\mathcal{H}^1$ to $L^1$ and from $L^\infty$ to $\mathrm{BMO}$.

	\smallskip
	\noindent\emph{Boundedness of the singular term $\mathbf{T}_{\mathbb{H}}$.} It remains to treat the operator $\mathbf{T}_{\mathbb{H}}$ with kernel
	\[
		\mathbf{T}_{\mathbb{H}}(x,y)=\operatorname{p.v.}\frac{1}{|x|^2-|y|^2}.
	\]
	We prove that $\mathbf{T}_{\mathbb{H}}:\mathcal{H}^1\to L^1$ by the atomic decomposition. Recall that $a$ is an $\mathcal{H}^1(\mathbb{R}^2)$-atom associated with a ball $B(x_0,r)\subset\mathbb{R}^2$ if it satisfies
	\begin{enumerate}
		\item[(1)] the support condition: $\operatorname{supp} a \subset B(x_0,r)$;
		\item[(2)] the size condition: $\|a\|_{L^\infty} \le |B(x_0,r)|^{-1}$;
		\item[(3)] the vanishing moment condition: $\int_{\mathbb{R}^2} a(x) \, dx = 0$.
	\end{enumerate}
	It suffices to show that there exists a constant $A$ such that
	\begin{equation}\label{eq:atom-bound}
		\|\mathbf{T}_{\mathbb{H}} a\|_{L^1} \le A
	\end{equation}
	for every $\mathcal{H}^1(\mathbb{R}^2)$-atom $a$.

	Passing to polar coordinates $y=\sqrt\rho\,\theta$ with $dy=\frac12\,d\rho\,d\theta$ shows that $\mathbf{T}_{\mathbb{H}}a$ is radial, and writing $t=|x|^2$ with $dx=\frac12\,dt\,d\omega$ then gives
	\[
		\|\mathbf{T}_{\mathbb{H}}a\|_{L^1(\mathbb{R}^2)}
		= \frac{\pi}{2}\Bigl\| \operatorname{p.v.}\int_{\mathbb{R}^+}\frac{\int_{\mathbb{S}^1}a(\sqrt\rho\,\theta)\,d\theta}{t-\rho}\,d\rho\Bigr\|_{L^1(\mathbb{R}^+,\,dt)} .
	\]
	Since the function $\rho\mapsto\int_{\mathbb{S}^1}a(\sqrt\rho\,\theta)\,d\theta$ is supported in $\mathbb{R}^+$, the principal value on the right-hand side coincides, for $t>0$, with $\pi$ times the Hilbert transform of this function (extended by zero to $\mathbb{R}$). The $\mathcal{H}^1(\mathbb{R})\to L^1(\mathbb{R})$ boundedness of the Hilbert transform therefore yields
	\[
		\|\mathbf{T}_{\mathbb{H}}a\|_{L^1(\mathbb{R}^2)}
		\lesssim \Bigl\|\int_{\mathbb{S}^1}a(\sqrt\rho\,\theta)\,d\theta\Bigr\|_{\mathcal{H}^1(\mathbb{R})}.
	\]

	Set $\mathcal{A}a(\rho)=\frac{1}{18}\int_{\mathbb{S}^1}a(\sqrt\rho\,\theta)\,d\theta$. To obtain \eqref{eq:atom-bound}, it remains to verify that $\mathcal{A}a$ is an $\mathcal{H}^1(\mathbb{R})$-atom. The vanishing moment condition follows from that of $a$:
	\[
		\int_{\mathbb{R}}\mathcal{A}a(\rho)\,d\rho=\frac{1}{18}\int_{\mathbb{S}^1}\int_0^{+\infty} a(\sqrt\rho\,\theta)\,d\rho\,d\theta=\frac{1}{9}\int_{\mathbb{R}^2}a(y)\,dy=0 .
	\]
	If $a$ is supported in $B(x_0,r)$ with $|x_0| < 2r$, then $\operatorname{supp}\mathcal{A}a \subset I_0:=[0, 9r^2]$ and
	\[
		|\mathcal{A}a(\rho)|\le\frac{2\pi}{18}\,\|a\|_{L^\infty}\le\frac{2\pi}{18}\cdot\frac{1}{\pi r^{2}}=\frac{1}{9r^{2}}=|I_0|^{-1},
	\]
	so $\mathcal{A}a$ is an $\mathcal{H}^1(\mathbb{R})$-atom associated with the interval $I_0$. If instead $|x_0| \ge 2r$, then
	\[
		\operatorname{supp}\mathcal{A}a \subset I_0:=\bigl[(|x_0|-r)^2, (|x_0|+r)^2\bigr],\qquad |I_0|=4|x_0|r,
	\]
	and, since the circle of radius $\sqrt\rho$ meets the ball $B(x_0,r)$ in an arc of angular measure at most $2\arcsin(r/|x_0|)$,
	\[
		|\mathcal{A}a(\rho)|
		\le \frac{1}{9}\arcsin\!\Big(\frac{r}{|x_0|}\Big)\,\|a\|_{L^\infty}
		\le \frac{\pi r}{18\,|x_0|}\cdot\frac{1}{\pi r^{2}}
		= \frac{1}{18\,|x_0|\, r}
		\le |I_0|^{-1},
	\]
	for almost every $\rho$. Thus $\mathcal{A}a$ is again an $\mathcal{H}^1(\mathbb{R})$-atom associated with $I_0$. In both cases $\|\mathcal{A}a\|_{\mathcal{H}^1(\mathbb{R})}$ is bounded uniformly in $a$, which establishes \eqref{eq:atom-bound}.

	Consequently $\mathbf{T}_{\mathbb{H}}$, and hence $\mathbf{T}$, $\mathcal{I}_{\mathrm{sing}}$ and $\mathcal{W}_{\mathrm{sing}}^{\mathrm{L}}$, are bounded from $\mathcal{H}^1$ to $L^1$. For the endpoint at $p=\infty$ we argue by duality: the adjoint of $\mathcal{I}_{\mathrm{sing}}$ has the same form as $\mathcal{I}_{\mathrm{sing}}$, with the kernel $\mathbf{T}(x-w,z-y)$ in \eqref{eq:iden-1} replaced by $\overline{\mathbf{T}(y-w,z-x)}$, which differs from \eqref{rep-for-0-s-3} only by a sign in front of the principal-value part and a conjugation of the delta part. The argument above therefore applies to the adjoint as well; since $\mathrm{BMO}$ is the dual of $\mathcal{H}^1$, we conclude that $\mathcal{I}_{\mathrm{sing}}$, and hence $\mathcal{W}_{\mathrm{sing}}^{\mathrm{L}}$, are bounded from $L^\infty$ to $\mathrm{BMO}$.

	\medskip
	\noindent\textbf{Part II: Weak-type $(1,1)$ boundedness.}

	We now prove that $\mathcal{W}_{\mathrm{sing}}^{\mathrm{L}}$ is bounded from $L^1(\mathbb{R}^2)$ to $L^{1,\infty}(\mathbb{R}^2)$. The representation \eqref{eq:iden-1} cannot be used directly, because the convolution in the $w$-variable is not controlled in the weak space $L^{1,\infty}$; we therefore modify the argument according to the size of $|x|$. Write the kernel of $\mathcal{W}_{\mathrm{sing}}^{\mathrm{L}}$ as
	\begin{align*}
		\mathcal{W}_{\mathrm{sing}}^{\mathrm{L}}(x,y)= \frac{1}{\pi i\,\|V\|_{L^1}}
		&\int_0^{+\infty}\int_{\mathbb{R}^2}\int_{\mathbb{R}^2}
		R_0^+(\lambda^2)(x,w)\,|V|(w)|V|(z)\\
		&\qquad\times\bigl(R_0^+(\lambda^{2})-R_0^-(\lambda^{2})\bigr)(z,y)\,
		\frac{\lambda\,\chi(\lambda/\lambda_0)}{h_+(\lambda)}\,dz\,dw\,d\lambda ,
	\end{align*}
	and split the operator according to the spatial cutoff $\chi(|x|)$ (supported in $|x|\le1$) and its complement $\tilde\chi(|x|)=1-\chi(|x|)$ (supported in $|x|\ge\frac12$).

	\smallskip
	\emph{The region $|x|\le 1$.} Using the expansions \eqref{eq:asy-Hanel-0} and \eqref{eq:asy-Hanel-1}, we have for $|x|\le 1$,
	\begin{align*}
		\int_{\mathbb{R}^2}R_0^+(\lambda^2)(x,w)\,|V|(w)\,dw -\frac{1}{2\pi}\int_{\mathbb{R}^2}\log|x-w|\,|V|(w)\,dw + c(\lambda)\,\|V\|_{L^1} + g(\lambda,x),
	\end{align*}
	where, with a constant $c_0\in\mathbb{C}$,
	\[
		c(\lambda):=c_0-\frac{1}{2\pi}\log\lambda,
	\]
	and where $g$ satisfies $\sup_{|x|\le1}|\partial_\lambda^\ell g(\lambda,x)|\lesssim\lambda^{1-\ell}$ for $\ell=0,1,2$ and $\lambda\in (0,1)$ by \eqref{eq:est for r-0}. Note that the logarithmic factor $c(\lambda)$, rather than a genuine constant, appears here; matching it with the leading term of $h_{+}(\lambda)$ is essential for the multiplier argument below. The contribution of the part with $\chi(|x|)$ accordingly splits into three terms,
	\begin{align*}
		-\frac{\chi(|x|)}{2\pi\|V\|_{L^1}} &\int_{\mathbb{R}^2} \log|x-w| \, |V|(w) \, dw \int_{\mathbb{R}^2} |V|(y)\, \mathcal{M}_{h} f(y) \, dy, \\
		\frac{\chi(|x|)}{\|V\|_{L^1}} &\int_{\mathbb{R}^2} |V|(y)\, \mathcal{M}_{c,h} f(y) \, dy,
	\end{align*}
	and
	\[
		\frac{\chi(|x|)}{\pi i\|V\|_{L^1}}
		\int_0^{+\infty} g(\lambda,x)
		\int_{\mathbb{R}^2} |V|(z) \bigl(R_0^+(\lambda^{2})-R_0^-(\lambda^{2})\bigr) (z,y) \, \frac{\lambda  \chi(\lambda/\lambda_0)}{h_{+}(\lambda)} \,  dz \,d\lambda ,
	\]
	where $\mathcal{M}_{c,h}$ denotes the Fourier multiplier with symbol $c(\lambda)h_{+}(\lambda)^{-1}\chi(\lambda/\lambda_0)$, and where we have used the functional-calculus identity
	\[
		\frac{1}{\pi i}\int_0^{+\infty}\varphi(\lambda)\,\lambda\bigl(R_0^+(\lambda^{2})-R_0^-(\lambda^{2})\bigr)\,d\lambda=\varphi(\sqrt{-\Delta}),
	\]
	applied to $\varphi(\lambda)=h_{+}(\lambda)^{-1}\chi(\lambda/\lambda_0)$ and to $\varphi(\lambda)=c(\lambda)\,h_{+}(\lambda)^{-1}\chi(\lambda/\lambda_0)$; this identity follows from \eqref{eq:Four_trans_on_S} exactly as in Part I.

	Since $\mathcal{M}_{h}$ is bounded on $L^1$ and
	\[
		\sup_{|x|\le1}\int_{\mathbb{R}^2}\bigl|\log|x-w|\bigr|\,|V|(w)\,dw\lesssim1,
	\]
	the operator corresponding to the first term is bounded on $L^1$. For the second term, writing $b_0=-\frac{\|V\|_{L^1}}{2\pi}$ so that $h_{+}(\lambda)=b_0\log\lambda+z_0^+$ (cf. \eqref{eq:def-of-hpm}), we decompose
	\[
		\frac{c(\lambda)}{h_{+}(\lambda)}=\frac{c_0-\frac{1}{2\pi}\log\lambda}{b_0\log\lambda+z_0^+}
		=-\frac{1}{2\pi b_0}+\Bigl(c_0+\frac{z_0^+}{2\pi b_0}\Bigr)h_{+}(\lambda)^{-1}.
	\]
	Hence $\mathcal{M}_{c,h}$ is the sum of a constant multiple of $\mathcal{M}_{h}$ and a constant multiple of the cutoff multiplier $\chi(\sqrt{-\Delta}/\lambda_0)$, both of which are bounded on $L^1$ by Lemma~\ref{lem-Fou-multiplier}. The operator corresponding to the second term is therefore bounded on $L^1$ as well. The third term is handled exactly as in the proof of Lemma~\ref{Lemm: bound of N_2}: the improved symbol estimates for $g$ stated above provide more $\lambda$-decay than is required there, so the associated operator is also $L^1$-bounded.

	\smallskip
	\emph{The region $|x|\ge 1/2$.} For the part with $\tilde\chi(|x|)$, we replace the full resolvent $R_0^+(\lambda^2)(x,w)$ by its asymptotic profile $\frac{i}{4}H_0^+(\lambda|x|)$ and split accordingly into a main term $\mathcal{I}_0$ and a remainder $\mathcal{I}_1$. Since $\int_{\mathbb{R}^2}|V|(w)\,dw=\|V\|_{L^1}$, we set
	\[
		\mathcal{I}_0 =  \frac{\tilde{\chi}(|x|)}{4\pi}
		\int_0^{+\infty} \frac{\lambda\,\chi(\lambda/\lambda_0)}{h_{+}(\lambda)}\,H_0^{+}(\lambda|x|)
		\int_{\mathbb{R}^2} |V|(z)\, \bigl(R_0^+-R_0^-\bigr)(\lambda^{2}) (z,y) \, dz \, d\lambda,
	\]
	and
	\begin{align*}
		\mathcal{I}_1 =  \frac{\tilde{\chi}(|x|)}{\pi i\, \|V\|_{L^1}}
		\int_0^{+\infty}
		\int_{\mathbb{R}^2} \int_{\mathbb{R}^2} &\left(R_0^{+}(\lambda^2)(x,w) - \frac{i}{4}H_0^{+}(\lambda|x|)\right) |V|(w) |V|(z) \\
		&\quad \times \bigl(R_0^+-R_0^-\bigr)(\lambda^{2}) (z,y) \, \frac{\lambda  \chi(\lambda/\lambda_0)}{h_{+}(\lambda)} \, dz \, dw \, d\lambda.
	\end{align*}
	In view of Remark~\ref{remk:on the diff}, the kernel of $\mathcal{I}_1$ satisfies the same estimates as the kernels treated in Lemma~\ref{Lemm: bound of N_1}; hence $\mathcal{I}_1$ is bounded on $L^1$, and in particular from $L^1$ to $L^{1,\infty}$.

	For $\mathcal{I}_0$ we repeat the Parseval computation of Part I: since $\frac{i}{4}H_0^{+}(\lambda|x|)=R_0^+(\lambda^2)(x,0)$, the factor $\mathcal{F}(|V|)(\xi)$ there is replaced by the constant $\|V\|_{L^1}$ (the Fourier transform of the point mass $\|V\|_{L^1}\delta_0$), while the spectral factor $h_{+}(\lambda)^{-1}\chi(\lambda/\lambda_0)$ contributes the multiplier $\mathcal{M}_{h}$ acting on $f$. With $\mathbf{T}$ given by \eqref{rep-for-0-s-3} we obtain
	\begin{equation}\label{eq:inte kern by reduction}
		\mathcal{I}_0 f(x)=\tilde\chi(|x|)\int_{\mathbb{R}^2} \int_{\mathbb{R}^2} \mathbf{T}(x, z-y)\, |V|(z)\, \mathcal{M}_hf(y) dz\,dy  
		=\tilde\chi(|x|)\int_{\mathbb{R}^2} \mathbf{T}(x, u)\, \Phi(u)\,du,
	\end{equation}
	where $\Phi(u):=\int_{\mathbb{R}^2}|V|(u+y)\,\mathcal{M}_{h}f(y)\,dy$ satisfies
	\[
		\|\Phi\|_{L^1}\le\|V\|_{L^1}\,\|\mathcal{M}_{h}f\|_{L^1}\lesssim\|f\|_{L^1}.
	\]
	Since multiplication by $\tilde\chi$ is bounded on $L^{1,\infty}$, it suffices to prove the weak-type $(1,1)$ boundedness of $\mathbf{T}$.

	The delta part of $\mathbf{T}$ is bounded on $L^1$, hence of weak type $(1,1)$. For the Hilbert part, note first that
	\[
		\mathbf{T}_{\mathbb{H}}f(x)=\frac12\,\operatorname{p.v.}\int_{\mathbb{R}^+}\frac{\int_{\mathbb{S}^1}f(\sqrt\rho\,\theta)\,d\theta}{|x|^{2}-\rho}\,d\rho
	\]
	is a radial function. Passing to polar coordinates in the $x$-variable as in Part I gives
	\begin{align*}
		\|\mathbf{T}_{\mathbb{H}} f\|_{L^{1,\infty}(\mathbb{R}^2)}
		&= \frac{\pi}{2} \left\| \operatorname{p.v.}\int_{\mathbb{R}^+} \frac{ \int_{\mathbb{S}^1} f(\sqrt{\rho}\,\theta)\,d\theta}{t-\rho} \, d\rho \right\|_{L^{1,\infty}(\mathbb{R}^+)} \\[4pt]
		&\lesssim \left\| \int_{\mathbb{S}^1} f(\sqrt{\rho}\,\theta)\,d\theta \right\|_{L^1(\mathbb{R})} \\[4pt]
		&\le 2\,\|f\|_{L^1(\mathbb{R}^2)},
	\end{align*}
	where the second line follows from the weak-type $(1,1)$ boundedness of the Hilbert transform (as the integrand is supported on $\mathbb{R}^+$, the principal value coincides for $t>0$ with the Hilbert transform on $\mathbb{R}$), and the third from $d\rho\,d\theta=2\,dy$. Thus $\mathcal{I}_0$ is bounded from $L^1(\mathbb{R}^2)$ to $L^{1,\infty}(\mathbb{R}^2)$. Combining this with the $L^1$-boundedness of $\mathcal{I}_1$ and of the part with $|x|\le1$ completes the proof of the weak-type $(1,1)$ bound, and hence of Lemma~\ref{lem-W0s-bound}.
\end{proof}

\begin{remark}\label{eq:rem-to-Linfty}
	By Lemma~\ref{lem-Fou-multiplier}, the multipliers $\mathcal{M}_{h}$ and $\mathcal{M}_{c,h}$ are bounded on $L^\infty$. Therefore, the three-term decomposition used for $|x|\le1$ in Part II applies equally well with $\chi(|\cdot|/r)$ in place of $\chi(|\cdot|)$. Consequently, for each fixed $r>0$ the truncated operator $\chi(|\cdot|/r)\,\mathcal{W}_{\mathrm{sing}}^{\mathrm{L}}$ is bounded on $L^\infty$, with constant depending on $r$.
\end{remark}

\subsection{The proof of Proposition~\ref{pro:bound for low energy}}

We now assemble the preceding estimates to complete the proof of Proposition~\ref{pro:bound for low energy}.

\medskip
\noindent\emph{Case 1: zero is a regular point of $H$.}
Recall from \eqref{eq-M expansion-even} that
\[
	M^+(\lambda)^{-1} = h_{+}(\lambda)^{-1}N + QD_0Q + \tilde{\mathcal{O}}_2(\lambda^{3/2}),
\]
with $N=P-PTQD_0Q-QD_0QTP+QD_0QTPTQD_0Q$ as in \eqref{eq:def of K}.
Inserting this expansion into the stationary representation \eqref{eq:stationary formula-Low-2} via the resolvent identity $R^+(\lambda^2)v=R_0^+(\lambda^2)v\big(M^+(\lambda)\big)^{-1}U$, the part $h_{+}(\lambda)^{-1}P$ produces precisely the singular component $\mathcal{W}_{\mathrm{sing}}^{\mathrm{L}}$ of \eqref{eq-rep-W_s}, whose mapping properties are given by Lemma~\ref{lem-W0s-bound}:
\[
	\mathcal{W}_{\mathrm{sing}}^{\mathrm{L}}:\; L^1\to L^{1,\infty},\qquad \mathcal{H}^1\to L^1,\qquad L^\infty\to\mathrm{BMO}.
\]
Every remaining summand of $h_{+}(\lambda)^{-1}N$, the term $QD_0Q$, and the error term has $Q$ as its leftmost or rightmost projection; the cancellation $Qv=0$ therefore places each of their contributions in one of the canonical forms $\mathcal{N}_0$, $\mathcal{N}_1$ or $\mathcal{N}_2$ (see the classification preceding \eqref{eq-rep-W0}--\eqref{eq-rep-W_s}), with $f_0,f_1\in\{1,\,h_{+}^{-1}\}$ and $\Gamma_2(\lambda)=\tilde{\mathcal{O}}_2(\lambda^{3/2})$. Since $h_{+}^{-1}$ satisfies \eqref{eq:assum for f_0} and \eqref{eq:assum for f_1}, and since both $f_1\equiv1$ and $f_1=h_{+}^{-1}$ are covered by the endpoint assertion of Lemma~\ref{Lemm: bound of N_1}, Lemmas~\ref{Lemm: bound of N_0}--\ref{Lemm: bound of N_2} show that all these contributions are bounded on $L^p(\mathbb{R}^2)$ for the full range $1\le p\le\infty$; in particular they are bounded from $L^1$ to $L^{1,\infty}$, from $\mathcal{H}^1$ to $L^1$ and from $L^\infty$ to $\mathrm{BMO}$. This establishes statement (i) of Proposition~\ref{pro:bound for low energy}.

\medskip
\noindent\emph{Case 2: zero is a first-kind threshold singularity of $H$.}
From \eqref{eq-M expansion-even-1-1} we have
\[
	M^+(\lambda)^{-1}= -h_{+}(\lambda)S_1D_1S_1 - NS_1D_1S_1 - S_1D_1S_1N+ QD_0Q + \tilde{\mathcal{O}}_2(\lambda^{3/2}).
\]
In contrast to Case 1, no singular component of the form $\mathcal{W}_{\mathrm{sing}}^{\mathrm{L}}$ occurs: every term in this expansion has $Q$ or $S_1$ as its rightmost or leftmost projection. More precisely, according to the classification preceding \eqref{eq-rep-W0}--\eqref{eq-rep-W_s}:
\begin{itemize}
	\item the contributions of $-h_{+}(\lambda)S_1D_1S_1$ and of $-NS_1D_1S_1$ (rightmost projection $S_1$) are of the form $\mathcal{N}_0$, with $f_0=-h_{+}$ and $f_0=1$, respectively;
	\item the contribution of $-S_1D_1S_1N$ (leftmost projection $S_1$) is of the form $\mathcal{N}_1$ with $f_1=1$;
	\item the term $QD_0Q$ gives $\mathcal{N}_0$ with $f_0=1$, and the error term gives $\mathcal{N}_2$ with $\Gamma_2(\lambda)=\tilde{\mathcal{O}}_2(\lambda^{3/2})$.
\end{itemize}
Since $h_{+}(\lambda)=b_0\log\lambda+z_0^+$ satisfies
\[
	\bigl|\partial_\lambda^{k}h_{+}(\lambda)\bigr|\lesssim\lambda^{-k},\qquad k=0,1,2,\quad 0<\lambda<\lambda_0,
\]
both $f_0=-h_{+}$ and $f_0=1$ verify the hypothesis \eqref{eq:assum for f_0} of Lemma~\ref{Lemm: bound of N_0}, whose conclusion covers the full range $1\le p\le\infty$; likewise $f_1=1$ is covered by the endpoint assertion of Lemma~\ref{Lemm: bound of N_1}. Lemmas~\ref{Lemm: bound of N_0}--\ref{Lemm: bound of N_2} therefore show that every term in the expansion yields an operator bounded on $L^p(\mathbb{R}^2)$ for all $1\le p\le\infty$. This establishes statement (ii) of Proposition~\ref{pro:bound for low energy} and completes the proof.

\section{Low-energy estimates II: second-kind resonance and zero eigenvalue}\label{low-bound II}

In this section we establish the $L^p$-boundedness of the low-energy component $\mathcal{W}^{\mathrm{L}}$ under the assumption that zero is either a second-kind threshold singularity or an eigenvalue of $H$.

We recall that Yajima~\cite{Yajima-2022} proved, under the decay condition $\langle x\rangle^{8+}V\in L^1$, that $\mathcal{W}^{\mathrm{L}}$ is bounded on $L^p(\mathbb{R}^2)$ for $1<p\le 2$ in the presence of a p-wave resonance, and for $1<p<\infty$ in its absence. Below we present a new proof of this result under a weaker decay assumption on the potential.

Our analysis still rests on the stationary representation
\begin{equation*}
	\mathcal{W}^{\mathrm{L}} = \frac{1}{\pi i}\int_0^{+\infty} \lambda R^+(\lambda^2)v \bigl(M^+(\lambda)\bigr)^{-1} v \bigl(R_0^+(\lambda^2) - R_0^-(\lambda^2)\bigr) \chi(\lambda/\lambda_0) \, d\lambda.
\end{equation*}
To proceed, we first analyze the asymptotic behavior of $(M^{\pm}(\lambda))^{-1}$ as $\lambda\to 0$ when zero is a second-kind threshold singularity or an eigenvalue.

\subsection{Asymptotic behavior of $(M^{\pm}(\lambda))^{-1}$}

The asymptotics of $(M^{\pm}(\lambda))^{-1}$ near zero energy have been studied in \cite{Erd-Gre-13, Erdogan-Goldberg-Green-JFA-2018, JN01, Yajima-2022} when zero is either a second-kind threshold singularity or an eigenvalue of $H$. For our purposes, we require the following refined expansion; its proof is deferred to Appendix~\ref{sec:proof of exp lemma}.

To state the expansion, we define
\begin{equation}\label{eq:def-of-g_1}
g_1^{\pm}(\lambda):=\log\lambda+\beta_{\pm},\qquad \beta_{\pm}\in\mathbb{C}\setminus\mathbb{R},
\end{equation}
where $\beta_{\pm}$ are determined by \eqref{eq:asy-Hanel-0}. By Lemma~\ref{lem:one to one lem}, the projection $S_2$ is non-zero when zero is a second-kind threshold singularity or an eigenvalue. We then set
\begin{align*}
	D_2&:=\bigl((S_2-S_3)vG_1v(S_2-S_3)\bigr)^{-1}
	=\bigl(S_2vG_2vS_2\bigr)^{-1}\quad\text{if }S_2\neq S_3,\\[2pt]
	D_3&:=(S_3vG_2vS_3)^{-1}\quad\text{if }S_3\neq0.
\end{align*}
Finally, let
\begin{equation}\label{eq:def of K_1}
	\begin{aligned}
		N_1&:=(S_2-S_3)D_2(S_2-S_3)-(S_2-S_3)D_2vG_2vS_3D_3S_3\\
		&\qquad-S_3D_3S_3vG_2vD_2(S_2-S_3)\\
		&\qquad+S_3D_3S_3vG_2v(S_2-S_3)D_2(S_2-S_3)vG_2vS_3D_3S_3.
	\end{aligned}
\end{equation}

\begin{lemma}\label{thm-M inverse-even-1}
	Suppose $|V(x)|\lesssim\langle x\rangle^{-8-}$ and that zero is either a second-kind threshold singularity or an eigenvalue of $H$. Then there exists $\lambda_0=\lambda_0(V)\in(0,1)$ such that for $\lambda\in(0,\lambda_0)$ the operators $(M^{\pm}(\lambda))^{-1}\in\mathbb{B}(L^2)$ admit the expansion
	\begin{equation}\label{eq-M expansion-even-2}
		\begin{aligned}
			\bigl(M^{\pm}(\lambda)\bigr)^{-1}
			&=(I-S_2)\mathcal{L}_{00}^{\pm}(\lambda)(I-S_2)
			+(I-S_2)\mathcal{L}_{01}^{\pm}(\lambda)S_2\\
			&\quad+S_2\mathcal{L}_{10}^{\pm}(\lambda)(I-S_2)
			+S_2\mathcal{L}_{11}^{\pm}(\lambda)S_2,
		\end{aligned}
	\end{equation}
	where
	\begin{equation}\label{eq:mathcalL00}
		\mathcal{L}_{00}^{\pm}(\lambda)=
		\begin{cases}
			h_{\pm}(\lambda)^{-1}N+QD_0Q+\tilde{\mathcal{O}}_2(\lambda^{3/2}), 
			&\text{if no s-wave},\\[4mm]
			-h_{\pm}(\lambda)(S_1-S_2)D_1(S_1-S_2)
			-N(S_1-S_2)D_1(S_1-S_2)\\[1mm]
			\quad-(S_1-S_2)D_1(S_1-S_2)N+QD_0Q
			+\tilde{\mathcal{O}}_2(\lambda^{3/2}), 
			& \text{if  has s-wave},
		\end{cases}
	\end{equation}
	and
	\begin{equation}\label{eq:mathcalL11}
		\mathcal{L}_{11}^{\pm}(\lambda)=
		\begin{cases}
			\dfrac{S_3D_3S_3}{\lambda^2}+\tilde{\mathcal{O}}_2(\lambda^{-1+}), 
			&\text{if no p-wave},\\[6mm]
			\dfrac{S_3D_3S_3}{\lambda^2}
			+\dfrac{N_1}{\lambda^2g_1^{\pm}(\lambda)}
			+\tilde{\mathcal{O}}_2\!\left(\lambda^{-2}g_1^{\pm}(\lambda)^{-2}\right), 
			& \text{if has p-wave},
		\end{cases}
	\end{equation}
	and
	\begin{equation}\label{eq:mathcalL01 10}
		\mathcal{L}_{01}^{\pm}(\lambda)=\tilde{\mathcal{O}}_2(\log^2\lambda),\qquad 
		\mathcal{L}_{10}^{\pm}(\lambda)=\tilde{\mathcal{O}}_2(\log^2\lambda).
	\end{equation}
	Here we set $S_3D_3S_3=0$ when $S_3=0$. Furthermore, when $H$ has s-wave but no p-wave resonances,
	\begin{equation}\label{eq:special mathcalL10}
		\mathcal{L}_{10}^{\pm}(\lambda)=\tilde{\mathcal{O}}_2(\log^2\lambda)\,Q+\Xi P+\tilde{\mathcal{O}}_2(\lambda^2),
	\end{equation}
	where $\Xi$ is a bounded operator on $L^2(\mathbb{R}^2)$.
\end{lemma}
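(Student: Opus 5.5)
I would prove Lemma~\ref{thm-M inverse-even-1} by a Feshbach--Schur (Grushin) reduction with respect to the splitting $L^2=(I-S_2)L^2\oplus S_2L^2$, in the style of \cite{JN01,Erd-Gre-13,Erdogan-Goldberg-Green-JFA-2018}.

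\textbf{Step 1: expansion of $M^\pm$.} Using \eqref{eq:asy-Hanel-0} and the decay $|V|\lesssim\langle x\rangle^{-8-}$ (so that $v|x|^k\log\langle x\rangle\in L^2$ for the needed $k\le 3$), I would write
\[
M^\pm(\lambda)=h_\pm(\lambda)P+T+c^\pm\lambda^2 g_1^\pm(\lambda)\,vG_1v+\lambda^2 vG_2v+\tilde{\mathcal O}_2(\lambda^{2+}),
\]
with constants $c^\pm$. On $S_2L^2$ the first two terms vanish by \eqref{eq:S2T=0} and $S_2P=0$. Moreover $S_3vG_1v=vG_1vS_3=0$ by \eqref{eq:property of S-3}, since the $|x-y|^2$ kernel only sees zeroth and first moments.

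\textbf{Step 2: invert the $(I-S_2)$ block.} The block $A(\lambda):=(I-S_2)M^\pm(I-S_2)$ equals the operator analysed in Lemma~\ref{thm-M inverse-even} with $S_1$ replaced by $S_1-S_2$, plus $\tilde{\mathcal O}_2(\lambda^2\log\lambda)$. Rerunning that argument gives $A^{-1}=\mathcal L_{00}$ as stated: it is the regular-point form if $S_1=S_2$, and the first-kind form with $(S_1-S_2)D_1(S_1-S_2)$ otherwise. In both cases $\|A^{-1}\|\lesssim|\log\lambda|$.

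\textbf{Step 3: Schur complement.} The off-diagonal blocks $B=(I-S_2)M^\pm S_2$ and $C=S_2M^\pm(I-S_2)$ are $\tilde{\mathcal O}_2(\lambda^2\log\lambda)$. Hence the Schur complement is
\[
E(\lambda)=S_2M^\pm S_2-CA^{-1}B=\lambda^2\bigl[g_1^\pm c^\pm S_2vG_1vS_2+S_2vG_2vS_2\bigr]+\tilde{\mathcal O}_2(\lambda^{2+}).
\]
The correction $CA^{-1}B=O(\lambda^4\log^3\lambda)$ is negligible. I then invert $\lambda^{-2}E$ on $S_2L^2$ by a second reduction with respect to $S_2=(S_2-S_3)+S_3$.
\begin{itemize}
\item If there is no p-wave ($S_2=S_3$), the leading operator is $S_3vG_2vS_3$, invertible with inverse $D_3$. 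This gives $S_3D_3S_3/\lambda^2+\tilde{\mathcal O}_2(\lambda^{-1+})$; the error order comes from the $\lambda^{2+}$ remainder, and I would use slightly more expansion of $H_0$ to get the $\lambda^{2+1-}$ precision.
\item If there is a p-wave, the $(S_2-S_3)$ block carries the large factor $g_1^\pm$. Inverting it with inverse $\approx D_2/g_1^\pm$, and then taking the Schur complement on $S_3$ (which is $S_3vG_2vS_3+O(1/g_1)$), produces $S_3D_3S_3+N_1/g_1^\pm+O(g_1^{-2})$ after multiplying by $\lambda^{-2}$. This is exactly the combination \eqref{eq:def of K_1}.
\end{itemize}
The block formula then gives $\mathcal L_{11}=E^{-1}$, $\mathcal L_{01}=-A^{-1}BE^{-1}$ and $\mathcal L_{10}=-E^{-1}CA^{-1}$. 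The bound $\tilde{\mathcal O}_2(\log^2\lambda)$ follows from $|A^{-1}|\lesssim|\log\lambda|$, $|B|,|C|\lesssim\lambda^2|\log\lambda|$ and $|E^{-1}|\lesssim\lambda^{-2}$. Derivative bounds follow because every term is a symbol in $\lambda$ and $\log\lambda$.

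\textbf{Step 4: the refinement \eqref{eq:special mathcalL10}.} This is the main obstacle. I would compute $C=S_2M^\pm(I-S_2)$ more precisely. The $\lambda^2\log\lambda$ part of $C$ involves $S_2vG_1v$, which equals $S_3vG_1v=0$ in the no-p-wave case, so $C=\lambda^2S_2vG_2v(I-S_2)+\tilde{\mathcal O}_2(\lambda^{2+})$.

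Next I split $A^{-1}$ in the s-wave case. The pieces ending in $Q$ give $\tilde{\mathcal O}_2(\log^2\lambda)Q$. The only pieces ending in $P$ come from $-N(S_1-S_2)D_1(S_1-S_2)$ and $h_\pm^{-1}N$-type terms; with the first-kind identity $NS_1D_1S_1N=N$, they combine to a $\lambda$-independent operator times $P$. This is exactly why the s-wave is needed. The coefficient of $P$ is then
\[
\Xi=-S_3D_3S_3\,vG_2v\,(\text{that constant operator}).
\]
The remainders can be absorbed into $\tilde{\mathcal O}_2(\lambda^2)$ only if the $h_\pm$-growing term $-h_\pm(S_1-S_2)D_1(S_1-S_2)$ ends in $S_1-S_2\subset Q$, which it does. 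I expect this bookkeeping, in particular checking that no $\log\lambda$ multiplies $P$, to require the most care.
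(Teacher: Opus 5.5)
Your overall strategy coincides with the paper's. You use a Feshbach reduction on $(I-S_2)L^2\oplus S_2L^2$, obtain the $(I-S_2)$ block by rerunning Lemma~\ref{thm-M inverse-even} with $S_1$ replaced by $S_1-S_2$, and invert the $S_2$ block by a second reduction on $(S_2-S_3)L^2\oplus S_3L^2$. The remaining differences are cosmetic and produce the same $N_1$: you use the exact Schur complement $E=S_2M^\pm S_2-CA^{-1}B$ instead of the paper's $\sigma_\lambda$-rescaling followed by a Neumann series, and you eliminate the $(S_2-S_3)$ block before the $S_3$ block. (Strictly, the $(0,0)$ entry of the inverse is $A^{-1}+A^{-1}BE^{-1}CA^{-1}$, but the correction is $\tilde{\mathcal O}_2(\lambda^2\log^4\lambda)$.)

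There is, however, a genuine error in Steps 1 and 4: the identity $S_3vG_1v=vG_1vS_3=0$ is false. Expanding $|x-y|^2=|x|^2-2x\cdot y+|y|^2$, $S_3$ annihilates $v$ and $x_iv$ but not $|x|^2v$, so
\[
S_3vG_1v=\tfrac{1}{8\pi}\,S_3\bigl(|x|^2v\bigr)\langle\cdot,v\rangle=S_3vG_1vP .
\]
This is nonzero as soon as $\int|x|^2V\psi\,dx\neq0$ for some zero-energy eigenfunction $\psi$. The paper never assumes this trace moment vanishes; cf.\ the term $\frac{m^{(j)}_{1,1}}{2}\mathcal F^{-1}(\chi(|\cdot|/\lambda_0))$ in the proof of Lemma~\ref{lem:bound of N_4}.

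What is true is $S_3vG_1vQ=QvG_1vS_3=0$, hence $S_2vG_1vS_3=0$. These are the only forms the paper actually needs: inside $S_2(\cdot)S_2$, and as $S_3vG_1v(S_1-S_2)=0$ in the proof of \eqref{eq:special mathcalL10}.

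Consequently, in the no-p-wave case $C$ retains the term $\lambda^2g_1^\pm(\lambda)\,S_3vG_1vP$. It has size $\lambda^2\log\lambda$ and ends in $P$, which is exactly the kind of term that could put a $\log\lambda$ in front of $P$ in $\mathcal L_{10}^\pm$ — the danger you yourself flagged. It is harmless only because of a fact you do not state. In the s-wave branch of \eqref{eq:inverse of m00}, every term has $S_1-S_2$ or $Q$ on at least one side, so $PA^{-1}P=\tilde{\mathcal O}_2(\lambda^{3/2})$. Hence the extra term contributes only $\tilde{\mathcal O}_2(\log\lambda)\,Q+\tilde{\mathcal O}_2(\lambda^{3/2-})$.

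Relatedly, you have the $P$-ending part of $A^{-1}$ on the wrong side. The term $-N(S_1-S_2)D_1(S_1-S_2)$ ends in $S_1-S_2\le Q$, and no $h_\pm^{-1}$ terms survive in this branch. The only $P$-ending piece is $(S_1-S_2)D_1(S_1-S_2)QD_0QTP$, which comes from $-(S_1-S_2)D_1(S_1-S_2)N$ through the summand $-QD_0QTP$ of $N$. Composing it with $-E^{-1}C\approx-S_3D_3S_3vG_2v$ produces $\Xi P$.

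Two smaller points. First, the expansion in Step 1 must have error $\tilde{\mathcal O}_2(\lambda^{3+})$, as in \eqref{eq:exp for M at zero}. This is what the decay $\langle x\rangle^{-8-}$ buys, since $v\langle x\rangle^{3+}\in L^2$. The $\lambda^{3-}$ precision you propose would only give $\tilde{\mathcal O}_2(\lambda^{-1-})$ in the no-p-wave branch of $\mathcal L_{11}^\pm$.

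Second, even with the $\lambda^{3+}$ error, $E^{-1}C=S_3D_3S_3\bigl(g_1^\pm vG_1vP+vG_2v(I-S_2)\bigr)+\tilde{\mathcal O}_2(\lambda^{1+})$. So what your bookkeeping actually delivers in \eqref{eq:special mathcalL10} is a remainder $\tilde{\mathcal O}_2(\lambda^{1+})$, not $\tilde{\mathcal O}_2(\lambda^2)$. The paper's own argument likewise yields only $\tilde{\mathcal O}_2(\lambda^{1-})$ after the division by $\lambda$ in \eqref{eq:def of mathcal L}. This suffices for Lemma~\ref{lem:contri of L10}, since Lemma~\ref{Lemm: bound of N_2} needs only $\lambda^{1/2+}$.
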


In view of the expansion \eqref{eq-M expansion-even-2}, it suffices to analyze the contributions arising from each term $\mathcal{L}_{ij}^\pm(\lambda)$ for $i,j\in\{0,1\}$. We define the corresponding contributions by
\begin{align*}
	\mathcal{W}_{00}^{\mathrm{L}}
	&=\frac{1}{\pi i}\int_0^{+\infty} \lambda R^+(\lambda^2)v(I-S_2)
	\mathcal{L}_{00}^+(\lambda)(I-S_2)v
	\bigl(R_0^+(\lambda^2)-R_0^-(\lambda^2)\bigr)
	\chi(\lambda/\lambda_0)\,d\lambda,\\[2mm]
	\mathcal{W}_{01}^{\mathrm{L}}
	&=\frac{1}{\pi i}\int_0^{+\infty} \lambda R^+(\lambda^2)v(I-S_2)
	\mathcal{L}_{01}^+(\lambda)S_2v
	\bigl(R_0^+(\lambda^2)-R_0^-(\lambda^2)\bigr)
	\chi(\lambda/\lambda_0)\,d\lambda,\\[2mm]
	\mathcal{W}_{10}^{\mathrm{L}}
	&=\frac{1}{\pi i}\int_0^{+\infty} \lambda R^+(\lambda^2)vS_2
	\mathcal{L}_{10}^+(\lambda)(I-S_2)v
	\bigl(R_0^+(\lambda^2)-R_0^-(\lambda^2)\bigr)
	\chi(\lambda/\lambda_0)\,d\lambda,\\[2mm]
	\mathcal{W}_{11}^{\mathrm{L}}
	&=\frac{1}{\pi i}\int_0^{+\infty} \lambda R^+(\lambda^2)vS_2
	\mathcal{L}_{11}^+(\lambda)S_2v
	\bigl(R_0^+(\lambda^2)-R_0^-(\lambda^2)\bigr)
	\chi(\lambda/\lambda_0)\,d\lambda.
\end{align*}

\subsection{The proof of Proposition~\ref{pro:bound for low energy-1}}\label{sec:contri-of-co}
The contributions from $\mathcal{L}_{00}^+(\lambda)$, $\mathcal{L}_{01}^+(\lambda)$ and $\mathcal{L}_{10}^+(\lambda)$ are governed by the results of Subsection~\ref{sec:bound-of-N0-N2}. Their mapping properties are summarized as follows.

\begin{lemma}\label{lem:contri of L00}
If $H$ has no s-wave resonance, then $\mathcal{W}_{00}^{\mathrm{L}}$ is 
bounded on $L^p(\mathbb{R}^2)$ for $1<p<\infty$; if $H$ has an s-wave 
resonance, then $\mathcal{W}_{00}^{\mathrm{L}}$ is bounded on 
$L^p(\mathbb{R}^2)$ for $1\le p\le\infty$.
\end{lemma}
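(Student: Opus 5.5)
The plan is to reduce $\mathcal W_{00}^{\mathrm L}$ to the canonical operators $\mathcal N_0,\mathcal N_1,\mathcal N_2,\mathcal W^{\mathrm L}_{\mathrm{sing}}$ of Section~\ref{sec:Low-bound}, after first handling the factor $R^+(\lambda^2)v$. Using $R^+(\lambda^2)v=R_0^+(\lambda^2)v\bigl(M^+(\lambda)\bigr)^{-1}U$ together with the symmetric resolvent identity, the operator in the stationary formula is $R_0^+ v (M^+)^{-1} v (R_0^+-R_0^-)$. Hence $\mathcal W_{00}^{\mathrm L}$ equals
\[
\frac{1}{\pi i}\int_0^\infty \lambda\chi(\lambda/\lambda_0)\,R_0^+(\lambda^2)v(I-S_2)\mathcal L_{00}^+(\lambda)(I-S_2)v\bigl(R_0^+(\lambda^2)-R_0^-(\lambda^2)\bigr)\,d\lambda .
\]
We then substitute \eqref{eq:mathcalL00} and expand $(I-S_2)X(I-S_2)=X-S_2X-XS_2+S_2XS_2$ for each term $X$. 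Since $S_2\le S_1\le Q$, we have $S_2v=0$ and $S_2(vx_j)$ is well behaved. Any term carrying $S_2$ on the left or right therefore has $\mathcal Q_1$ (indeed $Q$) at that end and falls into the classes $\mathcal N_0$ or $\mathcal N_1$.

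\emph{No s-wave case.} Here $X=h_+^{-1}N+QD_0Q+\tilde{\mathcal O}_2(\lambda^{3/2})$. The $P\cdots P$ part $h_+^{-1}P$ is untouched by the $S_2$ factors, because $S_2P=PS_2=0$, and it produces exactly $\mathcal W^{\mathrm L}_{\mathrm{sing}}$. Lemma~\ref{lem-W0s-bound} bounds this on $L^p$ for $1<p<\infty$, by interpolating $\mathcal H^1\to L^1$ (or weak $(1,1)$) with $L^2$ and dualizing.

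The remaining terms are handled as in Case~1 of the proof of Proposition~\ref{pro:bound for low energy}:
\begin{itemize}
\item terms with $Q$ on the right give $\mathcal N_0$ with $f_0\in\{1,h_+^{-1}\}$, bounded for $1\le p\le\infty$ by Lemma~\ref{Lemm: bound of N_0};
\item terms with $Q$ on the left and $P$ on the right give $\mathcal N_1$ with $f_1\in\{1,h_+^{-1}\}$, bounded for all $1\le p\le\infty$ by Lemma~\ref{Lemm: bound of N_1};
\item the error gives $\mathcal N_2$, bounded by Lemma~\ref{Lemm: bound of N_2}.
\end{itemize}
This yields boundedness for $1<p<\infty$.

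\emph{s-wave case.} Here there is no pure $P\cdots P$ term. This is checked as in the remark following Lemma~\ref{thm-M inverse-even}: every term has $Q$ or $S_1-S_2$ at one end. The term $-h_+(S_1-S_2)D_1(S_1-S_2)$ gives $\mathcal N_0$ with $f_0=-h_+$, which satisfies \eqref{eq:assum for f_0}. The terms $N(S_1-S_2)D_1(S_1-S_2)$ and $(S_1-S_2)D_1(S_1-S_2)N$ give $\mathcal N_0$ and $\mathcal N_1$ with constant $f$. The terms $QD_0Q$ and the error are handled as before. Conjugating by $(I-S_2)$ only adds terms that still have $Q$ at one end, with coefficients $f\in\{1,h_+,h_+^{-1}\}$. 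By the same assignment as Case~2 of Proposition~\ref{pro:bound for low energy}, all of these are bounded for $1\le p\le\infty$.

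\emph{Main obstacle.} The delicate point is the $\mathcal N_1$-type terms, those with $Q$-type projection on the left and $P$ on the right, at $p=\infty$. Lemma~\ref{Lemm: bound of N_1} gives the endpoint only for $f_1\in\{1,h_+^{-1}\}$. I would verify that the $(I-S_2)$-conjugation never produces $f_1=h_+$ paired with a right-hand $P$. For the s-wave case, the $h_+$ coefficient multiplies $(S_1-S_2)D_1(S_1-S_2)$, which has projections $\le Q$ on both sides, so it falls under $\mathcal N_0$ rather than $\mathcal N_1$; this should close the argument. A secondary check is that the decay $\langle x\rangle^{-8-}$ exceeds the $\langle x\rangle^{-6-}$ required by those lemmas, so all of them apply.
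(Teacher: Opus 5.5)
Your proposal is correct and follows the same route as the paper. The paper observes that $\mathcal L_{00}^+$ has exactly the structure of $(M^+(\lambda))^{-1}$ in the regular case (no s-wave), or in the first-kind case with $S_1$ replaced by $S_1-S_2$, and then invokes Proposition~\ref{pro:bound for low energy}; your term-by-term check of the $(I-S_2)$-conjugation, and of which coefficients $f_1$ can sit next to a right-hand $P$, just makes that reduction explicit. The one step left implicit, in your version and equally in the paper's, is the $L^2$ (indeed $L^p$, $1<p<\infty$) bound for $\mathcal W^{\mathrm L}_{\mathrm{sing}}$ that your interpolation uses; it follows by running Part~I of Lemma~\ref{lem-W0s-bound} with the $L^p$-boundedness of the Hilbert transform in place of its $\mathcal H^1\to L^1$ bound.
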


\begin{proof}
If $H$ has no s-wave resonance, then $\mathcal{L}_{00}^{+}(\lambda)$ 
coincides with $\bigl(M^{\pm}(\lambda)\bigr)^{-1}$ when zero is a regular 
point; if $H$ has an s-wave resonance, then it coincides with 
$\bigl(M^{\pm}(\lambda)\bigr)^{-1}$ when zero is a first-kind threshold 
singularity, upon replacing $S_1$ by $S_1-S_2$. The stated bounds follow 
from Proposition~\ref{pro:bound for low energy}.
\end{proof}

\begin{lemma}
	$\mathcal{W}_{01}^{\mathrm{L}}$ is bounded on $L^p(\mathbb{R}^2)$ for all $1\le p\le\infty$.
\end{lemma}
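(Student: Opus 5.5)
We have to show that $\mathcal{W}_{01}^{\mathrm{L}}$ is bounded on $L^p$ for $1\le p\le\infty$. Here $\mathcal{L}_{01}^+(\lambda)=\tilde{\mathcal O}_2(\log^2\lambda)$ is sandwiched as $(I-S_2)\mathcal{L}_{01}^+(\lambda)S_2$.

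The plan is to reduce the operator to the canonical form $\mathcal{N}_0$ of Lemma~\ref{Lemm: bound of N_0}. The rightmost projection $S_2$ satisfies $S_2\le S_1\le Q$, so $S_2 v=0$. We may therefore take $\mathcal{Q}_1=S_2$ in the sense of \eqref{eq:def of Qm}, and in fact $S_2(vx_i)$ need not vanish, but only the zeroth moment is required for $\mathcal{N}_0$.

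\textbf{Step 1: removing the perturbed resolvent.} First we rewrite the perturbed resolvent. Using $R^+(\lambda^2)v=R_0^+(\lambda^2)v(M^+(\lambda))^{-1}U$ directly would reintroduce $(M^+)^{-1}$, and the expansion becomes cumbersome. It is better to use the symmetric formula already used in \eqref{eq:station rep for low}, in which the outer factor is $R_0^+(\lambda^2)v$. Interpreted that way, the kernel is
\[
\int_0^\infty \lambda\chi(\lambda/\lambda_0)\bigl\langle \Gamma(\lambda)S_2 v(R_0^+-R_0^-)(\lambda^2)(\cdot,y),\,vR_0^-(\lambda^2)(\cdot,x)\bigr\rangle\,d\lambda,
\qquad \Gamma(\lambda)=(I-S_2)\mathcal{L}_{01}^+(\lambda).
\]

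\textbf{Step 2: pointwise kernel bound.} We apply Lemma~\ref{lem:est-S0-J0} with $m=1$ (using $\mathcal{Q}_1=S_2$) to the right factor and Lemma~\ref{lem:est-vR-0} to the left factor. The right factor then has the form $e^{\pm i\lambda\langle y\rangle}S_2\eta_1^\pm$, with $\|\partial^k\eta_1^\pm\|\lesssim \lambda^{1/2-k}\langle y\rangle^{-1/2}$. The left factor has the form $e^{i\lambda\langle x\rangle}\omega_0^-$ (after conjugation), with $\|\partial^k\omega_0\|\lesssim\lambda^{-1/2-k}\langle x\rangle^{-1/2}$.

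By Cauchy--Schwarz and the symbol bounds $\|\partial^\ell\Gamma(\lambda)\|\lesssim \log^2\lambda\,\lambda^{-\ell}$, the amplitude (including the factor $\lambda$) is $\mathcal O_2(\lambda\log^2\lambda)\langle x\rangle^{-1/2}\langle y\rangle^{-1/2}$. This is exactly the hypothesis \eqref{eq:assum for f_0} with $f_0=\log^2\lambda$, which is $\lesssim\lambda^{-0-}$ with derivatives $\lambda^{-k-}$.

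Lemma~\ref{lem-oscillatory estimate} with $b=1-$ then gives the bound $\langle x\rangle^{-1/2}\langle y\rangle^{-1/2}\langle\langle x\rangle-\langle y\rangle\rangle^{-2+}$.

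\textbf{Step 3: Schur.} This is the same kernel bound \eqref{eq:ptwise-N0} as for $\mathcal{N}_0$, so the Schur computation there shows the kernel is admissible. Lemma~\ref{Schur Lemma} then gives $L^p$ boundedness for $1\le p\le\infty$. In short, Steps 2–3 amount to a direct invocation of Lemma~\ref{Lemm: bound of N_0} with $\mathcal{D}\mathcal{Q}_1=\Gamma(\lambda)S_2$, except that $\mathcal{D}$ now depends on $\lambda$; that dependence is absorbed into $f_0$ since its derivative bounds are uniform.

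\textbf{Main obstacle.} The main point to check is that Lemma~\ref{Lemm: bound of N_0} tolerates a $\lambda$-dependent operator $\mathcal{D}(\lambda)$ with logarithmic growth. Its proof only uses operator-norm symbol bounds, so the $\log^2\lambda$ loss is harmless against the $\lambda^{1-}$ margin.

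A second point is that Lemma~\ref{lem:est-S0-J0} requires only $|v|\lesssim\langle x\rangle^{-3-}$, which holds under the decay $\langle x\rangle^{-8-}$. No additional moment cancellation beyond $S_2v=0$ is needed, so no case distinction between the p-wave and s-wave situations arises.
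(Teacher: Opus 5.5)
Your proposal is correct and follows the paper's proof, which likewise invokes Lemma~\ref{Lemm: bound of N_0} using $S_2v=0$ (so $S_2$ plays the role of $\mathcal{Q}_1$) and $\mathcal{L}_{01}^+(\lambda)=\tilde{\mathcal{O}}_2(\log^2\lambda)$. Your remark that the $\lambda$-dependence of $(I-S_2)\mathcal{L}_{01}^+(\lambda)$ is harmless, because that lemma's proof uses only operator-norm symbol bounds via Leibniz and Cauchy--Schwarz, is exactly the detail the paper leaves implicit; one small correction is that reading the outer factor as $R_0^+(\lambda^2)v$ is not merely more convenient but forced, since $R^+V=R_0^+v(M^+)^{-1}v$ puts $R_0^+$ in \eqref{eq:station rep for low}, and only then do the four pieces $\mathcal{W}_{ij}^{\mathrm{L}}$ sum to $\mathcal{W}^{\mathrm{L}}$.
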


\begin{proof}
	Since $S_2v=0$ and $\mathcal{L}_{01}^+(\lambda)=\tilde{\mathcal{O}}_2(\log^2\lambda)$ for $\lambda\in(0,\lambda_0)$ by Lemma~\ref{thm-M inverse-even-1}, the conclusion follows from Lemma~\ref{Lemm: bound of N_0}.
\end{proof}

\begin{lemma}\label{lem:contri of L10}
	The operator $\mathcal{W}_{10}^{\mathrm{L}}$ is bounded on $L^p(\mathbb{R}^2)$ for $1\le p<\infty$ in general, and for $1\le p\le\infty$ when $H$ has s-wave but no p-wave resonances.
\end{lemma}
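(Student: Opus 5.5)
The plan is to reduce $\mathcal{W}_{10}^{\mathrm{L}}$ to the canonical forms $\mathcal{N}_0,\mathcal{N}_1,\mathcal{N}_2$ of Subsection~\ref{sec:bound-of-N0-N2}. The projection $S_2$ now sits on the \emph{left}, and $S_2\le S_1\le Q$, so $S_2v=0$, i.e.\ $S_2$ satisfies \eqref{eq:def of Qm} with $m=1$ (in fact $m=2$ when $S_3=S_2$). First I rewrite $R^+(\lambda^2)v=R_0^+(\lambda^2)v(M^+(\lambda))^{-1}U$. Then I insert the full expansion \eqref{eq-M expansion-even-2} for the left factor, or, more economically, use the symmetric resolvent identity so that the leftmost operator acting on $vR_0^+$ is still $S_2$. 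This gives a kernel of the form
\[
\frac{1}{\pi i}\int_0^\infty \lambda\chi(\lambda/\lambda_0)\bigl\langle \mathcal{L}(\lambda)(I-S_2)v(R_0^+-R_0^-)(\lambda^2)(\cdot,y),\,S_2vR_0^-(\lambda^2)(\cdot,x)\bigr\rangle\,d\lambda ,
\]
where $\mathcal{L}(\lambda)=\tilde{\mathcal{O}}_2(\log^2\lambda)$.

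\emph{General case, $1\le p<\infty$.} Here $I-S_2$ gives no cancellation on the right, so I treat this exactly as $\mathcal{N}_1$ with $\mathcal{Q}_1=S_2$. On the left factor I use Lemma~\ref{lem:est-S0vR0-2}: $S_2vR_0^-(\cdot,x)=e^{-i\lambda\langle x\rangle}S_2\omega_1^-+S_2\psi_1\chi(\lambda\langle x\rangle)$. On the right factor I use Lemma~\ref{lem:est-vR-0}.

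The first piece has amplitude bounded by $\lambda^{-\ell}\log^2\lambda\,\langle x\rangle^{-1/2}\langle y\rangle^{-1/2}$. The factor $\log^2\lambda$ is absorbed into $\lambda^{0-}$, so the hypothesis \eqref{eq:assum for f_1} holds. Lemma~\ref{lem-oscillatory estimate} then gives the admissible bound \eqref{eq:ptwise-N0}.

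The $\psi_1$ piece is handled as $\mathcal{N}_{1,1}$. It obeys the pointwise bound $\langle x\rangle^{-5/2+}\langle y\rangle^{-1/2}$, which gives $L^1$ boundedness. It also obeys the bound $\langle x\rangle^{-1}\langle y\rangle^{-2+}$, which gives $L^p$ boundedness for $2<p<\infty$. Interpolation covers the rest. The proof of Lemma~\ref{Lemm: bound of N_1} goes through verbatim, because it used only the size estimates on $f_1$ and $\mathcal{D}\mathcal{P}$ bounded. One point to check is that $\mathcal{L}_{10}$ is $\lambda$-dependent rather than scalar times a fixed operator. The Cauchy--Schwarz bounds only need $\|\partial^k_\lambda\mathcal{L}\|\lesssim\lambda^{-k}\log^2\lambda$, which is what $\tilde{\mathcal{O}}_2$ provides.

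\emph{Endpoint $p=\infty$ when there is an s-wave but no p-wave.} I use \eqref{eq:special mathcalL10}, which splits the kernel into three parts.
\begin{itemize}
\item The term $\tilde{\mathcal{O}}_2(\log^2\lambda)Q$ has $Q$ on the right and $S_2$ on the left. Moving $Q$ onto $v(R_0^+-R_0^-)$ puts it in the form $\mathcal{N}_0$ via Lemma~\ref{lem:est-S0-J0}, which is bounded for all $1\le p\le\infty$.
\item The error $\tilde{\mathcal{O}}_2(\lambda^2)$ falls under $\mathcal{N}_2$ (needing only $\lambda^{1/2+}$), or under $\mathcal{N}_0$.
\item The remaining term is $S_2\Xi P$, with $\lambda$-independent operator $\Xi$ and $P$ of rank one. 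This is exactly $\mathcal{N}_1$ with $f_1\equiv1$, $\mathcal{Q}_1=S_2$, $\mathcal{D}=\Xi$, $\mathcal{P}=P$. The endpoint assertion of Lemma~\ref{Lemm: bound of N_1} then gives $L^\infty$ boundedness.
\end{itemize}

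The main obstacle is this last $\mathcal{N}_{1,1}$-type contribution in general. Its $L^\infty$ bound relies on the frequency factor being a bounded multiplier on $L^\infty$. That holds for $f_1\equiv1$, but fails for a generic $\log^2\lambda$-dependent operator coefficient. This is precisely why the refined structure \eqref{eq:special mathcalL10} is needed, with the $P$-part carrying a constant coefficient $\Xi$. I must also verify that the $\psi_1$-piece coming from the $Q$-part really is $\mathcal{N}_0$-type, with cancellation on the right, so that no multiplier issue arises there.
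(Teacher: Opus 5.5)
Your proposal is correct and is essentially the paper's proof. The general case is the $\mathcal{N}_1$ argument with $\mathcal{Q}_1=S_2$; as you note, for $1\le p<\infty$ only the symbol bounds $\tilde{\mathcal O}_2(\log^2\lambda)$ and $L^2$-boundedness of the coefficient are used. In the s-wave/no-p-wave case, \eqref{eq:special mathcalL10} splits $\mathcal{W}_{10}^{\mathrm L}$ into three pieces: an $\mathcal{N}_0$ term (since $Qv=0$ on the right), an $\mathcal{N}_1$ term with $f_1\equiv 1$ coming from $\Xi P$, and an $\mathcal{N}_2$ remainder.

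Two small remarks. First, the $R^+$ in the definition of $\mathcal{W}_{10}^{\mathrm L}$ should be read as $R_0^+$ (cf. \eqref{eq:station rep for low}). So no rewriting of the left resolvent is needed, and your first option of inserting the full expansion on the left would lose the $S_2$ cancellation. Second, your closing worry is moot: the $\mathcal{N}_0$ argument expands the left factor via \eqref{eq:expr-vH0} without using any cancellation, so no $\psi_1$-piece arises there.
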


\begin{proof}
In the general case, since $S_2v=0$ and $\mathcal{L}_{10}^+(\lambda)=\tilde{\mathcal{O}}_2(\log^2\lambda)$, Lemma~\ref{Lemm: bound of N_1} gives that $\mathcal{W}_{10}^{\mathrm{L}}$ is bounded on $L^p(\mathbb{R}^2)$ for all $1\le p<\infty$. 
	
In the exceptional case that $H$ has s-wave but no p-wave resonances, we need to consider the contribution of each term in \eqref{eq:special mathcalL10}. Since $Qv=0$, Lemma~\ref{Lemm: bound of N_0} shows that the first term is $L^p$-bounded for $1\le p\le\infty$. By Lemma~\ref{Lemm: bound of N_1}, the second term $\Xi P$ enjoys the same boundedness. Finally, Lemma~\ref{Lemm: bound of N_2} applies to the remainder, giving $L^p$-boundedness for $1\le p\le\infty$. This completes the proof.
\end{proof}

It remains to treat the contribution from $\mathcal{L}_{11}^+(\lambda)$. It is bounded as follows.

\begin{lemma}\label{lem:contri of L11}
The operator $\mathcal{W}_{11}^{\mathrm{L}}$ is $L^p$-bounded for $1<p<2$ 
in general, and for $p=1$ and $2\le p<\infty$ if $H$ has no p-wave resonances. 
Furthermore, if $H$ has no p-wave resonances and every zero-energy eigenfunction $\psi$ 
satisfies \eqref{eq:condi_for_eigen-0},
then $\mathcal{W}_{11}^{\mathrm{L}}$ is also bounded on $L^\infty(\mathbb{R}^2)$.
\end{lemma}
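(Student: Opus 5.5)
The plan is to rewrite $\mathcal{W}_{11}^{\mathrm L}$ so that both outer factors are free resolvents. By the symmetric resolvent identity $R^+(\lambda^2)=R_0^+(\lambda^2)-R_0^+(\lambda^2)v(M^+(\lambda))^{-1}vR_0^+(\lambda^2)$, the $S_2\cdots S_2$ block of the kernel becomes a pairing
\[
\bigl\langle \mathcal{L}_{11}^+(\lambda)\,S_2v(R_0^+-R_0^-)(\lambda^2)(\cdot,y),\;S_2vR_0^-(\lambda^2)(\cdot,x)\bigr\rangle .
\]
The structural input is the cancellation carried by the projections. We have $S_2(vx^\alpha)=0$ for $|\alpha|=0$, so $S_2$ is a $\mathcal{Q}_1$. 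By Remark~\ref{remk:one-to-one}, $S_3(vx^\alpha)=0$ for $|\alpha|\le1$, so $S_3$ is a $\mathcal{Q}_2$. Hence Lemmas~\ref{lem:est-S0-J0} and \ref{lem:est-S0vR0-2} apply with $m=1$ on every $S_2$-factor and with $m=2$ on every $S_3$-factor. I then insert the expansion \eqref{eq:mathcalL11} and treat its three pieces separately.

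\emph{The eigenvalue piece $\lambda^{-2}S_3D_3S_3$.} With $m=2$ on both sides, the oscillatory parts $\eta_2^\pm$ and $\omega_2^\pm$ each contribute $\lambda^{3/2}a^{-1/2}$ (or $\lambda^2$). Against $\lambda\cdot\lambda^{-2}$ this gives an amplitude $\lambda^{2-\ell}\langle x\rangle^{-1/2}\langle y\rangle^{-1/2}$. Lemma~\ref{lem-oscillatory estimate} then gives the admissible bound \eqref{eq:ptwise-N0}, exactly as in Lemma~\ref{Lemm: bound of N_0}. The dangerous contribution is the non-oscillatory term $-\mathcal{Q}_2\psi_2(\cdot,x)\chi(\lambda\langle x\rangle)$ paired with $\eta_2^\pm(\lambda,\cdot,y)$. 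Its amplitude is $\lambda^{1/2}\langle x\rangle^{-2}\langle y\rangle^{-1/2}$ on $\lambda\langle x\rangle\lesssim1$.
\begin{itemize}
\item For $p=1$: integrating in $\lambda$ gives $\langle x\rangle^{-7/2}\langle y\rangle^{-1/2}$, whose $x$-integral is bounded uniformly in $y$, so this term is $L^1$-bounded.
\item For $2\le p<\infty$: integrating by parts against $e^{\pm i\lambda\langle y\rangle}$, as for $\mathcal{N}_{1,1}$, gives $\langle x\rangle^{-2}\langle y\rangle^{-2-}$-type bounds, and H\"older then handles this range.
\item For $p=\infty$: this is the step I expect to be the main obstacle, since $\sup_x\int|K|\,dy$ fails for the $a^{-2}$ part of $\psi_2$.
\end{itemize}

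To handle $p=\infty$, I would expand $\log|x-y|$ one order further than in the proof of Lemma~\ref{lem:est-S0vR0-2}. After the constant and the dipole are removed by $S_3$, the $a^{-2}$ coefficient is $\frac{|x|^2-2(x\cdot\hat y)^2}{2a^2}$. This is a degree-two homogeneous harmonic polynomial in $x$, lying in the span of $x_1x_2$ and $x_1^2-x_2^2$ with coefficients depending on $\hat y$. Pairing with $S_3D_3\varphi$ yields moments $\int P\,V\psi$ with $\psi=Uv\cdot$ (a zero-energy eigenfunction, by Lemma~\ref{lem:one to one lem}). Under \eqref{eq:condi_for_eigen-0} these moments vanish, so the remainder is $O(\langle\cdot\rangle^{3}a^{-3})$. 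This is square-integrable against $v$ because $|V|\lesssim\langle x\rangle^{-8-}$. The kernel is then $\lesssim\langle x\rangle^{-3}\langle y\rangle^{-1/2}\times$ an oscillatory factor, and after integration by parts it is integrable in $y$ uniformly in $x$.

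\emph{The remainder and the p-wave piece.} If there is no p-wave resonance, the remainder $\tilde{\mathcal O}_2(\lambda^{-1+})$ sits between two $S_2=\mathcal{Q}_1$ factors. Its oscillatory parts give an amplitude $\lambda\cdot\lambda^{1/2}\lambda^{1/2}\lambda^{-1+}=\lambda^{1+}$, which is admissible. Its $\psi_1$-parts have the structure of $\mathcal{N}_{1,1}$, so Lemma~\ref{Lemm: bound of N_1}'s argument gives boundedness for $1\le p<\infty$. For $L^\infty$ I again use the $\psi_2$-refinement above: the $S_3$-components carry the second-order cancellation, while the $(S_2-S_3)$-components are absent when there is no p-wave resonance.

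If a p-wave resonance is present, the piece $N_1/(\lambda^2g_1^+(\lambda))$ has only first-order cancellation on its $(S_2-S_3)$ side. This produces a non-integrable, logarithmically weighted radial singular kernel. For $1<p<2$ I would factor out a logarithmic multiplier, as was done with $\mathcal{M}_h$ using Lemmas~\ref{eq-fourier-mulitiplier-1} and \ref{lem-Fou-multiplier}. The remaining kernel is of size $\langle x\rangle^{-1}\langle y\rangle^{-1}$ times bounded radial Hilbert-type operators, and I would bound it by H\"older in Lorentz spaces, using $\langle x\rangle^{-1}\in L^{p}$ for $p>2$ on the dual side, and then interpolate.
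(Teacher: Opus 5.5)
Your treatment of $1\le p<\infty$ is essentially what the paper does in Lemmas~\ref{lem:bound of N_3-2}, \ref{lem:bound of N_3-3} and the first half of Lemma~\ref{lem:bound of N_4}. That treatment uses $S_2$ as a $\mathcal{Q}_1$ and $S_3$ as a $\mathcal{Q}_2$, with the bounds $\langle x\rangle^{-7/2}\langle y\rangle^{-1/2}$ and $\langle x\rangle^{-2}\langle y\rangle^{-2}$ for the static $\psi_2$--$\eta_2$ term. The $L^\infty$ step for the eigenvalue piece $\lambda^{-2}S_3D_3S_3$, however, fails, because you apply the harmonic cancellation in the wrong variable.

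For the pairing of $\psi_2(\cdot,x)\chi(\lambda\langle x\rangle)$ with $\eta_2^\pm(\lambda,\cdot,y)$, the available amplitude bound is $\lambda^{-1}\|\eta_2^\pm(\lambda,\cdot,y)\|_{L^2}\lesssim\min\{\lambda^{1/2}\langle y\rangle^{-1/2},\lambda\}$. From it the $\lambda$-integral is at best $O(\langle y\rangle^{-2})$. Indeed, for $|y|\gg\langle x\rangle$ the range $\lambda\le\langle y\rangle^{-1}$ alone contributes $\int_0^{1/\langle y\rangle}\lambda\,d\lambda$, and integrating by parts against $e^{\pm i\lambda\langle y\rangle}$ with an amplitude of order $\lambda^{1/2}$ gains exactly $\langle y\rangle^{-3/2}$. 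Since $\langle y\rangle^{-2}\notin L^1(\mathbb{R}^2)$, we get $\sup_x\int|K(x,y)|\,dy=\infty$ however fast the $x$-factor decays. So upgrading $\|\psi_2(\cdot,x)\|_{L^2}$ from $\langle x\rangle^{-2}$ to $\langle x\rangle^{-3}$ changes nothing, and your claim that the kernel becomes integrable in $y$ uniformly in $x$ after integration by parts is false.

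The divergence is genuine and sits in the input variable. The second-order Taylor term of $J_0(\lambda|w-y|)$ in $w$ is $-\frac{\lambda^2}{4}|w|^2J_0(\lambda|y|)+\frac{\lambda^2}{4}J_2(\lambda|y|)\bigl((w\cdot\hat y)^2-(w\cdot\hat y^{\perp})^2\bigr)$. With the weight $\lambda\cdot\lambda^{-2}$, the $J_2$ channel produces, up to constants, $\int_0^\infty\lambda J_2(\lambda|y|)\chi(\lambda\langle x\rangle)\,d\lambda\sim 2|y|^{-2}$ for $|y|\gg\langle x\rangle$. This is multiplied by a mean-zero degree-two function of $\hat y$, which is exactly the $|y|^{-2}\Omega(\hat y)$ kernel behind the counterexample of Lemma~\ref{lem:unbound-of-N_4}. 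So \eqref{eq:condi_for_eigen-0} must be used on the $\eta_2$ side. Even then, the surviving trace part has size bounds of only $\langle y\rangle^{-2}$. It is harmless solely because $\int_0^\infty\lambda J_0(\lambda|y|)\chi(\lambda\langle x\rangle)\,d\lambda=2\pi\langle x\rangle^{-2}\mathcal{F}^{-1}\bigl(\chi(|\cdot|)\bigr)(y/\langle x\rangle)$ has $L^1_y$-norm independent of $x$. That is an exact cancellation which symbol bounds of the kind fed into Lemma~\ref{lem-oscillatory estimate} cannot detect.

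This is why the paper does not estimate this term by kernel size. It inserts $R_0^+(\lambda^2)=(-\Delta)^{-1}+\lambda^2(-\Delta)^{-1}R_0^+(\lambda^2)$ on the output side and uses $vS_3D_3S_3v=V\mathcal{P}_eV$ together with $(-\Delta)^{-1}V\mathcal{P}_e=-\mathcal{P}_e$. This identifies the static part exactly as $\mathcal{N}_{4,m}=\mathcal{P}_e\,\chi(\sqrt{-\Delta}/\lambda_0)$, so $L^\infty$-boundedness reduces to $\chi(\sqrt{-\Delta}/\lambda_0)\Psi_j\in L^1$. On the Fourier side this concerns $\chi(|\eta|/\lambda_0)\widehat{V\Psi_j}(\eta)/|\eta|^2$, whose degree-zero part is a constant multiple of $\chi(|\eta|/\lambda_0)$ precisely under \eqref{eq:condi_for_eigen-0}. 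That handles the $J_2$ obstruction and the trace cancellation at once. The dynamic part $\mathcal{N}_{4,r}$ is treated by a frequency splitting, Lemma~\ref{Lemm: bound of N_0}, and the duality identity $\mathcal{N}_{4,r,0}^*=-\chi(\sqrt{-\Delta}/\lambda_0)\,\mathcal{W}^{\mathrm L}_{\mathrm e,\mathrm{sing}}$ combined with the $L^1$ bound. An exact identity of this kind, or a $y$-side expansion that keeps track of the Bessel structure, is what your $L^\infty$ argument is missing.

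Two smaller remarks. First, the $\tilde{\mathcal{O}}_2(\lambda^{-1+})$ remainder needs no moment condition at $p=\infty$, since its kernel is already admissible with $m_1=m_2=1$ (Lemma~\ref{lem:bound of N_3-3}). Second, in the p-wave case your $1<p<2$ argument is only a sketch, and it omits the $\tilde{\mathcal{O}}_2(\lambda^{-2}g_1^+(\lambda)^{-2})$ remainder. The paper handles that remainder separately through the pointwise kernel bounds of Lemmas~\ref{lem:bound of N_3-1} and~\ref{lem:bound of N_3-2-1}.
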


We now assemble the preceding estimates to complete the proof of 
Proposition~\ref{pro:bound for low energy-1}. 

\begin{proof}[Completion of the proof of Proposition~\ref{pro:bound for low energy-1}]
Recall that
\[
\mathcal{W}^{\mathrm{L}}=\sum_{i,j=0}^1\mathcal{W}_{ij}^{\mathrm{L}}.
\]
The $L^2$-boundedness of $\mathcal{W}^{\mathrm{L}}$ follows from the unitarity 
of the wave operators.  When zero is either a second-kind threshold singularity or an 
eigenvalue of $H$, Lemmas~\ref{lem:contri of L00}--\ref{lem:contri of L11} 
collectively yield  
desired $L^p$ bounds of $\mathcal{W}^{\mathrm{L}}$, thereby establishing 
Proposition~\ref{pro:bound for low energy-1}.
\end{proof}

We are left to prove Lemma \ref{lem:contri of L11}.
The proof of this lemma reduces to studying operators of the form
\begin{align}
&\mathcal{N}_3=\frac{1}{\pi i}\int_0^{+\infty} \lambda\chi(\lambda/\lambda_0)\,
R_0^{+}(\lambda^2)v\mathcal{Q}_{m_1}\Gamma_3(\lambda)\mathcal{Q}_{m_2}v
\bigl(R_0^+(\lambda^{2})-R_0^-(\lambda^{2})\bigr)\,d\lambda,
\label{eq-rep-N4}\\[4mm]
&\mathcal{W}_{\mathrm{e}, \mathrm{sing}}^{\mathrm{L}}=\frac{1}{\pi i}\int_0^{+\infty} \lambda^{-1}\chi(\lambda/\lambda_0)\,
R_0^{+}(\lambda^2)vS_3D_3S_3v
\bigl(R_0^+(\lambda^{2})-R_0^-(\lambda^{2})\bigr)\,d\lambda,
\label{eq-rep-N4-2}
\end{align}
where $\Gamma_3(\lambda)\in\mathbb{B}(L^2)$ and $m_1,m_2\in\{1,2\}$, and $\mathcal{Q}_{m}$ is the projection defined in \eqref{eq:def of Qm}, satisfying
\[
\mathcal{Q}_m\bigl(v(x)\,x^{\alpha}\bigr)=0\quad\text{for }|\alpha|<m.
\]

\begin{lemma}\label{lem:bound of N_3-1}
	Let $|V(x)|\lesssim\langle x\rangle^{-6-}$. Suppose that the operator-valued function $\Gamma_3(\lambda)$ in \eqref{eq-rep-N4} satisfies $\Gamma_3(\lambda)=\tilde{\mathcal{O}}_2(\lambda^{-2}\log^{-2}\lambda)$. Then, for $m_1=m_2=1$, the operator $\mathcal{N}_3$ is bounded on $L^1(\mathbb{R}^2)$.
\end{lemma}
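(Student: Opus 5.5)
We will show $L^1$-boundedness by checking the column condition $\sup_y\int|\mathcal N_3(x,y)|\,dx<\infty$. This suffices because $\|Tf\|_{L^1}\le \sup_y\|T(\cdot,y)\|_{L^1}\|f\|_{L^1}$. Following the proof of Lemma~\ref{Lemm: bound of N_1}, we write the kernel as
\[
\mathcal N_3(x,y)=\frac{1}{\pi i}\int_0^{\infty}\lambda\chi(\lambda/\lambda_0)\bigl\langle \Gamma_3(\lambda)\mathcal Q_1 v(R_0^+-R_0^-)(\lambda^2)(\cdot,y),\ \mathcal Q_1 vR_0^-(\lambda^2)(\cdot,x)\bigr\rangle d\lambda .
\]
We expand the right factor by Lemma~\ref{lem:est-S0-J0} with $m=1$, giving $e^{\pm i\lambda\langle y\rangle}\mathcal Q_1\eta_1^\pm$. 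We expand the left factor by Lemma~\ref{lem:est-S0vR0-2} with $m=1$, giving $e^{-i\lambda\langle x\rangle}\mathcal Q_1\omega_1^-(\lambda,\cdot,x)-\mathcal Q_1\psi_1(\cdot,x)\chi(\lambda\langle x\rangle)$.

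The $\omega_1$-part gives oscillatory integrals with phase $e^{i\lambda(\langle x\rangle\pm\langle y\rangle)}$. By \eqref{eq:omega-est-3} and \eqref{eq:est-eta-w-1}, together with $\Gamma_3=\tilde{\mathcal O}_2(\lambda^{-2}\log^{-2}\lambda)$, the amplitude $\lambda\cdot\lambda^{1/2}\cdot\lambda^{1/2}\cdot\lambda^{-2}\log^{-2}\lambda$ has order $\lambda^{0}|\log\lambda|^{-2}$, times $\langle x\rangle^{-1/2}\langle y\rangle^{-1/2}$. Lemma~\ref{lem-oscillatory estimate} with $b=0$ and $a=2$ then gives
\[
|\cdot|\lesssim \frac{\langle x\rangle^{-1/2}\langle y\rangle^{-1/2}}{\langle\langle x\rangle\pm\langle y\rangle\rangle\log^2(2+|\langle x\rangle\pm\langle y\rangle|)}.
\]
In polar coordinates this is integrable in $x$ uniformly in $y$: near $r\approx\langle y\rangle$ we have $r^{1/2}\langle y\rangle^{-1/2}\sim1$ and $\int\langle s\rangle^{-1}\log^{-2}(2+|s|)\,ds<\infty$. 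Away from that shell, $\langle x\rangle\gg\langle y\rangle$ leaves $r^{1/2}\langle y\rangle^{-1/2} r^{-1}\log^{-2}r$, and we must check integrability in $r$.

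This is the main concern. The bound $r^{-1/2}\log^{-2}r\,\langle y\rangle^{-1/2}$ is not integrable in $r$ at infinity. We therefore need the improved bound using both weights: in that region we use \eqref{eq:est-eta-w-2} for $\eta_1$ (order $\lambda$, no $\langle y\rangle^{-1/2}$) to raise $b$ to $1/2$. That gives decay $\langle x\rangle^{-1/2}\langle x\rangle^{-3/2}$, which is integrable in $x$. If needed, we interpolate the two estimates for $\eta_1$ (for instance $\lambda^{1-}\langle y\rangle^{-0+}$) to obtain $\langle x\rangle^{-1/2}\langle\langle x\rangle-\langle y\rangle\rangle^{-3/2+}$ off the shell. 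The $\omega_1$-part is thereby admissible column-wise.

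The $\psi_1$-part carries no $x$-oscillation, only the cutoff $\chi(\lambda\langle x\rangle)$, so $\lambda\lesssim\langle x\rangle^{-1}$. By \eqref{eq:psi-est-0} the pairing is bounded by $\langle x\rangle^{-1}$ times $\|\partial^\ell\eta_1\|$. Using \eqref{eq:est-eta-w-2}, the crude size bound is
\[
\langle x\rangle^{-1}\int_0^{\langle x\rangle^{-1}}\lambda\cdot\lambda\cdot\lambda^{-2}\log^{-2}\lambda\,d\lambda\lesssim\langle x\rangle^{-2}\log^{-2}\langle x\rangle .
\]
In two dimensions this is borderline but integrable, since $\int^\infty r^{-1}\log^{-2}r\,dr<\infty$, and it is uniform in $y$. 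Hence this term is $L^1$-bounded with no oscillation needed. The $\log^{-2}$ gain from $\Gamma_3$ is exactly what rescues integrability here, and also rescues it in the $\omega_1$-part near the shell. We expect the delicate point to be the off-shell region of the $\omega_1$-part, handled as described above.
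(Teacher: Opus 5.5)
Your argument is the paper's proof. The paper uses the same splitting of the kernel into the $\omega_1$-part (phase $e^{i\lambda(\langle x\rangle\pm\langle y\rangle)}$) and the $\psi_1$-part. It uses the same dichotomy $\langle x\rangle\lesssim\langle y\rangle$ versus $\langle x\rangle\gg\langle y\rangle$, written there as $|x|\le 2|y|$ and $|x|>2|y|$. It applies \eqref{eq:est-eta-w-1} in the first region and \eqref{eq:est-eta-w-2} in the second, and it uses the same crude bound $\langle x\rangle^{-2}\log^{-2}(2+|x|)$ for the $\psi_1$-part.

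One claim, however, is false as written and must be corrected. In the off-shell region you assert that $\langle x\rangle^{-1/2}\langle x\rangle^{-3/2}=\langle x\rangle^{-2}$ is integrable in $x$, but $\langle x\rangle^{-2}\notin L^1(\mathbb{R}^2)$. What saves the argument is that Lemma~\ref{lem-oscillatory estimate}, applied with $b=\tfrac12$ and $a=2$, also returns the factor $\log^{-2}\bigl(2+|\langle x\rangle\pm\langle y\rangle|\bigr)\sim\log^{-2}(2+|x|)$ in that region. The correct bound is therefore $\langle x\rangle^{-2}\log^{-2}(2+|x|)$, exactly as in the paper. So the $\log^{-2}$ gain from $\Gamma_3$ is needed in all three places, not only near the shell and in the $\psi_1$-part.

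For the same reason, drop the suggested fallback of interpolating to $\lambda^{1-}\langle y\rangle^{-0+}$. It lowers $b$ to $\tfrac12-$ and produces $\langle x\rangle^{-2+}$ off the shell, which is not integrable on $\mathbb{R}^2$ even with the logarithms. The extra factor $\langle y\rangle^{-0+}$ does not help the $x$-integration uniformly in $y$.
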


\begin{proof}
	To establish the $L^1$ boundedness, it suffices to show that the kernel of $\mathcal{N}_3$ satisfies
	\begin{equation}\label{eq:L1 bound ceri}
		\sup_{y}\int_{\mathbb{R}^2} \left|\mathcal{N}_3(x,y)\right| dx \lesssim 1.
	\end{equation}
	
	Since $|v(x)|\lesssim\langle x\rangle^{-3-}$, we express its kernel via \eqref{eq:S0vR+R-} and \eqref{eq:S0vR-G-0} as
	\[
	R_0^+(\lambda^2)\mathcal{Q}_{1}\Gamma_3(\lambda)\mathcal{Q}_{1}
	\bigl(R_0^+(\lambda^2)-R_0^-(\lambda^2)\bigr)(x,y) 
	= \sum_{\pm}e^{i\lambda(\langle x\rangle \pm \langle y\rangle)}
	\mathcal{K}_{3,0}^{\pm}(\lambda,x,y)
	-\sum_{\pm}e^{\pm i\lambda|y|}
	\mathcal{K}_{3,1}^{\pm}(\lambda,x,y),
	\]
	with (omitting a constant factor)
	\begin{align*}
		\mathcal{K}_{3,0}^{\pm}(\lambda,x,y) 
		&= \big\langle \mathcal{Q}_{m_1}\Gamma_3(\lambda)\mathcal{Q}_{m_2}
		\eta_{m_2}^{\pm}(\lambda,\cdot,y),\, 
		\omega_{m_1}^{-}(\lambda,\cdot,x)\big\rangle,\\[1mm]
		\mathcal{K}_{3,1}^{\pm}(\lambda,x,y) 
		&=\big\langle \mathcal{Q}_{m_1}\Gamma_3(\lambda)\mathcal{Q}_{m_2}
		\eta_{m_2}^{\pm}(\lambda,\cdot,y),\, 
		\psi_{m_1}(\cdot,x)\big\rangle\, \chi(\lambda\langle x\rangle).
	\end{align*}
	Combined with the representation \eqref{eq-rep-N4} of $\mathcal{N}_3$, this leads us to study the oscillatory integrals
	\begin{align*}
		\mathcal{N}_{3,0}^{\pm}(x,y)
		&= \frac{1}{\pi i}\int_0^\infty e^{i\lambda(\langle x\rangle \pm \langle y\rangle)} 
		\mathcal{K}_{3,0}^{\pm}(\lambda,x,y) \, \lambda\chi(\lambda/\lambda_0) \, d\lambda,\\[1mm]
		\mathcal{N}_{3,1}^{\pm}(x,y) 
		&= \frac{1}{\pi i}\int_0^\infty e^{\pm i\lambda\langle y\rangle} 
		\mathcal{K}_{3,1}^{\pm}(\lambda,x,y) \, \lambda\chi(\lambda/\lambda_0) \, d\lambda.
	\end{align*}
	
	We analyze $\mathcal{N}_{3,0}^{\pm}$ separately in the regions $|x|\le 2|y|$ and $|x|>2|y|$.
	
	\smallskip
	\noindent\emph{The region $|x|\le 2|y|$.} 
	Under the condition $\Gamma_3(\lambda)=\tilde{\mathcal{O}}_2(\lambda^{-2}\log^{-2}\lambda)$ and using \eqref{eq:est-eta-w-1}, \eqref{eq:omega-est-3} and the Cauchy--Schwarz inequality, we obtain that when $m_1=m_2=1$, for $\ell=0,1,2$ and $\lambda\in(0,\lambda_0)$,
	\[
	|\partial_\lambda^\ell \mathcal{K}_{3,0}^{\pm}(\lambda,x,y)| 
	\lesssim \lambda^{-1-\ell} (\log\lambda)^{-2} 
	\langle x \rangle^{-1/2}\langle y\rangle^{-1/2}
	\lesssim \lambda^{-1-\ell} (\log\lambda)^{-2} \langle x \rangle^{-1}.
	\]
	Applying Lemma~\ref{lem-oscillatory estimate} yields 
	\[
	|\mathcal{N}_{3,0}^{\pm}(x,y)| 
	\lesssim \langle\langle x\rangle \pm \langle y\rangle\rangle^{-1}
	\langle x \rangle^{-1}\log^{-2}\bigl(2+\big|\langle x\rangle \pm \langle y\rangle\big|\bigr).
	\]
	In spherical coordinates, the above bound shows that
	\begin{align*}
		\sup_{y}\int_{\mathbb{R}^2} \left|\mathcal{N}_{3,0}^\pm (x,y)\right|\,dx
		&\lesssim \sup_{y}\int_{0}^\infty  
		\langle r\pm \langle y\rangle \rangle^{-1}
		\log^{-2}\bigl(2+\big|r \pm  \langle y\rangle\big|\bigr)\,dr \\
		&\lesssim \sup_{y}\int_{0}^\infty 
		\langle r\rangle^{-1}\log^{-2}(2+|r|)\,dr\lesssim 1,
	\end{align*}
	which implies that $\mathcal{N}_{3,0}^{\pm}(x,y)$ satisfies 
	\eqref{eq:L1 bound ceri} in this region.
	
	\smallskip
	\noindent\emph{The region $|x|> 2|y|$.} 
	Using \eqref{eq:est-eta-w-2}, \eqref{eq:omega-est-3} and the Cauchy--Schwarz inequality, we obtain that for $m_1=m_2=1$, $\ell=0,1,2$ and $\lambda\in(0,\lambda_0)$,
	\[
	|\partial_\lambda^\ell \mathcal{K}_{3,0}^{\pm}(\lambda,x,y)| 
	\lesssim \lambda^{-1/2-\ell} (\log\lambda)^{-2} 
	\langle x \rangle^{-1/2}
	\lesssim \lambda^{-1/2-\ell} (\log\lambda)^{-2} \langle x \rangle^{-1/2}.
	\]
	Applying Lemma~\ref{lem-oscillatory estimate} yields 
	\[
	|\mathcal{N}_{3,0}^{\pm}(x,y)| 
	\lesssim \langle\langle x\rangle \pm \langle y\rangle \rangle^{-3/2}
	\langle x \rangle^{-1/2}
	\log^{-2}\bigl(2+\big|\langle x\rangle \pm \langle y\rangle\big|\bigr)
	\lesssim \langle x \rangle^{-2}\log^{-2}\bigl(2+|x|\bigr),
	\]
	since $|x|>2|y|$. This implies that $\mathcal{N}_{3,0}^{\pm}(x,y)$ satisfies 
	\eqref{eq:L1 bound ceri} in this region as well.
	
	\smallskip
	We next consider the kernel $\mathcal{N}_{3,1}^{\pm}(x, y)$. Since 
	\eqref{eq:psi-est-0} holds for $\psi_1(\cdot,x)$, we also have
	\[
	|\mathcal{K}_{3,1}^{\pm}(\lambda,x,y)| 
	\lesssim \lambda^{-1} (\log\lambda)^{-2} \langle x \rangle^{-1}.
	\]
	Noting that the support of $\mathcal{K}_{3,1}^{\pm}(\lambda,x,y)$ satisfies 
	$\lambda\langle x\rangle \le 1$, a direct computation shows
	\[
	\sup_{y}|\mathcal{N}_{3,1}^{\pm}(x,y)| 
	\lesssim \langle x \rangle^{-2}\log^{-2}\bigl(2+|x|\bigr),
	\]
	which also implies that this kernel satisfies \eqref{eq:L1 bound ceri}.
	
	Combining the estimates for $\mathcal{N}_{3,0}^{\pm}$ and 
	$\mathcal{N}_{3,1}^{\pm}$, we complete the proof.
\end{proof}

\begin{lemma}\label{lem:bound of N_3-2-1}
	Suppose $|V(x)|\lesssim\langle x\rangle^{-6-}$ and that 
	$\Gamma_3(\lambda)=\tilde{\mathcal{O}}_2(\lambda^{-2}g_{1}^{+}(\lambda)^{-1})$. 
	Then $\mathcal{N}_3$ is bounded on $L^{p}(\mathbb{R}^2)$ for $1<p<2$ when $m_1=m_2=1$.
\end{lemma}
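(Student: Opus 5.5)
The strategy is to reuse the kernel decomposition from the proof of Lemma~\ref{lem:bound of N_3-1}. I write $\mathcal{N}_3$ through the same oscillatory integrals $\mathcal{N}_{3,0}^{\pm}$ and $\mathcal{N}_{3,1}^{\pm}$, with amplitudes $\mathcal{K}_{3,0}^{\pm}$ and $\mathcal{K}_{3,1}^{\pm}$ built from \eqref{eq:S0vR+R-} and \eqref{eq:S0vR-G-0} with $m_1=m_2=1$. The one change is that the gain $(\log\lambda)^{-2}$ is replaced by $g_1^+(\lambda)^{-1}=\mathcal{O}_\infty(|\log\lambda|^{-1})$, because $\beta_+\notin\mathbb{R}$. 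By Cauchy--Schwarz together with \eqref{eq:est-eta-w-1}, \eqref{eq:est-eta-w-2}, \eqref{eq:omega-est-3} and \eqref{eq:psi-est-0}, I obtain the following amplitude bounds for $\ell=0,1,2$:
\begin{align*}
|\partial_\lambda^\ell \mathcal{K}_{3,0}^{\pm}| &\lesssim \lambda^{-1-\ell}|\log\lambda|^{-1}\langle x\rangle^{-1/2}\langle y\rangle^{-1/2},\\
|\partial_\lambda^\ell \mathcal{K}_{3,0}^{\pm}| &\lesssim \lambda^{-1/2-\ell}|\log\lambda|^{-1}\langle x\rangle^{-1/2},\\
|\mathcal{K}_{3,1}^{\pm}| &\lesssim \lambda^{-1}|\log\lambda|^{-1}\langle x\rangle^{-1}\ \text{on}\ \{\lambda\langle x\rangle\le1\}.
\end{align*}

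Applying Lemma~\ref{lem-oscillatory estimate} with $a=1$ and $b=0$, respectively $b=\tfrac12$, gives the pointwise kernel bounds below. The term $\mathcal{N}_{3,1}$ is handled by direct integration over $\lambda\le\langle x\rangle^{-1}$.
\[
|\mathcal{N}_{3,0}^{\pm}(x,y)|\lesssim \frac{\langle x\rangle^{-1/2}\langle y\rangle^{-1/2}}{\langle\langle x\rangle\pm\langle y\rangle\rangle\log(2+|\langle x\rangle\pm\langle y\rangle|)}\wedge\frac{\langle x\rangle^{-1/2}}{\langle\langle x\rangle\pm\langle y\rangle\rangle^{3/2}\log(2+|\langle x\rangle\pm\langle y\rangle|)},
\qquad
|\mathcal{N}_{3,1}^{\pm}(x,y)|\lesssim \frac{1}{\langle x\rangle^{2}\log(2+|x|)}.
\]
The $+$ sign and the region $|x|>2|y|$ give kernels of size $\langle x\rangle^{-2}\log^{-1}$ or better. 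These fail to be integrable in $x$ only logarithmically, but each is dominated by $\langle x\rangle^{-2}\log^{-1}(2+|x|)\mathbf 1_{|x|\gtrsim|y|}$ or by a rank-one-type product. I show that such kernels are bounded on $L^p$ for $1<p<2$ by H\"older's inequality, using $\langle x\rangle^{-2}\log^{-1}\in L^p$ for $p>1$ and the bound $\sup_x\|\cdot\|_{L^{p'}_y}$ on the constraint region. More cleanly, I would use a Schur test with weights $\langle x\rangle^{-2/p}$ and $\langle y\rangle^{-2/p'}$.

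The main term is the $-$ sign in the region $|x|\le2|y|$. Passing to polar coordinates, this kernel is essentially
\[
\frac{(rs)^{-1/2}}{\langle r-s\rangle\log(2+|r-s|)}
\]
with respect to the measures $r\,dr$ and $s\,ds$. After conjugating by $r^{1/p}$, this becomes a one-dimensional kernel $k(r,s)=r^{1/p-1/2}s^{1/p'-1/2}\langle r-s\rangle^{-1}\log^{-1}(2+|r-s|)$. For $1<p<2$ the exponents $1/p-1/2>0$ and $1/p'-1/2<0$ are balanced, so I apply Schur's test with power weights $r^{-\gamma}$. Near the diagonal $r\sim s$ the factors cancel and leave $\langle r-s\rangle^{-1}\log^{-1}$. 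That kernel is not integrable, so the near-diagonal part must be handled differently: I would compare it with a Hilbert-transform-type kernel. Concretely, I would split the region $|r-s|\le s/2$ and the region $|r-s|>s/2$. The off-diagonal part is homogeneous of degree $-1$ up to a logarithmic gain, so it is handled by Hardy-type inequalities, which are valid for $1<p<2$ precisely because $1/p-1/2\in(0,1/2)$.

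I expect the near-diagonal piece to be the main obstacle, since $\langle r-s\rangle^{-1}\log^{-1}(2+|r-s|)$ is neither Schur-admissible nor a singular integral. For this piece I would return to the oscillatory representation and not take absolute values: it equals a convolution in $r-s$ whose Fourier symbol is $\lambda\,\Gamma$-amplitude$\cdot$(smooth), which is a logarithmic multiplier in $\lambda$ of the type covered by Lemma~\ref{eq-fourier-mulitiplier-1} and Lemma~\ref{lem-Fou-multiplier}. Those lemmas give $L^p(\mathbb{R})$ boundedness, and the radial weights are harmless on $|r-s|\le s/2$. If this fails, the fallback is to invoke Yajima's argument for the p-wave term in the range $1<p<2$.
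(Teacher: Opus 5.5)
Your treatment of the pieces outside the main region is fine in outline. Your diagnosis of the main region is also correct: the part of $\mathcal{N}_{3,0}^-$ with $|x|\le 2|y|$ and $|\langle x\rangle-\langle y\rangle|\le\langle y\rangle/2$ cannot be handled by size estimates.

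This matters when you compare with the paper, whose proof is purely size-based. It records $|\mathcal{N}_{3,0}^\pm|\lesssim\langle\langle x\rangle\pm\langle y\rangle\rangle^{-1}\langle x\rangle^{-1}$ on $\{|x|\le 2|y|\}$, $\langle x\rangle^{-2}$ elsewhere, and $|\mathcal{N}_{3,1}^\pm|\lesssim\min\{\langle x\rangle^{-2},\langle x\rangle^{-1}\langle y\rangle^{-1}\}$. It then asserts that the kernel lies in $L^{p,\infty}_x(L^{p',\infty}_y)$ and concludes by Lorentz-space H\"older and off-diagonal Marcinkiewicz interpolation. Your observation shows that step cannot hold as stated. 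For fixed $x$, the first bound is $\sim\langle x\rangle^{-1}$ on the annulus $\{|\langle y\rangle-\langle x\rangle|\le 1\}$, which has area $\sim\langle x\rangle$. Hence its $L^{p',\infty}_y$-norm is $\gtrsim\langle x\rangle^{-1/p}\notin L^{p,\infty}(\mathbb{R}^2)$. In fact this positive kernel maps $\mathbf 1_{\{R\le|y|\le 2R\}}$ to a function $\gtrsim\log R$ on $\{R\le|x|\le 2R\}$.

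Sharper pointwise bounds cannot help either. Take the p-wave instance $\Gamma_3(\lambda)=\lambda^{-2}g_1^+(\lambda)^{-1}(S_2-S_3)D_2(S_2-S_3)$. Lemmas~\ref{lemm:unb-of-wpsing} and~\ref{lem:k-shoulder-lower} show that the leading part of the kernel has modulus $\gtrsim(\mu\,\delta\log\delta)^{-1}$ on a sector, where $\mu=\langle y\rangle$ and $\delta=\langle x\rangle-\langle y\rangle\in[\delta_1,\sqrt{\mu}\,]$. So a cancellation argument is needed, as you say.

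The gap is that your cancellation step does not work as written, for two reasons.

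First, $\mathcal{N}_{3,0}^-$ is not a convolution in $r-s$. The amplitude $\mathcal{K}_{3,0}^-(\lambda,x,y)$ depends on $\hat x$, $\hat y$, and on $\lambda\langle x\rangle$ and $\lambda\langle y\rangle$ separately. You must first isolate the dipole--dipole term, as in Step~1 of the proof of Lemma~\ref{lemm:unb-of-wpsing} (the analogue of \eqref{eq:main-term}). You must also check that every other cross term gains an extra power or an extra logarithm. The angular factor of that term is the finite-rank form $\sum_{i,j}\hat x_i\hat y_j\langle\mathcal{Q}_{m_1}\Gamma_3(\lambda)\mathcal{Q}_{m_2}[v z_j],v z_i\rangle$. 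By \eqref{eq:HJ-prod}, its radial factor is $(\mu\nu)^{-1/2}\int_0^\infty e^{i\lambda(\nu-\mu)}m_{ij}(\lambda)\,d\lambda$ plus lower-order terms, where $m_{ij}$ is an order-zero symbol carrying one factor $|\log\lambda|^{-1}$.

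Second, this symbol is one-sided, $\mathbf 1_{(0,\infty)}(\xi)m_{ij}(\xi)$, and its kernel behaves like $-i/(t\log|t|)$ (compare \eqref{eq:cchi-asy-lem}), which is not integrable. Lemmas~\ref{eq-fourier-mulitiplier-1} and~\ref{lem-Fou-multiplier} concern radial symbols (even symbols, in one dimension) and yield integrable kernels, hence $L^1$ and $L^\infty$ bounds. If they applied here, your argument would run equally well at $p=1$ and make the near-diagonal piece $L^1$-bounded. That contradicts the $\log\log\mu$ growth of column norms in Lemma~\ref{lemm:unb-of-wpsing}, which comes from exactly this region.

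The missing ingredient is to write $\mathbf 1_{(0,\infty)}m=\frac12(1+\operatorname{sgn})m$, i.e.\ a Hilbert transform composed with a logarithmic multiplier, or to apply Mikhlin's theorem on $\mathbb{R}$. This is bounded only for $1<p<\infty$. After that the weights are harmless: on $|r-s|\le s/2$ one has $(r/s)^{1/p-1/2}=1+O(|r-s|/s)$, so the correction is a Schur-bounded kernel. The sharp truncation differs from the full operator by a Hardy-type term. With these repairs your strategy is sound, and it supplies the cancellation that a size-based argument cannot. The fallback to Yajima is not an argument.

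A smaller point: the upper restriction on $p$ does not come from the Hardy step. With one logarithm, the off-diagonal part is bounded by H\"older for all $1<p<4$. The restriction comes from $\mathcal{N}_{3,1}^\pm$ on $\{|x|\lesssim|y|\}$, where your uniform bound $\langle x\rangle^{-2}\log^{-1}(2+|x|)$ has no decay in $y$. There you need the oscillatory bound $|\mathcal{N}_{3,1}^\pm|\lesssim\langle x\rangle^{-1}\langle y\rangle^{-1}\log^{-1}(2+|y|)$, obtained from the phase $e^{\pm i\lambda\langle y\rangle}$. H\"older in $y$ followed by the $L^p_x$-norm is what forces $p<2$.
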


\begin{proof}
	Note that $g_{1}^{\pm}(\lambda)=\log\lambda+\beta_{\pm}$ with 
	$\beta_{\pm}\in\mathbb{C}\setminus\mathbb{R}$. Following the proof of 
	Lemma~\ref{lem:bound of N_3-1}, we obtain
	\begin{align*}
		|\mathcal{N}_{3,0}^{\pm}(x,y)| &\lesssim 
		\begin{cases}
			\langle\langle x\rangle \pm \langle y\rangle \rangle^{-1}
			\langle x \rangle^{-1}, & |x|\le 2|y|,\\[4pt]
			\langle x \rangle^{-2}, & |x|\ge 2|y|,
		\end{cases}
		\\[4pt]
		|\mathcal{N}_{3,1}^\pm(x,y)| &\lesssim \langle x \rangle^{-2}.
	\end{align*}
	Since 
	\[
	|\partial_{\lambda}^k \mathcal{K}_{3,1}^{\pm}(\lambda,x,y)| 
	\lesssim \lambda^{-1-k} (\log\lambda)^{-2} \langle x \rangle^{-1}, 
	\quad k=0,1,2,
	\]
	oscillatory integral estimates yield another bound 
	$|\mathcal{N}_{3,1}^\pm(x,y)| \lesssim \langle x \rangle^{-1}\langle y\rangle^{-1}$.
	These bounds imply
	\[
	\left\| \mathcal{N}_{3}(x,y) \right\|_{L^{p,\infty}(L^{p',\infty}(\mathbb{R}^2_y),\, \mathbb{R}^2_x)}\lesssim 1,
	\qquad \frac{1}{p}+\frac{1}{p'}=1, \quad 1<p\le 2.
	\]
	Hence $\mathcal{N}_3$ is bounded from $L^{p,\infty}(\mathbb{R}^2)$ to 
	$L^{p,1}(\mathbb{R}^2)$ for each $1<p\le 2$ by H\"older's inequality in 
	Lorentz spaces. Then, in light of the off-diagonal Marcinkiewicz 
	interpolation theorem in \cite{graf}, we complete the proof.
\end{proof}

\begin{lemma}\label{lem:bound of N_3-3}
	Let $|V(x)|\lesssim\langle x\rangle^{-6-}$. Suppose the operator-valued function 
	$\Gamma_3(\lambda)=\tilde{\mathcal{O}}_2(\lambda^{-1+})$. Then, for $m_1=m_2=1$, 
	the operator $\mathcal{N}_3$ is bounded on $L^p(\mathbb{R}^2)$ for all 
	$1 \le p \le \infty$.
\end{lemma}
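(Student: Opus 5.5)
The plan is to show that the kernel of $\mathcal{N}_3$ is admissible and conclude with Lemma~\ref{Schur Lemma}. This parallels Lemmas~\ref{Lemm: bound of N_0} and~\ref{Lemm: bound of N_2}, but both sides now carry a $\mathcal{Q}_1$, so the amplitudes of Lemmas~\ref{lem:est-S0-J0} and~\ref{lem:est-S0vR0-2} with $m=1$ are available on each factor.

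\emph{Step 1: decomposition.} As in the proof of Lemma~\ref{lem:bound of N_3-1}, I would expand the right factor $\mathcal{Q}_1 v(R_0^+-R_0^-)(\cdot,y)$ by \eqref{eq:S0vR+R-} and the left factor $\mathcal{Q}_1 vR_0^-(\cdot,x)$ by \eqref{eq:S0vR-G-0}. This writes the kernel as $\mathcal{N}_{3,0}^\pm+\mathcal{N}_{3,1}^\pm$, where
\[
\mathcal{K}_{3,0}^\pm=\langle \Gamma_3\mathcal{Q}_1\eta_1^\pm(\lambda,\cdot,y),\omega_1^-(\lambda,\cdot,x)\rangle,\qquad \mathcal{K}_{3,1}^\pm=\chi(\lambda\langle x\rangle)\langle \Gamma_3\mathcal{Q}_1\eta_1^\pm(\lambda,\cdot,y),\psi_1(\cdot,x)\rangle .
\]

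\emph{Step 2: the term $\mathcal{N}_{3,0}^\pm$.} By Cauchy--Schwarz with \eqref{eq:est-eta-w-1}, \eqref{eq:omega-est-3} and $\Gamma_3=\tilde{\mathcal{O}}_2(\lambda^{-1+})$,
\[
|\partial_\lambda^\ell\mathcal{K}_{3,0}^\pm|\lesssim \lambda^{-\ell+}\langle x\rangle^{-1/2}\langle y\rangle^{-1/2}.
\]
After the extra factor $\lambda$, Lemma~\ref{lem-oscillatory estimate} with $b=1+$ gives the pointwise bound $\langle x\rangle^{-1/2}\langle y\rangle^{-1/2}\langle\langle x\rangle-\langle y\rangle\rangle^{-2-}$. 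This is the symmetric kernel already shown to be admissible in Lemma~\ref{Lemm: bound of N_0}.

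\emph{Step 3: the term $\mathcal{N}_{3,1}^\pm$ (the main obstacle).} Here $\psi_1$ has no oscillation in $x$ and only the decay $\|\psi_1(\cdot,x)\|_{L^2}\lesssim\langle x\rangle^{-1}$. I would use \eqref{eq:est-eta-w-2}, which gives $|\partial^\ell_\lambda\mathcal{K}_{3,1}^\pm|\lesssim \lambda^{-\ell+}\langle x\rangle^{-1}$, and \eqref{eq:est-eta-w-1}, which gives $\lambda^{-1/2-\ell+}\langle x\rangle^{-1}\langle y\rangle^{-1/2}$.
\begin{itemize}
\item The $\lambda$-integral is restricted to $\lambda\lesssim\langle x\rangle^{-1}$. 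Integrating directly with the first bound yields $|\mathcal{N}_{3,1}^\pm|\lesssim\langle x\rangle^{-3-}$, which is integrable in $x$ uniformly in $y$.
\item For the $y$-integral, I would integrate by parts twice against $e^{\pm i\lambda\langle y\rangle}$, using the second bound and $|\partial_\lambda^\ell\chi(\lambda\langle x\rangle)|\lesssim\lambda^{-\ell}$. Lemma~\ref{lem-oscillatory estimate} with $b=\tfrac12+$ then gives $\langle x\rangle^{-1}\langle y\rangle^{-5/2-}$, so the $y$-integral is uniformly bounded.
\end{itemize}

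The delicate point is that the $y$-integral bound alone loses the $x$-decay. The fix is to take, for each Schur norm, the better of the two bounds: the second bound controls $\sup_x\int|\mathcal{N}_{3,1}^\pm|\,dy$, and the first controls $\sup_y\int|\mathcal{N}_{3,1}^\pm|\,dx$. The improvement to $\lambda^{-1+}$ is exactly what makes both integrals converge; compare $\lambda^{-2}g_1^{-1}$ in Lemma~\ref{lem:bound of N_3-2-1}, where they fail. Combining Steps 2 and 3 gives an admissible kernel, hence boundedness for $1\le p\le\infty$.
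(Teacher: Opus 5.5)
Your proposal is correct and is essentially the paper's proof. It uses the same splitting into $\mathcal{N}_{3,0}^\pm$ and $\mathcal{N}_{3,1}^\pm$, Cauchy--Schwarz with \eqref{eq:est-eta-w-1}, \eqref{eq:omega-est-3} and \eqref{eq:psi-est-0}, Lemma~\ref{lem-oscillatory estimate}, and Schur's lemma. Your explicit pairing of the support bound ($\lambda\langle x\rangle\lesssim 1$) with the oscillatory bound for $\mathcal{N}_{3,1}^\pm$, which oscillates only in $\langle y\rangle$, is exactly what lies behind the paper's single pointwise bound for that term.

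There are two slips, neither of which affects the conclusion. First, the oscillatory bound for $\mathcal{N}_{3,1}^\pm$ is $\langle x\rangle^{-1}\langle y\rangle^{-2-}$, not $\langle x\rangle^{-1}\langle y\rangle^{-5/2-}$; it is still integrable in $y$. Second, in Step~2 only two $\lambda$-derivatives are available, so you should take $b=1-$ as in Lemma~\ref{Lemm: bound of N_0}; this gives $\langle\langle x\rangle-\langle y\rangle\rangle^{-2+}$, which is still admissible.
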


\begin{proof}
	Following the proof of Lemma~\ref{lem:bound of N_3-1}, it suffices to analyze 
	the operators whose kernels are given by the oscillatory integrals 
	$\mathcal{N}_{3,0}^{\pm}$ and $\mathcal{N}_{3,1}^{\pm}$. Since 
	$\Gamma_3(\lambda)=\tilde{\mathcal{O}}_2(\lambda^{-1+})$, the estimates 
	\eqref{eq:est-eta-w-1} and \eqref{eq:est-eta-w-2} together with the 
	Cauchy--Schwarz inequality yield
	\[
	|\partial_\lambda^\ell \mathcal{K}_{3,0}^{\pm}(\lambda,x,y)| 
	\lesssim \lambda^{1-\ell+} \langle x \rangle^{-1/2} \langle y \rangle^{-1/2},
	\qquad \ell=0,1,2.
	\]
	Applying Lemma~\ref{lem-oscillatory estimate} with $k=2$ and exponent 
	$b=1-$, we obtain
	\[
	|\mathcal{N}_{3,0}^{\pm}(x,y)| 
	\lesssim \langle x \rangle^{-1/2} 
	\langle \langle x\rangle \pm \langle y\rangle \rangle^{-2-} 
	\langle y \rangle^{-1/2},
	\]
	which shows that the kernels $\mathcal{N}_{3,0}^{\pm}(x,y)$ are admissible. Similarly, by \eqref{eq:est-eta-w-1} and \eqref{eq:psi-est-0} together with 
	the Cauchy--Schwarz inequality,
	\[
	|\partial_\lambda^\ell \mathcal{K}_{3,1}^{\pm}(\lambda,x,y)| 
	\lesssim \lambda^{-1/2-\ell+} \langle x \rangle^{-1} \langle y \rangle^{-1/2},
	\qquad \ell=0,1,2,
	\]
	whence
	\[
	|\mathcal{N}_{3,1}^{\pm}(x,y)| 
	\lesssim \langle x \rangle^{-1} 
	\langle \langle x\rangle \pm \langle y\rangle \rangle^{-\frac32-} 
	\langle y \rangle^{-1/2}.
	\]
	These kernels are also admissible. This completes the proof.
\end{proof}

\begin{lemma}\label{lem:bound of N_3-2}
	Suppose $|V(x)|\lesssim\langle x\rangle^{-6-}$ and that 
	$\Gamma_3(\lambda)=\tilde{\mathcal{O}}_2(\lambda^{-2})$. Then the operator 
	$\mathcal{N}_3$ defined in \eqref{eq-rep-N4} is bounded on 
	$L^1(\mathbb{R}^2)$ whenever $m_1+m_2>2$.
\end{lemma}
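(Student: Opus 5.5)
Following the proof of Lemma~\ref{lem:bound of N_3-1}, I would reduce \eqref{eq:L1 bound ceri} to column bounds for the oscillatory kernels $\mathcal{N}_{3,0}^{\pm}$ and $\mathcal{N}_{3,1}^{\pm}$. These arise from writing $\mathcal{Q}_{m_2}v(R_0^+-R_0^-)(\cdot,y)$ via \eqref{eq:S0vR+R-} and $\mathcal{Q}_{m_1}vR_0^-(\cdot,x)$ via \eqref{eq:S0vR-G-0}. Since $m_1+m_2>2$, either $m_1=2$ or $m_2=2$. In both cases one gains a full power of $\lambda$ over the case $m_1=m_2=1$, and this gain is what compensates for losing the factor $\log^{-2}\lambda$. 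The point is that \eqref{eq:est-eta-w-1}, \eqref{eq:est-eta-w-2}, \eqref{eq:omega-est-3} carry the factor $\lambda^{m}$, and \eqref{eq:psi-est-0} carries $a^{-m}$, which on the support $\lambda\langle x\rangle\lesssim1$ of $\chi(\lambda\langle x\rangle)$ can be traded for powers of $\lambda$.

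\emph{Step 1: the term $\mathcal{N}_{3,0}^{\pm}$.} By Cauchy--Schwarz and $\Gamma_3=\tilde{\mathcal O}_2(\lambda^{-2})$, the amplitude satisfies
\[
|\partial_\lambda^\ell\mathcal{K}_{3,0}^{\pm}|\lesssim\lambda^{m_1+m_2-3-\ell}\langle x\rangle^{-1/2}\langle y\rangle^{-1/2}\lesssim \lambda^{-\ell}\langle x\rangle^{-1/2}\langle y\rangle^{-1/2}
\]
in the region $|x|\le2|y|$, using \eqref{eq:est-eta-w-1} and \eqref{eq:omega-est-3}. Multiplied by $\lambda\chi(\lambda/\lambda_0)$, this is a symbol of order $b=1$. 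Lemma~\ref{lem-oscillatory estimate} then gives $|\mathcal{N}_{3,0}^{\pm}|\lesssim\langle x\rangle^{-1/2}\langle y\rangle^{-1/2}\langle\langle x\rangle\pm\langle y\rangle\rangle^{-2}$. This is the bound \eqref{eq:ptwise-N0}, which is admissible. For $|x|>2|y|$ I would instead use \eqref{eq:est-eta-w-2}, which avoids $\langle y\rangle^{-1/2}$. This gives amplitude order $\lambda^{m_1+m_2-5/2-\ell}\langle x\rangle^{-1/2}$ and hence a decay $\langle x\rangle^{-1/2}\langle x\rangle^{-(m_1+m_2-1/2)}\lesssim\langle x\rangle^{-5/2}$, since $\langle\langle x\rangle\pm\langle y\rangle\rangle\sim\langle x\rangle$ there. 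This is integrable in $x$ uniformly in $y$.

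\emph{Step 2: the term $\mathcal{N}_{3,1}^{\pm}$.} Here the estimate is the nonoscillatory one: $|\mathcal{K}_{3,1}^{\pm}|\lesssim\lambda^{-2}\lambda^{m_2}\langle x\rangle^{-m_1}$ on $\lambda\langle x\rangle\lesssim1$, using \eqref{eq:est-eta-w-2} and \eqref{eq:psi-est-0}. Integrating $\lambda\,d\lambda$ over $(0,\langle x\rangle^{-1})$ gives $\langle x\rangle^{-m_1-m_2}$ when $m_2\ge1$. This is at least $\langle x\rangle^{-3}$, which is integrable. If $m_2=1$ and $m_1=2$ the power is still $3$, so every case with $m_1+m_2>2$ is covered; with $m_1=m_2=1$ one would get exactly $\langle x\rangle^{-2}$ and fail, which explains the hypothesis.

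\emph{Main obstacle.} The delicate point is Step~1 in the borderline case $m_1=2$, $m_2=1$, or the reverse, where the amplitude is exactly of order $\lambda^{0}$ with no $\log$ gain. There I must check that Lemma~\ref{lem-oscillatory estimate} with $a=0$, $b=1$ is applicable, and in particular that the derivative bounds for $\ell=0,1,2$ hold uniformly. If the symbol bookkeeping near $\lambda\langle x\rangle\sim1$ loses an $\varepsilon$, I would instead split $\lambda\lessgtr\langle x\rangle^{-1}$ and integrate by parts only on the high piece. Here the decay $\langle\langle x\rangle\pm\langle y\rangle\rangle^{-2}$ against the measure $r\,dr$ and the weight $\langle x\rangle^{-1/2}\langle y\rangle^{-1/2}$ remains integrable, exactly as in Lemma~\ref{Lemm: bound of N_0}.
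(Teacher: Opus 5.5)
Your proposal is correct and follows the paper's proof essentially step for step: the same decomposition into $\mathcal{N}_{3,0}^{\pm}$ and $\mathcal{N}_{3,1}^{\pm}$, the same split into $|x|\le 2|y|$ and $|x|>2|y|$ with the same amplitude bounds, and the same direct small-$\lambda$ integration for $\mathcal{N}_{3,1}^{\pm}$ giving $\langle x\rangle^{-(m_1+m_2)}$. The one point to tidy is the obstacle you flagged. The amplitude estimates are only available for $\ell\le 2$, so apply Lemma~\ref{lem-oscillatory estimate} with $b=1-$ rather than $b=1$. This costs only an $\varepsilon$ of decay and is harmless, exactly as in Lemma~\ref{Lemm: bound of N_0}.
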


\begin{proof}
	In this case, following the proof of Lemma~\ref{lem:bound of N_3-1}, we 
	obtain that for $\ell=0,1,2$ and $\lambda\in(0,\lambda_0)$,
	\[
	|\partial_\lambda^\ell \mathcal{K}_{3,0}^{\pm}(\lambda,x,y)| \lesssim
	\begin{cases}
		\lambda^{m_1+m_2-3-\ell} \langle x \rangle^{-1/2}\langle y\rangle^{-1/2}, 
		& |x|\le 2|y|,\\[4pt]
		\lambda^{m_1+m_2-5/2-\ell} \langle x \rangle^{-1/2}, 
		& |x|>2|y|,
	\end{cases}
	\]
	and 
	\[
	|\partial_\lambda^\ell \mathcal{K}_{3,1}^{\pm}(\lambda,x,y)| \lesssim
	\lambda^{m_2-2-\ell} \langle x \rangle^{-m_1}.
	\]
	These estimates yield
	\begin{align*}
		|\mathcal{N}_{3,0}^{\pm}(x,y)| 
		&\lesssim \langle\langle x\rangle \pm \langle y\rangle \rangle^{-(m_1+m_2-1)}
		\langle x \rangle^{-1}+ \langle x \rangle^{-(m_1+m_2)},\\
		|\mathcal{N}_{3,1}^\pm(x,y)| 
		&\lesssim \langle x \rangle^{-(m_1+m_2)},
	\end{align*}
	which immediately implies the $L^1$-boundedness of $\mathcal{N}_3$.
\end{proof}

\begin{lemma}\label{lem:bound of N_4}
	Assume that $|V(x)|\lesssim\langle x\rangle^{-8-}$. Then the operator $\mathcal{W}_{\mathrm{e}, \mathrm{sing}}^{\mathrm{L}}$ is bounded on $L^p(\mathbb{R}^2)$ for every $1\le p<\infty$. If, in addition, each zero-energy eigenfunction of $H$ satisfies \eqref{eq:condi_for_eigen-0}, then $\mathcal{W}_{\mathrm{e}, \mathrm{sing}}^{\mathrm{L}}$ is also bounded on $L^\infty(\mathbb{R}^2)$.
\end{lemma}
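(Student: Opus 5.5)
The plan is to treat $\mathcal{W}_{\mathrm{e},\mathrm{sing}}^{\mathrm{L}}$ with the same factorization-plus-expansion scheme used for $\mathcal{N}_3$. The difference is that we exploit the full moment cancellation carried by $S_3$. By Remark~\ref{remk:one-to-one}, $S_3(vx^\alpha)=0$ for $|\alpha|\le1$, so $S_3$ plays the role of $\mathcal{Q}_2$ on both sides.

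Writing the kernel as a pairing, we apply Lemma~\ref{lem:est-S0-J0} with $m=2$ to $S_3v(R_0^+-R_0^-)(\lambda^2)(\cdot,y)$. We apply Lemma~\ref{lem:est-S0vR0-2} with $m=2$ to $S_3vR_0^-(\lambda^2)(\cdot,x)$. This splits the kernel into an oscillatory piece with phase $e^{i\lambda(\langle x\rangle\pm\langle y\rangle)}$ and amplitude $\lambda^{-1}\langle S_3D_3S_3\eta_2^\pm,\omega_2^-\rangle$, and a piece $\chi(\lambda\langle x\rangle)\lambda^{-1}\langle S_3D_3S_3\eta_2^\pm,\psi_2(\cdot,x)\rangle$.

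For the first piece, the bounds \eqref{eq:est-eta-w-1} and \eqref{eq:omega-est-3} give an amplitude of size $\lambda^{2-\ell}\langle x\rangle^{-1/2}\langle y\rangle^{-1/2}$ after multiplying by $\lambda\cdot\lambda^{-2}$. Lemma~\ref{lem-oscillatory estimate} then yields an admissible kernel bounded by $\langle x\rangle^{-1/2}\langle y\rangle^{-1/2}\langle\langle x\rangle\pm\langle y\rangle\rangle^{-3}$. In the region $|x|>2|y|$ we instead use \eqref{eq:est-eta-w-2}, which gives decay $\langle x\rangle^{-5/2}$. So this piece is bounded on all $L^p$, $1\le p\le\infty$.

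The second piece is the obstruction. Its amplitude is $\lambda^{1-\ell}\langle x\rangle^{-2}\langle y\rangle^{-1/2}$ (or $\lambda^{3/2-\ell}\langle x\rangle^{-2}$ in the region where \eqref{eq:est-eta-w-2} is used), with support in $\lambda\langle x\rangle\lesssim1$. A direct integration gives $\lesssim\langle x\rangle^{-4}$, which settles $L^1$. Integrating by parts in the phase $e^{\pm i\lambda\langle y\rangle}$ gives $\langle x\rangle^{-2}\langle y\rangle^{-5/2}$, or $\langle x\rangle^{-2}\langle y\rangle^{-2}$. Hölder then handles $2<p<\infty$, and interpolation gives $1\le p<\infty$.

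For $p=\infty$ we mimic the endpoint argument of Lemma~\ref{Lemm: bound of N_1}. We write $\chi(\lambda\langle x\rangle)=1-\tilde\chi(\lambda\langle x\rangle)$. The $\tilde\chi$ part has $\lambda\gtrsim\langle x\rangle^{-1}$, and integrating by parts twice gives a kernel bounded by $\langle y\rangle^{-5/2}$, which is fine. The remaining "$1$" part equals $\Phi(x)\,\ell(f)$, where $\Phi(x)=\langle S_3D_3\varphi,\psi_2(\cdot,x)\rangle$ is bounded. The difficulty is that $\ell(f)$ involves $\int\lambda^{-1}\cdots$, namely the multiplier $(-\Delta)^{-1}$ applied to $S_3vf$. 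Because $S_3$ kills first moments, $(S_3v(R_0^+-R_0^-))$ produces a factor $\lambda^2$ times a second-order term. Precisely, the leading term of $\eta_2^\pm$ is governed by the quadratic Taylor coefficient $-\tfrac{\lambda^2}{4}$ times the second-moment tensor, plus $\tfrac{\lambda^2 r^2}{\cdot}$-type terms.

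Decomposing $x^\alpha$ with $|\alpha|=2$ into $|x|^2$ and the harmonic part $\mathcal{A}_2$, the $|x|^2$ channel corresponds to a multiplier $\lambda^{-2}\cdot\lambda^2|\xi|^2$-type smooth symbol times $\chi$, which is bounded on $L^\infty$. The $\mathcal{A}_2$ channel yields the Riesz-transform-type multiplier $P(\xi)/|\xi|^2$, which is unbounded on $L^\infty$. This channel vanishes exactly when $\int P\,V\psi=0$ for $P\in\mathcal{A}_2$, using $S_3L^2=Uv\mathscr{N}_e$ from Lemma~\ref{lem:one to one lem}.

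The main obstacle is this last step: making the spherical-harmonic separation of the quadratic term rigorous, and showing that under \eqref{eq:condi_for_eigen-0} the residual (the $|x|^2$ channel plus higher-order Taylor remainders, controlled using $|V|\lesssim\langle x\rangle^{-8-}$) gives a multiplier that is genuinely smooth at the origin and hence $L^\infty$-bounded.
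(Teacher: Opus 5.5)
Your route works, and for $1\le p<\infty$ it is essentially the paper's. Since $S_3(vx^\alpha)=0$ for $|\alpha|\le1$, the paper also treats $\mathcal{W}_{\mathrm{e},\mathrm{sing}}^{\mathrm{L}}$ as an $\mathcal{N}_3$-type operator with $m_1=m_2=2$ (Lemma~\ref{lem:bound of N_3-2}). It reads off the kernel bound $\langle x\rangle^{-1/2}\langle\langle x\rangle\pm\langle y\rangle\rangle^{-2}\langle y\rangle^{-1/2}+\langle x\rangle^{-2}\langle y\rangle^{-2}$.

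At $p=\infty$ the two arguments genuinely diverge.
\begin{itemize}
\item \emph{The paper.} It invokes $vS_3D_3S_3v=V\mathcal{P}_eV$ from \cite[Lemma 5.6]{Erd-Gre-13} and inserts $R_0^+(\lambda^2)=(-\Delta)^{-1}+\lambda^2(-\Delta)^{-1}R_0^+(\lambda^2)$ on the left. The remainder is handled by a high/low frequency split together with the duality identity $\mathcal{N}_{4,r,0}^*=-\chi(\sqrt{-\Delta}/\lambda_0)\,\mathcal{W}_{\mathrm{e},\mathrm{sing}}^{\mathrm{L}}$, which recycles the $L^1$ bound. The main part collapses to $\mathcal{N}_{4,m}=\mathcal{P}_e\chi(\sqrt{-\Delta}/\lambda_0)$, whose $L^\infty$ bound requires $\Psi_j\in L^\infty$ and $\chi(\sqrt{-\Delta}/\lambda_0)\Psi_j\in L^1$.
\item \emph{Your route.} You stay with the expansion of Lemma~\ref{lem:est-S0vR0-2} and apply the $1-\tilde\chi(\lambda\langle x\rangle)$ device of Lemma~\ref{Lemm: bound of N_1} to the $\psi_2$-piece. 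The $\omega_2^-$-piece and the $\tilde\chi$-part are handled by kernel bounds. What remains is a finite sum $\sum_i\Phi_i(x)\ell_i(f)$, where $\Phi_i(x)=\mathcal{O}(\langle x\rangle^{-2})$ and $\ell_i(f)$ is a constant multiple of $\int\hat f(\eta)\chi(|\eta|/\lambda_0)\overline{\widehat{v\phi_i}(\eta)}\,|\eta|^{-2}\,d\eta$ with $\phi_i\in S_3L^2$. By Lemma~\ref{lem:one to one lem}, $v\phi_i=V\psi$ for a zero-energy eigenfunction $\psi$.
\end{itemize}
Both routes therefore end at the same fact, $\mathcal{F}^{-1}\bigl(\chi(|\cdot|/\lambda_0)\widehat{V\psi}/|\cdot|^2\bigr)\in L^1$. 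Yours is more economical: it needs no eigenprojection identity, no duality argument, and no decay estimate for $\Psi_j$, since $\Phi_i$ is bounded for free. The paper's route buys the explicit operator $\mathcal{P}_e\chi(\sqrt{-\Delta}/\lambda_0)$ with orthonormal $\Psi_j$, which it reuses for the converse in Lemma~\ref{lem:unbound-of-N_4}.

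The step you flag as the main obstacle closes as in the paper, but not for the reason you give: the residual is not smooth at the origin. With vanishing zeroth and first moments, $\widehat{V\psi}(\eta)=-\tfrac12\int(x\cdot\eta)^2V\psi\,dx+E(\eta)$. Condition \eqref{eq:condi_for_eigen-0} makes the quadratic term $c|\eta|^2$, which contributes $c\,\chi(|\eta|/\lambda_0)$. The remainder $a:=E\,\chi(|\cdot|/\lambda_0)|\cdot|^{-2}$ is only a symbol with $|\partial^\alpha a(\eta)|\lesssim|\eta|^{1-|\alpha|}$ for $|\alpha|\le3$, which uses $\langle x\rangle^3V\psi\in L^1$. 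This gives $|\check a(x)|\lesssim|x|^{-3}\log(2+|x|)$, hence $\check a\in L^1$.

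Two computational slips should also be fixed.
\begin{itemize}
\item The amplitude of the $\psi_2$-piece is $\lambda^{1/2-\ell}\langle x\rangle^{-2}\langle y\rangle^{-1/2}$ (or $\lambda^{1-\ell}\langle x\rangle^{-2}$ via \eqref{eq:est-eta-w-2}). It is not $\lambda^{1-\ell}\langle x\rangle^{-2}\langle y\rangle^{-1/2}$ (or $\lambda^{3/2-\ell}\langle x\rangle^{-2}$). Hence the untruncated piece only satisfies $\langle x\rangle^{-2}\langle y\rangle^{-2}$. Your claimed $\langle x\rangle^{-2}\langle y\rangle^{-5/2}$ must be dropped: it would give $L^\infty$-boundedness with no moment condition, contradicting Lemma~\ref{lem:unbound-of-N_4}.
\item Decay $\langle\langle x\rangle\pm\langle y\rangle\rangle^{-3}$ from Lemma~\ref{lem-oscillatory estimate} would require four $\lambda$-derivatives, while Lemmas~\ref{lem:est-S0-J0} and~\ref{lem:est-S0vR0-2} provide only two. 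The exponent $-2$ you actually get is still admissible.
\end{itemize}
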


\begin{proof}
	By \cite[Lemma 5.6]{Erd-Gre-13},
	\[
		(-\Delta)^{-1}v S_3D_3S_3 v (-\Delta)^{-1}=\mathcal{P}_e,
	\]
	where $\mathcal{P}_e$ is the orthogonal projection onto $\mathscr{N}_e:=\mathcal{P}_eL^2(\mathbb{R}^2)$, the zero-energy eigenspace of $H$; in particular $vS_3D_3S_3v=(-\Delta)\mathcal{P}_e(-\Delta)$. Since every zero-energy eigenfunction $\psi$ of $H$ satisfies $-\Delta\psi+V\psi=0$ (see Remark~\ref{remk:one-to-one}), we have $(-\Delta)\mathcal{P}_e=-V\mathcal{P}_e$ and, by self-adjointness, $\mathcal{P}_e(-\Delta)=-\mathcal{P}_eV$; consequently
	\[
		vS_3D_3S_3v=(-\Delta)\mathcal{P}_e(-\Delta)=V\mathcal{P}_eV ,
	\]
	and the singular operator can be rewritten as
	\begin{align*}
		\mathcal{W}_{\mathrm{e}, \mathrm{sing}}^{\mathrm{L}}
		=\frac{1}{\pi i}\int_0^{+\infty} \lambda^{-1} \chi(\lambda/\lambda_0)\,
		R_0^+(\lambda^{2})V\mathcal{P}_eV
		\bigl(R_0^+(\lambda^{2})-R_0^-(\lambda^{2})\bigr) \, d\lambda .
	\end{align*}

	Let $\{\Psi_{j}\}_{j=1}^k$ be an orthonormal basis of $\mathscr{N}_e$. Recall that $S_3\bigl(v\,x^\alpha\bigr)=0$ for all $|\alpha|<2$; through the one-to-one correspondence of Remark~\ref{remk:one-to-one}, this cancellation takes the form
	\begin{equation}\label{vanishing-con}
		\int_{\mathbb{R}^2}  x^{\alpha}\,(V\Psi_j)(x)\,dx=0, \qquad |\alpha|<2,\quad 1\le j\le k .
	\end{equation}
	In particular,
	\[
		\mathcal{P}_e\bigl(Vx^\alpha\bigr)
		=\sum_{j=1}^k \Psi_j\,\overline{\int_{\mathbb{R}^2}x^\alpha\,(V\Psi_j)(x)\,dx}=0,
		\qquad |\alpha|<2 ,
	\]
	so Lemma~\ref{lem:bound of N_3-2} applies and $\mathcal{W}_{\mathrm{e}, \mathrm{sing}}^{\mathrm{L}}$ is bounded on $L^1(\mathbb{R}^2)$. Moreover, following the proof of Lemma~\ref{lem:bound of N_3-1}, its integral kernel satisfies
	\[
		|\mathcal{W}_{\mathrm{e}, \mathrm{sing}}^{\mathrm{L}}(x,y)|
		\lesssim \langle x \rangle^{-1/2}
		\langle \langle x\rangle \pm \langle y\rangle \rangle^{-2}
		\langle y \rangle^{-1/2}
		+ \langle x \rangle^{-2} \langle y \rangle^{-2},
	\]
	and a standard Schur-type argument then yields the $L^p(\mathbb{R}^2)$ boundedness of $\mathcal{W}_{\mathrm{e}, \mathrm{sing}}^{\mathrm{L}}$ for every $1<p<\infty$.

	It remains to establish the $L^\infty(\mathbb{R}^2)$ boundedness under condition \eqref{eq:condi_for_eigen-0}. Inserting the resolvent identity
	\[
		R_0^+(\lambda^{2})=(-\Delta)^{-1}+\lambda^2 (-\Delta)^{-1}R_0^+(\lambda^{2})
	\]
	into the representation above, we decompose $\mathcal{W}_{\mathrm{e}, \mathrm{sing}}^{\mathrm{L}}=\mathcal{N}_{4,m}+\mathcal{N}_{4,r}$, where
	\begin{align*}
		\mathcal{N}_{4,m}
		&:= \frac{1}{\pi i}\int_0^{+\infty} \lambda^{-1} \chi(\lambda/\lambda_0)\,
		(-\Delta)^{-1}V\mathcal{P}_eV
		\bigl(R_0^+(\lambda^{2})-R_0^-(\lambda^{2})\bigr) \, d\lambda ,\\
		\mathcal{N}_{4,r}
		&:= \frac{1}{\pi i}\int_0^{+\infty} \lambda\, \chi(\lambda/\lambda_0)\,
		(-\Delta)^{-1}R_0^+(\lambda^{2})V\mathcal{P}_eV
		\bigl(R_0^+(\lambda^{2})-R_0^-(\lambda^{2})\bigr) \, d\lambda .
	\end{align*}

	\medskip
	\noindent\emph{Step 1: $\mathcal{N}_{4,r}$ is bounded on $L^\infty(\mathbb{R}^2)$ regardless of condition \eqref{eq:condi_for_eigen-0}.}
	Splitting $(-\Delta)^{-1}=\frac{\chi(\sqrt{-\Delta}/\lambda_0)}{-\Delta}+\frac{\tilde{\chi}(\sqrt{-\Delta}/\lambda_0)}{-\Delta}$ in front of the $\lambda$-integral gives $\mathcal{N}_{4,r}=\mathcal{N}_{4,r,0}+\mathcal{N}_{4,r,1}$, where $\mathcal{N}_{4,r,0}$ and $\mathcal{N}_{4,r,1}$ contain the cutoffs $\chi(\sqrt{-\Delta}/\lambda_0)$ and $\tilde{\chi}(\sqrt{-\Delta}/\lambda_0)$, respectively.

	The symbol $\tilde{\chi}(|\xi|/\lambda_0)\,|\xi|^{-2}$ is smooth and satisfies $\bigl|\partial_\xi^\alpha\bigl(\tilde{\chi}(|\xi|/\lambda_0)|\xi|^{-2}\bigr)\bigr|\lesssim|\xi|^{-2-|\alpha|}$; its inverse Fourier transform is therefore integrable, and $\frac{\tilde{\chi}(\sqrt{-\Delta}/\lambda_0)}{-\Delta}$ is bounded on $L^p(\mathbb{R}^2)$ for all $1\le p\le\infty$. On the other hand, we use
	\[
		V\mathcal{P}_eV=vS_3D_3S_3v,
	\]
	in which the middle operator factors through the projection $S_3\le Q$; in view of the vanishing moments \eqref{vanishing-con}, and since the symbol $f_0(\lambda):=\lambda\chi(\lambda/\lambda_0)$ readily satisfies \eqref{eq:assum for f_0}, the remaining $\lambda$-integral is covered by Lemma~\ref{Lemm: bound of N_0} and is bounded on $L^p(\mathbb{R}^2)$ for all $1\le p\le\infty$. Hence $\mathcal{N}_{4,r,1}$ is bounded on $L^\infty(\mathbb{R}^2)$.

	For $\mathcal{N}_{4,r,0}$ we argue by duality: its kernel can be verified to be locally integrable, thus $\mathcal{N}_{4,r,0}$ is bounded on $L^\infty$ if its duality is bounded on $L^1$.  So it suffices to prove that its adjoint
	\[
		\mathcal{N}_{4,r,0}^*
		=\frac{1}{\pi i}\int_0^{+\infty} \lambda\chi(\lambda/\lambda_0)\,
		\bigl(R_0^+(\lambda^{2})-R_0^-(\lambda^{2})\bigr)V\mathcal{P}_eV
		R_0^{-}(\lambda^{2}) \frac{\chi(\sqrt{-\Delta}/\lambda_0)}{-\Delta}\, d\lambda
	\]
	is bounded on $L^1(\mathbb{R}^2)$; indeed, $\mathcal{N}_{4,r,0}=(\mathcal{N}_{4,r,0}^*)^*$ then acts boundedly on $L^\infty(\mathbb{R}^2)=(L^1(\mathbb{R}^2))^*$. To identify $\mathcal{N}_{4,r,0}^*$ we compute Fourier transforms with the help of the free spectral measure identity
	\begin{equation}\label{eq:free-measure}
		\frac{\lambda}{\pi i}\bigl(R_0^+(\lambda^{2})-R_0^-(\lambda^{2})\bigr)\varphi
		= \bigl(\hat\varphi\, d\sigma_\lambda\bigr)^{\vee}
	\end{equation}
	and Parseval's formula, exactly as in the proof of Lemma~\ref{lem-W0s-bound}. First, expanding $\mathcal{P}_ef=\sum_{j=1}^k\Psi_j\langle f,\Psi_j\rangle$ gives
	\begin{equation*}
		\mathcal{W}_{\mathrm{e}, \mathrm{sing}}^{\mathrm{L}}f
		=\sum_{j=1}^k\frac{1}{\pi i}\int_0^{+\infty} \lambda^{-1} \chi(\lambda/\lambda_0)\,
		R_0^+(\lambda^{2})(V\Psi_j) 
		\bigl\langle\bigl(R_0^+-R_0^-\bigr)(\lambda^{2})f,\,V\Psi_j\bigr\rangle \, d\lambda .
	\end{equation*}
	By \eqref{eq:free-measure} and Parseval's formula,
	\[
		\bigl\langle\bigl(R_0^+(\lambda^{2})-R_0^-(\lambda^{2})\bigr)f,\,V\Psi_j\bigr\rangle
		=\frac{\pi i\,\lambda^{-1}}{(2\pi)^2}\int_{\lambda\mathbb{S}^1}
		\hat f(\eta)\,\overline{\widehat{V\Psi_j}(\eta)}\,d\sigma_{\lambda}(\eta),
	\]
	so taking the Fourier transform in $x$, using $\widehat{R_0^+(\lambda^2)g}\,(\xi)=\frac{\widehat{g}(\xi)}{|\xi|^2-\lambda^2-i0}$ and the coarea formula $d\eta=d\sigma_{\lambda}(\eta)\,d\lambda$ with $\lambda=|\eta|$, we obtain
	\begin{equation}\label{eq:Foure express}
		\mathcal{F}\left(\mathcal{W}_{\mathrm{e}, \mathrm{sing}}^{\mathrm{L}}f\right)(\xi)
		=\sum_{j=1}^k \frac{\widehat{V\Psi_j}(\xi)}{(2\pi)^2}
		\lim_{\varepsilon\to 0^+}\int_{\mathbb{R}^2}
		\frac{\overline{\widehat{V\Psi_j}(\eta)}\,\chi(|\eta|/\lambda_0)\,\hat f(\eta)}
		{|\eta|^2\bigl(|\xi|^2-|\eta|^2-i\varepsilon\bigr)}
		\,d\eta .
	\end{equation}
	Second, \eqref{eq:free-measure} applied to the outer spectral measure of $\mathcal{N}_{4,r,0}^*$ forces $\lambda=|\xi|$ on the Fourier side, and computing the inner pairing as above yields
	\begin{equation}\label{eq:dual-form}
		\mathcal{F}\bigl(\mathcal{N}_{4,r,0}^*f\bigr)(\xi)
		=\sum_{j=1}^k \frac{\chi(|\xi|/\lambda_0)\,\widehat{V\Psi_j}(\xi)}{(2\pi)^2}
		\lim_{\varepsilon\to 0^+}\int_{\mathbb{R}^2}
		\frac{\overline{\widehat{V\Psi_j}(\eta)}\,\chi(|\eta|/\lambda_0)\,\hat f(\eta)}
		{|\eta|^2\bigl(|\eta|^2-|\xi|^2+i\varepsilon\bigr)}
		\,d\eta .
	\end{equation}
	Since $|\eta|^2-|\xi|^2+i\varepsilon=-\bigl(|\xi|^2-|\eta|^2-i\varepsilon\bigr)$, comparing \eqref{eq:dual-form} with \eqref{eq:Foure express} gives
	\[
		\mathcal{F}\bigl(\mathcal{N}_{4,r,0}^*f\bigr)(\xi)
		=-\chi(|\xi|/\lambda_0)\cdot\mathcal{F}\bigl(\mathcal{W}_{\mathrm{e},\mathrm{sing}}^{\mathrm{L}}f\bigr)(\xi),
	\]
	that is, $\mathcal{N}_{4,r,0}^*=-\chi(\sqrt{-\Delta}/\lambda_0)\circ\mathcal{W}_{\mathrm{e},\mathrm{sing}}^{\mathrm{L}}$. The cutoff $\chi(\sqrt{-\Delta}/\lambda_0)$ has an integrable kernel and is therefore bounded on $L^1(\mathbb{R}^2)$, and $\mathcal{W}_{\mathrm{e},\mathrm{sing}}^{\mathrm{L}}$ is bounded on $L^1(\mathbb{R}^2)$ by the first part of the proof. Hence $\mathcal{N}_{4,r,0}^*$ is bounded on $L^{1}(\mathbb{R}^2)$, and $\mathcal{N}_{4,r,0}$ is bounded on $L^\infty(\mathbb{R}^2)$. This completes Step 1.

	\medskip
	\noindent\emph{Step 2: $\mathcal{N}_{4,m}$ under condition \eqref{eq:condi_for_eigen-0}.}
	Since $-\Delta\Psi_j+V\Psi_j=0$ gives $\Psi_j=-(-\Delta)^{-1}(V\Psi_j)$, we have $(-\Delta)^{-1}V\mathcal{P}_e=-\mathcal{P}_e$, hence
	\[
		\mathcal{N}_{4,m}
		=-\frac{1}{\pi i}\int_0^{+\infty} \lambda^{-1} \chi(\lambda/\lambda_0)\,
		\mathcal{P}_eV\bigl(R_0^+(\lambda^{2})-R_0^-(\lambda^{2})\bigr) \, d\lambda .
	\]
	Let $f\in L^\infty(\mathbb{R}^2)\cap L^2(\mathbb{R}^2)$. Applying \eqref{eq:free-measure} with the profile $\varphi(\lambda)=\lambda^{-2}\chi(\lambda/\lambda_0)$ gives the functional-calculus identity
	\[
		\frac{1}{\pi i}\int_0^{+\infty}\lambda^{-1}\chi(\lambda/\lambda_0)\bigl(R_0^+(\lambda^{2})-R_0^-(\lambda^{2})\bigr)\,d\lambda
		=(-\Delta)^{-1}\chi(\sqrt{-\Delta}/\lambda_0).
	\]
	Using it under the pairing with $V\Psi_j$ (which is legitimate in view of \eqref{vanishing-con}), we obtain
	\begin{align*}
		\mathcal{N}_{4,m}f
		&=-\sum_{j=1}^k\Psi_j\,
		\bigl\langle(-\Delta)^{-1}\chi(\sqrt{-\Delta}/\lambda_0)f,\,V\Psi_j\bigr\rangle  \\
		&=\sum_{j=1}^k\Psi_j\,
		\bigl\langle f,\,\chi(\sqrt{-\Delta}/\lambda_0)\Psi_j\bigr\rangle
		=\mathcal{P}_e\,\chi(\sqrt{-\Delta}/\lambda_0)f ,
	\end{align*}
	where we used the self-adjointness of the Fourier multipliers involved and the identity $(-\Delta)^{-1}(V\Psi_j)=-\Psi_j$. Consequently
	\[
		\|\mathcal{N}_{4,m}f\|_{L^\infty}
		\le \biggl(\sum_{j=1}^k\|\Psi_j\|_{L^\infty}\,
		\bigl\|\chi(\sqrt{-\Delta}/\lambda_0)\Psi_j\bigr\|_{L^1}\biggr)\,\|f\|_{L^\infty},
	\]
	and it remains to show that $\Psi_j\in L^\infty(\mathbb{R}^2)$ and $\chi(\sqrt{-\Delta}/\lambda_0)\Psi_j\in L^1(\mathbb{R}^2)$ for each $j$.

	\smallskip
	\noindent\emph{Boundedness of $\Psi_j$.} Since $-\frac{1}{2\pi}\log|x|$ is the fundamental solution of $-\Delta$, the identity $\Psi_j=-(-\Delta)^{-1}(V\Psi_j)$ reads
	\[
		\Psi_j(x)
		=\frac{1}{2\pi}\int_{\mathbb{R}^2} \log|x-y|\, (V\Psi_j)(y)\,dy
		=\frac{1}{2\pi}\int_{\mathbb{R}^2} \log\frac{|x-y|}{\langle x\rangle}\,
		(V\Psi_j)(y)\,dy ,
	\]
	where the second equality uses \eqref{vanishing-con} with $\alpha=0$. Note that $\langle\,\cdot\,\rangle^{7}V\Psi_j\in L^{1}(\mathbb{R}^2)$: indeed, $\Psi_j\in L^2(\mathbb{R}^2)$ and $|V(x)|\lesssim\langle x\rangle^{-8-}$, so $\langle x\rangle^{7}|(V\Psi_j)(x)|\lesssim\langle x\rangle^{-1-}|\Psi_j(x)|$, which is integrable by the Cauchy--Schwarz inequality. Using the first-order moments in \eqref{vanishing-con} as well, one may further subtract the linear part of $y\mapsto\log|x-y|$ at $y=0$ (the remainder is $\mathcal{O}(\langle y\rangle^{2}/\langle x\rangle^{2})$ for $|y|\le |x|/2$); the resulting routine estimates then yield $|\Psi_j(x)|\lesssim\langle x\rangle^{-2}$, and in particular $\Psi_j\in L^\infty(\mathbb{R}^2)$.

	\smallskip
	\noindent\emph{Integrability of $\chi(\sqrt{-\Delta}/\lambda_0)\Psi_j$.} In view of \eqref{vanishing-con}, expanding the Fourier transform of $V\Psi_j$ at $\eta=0$ gives
	\begin{align*}
		\widehat{V\Psi_j}(\eta)
		&=\int_{\mathbb{R}^2} e^{-ix\cdot\eta} (V\Psi_j)(x)\, dx 
		=\frac{1}{2}\int_{\mathbb{R}^2} (-ix\cdot\eta)^2 (V\Psi_j)(x)\,dx
		+\int_{\mathbb{R}^2} \mathcal{R}_{3}(i x\cdot\eta)\, (V\Psi_j)(x)\,dx \\
		&=:\mathbf{P}_j(\eta)+E_j(\eta),
	\end{align*}
	where
	\[
		\mathcal{R}_{3}(t)=e^{-it}-1+it+\frac{t^{2}}{2},\qquad
		|\mathcal{R}_{3}^{(k)}(t)|\lesssim|t|^{3-k}\quad(0\le k\le 3).
	\]
	Write $\mathbf{P}_j(\eta)=-\frac12\sum_{1\le i,\ell\le2}m_{i,\ell}^{(j)}\,\eta_i\eta_l$ with $m_{i,\ell}^{(j)}:=\int_{\mathbb{R}^2}x_ix_\ell\,(V\Psi_j)(x)\,dx$. The condition \eqref{eq:condi_for_eigen-0} shows that 
	\[
		\mathbf{P}_j(\eta)= -\frac{m_{1,1}^{(j)}}{2}\,|\eta|^2 .
	\]
	(Without \eqref{eq:condi_for_eigen-0}, the quotient $\mathbf{P}_j(\eta)/|\eta|^2$ would be a linear combination of the Riesz symbols $\eta_i\eta_l/|\eta|^2$, whose inverse Fourier transforms decay only like $|x|^{-2}$ and fail to be integrable; this is precisely where the condition enters.)
	Since $\Psi_j=-(-\Delta)^{-1}(V\Psi_j)$, we conclude that
	\begin{align*}
		\chi(\sqrt{-\Delta}/\lambda_0)\Psi_j(x)
		&=-\chi(\sqrt{-\Delta}/\lambda_0)(-\Delta)^{-1}(V\Psi_j)(x)\\
		&=\frac{m_{1,1}^{(j)}}{2}\,\mathcal{F}^{-1}\bigl(\chi(|\cdot|/\lambda_0)\bigr)(x)
		-\mathcal{F}^{-1}\Bigl(\frac{E_j(\cdot)\,\chi(|\cdot|/\lambda_0)}{|\cdot|^2}\Bigr)(x).
	\end{align*}
	The first term on the right-hand side belongs to $L^{1}(\mathbb{R}^2)$, since $\chi(|\cdot|/\lambda_0)$ is a Schwartz function.  Note that $\langle\,\cdot\,\rangle^3V\Psi_j\in L^1(\mathbb{R}^2)$.
	Set $a_j(\eta):=E_j(\eta)\,\chi(|\eta|/\lambda_0)\,|\eta|^{-2}$. Then $a_j$ is compactly supported and satisfies $|a_j(\eta)|\lesssim|\eta|$ and $|\partial_\eta^\alpha a_j(\eta)|\lesssim|\eta|^{1-|\alpha|}$ for $|\alpha|\le3$. This implies that $\mathcal{F}^{-1}a_j\in L^{1}(\mathbb{R}^2)$. Thus $\chi(\sqrt{-\Delta}/\lambda_0)\Psi_j\in L^1(\mathbb{R}^2)$ for each $j$, and $\mathcal{N}_{4,m}$ is bounded on $L^\infty(\mathbb{R}^2)$.

	Combining Steps 1 and 2 shows that, under condition \eqref{eq:condi_for_eigen-0}, the operator $\mathcal{W}_{\mathrm{e},\mathrm{sing}}^{\mathrm{L}}=\mathcal{N}_{4,m}+\mathcal{N}_{4,r}$ is bounded on $L^\infty(\mathbb{R}^2)$. This completes the proof of Lemma~\ref{lem:bound of N_4}.
\end{proof}


\section{Negative results: unboundedness of wave operators}\label{sec:unbound}
This section is devoted to the proofs of Propositions~\ref{pro:unbound}
and~\ref{pro:unbound-1}, which establish the negative results in
Theorems~\ref{thm-main result-1} and~\ref{thm-main result-2},
respectively. We prove Proposition~\ref{pro:unbound} in the first
subsection; the proofs of statements~\textup{(i)} and~\textup{(ii)} of
Proposition~\ref{pro:unbound-1} are given in the remaining two
subsections.

\subsection{Proof of Proposition~\ref{pro:unbound}}\label{sec:unb-for-regular}

This subsection is devoted to the proof of Proposition~\ref{pro:unbound}.
We establish the unboundedness by constructing explicit counterexamples:
in the $L^1$ setting, we exhibit a suitably chosen function whose
Fourier transform does not vanish at the origin and whose image under
$\mathcal{W}^{\mathrm{L}}$ fails to be integrable; in the $L^\infty$
setting, we test $\mathcal{W}^{\mathrm{L}}$ against the characteristic
functions of large balls and show that the resulting outputs grow like
$\log\log\mu$ on the annuli $|x|\sim\mu$.

When zero is a regular point of $H$, the low-energy analysis in
Section~\ref{sec:Low-bound} yields the decomposition
\[
\mathcal{W}^{\mathrm{L}}
= \mathcal{W}_{\mathrm{sing}}^{\mathrm{L}}
+ \mathcal{W}_{\mathrm{good}}^{\mathrm{L}},
\]
where $\mathcal{W}_{\mathrm{good}}^{\mathrm{L}}$ is bounded on
$L^p(\mathbb{R}^2)$ for every $1 \le p \le \infty$, and the singular
part is given by
\begin{equation}\label{eq:exp of W-sing}
	\mathcal{W}_{\mathrm{sing}}^{\mathrm{L}}
	= \frac{1}{\pi i} \int_0^{+\infty} \lambda h_{+}(\lambda)^{-1}
	\chi(\lambda/\lambda_0) \, R_0^{+}(\lambda^2) v P v
	\bigl(R_0^+(\lambda^2) - R_0^-(\lambda^2)\bigr) \, d\lambda,
\end{equation}
with
\[
h_{+}(\lambda)=-\frac{\|V\|_{L^1}}{2\pi}\log\lambda+z_0^{+},
\]
where $z_0^{+}\in \mathbb{C}\setminus \mathbb{R}$ is the constant given
by \eqref{eq:def-of-hpm}. In particular,
\begin{equation}\label{eq:dec of inve of h_+}
	\frac{1}{h_{+}(\lambda)}
	=-\frac{2\pi}{\|V\|_{L^1}}\log^{-1}\lambda
	+\mathcal{O}(\log^{-2}\lambda),
	\quad \lambda\in (0,\lambda_0).
\end{equation}
Since $\mathcal{W}_{\mathrm{good}}^{\mathrm{L}}$ (as well as the
high-energy part of $W_{\pm}$) is bounded on every $L^p(\mathbb{R}^2)$,
the wave operators $W_{\pm}$ are unbounded on $L^p(\mathbb{R}^2)$,
$1\le p\le \infty$, if and only if the singular part
$\mathcal{W}_{\mathrm{sing}}^{\mathrm{L}}$ is.
Proposition~\ref{pro:unbound} therefore follows from the next lemma.

\begin{lemma}\label{pro:unb of Wsing}
Assume that $|V(x)|\lesssim \langle x \rangle^{-3-}$. Then the operator
$\mathcal{W}_{\mathrm{sing}}^{\mathrm{L}}$ is unbounded both on
$L^1(\mathbb{R}^2)$ and on $L^\infty(\mathbb{R}^2)$.
\end{lemma}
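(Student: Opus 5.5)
The plan is to work with the factorization $\mathcal{W}_{\mathrm{sing}}^{\mathrm{L}}=\mathcal{I}_{\mathrm{sing}}\circ\mathcal{M}_h$ from Part~I of the proof of Lemma~\ref{lem-W0s-bound}. I also use the Fourier-side formula obtained there, now with $h_+(|\eta|)^{-1}\chi(|\eta|/\lambda_0)$ inserted:
\[
\mathcal{F}\bigl(\mathcal{W}_{\mathrm{sing}}^{\mathrm{L}}f\bigr)(\xi)=\frac{\widehat{|V|}(\xi)}{(2\pi)^2\|V\|_{L^1}}\lim_{\varepsilon\to0^+}\int_{\mathbb{R}^2}\frac{\overline{\widehat{|V|}(\eta)}\,h_+(|\eta|)^{-1}\chi(|\eta|/\lambda_0)\,\hat f(\eta)}{|\xi|^2-|\eta|^2-i\varepsilon}\,d\eta .
\]
The two endpoints are treated separately.

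\emph{$L^1$-unboundedness.} Take the test function $f=\mathcal{F}^{-1}\bigl(\chi(2|\cdot|/\lambda_0)\bigr)$. It is Schwartz, so $f\in L^1$ and $\mathcal{W}_{\mathrm{sing}}^{\mathrm{L}}f\in L^2$ is well defined. If $\mathcal{W}_{\mathrm{sing}}^{\mathrm{L}}f$ were in $L^1$, its Fourier transform would be bounded and continuous. I will show instead that it is unbounded as $\xi\to0$.

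Since $\widehat{|V|}(0)=\|V\|_{L^1}\neq0$ and $\widehat{|V|}$ is $C^1$ near $0$ (because $|V|\lesssim\langle x\rangle^{-3-}$), the integrand near $\eta=0$ equals $\|V\|_{L^1}h_+(|\eta|)^{-1}$ up to an $\mathcal{O}(|\eta|)$ error. I pass to polar coordinates and set $r=|\eta|$, $s=|\xi|\ll\lambda_0$. The imaginary (delta) part is $\mathcal{O}\bigl(s\,|h_+(s)|^{-1}\bigr)\to0$. The principal-value part is
\[
2\pi\|V\|_{L^1}\,\mathrm{p.v.}\int_0^{\lambda_0/4}\frac{r\,h_+(r)^{-1}}{s^2-r^2}\,dr+\mathcal{O}(1).
\]
On $r\in(2s,\lambda_0/4)$ the integrand is $-r^{-1}h_+(r)^{-1}+\mathcal{O}(s^2r^{-3}|\log r|^{-1})$. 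By \eqref{eq:dec of inve of h_+} this integrates to
\[
-\frac{2\pi}{\|V\|_{L^1}}\log\log\frac1s+\mathcal{O}(1).
\]
On $r<2s$, the principal value is $\mathcal{O}(1)$, because $h_+^{-1}$ is slowly varying there: one can write $h_+(r)^{-1}=h_+(s)^{-1}+\mathcal{O}\bigl(|r-s|/(r|\log r|^2)\bigr)$. Hence $|\mathcal{F}(\mathcal{W}_{\mathrm{sing}}^{\mathrm{L}}f)(\xi)|\gtrsim\log\log(1/|\xi|)$, and so $\mathcal{W}_{\mathrm{sing}}^{\mathrm{L}}f\notin L^1$. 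The only justifications needed are the distributional identity for $\mathcal{F}(\mathcal{I}_{\mathrm{sing}}\,\cdot)$ from Part~I, applied to a Schwartz function, and dominated convergence for the error terms.

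\emph{$L^\infty$-unboundedness.} A duality argument does not work here, because the adjoint produces a factor $h_+(|\xi|)^{-1}$ outside the integral, which cancels the $\log$ blow-up. So I test directly against $\Psi_\mu=\mathbf 1_{B(0,\mu)}$ and aim for the asymptotic $|\mathcal{W}_{\mathrm{sing}}^{\mathrm{L}}\Psi_\mu(x)|=\tfrac12\log\log\mu+\mathcal{O}(1)$ on the annulus $\bigl||x|-\mu\bigr|\le\tfrac12$.

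I use the region split from Part~II of the proof of Lemma~\ref{lem-W0s-bound}. For $|x|\ge\tfrac12$, the remainder $\mathcal{I}_1$ is bounded on $L^\infty$ by Remark~\ref{remk:on the diff} together with Lemma~\ref{Lemm: bound of N_1}, since $f_1=h_+^{-1}$ is covered there. The main term is
\[
\mathcal{I}_0\Psi_\mu=\tilde\chi(|x|)\,\mathbf T\Phi_\mu,\qquad \Phi_\mu=|V|*\bigl(\mathcal{M}_h\Psi_\mu\bigr)\ \text{(up to reflection)}.
\]
The delta part of $\mathbf T$ is bounded on $L^\infty$, so everything reduces to the radial Hilbert transform $\mathbf T_{\mathbb H}\Phi_\mu$ evaluated at $t=|x|^2\approx\mu^2$.

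I write $\mathcal{M}_h\Psi_\mu=\Psi_\mu*K_h$. By Lemma~\ref{lem-Fou-multiplier}, $K_h$ has integral $h_+(0)^{-1}=0$ and tail $|K_h(z)|\sim|z|^{-2}\log^{-2}|z|$. For $|x|\approx\mu$, the angular average $A(\rho)=\int_{\mathbb S^1}\Phi_\mu(\sqrt\rho\,\theta)\,d\theta$ is a smoothed jump at $\rho=\mu^2$. The value of $\Phi_\mu$ at a point at distance $d$ inside or outside the circle $|y|=\mu$ is governed by $\int_{|z|\gtrsim d}K_h$, and the sign change comes from the one-dimensional profile of $K_h$ across the edge; this gives a profile of size $\mp c\log^{-1}d$. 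The Hilbert transform at the edge then integrates $\log^{-1}(d)\,d^{-1}\,dd$ over $1\lesssim d\lesssim\mu$, which produces $\log\log\mu$. The constant $\tfrac12$ should come from the leading coefficient $-2\pi/\|V\|_{L^1}$ in \eqref{eq:dec of inve of h_+}, the factor $-1/(4\pi^2)$ in \eqref{rep-for-0-s-3}, and the Jacobian $d\rho=2\sqrt\rho\,d|y|$.

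\emph{Main obstacle.} The hardest step is making this edge computation rigorous uniformly in $\mu$. This means controlling the curvature of the circle over distances $d\le\mu$, the convolution with $|V|$, and the $\mathcal{O}(\log^{-2})$ correction in $h_+^{-1}$. My first approach is to work on the Fourier side in one radial variable. Use $\widehat{\Psi_\mu}(\xi)=2\pi\mu|\xi|^{-1}J_1(\mu|\xi|)$ and the large-argument Hankel asymptotics for $J_1$ and $J_0(|x||\xi|)$. This reduces $\mathcal{W}_{\mathrm{sing}}^{\mathrm{L}}\Psi_\mu(x)$, modulo $\mathcal{O}(1)$ terms, to a one-dimensional oscillatory integral
\[
\int_{\mu^{-1}}^{\lambda_0}\frac{\sin\bigl(\lambda(|x|-\mu)\bigr)+\dots}{\lambda\,h_+(\lambda)}\,d\lambda .
\]
With $\bigl||x|-\mu\bigr|\le\tfrac12$ the phase is essentially trivial, so this integral equals $-\tfrac12\log\log\mu+\mathcal{O}(1)$. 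The estimates of Lemmas~\ref{lem-oscillatory estimate} and~\ref{lem-oscillatory estimate-h} handle the non-resonant terms.
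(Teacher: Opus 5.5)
Your plan follows the paper in substance. You use the same test functions, $f=\mathcal{F}^{-1}(\chi(2|\cdot|/\lambda_0))$ for $L^1$ and $\Psi_\mu=\mathbf{1}_{B(0,\mu)}$ for $L^\infty$, and the same Fourier-side mechanism. The $L^1$ half is fine. Showing $|\mathcal{F}(\mathcal{W}_{\mathrm{sing}}^{\mathrm{L}}f)(\xi)|\gtrsim\log\log(1/|\xi|)$ as $\xi\to0$ is a slightly cleaner version of the paper's evaluation of the divergent integral \eqref{eq-rep-pair} at $\xi=0$. One slip there: the on-shell part is $\mathcal{O}(|h_+(s)|^{-1})$, not $\mathcal{O}(s|h_+(s)|^{-1})$, since $r\,\delta(s^2-r^2)=\tfrac12\delta(r-s)$; it still tends to $0$. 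For $L^\infty$, you replace $|V|$ in the left factor by $\|V\|_{L^1}\delta_0$ and bound $\mathcal{I}_1$ via Remark~\ref{remk:on the diff} and the $f_1=h_+^{-1}$ endpoint case of Lemma~\ref{Lemm: bound of N_1}. This is equivalent to the paper's replacement of $\mathcal{V}_*$ by $\frac{4}{\pi}\Psi_{1/2}$.

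The gap is the decisive $L^\infty$ asymptotic. You only assert it, and the formula you display for it is wrong. Take a leading numerator $\sin(\lambda(|x|-\mu))$ with $\bigl||x|-\mu\bigr|\le\frac12$. Then $\sin(\lambda\delta)/(\lambda h_+(\lambda))=\mathcal{O}(|\log\lambda|^{-1})$ is integrable and the integral is $\mathcal{O}(1)$: a trivial phase kills a sine. That sine is exactly what the on-shell product $J_0(\lambda|x|)J_1(\mu\lambda)$ produces, i.e.\ the delta part you already observed is $L^\infty$-bounded. The $\log\log\mu$ lives in the principal-value part, i.e.\ in $Y_0$. So you must first integrate in $\xi$ against $(|\xi|^2-\lambda^2-i0)^{-1}$, which yields the outgoing kernel $\frac{i}{4}H_0^+(\lambda|x|)$. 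Concretely, \eqref{eq:pairing-spec} and $\widehat{\Psi_\mu}(\xi)=2\pi\mu J_1(\mu|\xi|)/|\xi|$ give
\[
\mathcal{I}_0\Psi_\mu(x)=\frac{i\mu\,\tilde\chi(|x|)}{8\pi}\int_0^{\infty}\frac{\chi(\lambda/\lambda_0)}{h_+(\lambda)}\,J_1(\mu\lambda)\,H_0^+(\lambda|x|)\int_{\mathbb{S}^1}\overline{\mathcal{F}(|V|)(\lambda\omega)}\,d\omega\,d\lambda .
\]
For $\lambda\ge1/\mu$ one has
\[
H_0^+(\lambda|x|)J_1(\mu\lambda)=\frac{i e^{i\lambda\delta}-e^{i\lambda(|x|+\mu)}}{\pi\lambda\sqrt{\mu|x|}}+\dots,\qquad \delta=|x|-\mu .
\]
In $ie^{i\lambda\delta}=i\cos(\lambda\delta)-\sin(\lambda\delta)$, the cosine comes from $Y_0$ and carries the growth. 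Lemma~\ref{lem:est of K-m} gives $\int_{1/\mu}^{\lambda_0}\frac{e^{i\lambda\delta}}{\lambda h_+(\lambda)}\,d\lambda=\frac{2\pi}{\|V\|_{L^1}}\log\log\mu+\mathcal{O}(1)$. After the prefactors, this yields $-\frac12\log\log\mu$.

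You still need to show that the remaining pieces are $\mathcal{O}(1)$:
\begin{itemize}
\item the range $\lambda<1/\mu$, via small-argument bounds;
\item the Hankel and Bessel remainders;
\item the correction $\gamma(\lambda)=\mathcal{O}(\lambda)$;
\item the $e^{i\lambda(|x|+\mu)}$ term.
\end{itemize}
For the last one, Lemma~\ref{lem-oscillatory estimate} does not apply, because the symbol $(\lambda h_+(\lambda))^{-1}$ has $b=-1$, which that lemma excludes. A single integration by parts gives $\mathcal{O}(1/\log\mu)$ instead. This is precisely the content of Lemmas~\ref{lem:est of K-m} and~\ref{lem:asy of K tau}. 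Your edge/Hilbert-transform heuristic is not needed. As stated, it also relies on a lower bound for the kernel of $\mathcal{M}_h$ that Lemma~\ref{lem-Fou-multiplier} does not provide, since that lemma gives only an upper bound.
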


\begin{proof}
\medskip\noindent\emph{$L^1$-unboundedness.}
We argue by contradiction. Suppose that
$\mathcal{W}_{\mathrm{sing}}^{\mathrm{L}}$ were bounded on
$L^1(\mathbb{R}^2)$. Then $\mathcal{W}_{\mathrm{sing}}^{\mathrm{L}} f\in
L^1(\mathbb{R}^2)$ for every $f\in L^1(\mathbb{R}^2)$, so that the
integral of $\mathcal{W}_{\mathrm{sing}}^{\mathrm{L}} f$ over
$\mathbb{R}^2$ would converge absolutely for every such $f$. We will
exhibit a function $f\in L^1(\mathbb{R}^2)$ for which this integral
diverges.

We choose the test function
\[
f = \mathcal{F}^{-1}\bigl(\chi(2|\cdot|/\lambda_0)\bigr),
\]
where $\chi$ is the cut-off function introduced in \eqref{cut-off-fun}.
Since $\chi(2|\cdot|/\lambda_0)\in C_c^{\infty}(\mathbb{R}^2)$, we have
$f\in \mathcal{S}(\mathbb{R}^2)\subset L^1(\mathbb{R}^2)$, and
$\hat{f}=\chi(2|\cdot|/\lambda_0)$ is smooth with $\hat{f}\equiv 1$
near the origin. Below we use the spectral-measure identity
\begin{equation}\label{eq:free spec-measure}
	\frac{\lambda}{\pi i}\bigl(R_0^+(\lambda^2)-R_0^-(\lambda^2)\bigr)g(z)
	= \mathcal{F}^{-1}\bigl(\hat{g} \, d\sigma_\lambda\bigr)(z),
\end{equation}
where $d\sigma_\lambda$ denotes the arc-length measure on the circle
$\lambda \mathbb{S}^1$. Pairing \eqref{eq:free spec-measure} with $|V|$
and applying Fubini's theorem gives, for every $\lambda>0$,
\begin{equation}\label{eq:pairing-spec}
	\int_{\mathbb{R}^2}|V|(y)\,
	\bigl(R_0^+(\lambda^2)-R_0^-(\lambda^2)\bigr)g(y)\,dy
	=\frac{\pi i}{(2\pi)^2}\int_{\mathbb{S}^1}
	\hat{g}(\lambda\omega)\,
	\overline{\mathcal{F}(|V|)(\lambda\omega)}\,d\omega .
\end{equation}
Moreover, since $vPv=\frac{1}{\|V\|_{L^1}}|V|\otimes|V|$, the
definition \eqref{eq:exp of W-sing} can be rewritten as
\begin{align*}
	\mathcal{W}_{\mathrm{sing}}^{\mathrm{L}} f(x)
	= \frac{1}{\pi i\|V\|_{L^1}} \int_0^{+\infty}
	\lambda h_{+}(\lambda)^{-1}\chi(\lambda/\lambda_0)
	\bigl[R_0^{+}(\lambda^2)|V|\bigr](x)
	\int_{\mathbb{R}^2}|V|(y)\,
	\bigl(R_0^+-R_0^-\bigr)(\lambda^2)f(y)\,dy \, d\lambda .
\end{align*}
To compute the integral of $\mathcal{W}_{\mathrm{sing}}^{\mathrm{L}} f$,
note first that
\[
	\int_{\mathbb{R}^2}\bigl[R_0^{+}(\lambda^2)|V|\bigr](x)\,dx
	=\mathcal{F}\bigl(R_0^{+}(\lambda^2)|V|\bigr)(0)
	=-\frac{\|V\|_{L^1}}{\lambda^2},
	\qquad \lambda>0,
\]
by the identity
$\mathcal{F}(R_0^{+}(\lambda^2)|V|)(\xi)
=\frac{\mathcal{F}(|V|)(\xi)}{|\xi|^2-\lambda^2-i0}$.
(Strictly speaking, one first pairs
$\mathcal{W}_{\mathrm{sing}}^{\mathrm{L}} f$ with the Gaussian
$e^{-\varepsilon|x|^2}$, which legitimizes the application of Fubini's
theorem; letting $\varepsilon\to 0+$ and using the assumed
integrability of $\mathcal{W}_{\mathrm{sing}}^{\mathrm{L}} f$ yields
the same limit.) Combining the above identity with
\eqref{eq:pairing-spec} (applied to $g=f$) and the co-area formula
$d\eta=d\sigma_\lambda\,d\lambda$, we obtain
\begin{equation}\label{eq-rep-pair}
	\begin{aligned}
		\int_{\mathbb{R}^2} \mathcal{W}_{\mathrm{sing}}^{\mathrm{L}} f(x) \, dx
		&= -\frac{1}{(2\pi)^2}
		\int_{0}^{+\infty}
		\frac{\chi(\lambda/\lambda_0)}{\lambda^2 h_{+}(\lambda)}
		\int_{\lambda\mathbb{S}^1}\hat{f}(\eta)
		\overline{\mathcal{F}(|V|)(\eta)} \, d\sigma_\lambda(\eta)
		\, d\lambda \\
		&= -\frac{1}{(2\pi)^2}
		\int_{\mathbb{R}^2}
		\frac{\overline{\mathcal{F}(|V|)(\eta)} \;
			h_{+}(|\eta|)^{-1} \chi(2|\eta|/\lambda_0)}{|\eta|^2} \, d\eta ,
	\end{aligned}
\end{equation}
where in the second equality we used
$\hat{f}=\chi(2|\cdot|/\lambda_0)$ and the fact that
$\chi(\cdot/\lambda_0)\equiv 1$ on the support of
$\chi(2|\cdot|/\lambda_0)$.

The right-hand side of \eqref{eq-rep-pair} diverges obviously by
\eqref{eq:dec of inve of h_+}. Hence cannot, this
contradiction proves the $L^1$-unboundedness of
$\mathcal{W}_{\mathrm{sing}}^{\mathrm{L}}$.

\medskip\noindent\emph{$L^\infty$-unboundedness.}
We next show that $\mathcal{W}_{\mathrm{sing}}^{\mathrm{L}}$ is
unbounded on $L^\infty(\mathbb{R}^2)$. To this end, we test the
operator against the characteristic functions
$\Psi_{\mu}:=\mathbf{1}_{\mathbf{B}(0,\mu)}$ of the balls
$\mathbf{B}(0,\mu)$, and prove that
\begin{equation}\label{eq:log incr of wsing}
	\left|\mathcal{W}_{\mathrm{sing}}^{\mathrm{L}}\Psi_{\mu}(x)\right|
	=\frac{1}{2}\log\log\mu+\mathcal{O}(1)
	\quad \text{for } |x|\in [\mu-1/2, \mu+1/2] \text{ and } \mu\gg 1.
\end{equation}
Since $\left\|\Psi_{\mu}\right\|_{L^\infty}=1$ for every $\mu$, this
yields the unboundedness of $\mathcal{W}_{\mathrm{sing}}^{\mathrm{L}}$
on $L^\infty(\mathbb{R}^2)$.

Set
\[
\mathcal{V} := |V|,\qquad
\mathcal{V}_{*} := \frac{|V|}{\|V\|_{L^1}}.
\]
Rewriting \eqref{eq:exp of W-sing} by means of the kernel
$vPv=\frac{1}{\|V\|_{L^1}}|V|\otimes|V|$ and using the algebraic
identity
\[
\langle\cdot, \mathcal{V}\rangle \mathcal{V}_{*}
=\langle\cdot, \mathcal{V}\rangle\left(\mathcal{V}_{*}
-\frac{4\Psi_{1/2}}{\pi}\right)
+\langle\cdot, \mathcal{V}\rangle \frac{4\Psi_{1/2}}{\pi},
\]
we decompose
$\mathcal{W}_{\mathrm{sing}}^{\mathrm{L}}
=\mathcal{W}_{\mathrm{sing}}^{\mathrm{L},0}
+\mathcal{W}_{\mathrm{sing}}^{\mathrm{L},1}$, where
\begin{align*}
	\mathcal{W}_{\mathrm{sing}}^{\mathrm{L},0}f(x)
	&= \frac{1}{\pi i}  \int_0^{+\infty} \lambda h_{+}(\lambda)^{-1}
	\chi(\lambda/\lambda_0) \left[R_0^{+}(\lambda^2)
	\left(\mathcal{V}_{*}-\frac{4\Psi_{1/2}}{\pi}\right)\right](x)\\
	&\qquad\qquad \times\int_{\mathbb{R}^2}\mathcal{V}(y)
	\bigl(R_0^+(\lambda^2) - R_0^-(\lambda^2)\bigr)f(y) \, dy\, d\lambda,\\[2mm]
	\mathcal{W}_{\mathrm{sing}}^{\mathrm{L},1}f(x)
	&= \frac{4}{\pi^2 i}  \int_0^{+\infty} \lambda h_{+}(\lambda)^{-1}
	\chi(\lambda/\lambda_0) \bigl[R_0^{+}(\lambda^2)
	\Psi_{1/2}\bigr](x)\\
	&\qquad\qquad \times\int_{\mathbb{R}^2}\mathcal{V}(y)
	\bigl(R_0^+(\lambda^2) - R_0^-(\lambda^2)\bigr)f(y) \, dy \, d\lambda .
\end{align*}
The point of this decomposition is that the density in the first piece
has mean zero,
\[
\int_{\mathbb{R}^2} \Bigl(\mathcal{V}_{*}(y)
-\frac{4\Psi_{1/2}(y)}{\pi}\Bigr)\,dy=0,
\]
which provides the extra cancellation needed for boundedness, whereas
the second piece carries the logarithmic growth. Indeed, comparing
$\mathcal{W}_{\mathrm{sing}}^{\mathrm{L},0}$ with \eqref{eq-rep-W1} and
following the proof of Lemma~\ref{Lemm: bound of N_1}, we find that
$\mathcal{W}_{\mathrm{sing}}^{\mathrm{L},0}$ is bounded on
$L^\infty(\mathbb{R}^2)$; in particular,
\begin{equation}\label{eq:est for Wsing0}
	\left\|\mathcal{W}_{\mathrm{sing}}^{\mathrm{L},0}\Psi_\mu\right\|_{L^\infty}
	\lesssim 1
\end{equation}
uniformly in $\mu$. Consequently, to establish
\eqref{eq:log incr of wsing} it suffices to prove the same asymptotics
with $\mathcal{W}_{\mathrm{sing}}^{\mathrm{L}}$ replaced by
$\mathcal{W}_{\mathrm{sing}}^{\mathrm{L},1}$.

For this purpose, we derive a more explicit representation of
$\mathcal{W}_{\mathrm{sing}}^{\mathrm{L},1}\Psi_\mu$. Recall that, with
our normalization
$\hat{g}(\xi)=\int_{\mathbb{R}^2}e^{-ix\cdot\xi}g(x)\,dx$ of the
Fourier transform,
\[
	\mathcal{F}(\Psi_\mu)(\xi)
	=2\pi\mu\,\frac{J_1(\mu|\xi|)}{|\xi|},
\]
where $J_1$ denotes the Bessel function of the first kind (see, e.g.,
\cite[p.~578]{graf}). Combining \eqref{eq:pairing-spec} (applied to
$g=\Psi_\mu$) with this formula, we obtain
\begin{equation}\label{eq:rp  of Wsing-1-1-1}
	\begin{aligned}
		\mathcal{W}_{\mathrm{sing}}^{\mathrm{L},1}\Psi_\mu(x)
		= \frac{2\mu}{\pi^2}  \int_0^{+\infty}  h_{+}(\lambda)^{-1}
		J_1(\mu\lambda)\chi(\lambda/\lambda_0)
		\bigl[R_0^{+}(\lambda^2)\Psi_{1/2}\bigr](x)  \,
		\int_{\mathbb{S}^1}
		\overline{\mathcal{F}(|V|)(\lambda\omega)}\,d\omega \, d\lambda.
	\end{aligned}
\end{equation}
Recalling also the representation
\[
R_0^{+}(\lambda^2)(x,z)=\frac{i}{4}H_0^+(\lambda|x-z|)
\]
of the free resolvent kernel in terms of the Hankel function $H_0^+$
of the first kind, we may rewrite \eqref{eq:rp  of Wsing-1-1-1} as
\begin{equation}\label{eq:rp  of Wsing-1-1}
	\mathcal{W}_{\mathrm{sing}}^{\mathrm{L},1}\Psi_\mu(x)
	=\frac{i\mu}{2\pi^2}   \int_{\mathbb{R}^2}K_\mu(|z|)
	\Psi_{1/2}(x-z)\,dz,
\end{equation}
where the radial kernel $K_\mu(|z|)$ is given by
\begin{equation*}
	K_\mu(|z|)
	=
	\int_0^{+\infty}  h_{+}(\lambda)^{-1} J_1(\mu\lambda)
	H_0^+(\lambda|z|)\chi(\lambda/\lambda_0)\,
	\int_{\mathbb{S}^1}\overline{\mathcal{F}(|V|)(\lambda\omega)}
	\,d\omega \,d\lambda .
\end{equation*}
In this way, the asymptotics of
$\mathcal{W}_{\mathrm{sing}}^{\mathrm{L},1}\Psi_\mu$ are reduced to
those of the one-dimensional oscillatory integral $K_\mu$.

To analyze $K_\mu$, we first expand the angular integral in the
potential. The assumption $|V(x)|\lesssim \langle x \rangle^{-3-}$
implies that $\mathcal{F}(|V|)$ is Lipschitz continuous, and hence
\begin{equation}\label{eq:est of remaind gamma}
\int_{\mathbb{S}^1}\overline{\mathcal{F}(|V|)(\lambda\omega)}\,d\omega
	=2\pi\|V\|_{L^1}+\gamma(\lambda), \quad
	\gamma(\lambda)=\mathcal{O}(\lambda) \quad \text{for } \lambda
	\in (0,\lambda_0).
\end{equation}
Accordingly, we decompose
$K_\mu(|z|)=K_\mu^{0}(|z|)+K_\mu^{1}(|z|)$ with
	\begin{align} \label{eq:def of Kmu}
		K_\mu^{0}(|z|)
		&= 2\pi\|V\|_{L^1}
		\int_0^{+\infty} h_{+}(\lambda)^{-1} J_1(\mu\lambda)
		H_0^+(\lambda|z|)\chi(\lambda/\lambda_0)\,d\lambda,
		\\[2mm] \nonumber
		K_\mu^{1}(|z|)
		&= \int_0^{+\infty} h_{+}(\lambda)^{-1} J_1(\mu\lambda)
		H_0^+(\lambda|z|)\gamma(\lambda)
		\chi(\lambda/\lambda_0)\,d\lambda.
	\end{align}
The remainder term $K_\mu^{1}$ is negligible: in light of
\eqref{eq:est of remaind gamma} and the uniform bounds
\[
H_{0}^{+}(t)=\mathcal{O}(t^{-1/2}),\qquad
J_{1}(t)=\mathcal{O}(t^{-1/2}), \qquad t>0
\]
(for small $t$ one uses $H_{0}^{+}(t)=\mathcal{O}(\log t)$ and
$J_1(t)=\mathcal{O}(t)$), one has
\[
	\bigl|K_\mu^{1}(|z|)\bigr|
	\lesssim \frac{1}{\sqrt{\mu|z|}}  \int_0^{\lambda_0}
	\frac{|\gamma(\lambda)|}{\lambda\,|h_{+}(\lambda)|}\,d\lambda
	\lesssim\frac{1}{\mu}\int_0^{\lambda_0}
	\frac{d\lambda}{|h_{+}(\lambda)|}
	=\mathcal{O}\left(\frac{1}{\mu}\right)
\]
for $|z|-\mu\in [-1,1]$ and $\mu\gg 1$, since
$|h_{+}(\lambda)|^{-1}=\mathcal{O}(|\log\lambda|^{-1})$ is integrable
near the origin. For the main term we appeal to
Lemma~\ref{lem:asy of K tau} below: note that $|x|-\mu\in[-1/2, 1/2]$
and $|x-z|\le 1/2$ imply $|z|-\mu\in[-1,1]$, so inserting
\eqref{eq:est for K0} and the above estimate into
\eqref{eq:rp  of Wsing-1-1} yields, for such $x$ and $\mu\gg 1$,
\begin{equation}
	\begin{aligned}
		\mathcal{W}_{\mathrm{sing}}^{\mathrm{L},1}\Psi_\mu(x)
		&=\frac{i\mu}{2\pi^2}\left(4\pi i\,\frac{\log\log\mu}{\mu}
		+\mathcal{O}\left(\frac{1}{\mu}\right)\right)
		\int_{\mathbb{R}^2}\Psi_{1/2}(x-z)\,dz \\[1mm]
		&= -\frac{2}{\pi}\log\log\mu\int_{\mathbb{R}^2}\Psi_{1/2}(z)\,dz
		+\mathcal{O}(1)
		= -\frac{1}{2}\log\log\mu+\mathcal{O}(1),
	\end{aligned}
\end{equation}
since
$\int_{\mathbb{R}^2}\Psi_{1/2}(z)\,dz=|\mathbf{B}(0,1/2)|=\frac{\pi}{4}$.
In particular,
$\left|\mathcal{W}_{\mathrm{sing}}^{\mathrm{L},1}\Psi_\mu(x)\right|
=\frac{1}{2}\log\log\mu+\mathcal{O}(1)$ on the annulus
$|x|\in[\mu-1/2,\mu+1/2]$, which together with
\eqref{eq:est for Wsing0} implies \eqref{eq:log incr of wsing} and
completes the proof.
\end{proof}

It remains to establish the asymptotic behavior of $K_\mu^{0}(|z|)$ as
$\mu\to\infty$, which was used in the proof above. We begin with a
simple oscillatory integral estimate that provides the main
logarithmic term.

\begin{lemma}\label{lem:est of K-m}
	Uniformly in $\delta\in [-1,1]$, one has
	\[
	\int_{1/\mu}^{\lambda_0}\frac{e^{i\lambda\delta}}
	{\lambda h_{+}(\lambda)}\,d\lambda
	=\frac{2\pi}{\|V\|_{L^1}} \log\log\mu+\mathcal{O}(1),
	\qquad \mu\gg 1,
	\]
	where $\lambda_0\in (0,1/2)$ is the fixed cut-off parameter.
\end{lemma}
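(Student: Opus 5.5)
We first remove the oscillation. Since $|\lambda\delta|\le\lambda_0<1/2$ on the range of integration, we write $e^{i\lambda\delta}=1+(e^{i\lambda\delta}-1)$ with $|e^{i\lambda\delta}-1|\le\lambda|\delta|\le\lambda$, uniformly in $\delta\in[-1,1]$. The error piece is then bounded by
\[
\int_{0}^{\lambda_0}\frac{d\lambda}{|h_+(\lambda)|},
\]
which is finite because $|h_+(\lambda)|^{-1}=\mathcal{O}(|\log\lambda|^{-1})$ is integrable near $0$. Here $h_+$ has no zero on $(0,\lambda_0)$, since $z_0^+\notin\mathbb{R}$ forces $|h_+(\lambda)|\ge|\operatorname{Im}z_0^+|>0$. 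This contribution is therefore $\mathcal{O}(1)$, independently of $\mu$ and $\delta$.

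It remains to compute
\[
\int_{1/\mu}^{\lambda_0}\frac{d\lambda}{\lambda h_+(\lambda)}.
\]
Using \eqref{eq:dec of inve of h_+}, we split $h_+(\lambda)^{-1}=-\frac{2\pi}{\|V\|_{L^1}}(\log\lambda)^{-1}+r(\lambda)$ with $|r(\lambda)|\lesssim\log^{-2}\lambda$ on $(0,\lambda_0)$. The remainder then contributes at most
\[
\int_0^{\lambda_0}\frac{d\lambda}{\lambda\log^2\lambda}=\frac{1}{|\log\lambda_0|}<\infty,
\]
again uniformly in $\mu$.

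For the main term we substitute $s=-\log\lambda$, so that $d\lambda/\lambda=-ds$:
\[
-\frac{2\pi}{\|V\|_{L^1}}\int_{1/\mu}^{\lambda_0}\frac{d\lambda}{\lambda\log\lambda}
=\frac{2\pi}{\|V\|_{L^1}}\int_{|\log\lambda_0|}^{\log\mu}\frac{ds}{s}
=\frac{2\pi}{\|V\|_{L^1}}\bigl(\log\log\mu-\log|\log\lambda_0|\bigr).
\]
The constant $\log|\log\lambda_0|$ is absorbed into $\mathcal{O}(1)$, which gives the stated asymptotics.

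The only step needing care is the sign bookkeeping in \eqref{eq:dec of inve of h_+}: both $-(\log\lambda)^{-1}$ and $1/\lambda$ are positive on $(0,\lambda_0)$, so the leading coefficient comes out as $+\frac{2\pi}{\|V\|_{L^1}}$. One should also note that the $\mathcal{O}(\log^{-2}\lambda)$ bound in \eqref{eq:dec of inve of h_+} is a pointwise bound valid on all of $(0,\lambda_0)$; this follows from the explicit form $h_+(\lambda)=b_0\log\lambda+z_0^+$ together with $|h_+|\ge|\operatorname{Im}z_0^+|$. No derivative estimates are needed anywhere in the argument.
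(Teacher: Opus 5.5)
Your proof is correct and follows essentially the same route as the paper: split $h_+(\lambda)^{-1}$ via \eqref{eq:dec of inve of h_+}, replace $e^{i\lambda\delta}$ by $1$ at a cost of $\mathcal{O}(\lambda)$, and integrate $(\lambda\log\lambda)^{-1}$ and $(\lambda\log^2\lambda)^{-1}$ explicitly. The only difference is the order of the two reductions. You remove the oscillation first, for the full $h_+^{-1}$, using $|h_+|\ge|\operatorname{Im}z_0^+|$. The paper does this only inside the leading $(\log\lambda)^{-1}$ term.
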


\begin{proof}
	Recall that
	$h_{+}(\lambda)=-\frac{\|V\|_{L^1}}{2\pi}\log\lambda+z_0^{+}$
	with $z_0^{+}\in \mathbb{C}\setminus \mathbb{R}$. Using the
	expansion \eqref{eq:dec of inve of h_+}, we split the integral
	accordingly,
	\[
	\int_{1/\mu}^{\lambda_0}\frac{e^{i\lambda\delta}}
	{\lambda h_{+}(\lambda)}\,d\lambda
	=-\frac{2\pi}{\|V\|_{L^1}}\int_{1/\mu}^{\lambda_0}
	\frac{e^{i\lambda\delta}}{\lambda \log\lambda}\,d\lambda
	+ \mathcal{O}\left(\int_{1/\mu}^{\lambda_0}
	\frac{1}{\lambda \log^2\lambda}\,d\lambda\right)
	=:I_1+I_2.
	\]
	For $I_1$, since $e^{i\lambda\delta}=1+\mathcal{O}(\lambda)$
	uniformly for $\delta\in [-1,1]$, and since
	$|\log\lambda|^{-1}$ is bounded on $[1/\mu,\lambda_0]$, we obtain
	\[
	I_1=-\frac{2\pi}{\|V\|_{L^1}}\int_{1/\mu}^{\lambda_0}
	\frac{1}{\lambda \log\lambda}\,d\lambda
	+ \mathcal{O}(1).
	\]
	The antiderivative of $(\lambda\log\lambda)^{-1}$ is
	$\log(-\log\lambda)$ on $(0,\lambda_0)$, so a direct computation
	gives
	\[
	\int_{1/\mu}^{\lambda_0}\frac{1}{\lambda\log\lambda}
	\,d\lambda
	=\log(-\log\lambda_0)-\log(\log\mu)
	=-\log\log\mu+\mathcal{O}(1),
	\]
	and hence
	\[
	I_1=\frac{2\pi}{\|V\|_{L^1}}\log\log\mu+\mathcal{O}(1),
	\qquad \mu\gg 1.
	\]
	For $I_2$, the integrand $(\lambda\log^2\lambda)^{-1}$ has the
	antiderivative $-\log^{-1}\lambda$, bounded on $(0,\lambda_0)$;
	hence $I_2=\mathcal{O}(1)$ for $\mu\gg 1$.
	Combining the estimates for $I_1$ and $I_2$, we obtain the desired
	conclusion.
\end{proof}

We now turn to the asymptotics of $K_\mu^{0}(|z|)$ itself.

\begin{lemma}\label{lem:asy of K tau}
	Let $K_\mu^{0}(|z|)$ be given by \eqref{eq:def of Kmu}. Under the
	assumption that $|V(x)|\lesssim \langle x \rangle^{-3-}$, we have
	\begin{equation}\label{eq:est for K0}
		K_{\mu}^{0}(|z|)=4\pi i\,\frac{\log\log\mu}{\mu}
		+\mathcal{O}\left(\frac{1}{\mu}\right),
		\quad |z|-\mu\in [-1,1], ~\mu\gg 1.
	\end{equation}
\end{lemma}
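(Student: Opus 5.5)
The plan is to write $K_\mu^0(|z|)=2\pi\|V\|_{L^1}\int_0^{\lambda_0}h_+(\lambda)^{-1}J_1(\mu\lambda)H_0^+(\lambda|z|)\chi(\lambda/\lambda_0)\,d\lambda$ and split the $\lambda$-integral at $\lambda=1/\mu$. Since $|z|\in[\mu-1,\mu+1]$, both Bessel arguments $\mu\lambda$ and $\lambda|z|$ are comparable, so the regime $\lambda\lesssim1/\mu$ is harmless and the regime $\lambda\gtrsim 1/\mu$ uses the large-argument asymptotics.

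\emph{Low frequencies $\lambda\le 1/\mu$.} Here $J_1(\mu\lambda)=\mathcal{O}(\mu\lambda)$ and $|H_0^+(\lambda|z|)|\lesssim 1+|\log(\lambda\mu)|$, while $|h_+(\lambda)|^{-1}\lesssim(\log\mu)^{-1}$. Hence this piece is
\[
\lesssim \frac{1}{\log\mu}\int_0^{1/\mu}\mu\lambda\bigl(1+|\log(\mu\lambda)|\bigr)\,d\lambda\lesssim\frac{1}{\mu\log\mu}=\mathcal{O}(1/\mu).
\]

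\emph{High frequencies $\lambda\ge1/\mu$.} I will use \eqref{eq:asy-Hanel-1} for $H_0^+$ and the analogous expansion
\[
J_1(t)=\sqrt{\tfrac{2}{\pi t}}\cos\!\bigl(t-\tfrac{3\pi}{4}\bigr)+\mathcal{O}(t^{-3/2}).
\]
Writing $\cos$ as a sum of exponentials, the product of the leading terms is
\[
\frac{1}{\pi\lambda\sqrt{\mu|z|}}\Bigl(e^{i\lambda(|z|+\mu)}e^{-i\pi}+e^{i\lambda(|z|-\mu)}e^{i\pi/2}\Bigr),
\]
since $e^{-i\pi/4}e^{\mp i3\pi/4}$ equals $e^{-i\pi}$ or $e^{i\pi/2}$. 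The phase $e^{i\lambda(|z|+\mu)}$ oscillates at frequency $\sim\mu$. One integration by parts against $\chi(\lambda/\lambda_0)/(\lambda h_+(\lambda))$ on $[1/\mu,\lambda_0]$ yields a boundary term $\mathcal{O}(\mu^{-1}\cdot\mu/\log\mu)$ plus $\mu^{-1}\int\lambda^{-2}|\log\lambda|^{-1}d\lambda=\mathcal{O}(1/\log\mu)$. Multiplied by the prefactor $(\mu|z|)^{-1/2}\sim\mu^{-1}$, this gives $\mathcal{O}(1/\mu)$. (One could alternatively smooth the cutoff at $1/\mu$; I would do that to avoid delicate boundary terms.) The slowly oscillating term with $\delta=|z|-\mu\in[-1,1]$ is exactly what Lemma~\ref{lem:est of K-m} handles. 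It contributes
\[
2\pi\|V\|_{L^1}\cdot\frac{i}{\pi\sqrt{\mu|z|}}\left(\frac{2\pi}{\|V\|_{L^1}}\log\log\mu+\mathcal{O}(1)\right)=\frac{4\pi i\log\log\mu}{\mu}+\mathcal{O}\!\left(\frac{1}{\mu}\right),
\]
using $(\mu|z|)^{-1/2}=\mu^{-1}(1+\mathcal{O}(\mu^{-1}))$ and the fact that the cutoff $\chi(\lambda/\lambda_0)$ differs from $1$ only on $[\lambda_0/2,\lambda_0]$, contributing $\mathcal{O}(1)$ inside the bracket.

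\emph{Error terms.} The corrections $\mathcal{O}((\mu\lambda)^{-3/2})\cdot(\lambda|z|)^{-1/2}$ and similar cross terms are bounded by
\[
\mu^{-2}\int_{1/\mu}^{\lambda_0}\lambda^{-2}|\log\lambda|^{-1}\,d\lambda\lesssim\mu^{-1}(\log\mu)^{-1}.
\]
So all remainders are $\mathcal{O}(1/\mu)$, uniformly in $|z|-\mu\in[-1,1]$.

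The main obstacle is bookkeeping the sharp cutoff at $\lambda=1/\mu$ in the fast-phase piece and near the transition where the Bessel asymptotics become valid. I would handle this with a smooth partition $\chi(\mu\lambda)+\tilde\chi(\mu\lambda)$, noting that Lemma~\ref{lem:est of K-m} changes only by $\mathcal{O}(1)$ when the lower limit is smoothed.
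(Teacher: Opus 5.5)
Your proposal is correct and follows essentially the paper's route: you split at $\lambda=1/\mu$ and use the large-argument Bessel/Hankel asymptotics on $[1/\mu,\lambda_0]$. There the slow phase $e^{i\lambda(|z|-\mu)}$ is handled by Lemma~\ref{lem:est of K-m}, the fast phase $e^{i\lambda(|z|+\mu)}$ by one integration by parts, and the remainders by $\mu^{-2}\int_{1/\mu}^{\lambda_0}\lambda^{-2}|\log\lambda|^{-1}\,d\lambda$. The only difference is on $(0,1/\mu]$: you bound absolutely via $|\log(\lambda|z|)|\lesssim 1+|\log(\mu\lambda)|$ and $|h_+(\lambda)|^{-1}\lesssim(\log\mu)^{-1}$, whereas the paper expands $H_0^+$ and $h_+^{-1}$ and uses a cancellation between $\log\lambda$ and $\log|z|$. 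Your version is simpler and gives the same $\mathcal{O}\bigl(1/(\mu\log\mu)\bigr)$.
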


\begin{proof}
	We split the $\lambda$-integral defining $K_\mu^{0}$ at the
	transition scale $\lambda=\mu^{-1}$, where $\mu\lambda$ passes
	from the small-argument regime of $J_1$ into the oscillatory
	regime:
	\begin{equation}\label{eq:dec of K0}
		\begin{aligned}
			K_\mu^{0}(|z|)
			&=2\pi\|V\|_{L^1} \left(\int_{0}^{\frac{1}{\mu}}
			+ \int_{1/\mu}^{\lambda_0}\right) h_{+}(\lambda)^{-1}
			J_1(\mu\lambda)
			H_0^+(\lambda|z|)\chi(\lambda/\lambda_0)\,d\lambda
			\\[1mm]
			&=:K_{\mu}^{0,r}(|z|)+K_{\mu}^{0,m}(|z|).
		\end{aligned}
	\end{equation}

	\medskip\noindent\emph{Part I: the good piece
	$K_\mu^{0,r}(|z|)$.}
	On this interval $\mu\lambda\le 1$; since $|z|-\mu\in[-1,1]$, this
	together with $\lambda\in (0,\lambda_0)$ implies
	$\lambda|z| \le \frac{3}{2}$ for $\mu$ sufficiently large, so that
	both special functions are in their small-argument regimes.
	Indeed, by the asymptotics of the Hankel function at the origin
	(see \eqref{eq:asy-Hanel-0}), one has
	\[
	H_0^+(\lambda|z|)=\frac{2i}{\pi}\log(\lambda|z|)+\mathcal{O}(1),
	\]
	while, for $\mu\lambda\le 1$, it follows from \cite{Abra} that
	\[
	J_1(\mu\lambda)=\frac{\mu\lambda}{2}+\mathcal{O}(\mu^3\lambda^3).
	\]
	Substituting these expansions into the integral, we obtain
	\begin{equation}\label{eq:dec of Km}
		\begin{aligned}
			K_{\mu}^{0,r}(|z|)
			&=2i\mu\|V\|_{L^1} \int_{0}^{\frac{1}{\mu}}
			\frac{\lambda\big(\log\lambda+\log|z|\big)}{h_{+}(\lambda)}
			\chi(\lambda/\lambda_0)\,d\lambda
			+ \mu\int_{0}^{\frac{1}{\mu}}
			\frac{\lambda\mathcal{O}(1)}{h_{+}(\lambda)}\,d\lambda \\[1mm]
			&\qquad +\int_{0}^{\frac{1}{\mu}}
			\frac{\mathcal{O}(\mu^3\lambda^3)\left[\log(\lambda|z|)
			+\mathcal{O}(1)\right]}
			{h_{+}(\lambda)}\,d\lambda.
		\end{aligned}
	\end{equation}

	We first treat the leading term. Note that
	$\chi(\lambda/\lambda_0)\equiv 1$ on $(0,1/\mu)$ for $\mu$ large.
	Using \eqref{eq:dec of inve of h_+}, we split
	\begin{align*}
		&\int_{0}^{\frac{1}{\mu}}
		\frac{\lambda\big(\log\lambda+\log|z|\big)}{h_{+}(\lambda)}
		\,d\lambda
		=-\frac{2\pi}{\|V\|_{L^1}}\left(\int_{0}^{\frac{1}{\mu}}\lambda\,d\lambda
		+\log|z|\int_{0}^{\frac{1}{\mu}}
		\frac{\lambda}{\log\lambda}\,d\lambda\right)\\
		&\qquad+ \mathcal{O}\left(\int_{0}^{\frac{1}{\mu}}
		\frac{\lambda}{|\log\lambda|}\,d\lambda\right)
		+|\log|z||\,\mathcal{O}\left(\int_{0}^{\frac{1}{\mu}}
		\frac{\lambda}{\log^2\lambda}\,d\lambda\right).
	\end{align*}
	Since
	\[
	\int_{0}^{\frac{1}{\mu}}\frac{\lambda}{\log\lambda}\,d\lambda
	=-\frac{1}{2\mu^2\log\mu}
	+\mathcal{O}\left(\frac{1}{\mu^2\log^2\mu}\right),
	\quad \mu\gg 1,
	\]
	and since $|z|-\mu\in[-1,1]$ with $\mu\gg1$ implies
	$\log|z|=\log\mu+\mathcal{O}(1)$, the first two terms on the
	right-hand side combine into
	\[
	-\frac{2\pi}{\|V\|_{L^1}}\left(\frac{1}{2\mu^2}
	-\frac{\log|z|}{2\mu^2\log\mu}\right)
	+\mathcal{O}\left(\frac{1}{\mu^2\log\mu}\right)
	=\mathcal{O}\left(\frac{1}{\mu^2\log\mu}\right).
	\]
	The remaining two error terms are of the same order: a simple
	computation yields
	\[
	\mathcal{O}\left(\int_{0}^{\frac{1}{\mu}}
	\frac{\lambda}{|\log\lambda|}\,d\lambda\right)
	+|\log|z||\,\mathcal{O}\left(\int_{0}^{\frac{1}{\mu}}
	\frac{\lambda}{\log^2\lambda}\,d\lambda\right)
	=\mathcal{O}\left(\frac{1}{\mu^2\log\mu}\right).
	\]
	These two estimates show that
	\begin{equation}\label{eq:est for first term of km}
		2i\mu\|V\|_{L^1}\int_{0}^{\frac{1}{\mu}}
		\frac{\lambda\big(\log\lambda+\log|z|\big)}{h_{+}(\lambda)}
		\chi(\lambda/\lambda_0)\,d\lambda
		=\mathcal{O}\left(\frac{1}{\mu\log\mu}\right).
	\end{equation}
	Similarly, since
	$|h_{+}(\lambda)|^{-1}=\mathcal{O}(|\log\lambda|^{-1})$, the
	remaining two terms in \eqref{eq:dec of Km} obey
	\[
	\mu\int_{0}^{\frac{1}{\mu}}
	\frac{\lambda\mathcal{O}(1)}{h_{+}(\lambda)}\,d\lambda
	=\mathcal{O}\left(\frac{\mu}{\log\mu}\int_0^{\frac{1}{\mu}}
	\lambda\,d\lambda\right)
	=\mathcal{O}\left(\frac{1}{\mu\log\mu}\right)
	\]
	and, noting that $|\log(\lambda|z|)|\lesssim |\log\lambda|$ on
	$(0,1/\mu)$,
	\[
	\int_{0}^{\frac{1}{\mu}}
	\frac{\mathcal{O}(\mu^3\lambda^3)\left[\log(\lambda|z|)
	+\mathcal{O}(1)\right]}{h_{+}(\lambda)}\,d\lambda
	=\mathcal{O}\left(\mu^3\int_{0}^{\frac{1}{\mu}}
	\lambda^3\,d\lambda\right)
	=\mathcal{O}\left(\frac{1}{\mu}\right).
	\]
	This, together with \eqref{eq:dec of Km} and
	\eqref{eq:est for first term of km}, implies that
	\begin{equation}\label{eq:est of Km}
		K_{\mu}^{0,r}(|z|)
		=\mathcal{O}\left(\frac{1}{\mu\log\mu}\right)
		\quad	\text{for $|z|-\mu\in[-1,1]$ and $\mu\gg 1$}.
	\end{equation}

	\medskip\noindent\emph{Part II: the leading piece
	$K_\mu^{0,m}(|z|)$.}
	Recall from \eqref{eq:dec of K0} that
	\[
	K_\mu^{0,m}(|z|)=2\pi\|V\|_{L^1} \int_{1/\mu}^{\lambda_0}
	h_{+}(\lambda)^{-1} J_1(\mu\lambda)
	H_0^+(\lambda|z|)\chi(\lambda/\lambda_0)\,d\lambda.
	\]
	Here both $\mu\lambda\ge 1$ and $\lambda|z|\gtrsim 1$, so we may
	use the large-argument asymptotic expansions
	\begin{align}
		H_{0}^{+}(t)&=\sqrt{\frac{2}{\pi t}}\,e^{i(t-\pi/4)}
		+\mathcal{O}(t^{-3/2}),
		\label{eq:Hankel-asymp}\\
		J_{1}(t)&=\sqrt{\frac{2}{\pi t}}\cos\Bigl(t-\frac{3\pi}{4}\Bigr)
		+\mathcal{O}(t^{-3/2}).\label{eq:Bessel-asymp}
	\end{align}
	Substituting \eqref{eq:Hankel-asymp}--\eqref{eq:Bessel-asymp} into
	the integral and using
	\[
	e^{-i\pi/4}\cos\Bigl(\lambda\mu-\frac{3\pi}{4}\Bigr)
	=\frac{1}{2}\left(ie^{-i\lambda\mu}-e^{i\lambda\mu}\right),
	\]
	we may decompose $K_{\mu}^{0,m}(|z|)$, for $|z|-\mu\in [-1,1]$, as
	\begin{align*}
		K_{\mu}^{0,m}(|z|)
		={}&
		\frac{2i\|V\|_{L^1}}{\sqrt{\mu|z|}}\int_{1/\mu}^{\lambda_0}
		e^{i\lambda(|z|-\mu)} \frac{\chi(\lambda/\lambda_0)}
		{\lambda h_{+}(\lambda)}\,d\lambda
		-\frac{2\|V\|_{L^1}}{\sqrt{\mu|z|}} \int_{1/\mu}^{\lambda_0}
		e^{i\lambda(|z|+\mu)}\frac{\chi(\lambda/\lambda_0)}
		{\lambda h_{+}(\lambda)}\,d\lambda\\
		&+\mu^{-2}\mathcal{O}\left(\int_{1/\mu}^{\lambda_0}
		\frac{d\lambda}{\lambda^2|h_{+}(\lambda)|}\right),
	\end{align*}
	where the remainder estimate uses
	$(\mu\lambda)^{-1/2}(\lambda|z|)^{-3/2}
	\lesssim\mu^{-2}\lambda^{-2}$ and the analogous bounds for the
	other mixed remainder terms (recall that $\lambda\ge 1/\mu$ and
	$|z|\sim\mu$). The first term carries the slowly varying phase
	$e^{i\lambda(|z|-\mu)}$ (recall that $|z|-\mu\in[-1,1]$) and
	therefore produces the main contribution: since inserting the
	cut-off $\chi(\lambda/\lambda_0)$ into the integral of
	Lemma~\ref{lem:est of K-m} changes it only by $\mathcal{O}(1)$,
	that lemma yields
	\begin{align*}
		\frac{2i\|V\|_{L^1}}{\sqrt{\mu |z|}}\int_{1/\mu}^{\lambda_0}
		e^{i\lambda(|z|-\mu)} \frac{\chi(\lambda/\lambda_0)}
		{\lambda h_{+}(\lambda)}\,d\lambda
		&=\frac{4\pi i}{\sqrt{\mu |z|}}\log\log\mu
		+\mathcal{O}\left(\frac{1}{\sqrt{\mu |z|}}\right)\\
		&=4\pi i\,\frac{\log\log\mu}{\mu}
		+\mathcal{O}\left(\frac{1}{\mu}\right)
	\end{align*}
	for $|z|-\mu\in [-1,1]$ and $\mu\gg 1$, where the last equality
	uses $\sqrt{\mu|z|}=\mu\bigl(1+\mathcal{O}(\mu^{-1})\bigr)$. The
	second term carries the rapidly oscillating phase
	$e^{i\lambda(|z|+\mu)}$ with $|z|+\mu\ge 2\mu-1$, and integration
	by parts gains one power of $\mu^{-1}$: one obtains
	\begin{align*}
		\frac{2\|V\|_{L^1}}{\sqrt{\mu|z|}}\int_{1/\mu}^{\lambda_0}
		e^{i\lambda(|z|+\mu)}\frac{\chi(\lambda/\lambda_0)}
		{\lambda h_{+}(\lambda)}\,d\lambda
		&=\mathcal{O}\left(\frac{1}{\mu\log\mu}\right)
		+\mathcal{O}\left(\frac{1}{\mu^2}\int_{1/\mu}^{\lambda_0}
		\frac{d\lambda}{\lambda^2|h_{+}(\lambda)|}\right)
		=\mathcal{O}\left(\frac{1}{\mu\log\mu}\right),
	\end{align*}
	where we used
	\[
	\int_{1/\mu}^{\lambda_0}
	\frac{1}{\lambda^2 |h_{+}(\lambda)|}\,d\lambda
	\lesssim\frac{1}{\log\mu}\int_{1/\mu}^{\lambda_0}
	\frac{d\lambda}{\lambda^2}
	\lesssim\frac{\mu}{\log\mu}.
	\]
	The same bound shows that the remainder term is also
	$\mathcal{O}(\frac{1}{\mu\log\mu})$. Combining these three
	estimates, we conclude that
	\begin{equation}\label{eq:est for K0m}
		K_{\mu}^{0,m}(|z|)=4\pi i\,\frac{\log\log\mu}{\mu}
		+\mathcal{O}\left(\frac{1}{\mu}\right),
		\quad |z|-\mu\in [-1,1], ~\mu\gg 1.
	\end{equation}

	Finally, the estimates \eqref{eq:est of Km} and
	\eqref{eq:est for K0m} together yield \eqref{eq:est for K0}, which
	completes the proof.
\end{proof}

\subsection{Proof of \textup{(i)} in Proposition~\ref{pro:unbound-1}}

This subsection is devoted to the proof of statement~\textup{(i)} in
Proposition~\ref{pro:unbound-1}, namely the $L^1$-unboundedness of
$\mathcal{W}^{\mathrm{L}}$ in the presence of a second-kind threshold singularity or a zero eigenvalue.
According to whether $H$ has p-wave or s-wave resonances, the proof
splits into the following three subcases:
\begin{itemize}
	\item \emph{Subcase 1:} $H$ has neither p-wave nor s-wave resonances.
    \smallskip
	\item \emph{Subcase 2:} $H$ has both p-wave and s-wave resonances.
      \smallskip
	\item \emph{Subcase 3:} $H$ has p-wave resonances but no s-wave resonances.
\end{itemize}

Recall from Section~\ref{low-bound II} the decomposition
\begin{equation}\label{eq:dec-of-WL-second}
	\mathcal{W}^{\mathrm{L}}=\sum_{i,j=0}^1\mathcal{W}_{ij}^{\mathrm{L}}.
\end{equation}
The results of Subsection~\ref{sec:contri-of-co} show that the components
$\mathcal{W}_{10}^{\mathrm{L}}$ and $\mathcal{W}_{01}^{\mathrm{L}}$ are
both bounded on $L^1(\mathbb{R}^2)$, while
Subsections~\ref{sec:unb-for-regular} and~\ref{sec:Low-bound} show that
$\mathcal{W}_{00}^{\mathrm{L}}$ is bounded on $L^1(\mathbb{R}^2)$ if and
only if $H$ has an s-wave resonance. The unboundedness in all three
subcases is therefore governed by the remaining component
$\mathcal{W}_{11}^{\mathrm{L}}$, to which we now turn.

We first study the $L^1$-unboundedness of $\mathcal{W}_{11}^{\mathrm{L}}$
when $H$ has p-wave resonances. In this case,
Lemma~\ref{lem:one to one lem} ensures that $S_2-S_3$ does not vanish.
By the definition of $\mathcal{W}_{11}^{\mathrm{L}}$ in
Section~\ref{low-bound II}, it is the contribution of the expansion
\begin{equation}\label{eq:exp-of-L11}
	\mathcal{L}_{11}^{+}(\lambda)=
	\dfrac{S_3D_3S_3}{\lambda^2}+\dfrac{N_1}{\lambda^2g_1^{+}(\lambda)}
	+\tilde{\mathcal{O}}_2\!\left(\lambda^{-2}g_1^{+}(\lambda)^{-2}\right),
\end{equation}
where
\begin{equation}\label{eq:def of N1-1}
	\begin{aligned}
		N_1&:=(S_2-S_3)D_2(S_2-S_3)-(S_2-S_3)D_2vG_2vS_3D_3S_3\\
		&\qquad-S_3D_3S_3vG_2vD_2(S_2-S_3)\\
		&\qquad+S_3D_3S_3vG_2v(S_2-S_3)D_2(S_2-S_3)vG_2vS_3D_3S_3.
	\end{aligned}
\end{equation}
Of the three terms in \eqref{eq:exp-of-L11}, the leading term
$\tfrac{S_3D_3S_3}{\lambda^2}$ and the error term
$\tilde{\mathcal{O}}_2\!\left(\lambda^{-2}g_1^{\pm}(\lambda)^{-2}\right)$
contribute $L^1$-bounded operators by virtue of
Lemmas~\ref{lem:bound of N_3-1} and \ref{lem:bound of N_4}, respectively,
so that only the second term $\tfrac{N_1}{\lambda^2g_1^{+}(\lambda)}$ can
be responsible for unboundedness.

Recall from \eqref{eq:def-of-g_1} that
\[
g_1^{+}(\lambda):=\log\lambda+\beta_{+},\qquad \beta_{+}\in\mathbb{C}\setminus\mathbb{R}.
\]
A term-by-term analysis of \eqref{eq:def of N1-1} shows that, apart from
the leading term $\frac{1}{\lambda g_1^{+}(\lambda)}(S_2-S_3)D_2(S_2-S_3)$,
all remaining terms in the expansion \eqref{eq:def of N1-1} are bounded on
$L^1(\mathbb{R}^2)$. The $L^1$-unboundedness of $\mathcal{W}_{11}^{\mathrm{L}}$
is therefore reduced to the following statement.

\begin{lemma}\label{lemm:unb-of-wpsing}
	If $H$ has p-wave resonances, then the operator
	\[
	\mathcal{W}_{\mathrm{p}, \mathrm{sing}}^{\mathrm{L}}
	=\frac{1}{\pi i}\int_0^{+\infty} \lambda^{-1} g_1^+(\lambda)^{-1}
	\chi(\lambda/\lambda_0)\,
	R_0^{+}(\lambda^2)v(S_2-S_3)D_2(S_2-S_3)v
	\bigl(R_0^+(\lambda^{2})-R_0^-(\lambda^{2})\bigr)\, d\lambda
	\]
	is not bounded on $L^1(\mathbb{R}^2)$.
\end{lemma}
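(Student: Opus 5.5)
We plan to follow the outline given in Subsection~\ref{sec:outline}: reduce $\mathcal{W}_{\mathrm{p},\mathrm{sing}}^{\mathrm{L}}$, modulo $L^1$-bounded pieces, to an explicit separable kernel, and then show that its column $L^1$-norms blow up like $\log\log\mu$. Write $(S_2-S_3)D_2(S_2-S_3)=\sum_{i,j}d_{ij}\varphi_i\langle\cdot,\varphi_j\rangle$ with $\{\varphi_1,\varphi_2\}$ (or one function) an orthonormal basis of $(S_2-S_3)L^2$. By \eqref{eq:property of S-3}, each $v\varphi_j$ has vanishing zeroth moment, and the moment vectors $m_j:=\int x\,v\varphi_j\,dx\in\R^2$ do not all vanish. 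Since $D_2$ is invertible on this range, the bilinear form $\sum d_{ij}m_i\otimes m_j$ is a nonzero $2\times2$ matrix $\mathbf{M}$.

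\emph{Step 1 (reduction).} In both resolvent factors we apply Lemma~\ref{lem:est-S0-J0} and Lemma~\ref{lem:est-S0vR0-2} (in the form of Remark~\ref{rem:rela-of-omegas}) with $m=1$. This expresses $\mathcal{Q}_1vR_0^+(\lambda^2)(\cdot,x)$ as the dipole term $-\tfrac{i}{4}\lambda(H_0^+)'(\lambda\langle x\rangle)\,v\,r_x$ plus remainders of order $\lambda^{3/2}$ (in the $\omega_2$ sense) and the $\chi(\lambda a)\psi_1$ term. The factor $(R_0^+-R_0^-)$ is handled in the same way, with dipole term $\lambda J_0'(\lambda\langle y\rangle)\,v\,r_y$. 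Products containing at least one remainder gain $\lambda^{1/2}$ relative to the dipole--dipole term; this gain beats the prefactor $\lambda^{-1}g_1^{-1}$. The estimates in the proofs of Lemmas~\ref{lem:bound of N_3-1} and \ref{lem:bound of N_3-2} then make these products $L^1$-bounded (column bound as in \eqref{eq:L1 bound ceri}). The main kernel is therefore
\[
\mathcal{I}_{\mathrm{p},\mathrm{sing}}(x,y)=k(\langle x\rangle,\langle y\rangle)\,T(x,y),\qquad
T(x,y)=\hat x^{\top}\mathbf{M}\,\hat y,
\]
\[
k(\nu,\mu)=c\int_0^\infty \frac{\lambda\,\chi(\lambda/\lambda_0)}{g_1^+(\lambda)}\,H_1^+(\lambda\nu)\,J_1(\lambda\mu)\,d\lambda ,
\]
using $(H_0^+)'=-H_1^+$, $J_0'=-J_1$, and absorbing $|x|/\langle x\rangle$ factors into harmless errors.

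\emph{Step 2 (freezing $T$).} Pick unit vectors $e,f$ with $e^\top\mathbf{M}f\neq0$. Since $\hat y\cdot\hat x$ is no longer needed, we restrict $y$ to a narrow sector around $f$ and $x$ to a narrow sector $\Sigma$ around $e$. On these sectors $|T(x,y)|\ge\tfrac12|e^\top\mathbf{M}f|$, a fixed constant. This is the finite-rank decoupling. A small point is that $T$ couples $\hat x,\hat y$ only through $\mathbf{M}$, so the sectors can be chosen independently; the lower bound on the column norm only requires integrating over $x\in\Sigma$.

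\emph{Step 3 (the radial lower bound; main obstacle).} Here we mimic Lemmas~\ref{lem:est of K-m}--\ref{lem:asy of K tau}, now at order one. For $\nu=\mu+\delta$ we insert the large-argument asymptotics of $H_1^+$ and $J_1$. The slowly oscillating part becomes
\[
(\mu\nu)^{-1/2}\int_{1/\mu}^{\lambda_0}\frac{e^{i\lambda\delta}}{g_1^+(\lambda)}\,d\lambda ,
\]
and the rapidly oscillating part $e^{i\lambda(\nu+\mu)}$ is $\mathcal{O}(\mu^{-2})$ after integrating by parts. The region $\lambda<1/\mu$ is small by the small-argument bounds. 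For $\delta_1\le\delta\le\sqrt\mu$, substitute $s=\lambda\delta$; the integral is then $\approx\delta^{-1}\int_{\delta/\mu}^{\lambda_0\delta}e^{is}\,(\log s-\log\delta+\beta_+)^{-1}ds$. Its leading part is $\sim(-\log\delta)^{-1}\int_0^\infty e^{is}ds$ (Abel sense), which is nonzero, giving $|k(\mu+\delta,\mu)|\gtrsim(\mu\,\delta\log\delta)^{-1}$. The delicate points are keeping all error terms, including those from Step 1, uniformly $o\big((\mu\delta\log\delta)^{-1}\big)$ in this range, and controlling the lower limit $\delta/\mu\le\mu^{-1/2}$. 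We expect this to be the hardest step, and we will first try to treat the error terms via Lemma~\ref{lem-oscillatory estimate} with $a=2$.

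\emph{Step 4 (conclusion).} For $\langle y\rangle\sim\mu$ in the $f$-sector, integrate over $x\in\Sigma$ with $\langle x\rangle=\mu+\delta$:
\[
\int|\mathcal{I}_{\mathrm{p},\mathrm{sing}}(x,y)|\,dx\gtrsim\int_{\delta_1}^{\sqrt\mu}\frac{(\mu+\delta)\,d\delta}{\mu\,\delta\log\delta}\gtrsim\log\log\mu ,
\]
while the $L^1$-bounded errors contribute $\mathcal{O}(1)$. Since the operator norm on $L^1$ equals $\sup_y\|\cdot(\cdot,y)\|_{L^1_x}$, this contradicts boundedness. Concretely, we test on approximate identities $f_\epsilon$ concentrated at such $y$ and use lower semicontinuity. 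Hence $\mathcal{W}_{\mathrm{p},\mathrm{sing}}^{\mathrm{L}}$ is unbounded on $L^1(\R^2)$.
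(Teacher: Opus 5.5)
Your proposal is correct in outline and follows essentially the same route as the paper. You isolate the dipole--dipole kernel $k(\langle x\rangle,\langle y\rangle)T(x,y)$, freeze $T$ on angular sectors using the first moments of $(S_2-S_3)L^2$, prove $|k(\mu+\delta,\mu)|\gtrsim(\mu\,\delta\log\delta)^{-1}$ for $\delta\in[\delta_1,\sqrt{\mu}\,]$ by splitting at $\lambda\sim1/\mu$ and keeping only the slow phase $e^{i\lambda\delta}$, and integrate over a cone to get $\log\log\mu$ growth of the column norms.

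The differences are minor:
\begin{itemize}
\item Your independent sectors with $e^{\top}\mathbf{M}f\neq0$ are a slightly cleaner version of the paper's relabelling to $\beta_{1,1}\neq0$. Note, however, that $\mathbf{M}\neq0$ needs more than invertibility of $D_2$ plus some $m_j\neq0$. It follows because $D_2$ is definite, or because the moment map is injective on $(S_2-S_3)L^2$ by \eqref{eq:property of S-3}.
\item In place of your Abel-summation heuristic, the paper integrates by parts once:
\[
\int_0^{\lambda_0}\frac{\chi(\lambda/\lambda_0)\,e^{i\lambda\delta}}{g_1^+(\lambda)}\,d\lambda
=-\frac{i}{\delta}\int_0^{\lambda_0}\frac{\chi(\lambda/\lambda_0)\,e^{i\lambda\delta}}{\lambda\,g_1^+(\lambda)^{2}}\,d\lambda+\mathcal{O}(\delta^{-N}).
\]
The last integral equals $1/\log\delta+\mathcal{O}((\log\delta)^{-2})$, with the main contribution coming from $\lambda\le1/\delta$. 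This is the natural way to make your Step~3 rigorous.
\item The same computation settles your concern about the lower limit $\delta/\mu$: the piece $\lambda<1/\mu$ is only $\mathcal{O}((\mu\log\mu)^{-1})$, which is negligible against $(\delta\log\delta)^{-1}$ when $\delta\le\sqrt{\mu}$.
\end{itemize}
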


\begin{proof}
	The proof proceeds in two steps: we first isolate the singular part of the
	kernel, and then exhibit the $\log\log$ growth of its column norms
	$y\mapsto\|\mathcal{I}_{\mathrm{p},\mathrm{sing}}(\cdot,y)\|_{L^1(\mathbb{R}^2)}$
	along annuli, which forces
	$\|\mathcal{W}_{\mathrm{p},\mathrm{sing}}^{\mathrm{L}}\|_{L^1\to L^1}=\infty$.
	
	\medskip\noindent\emph{Step 1: Reduction to the singular kernel.}
	The kernel of
	$R_0^+(\lambda^{2})v(S_2-S_3)D_2(S_2-S_3)v\bigl(R_0^+(\lambda^{2})-R_0^-(\lambda^{2})\bigr)$
	can be written as
	\begin{equation}\label{eq:exp-of-ker-of-spec}
		\Bigl\langle D_2(S_2-S_3)v\bigl(R_0^{+}(\lambda^2)(\cdot, y)-R_0^{-}(\lambda^2)(\cdot, y)\bigr),\,
		(S_2-S_3)vR_0^{-}(\lambda^2)(\cdot, x)
		\Bigr\rangle.
	\end{equation}
	Since $(S_2-S_3)v$ annihilates the constant function, the argument of
	Lemma~\ref{lem:est-S0-J0} gives
	\begin{equation}\label{eq:exp-for-Q2J-1}
		\begin{aligned}
			(S_2-S_3)v&\bigl(R_0^+(\lambda^2)(\cdot,y)-R_0^-(\lambda^2)(\cdot,y)\bigr)
			=\tfrac{i}{2} \lambda J_0'(\lambda\langle y\rangle)
			(S_2-S_3)v\frac{\langle \cdot,y\rangle}{|y|} \\
			&\qquad\qquad\qquad+e^{i\lambda\langle y\rangle}(S_2-S_3)\eta_2^+(\lambda,\cdot,y)
			+e^{-i\lambda\langle y\rangle}(S_2-S_3)\eta_2^-(\lambda,\cdot,y).
		\end{aligned}
	\end{equation}
	In view of Remark~\ref{rem:rela-of-omegas}, one also has
	\begin{equation}\label{eq:exp-for-Q2H-1}
		\begin{aligned}
			(S_2-S_3)vR_0^-(\lambda^2)(\cdot,x)
			&= e^{-i\lambda\langle x\rangle}(S_2-S_3)\omega_2^{-}(\lambda,\cdot,x)
			- (S_2-S_3)\psi_2(\cdot,x)\chi(\lambda\langle x\rangle)\\
			&\quad -\frac{i}{4}\lambda (H_0^-)'(\lambda\langle x\rangle)\,
			(S_2-S_3)v \frac{\langle \cdot,x\rangle}{|x|}.
		\end{aligned}
	\end{equation}
	Substituting \eqref{eq:exp-for-Q2J-1} and \eqref{eq:exp-for-Q2H-1} into
	\eqref{eq:exp-of-ker-of-spec} and using the improved estimates for
	$\eta_2^\pm$, $\omega_2^-$ and $\psi_2$, the same arguments as in
	Subsection~\ref{sec:contri-of-co} show that all terms except
	\begin{equation}\label{eq:main-term}
		-\dfrac{1}{8} \lambda^2 (H_0^+)'(\lambda\langle x\rangle)  J_0'(\lambda\langle y\rangle)
		\Bigl\langle D_2(S_2-S_3)v \frac{\langle \cdot,y\rangle}{|y|},\,
		(S_2-S_3)v \frac{\langle \cdot,x \rangle}{|x|} \Bigr\rangle
	\end{equation}
	contribute operators that are bounded on $L^1(\mathbb{R}^2)$.
	
	For the reader's convenience, we illustrate this on a typical example: the
	cross-term of
	$e^{i\lambda\langle y\rangle}(S_2-S_3)\eta_2^+(\lambda,\cdot,y)$ in
	\eqref{eq:exp-for-Q2J-1} and
	$-\frac{i}{4}\lambda (H_0^-)'(\lambda\langle x\rangle)\,
	(S_2-S_3)v \frac{\langle \cdot,x\rangle}{|x|}$ in
	\eqref{eq:exp-for-Q2H-1}. The corresponding operator has integral kernel
	(up to a constant)
	\begin{align*}
		\mathcal{I}_{\mathrm{good}}(x,y)
		&=\frac{1}{\pi i}\int_0^{+\infty} \lambda^{-1}
		g_1^{+}(\lambda)^{-1} \chi(\lambda/\lambda_0)\,
		e^{i\lambda\langle y\rangle}\lambda \, (H_0^+)'(\lambda\langle x\rangle)\\
		&\qquad\qquad\times\Bigl\langle D_2(S_2-S_3) \eta_2^+(\lambda,\cdot,y),\,
		(S_2-S_3)v\frac{\langle \cdot,x\rangle}{|x|}\Bigr\rangle\, d\lambda\\
		&=:\frac{1}{\pi i}\int_0^{+\infty} \lambda^{-1}
		g_1^{+}(\lambda)^{-1} \chi(\lambda/\lambda_0)\,
		\mathcal{E}_g(\lambda, x, y)\, d\lambda .
	\end{align*}
	Recall from Subsection~\ref{Sec:some Technical lemmas} that
	$\chi(z)(H_0^-)'(z)=\mathcal{O}(z^{-1})$ and
	$\tilde{\chi}(z)(H_0^-)'(z)=e^{-iz}\mathcal{O}_{\infty}(z^{-1/2})$.
	Together with the estimates \eqref{eq:est-eta-w-1},
	\eqref{eq:est-eta-w-2} and the Cauchy--Schwarz inequality,
	this allows us to decompose $\mathcal{E}_g(\lambda, x, y)$ as
	\[
	e^{i\lambda\langle y\rangle}\chi(\lambda \langle x\rangle)
	\mathcal{K}_{g,1}(\lambda, x, y)
	+e^{-i\lambda(\langle x\rangle-\langle y\rangle)}
	\mathcal{K}_{g,2}(\lambda, x, y),
	\]
	where $\mathcal{K}_{g,1}$ and $\mathcal{K}_{g,2}$ satisfy
	\[
	|\mathcal{K}_{g,1}|\lesssim \lambda^{3/2} \langle x\rangle^{-1},
	\qquad
	\bigl|\partial_{\lambda}^{k}\mathcal{K}_{g,2}\bigr|
	\lesssim \lambda^{2} \langle x\rangle^{-1/2}\langle y\rangle^{-1/2},\quad k=0,1,2.
	\]
	As in the proofs of Lemmas~\ref{lem:bound of N_3-2-1} and
	\ref{lem:bound of N_3-3}, this yields
	\[
	\bigl|\mathcal{I}_{\mathrm{good}}(x,y)\bigr|
	\lesssim \langle x\rangle^{-5/2}
	+\langle x\rangle^{-1/2}\langle y\rangle^{-1/2}
	\langle\langle x\rangle-\langle y\rangle\rangle^{-2},
	\]
	so that
	\[
	\sup_{y\in\mathbb{R}^2}\int_{\mathbb{R}^2}
	\bigl|\mathcal{I}_{\mathrm{good}}(x,y)\bigr|\,dx<\infty,
	\]
	and the corresponding operator is bounded on $L^1(\mathbb{R}^2)$ by
	Schur's lemma. All other cross-terms are treated in exactly the same way.
	
	\medskip\noindent\emph{Step 2: $\log\log$ growth of the singular kernel.}
	It remains to show that the contribution of \eqref{eq:main-term} is
	unbounded on $L^1(\mathbb{R}^2)$. Set
	\begin{equation*}
		T(x,y):=\Bigl\langle D_2(S_2-S_3)v \frac{\langle \cdot,y\rangle}{|y|},\,
		(S_2-S_3)v \frac{\langle \cdot,x \rangle}{|x|} \Bigr\rangle ,
	\end{equation*}
	and define the scalar kernel, with $\mu=\langle y\rangle$,
	$\nu=\langle x\rangle$,
	\begin{equation}\label{eq:def-ksing}
		k(\nu,\mu):=\frac{i}{8\pi}\int_0^{+\infty}\lambda\, g_1^+(\lambda)^{-1}
		\chi(\lambda/\lambda_0)\,(H_0^+)'(\lambda\nu)\,J_0'(\lambda\mu)\, d\lambda ,
	\end{equation}
	so that the kernel contributed by \eqref{eq:main-term} factors as
	\begin{equation*}
		\mathcal{I}_{\mathrm{p},\, \mathrm{sing}}(x,y)
		=k\bigl( \langle x\rangle, \langle y\rangle\bigr)\,T(x,y).
	\end{equation*}
We denote by $\mathcal{I}_{\mathrm{p},\, \mathrm{sing}}$ the operator with kernel
$\mathcal{I}_{\mathrm{p},\, \mathrm{sing}}(x,y)$.
By \cite[Chapter 9]{Abra}, we have
\[
J_0'(z)=-J_1(z),\qquad (H_0^{\pm})'(z)=-H_1^{\pm}(z),
\]
so that
	\begin{equation*}
		k(\nu,\mu)=\frac{i}{8\pi}\int_0^{+\infty}\lambda\, g_1^+(\lambda)^{-1}
		\chi(\lambda/\lambda_0)\, H_1^+(\lambda\nu)\,J_1(\lambda\mu)\, d\lambda .
	\end{equation*}
	The pointwise lower bound for $k$ along the ``shoulder'' $\nu=\mu+\delta$
	is the content of Lemma~\ref{lem:k-shoulder-lower} below: there exist a
	constant $c>0$, $\mu_0\ge 2$ and $\delta_1\ge 2$, depending only on
	$\lambda_0$, $\chi$ and $\beta_+$, such that
	\begin{equation}\label{eq:k-asymp-lem-0}
		|k(\nu,\mu)|
		\ge \frac{c}{\mu\,\delta\,\log\delta},
		\qquad\text{whenever }\ \nu=\mu+\delta,\ \ \mu\ge\mu_0,\ \
		\delta\in[\delta_1,\sqrt{\mu}\,].
	\end{equation}
	
	Before exploiting \eqref{eq:k-asymp-lem-0}, we record the endpoint Schur
	identity for $\mathcal{I}_{\mathrm{p},\, \mathrm{sing}}$. One checks directly that the
	kernel is bounded, in fact
	$\sup_{x,\,y}|\mathcal{I}_{\mathrm{p},\, \mathrm{sing}}(x,y)|\lesssim 1$:
	for $k$ this follows from \eqref{eq:bessel-bds} by splitting the integral
	at $\lambda=\nu^{-1}$ and $\lambda=\mu^{-1}$, and for $T$ from the
	finiteness of the pairings defining it, which depend only on the
	directions $x/|x|$ and $y/|y|$. In particular the kernel is locally
	integrable, and the endpoint Schur identity gives
	\begin{equation}\label{eq:schur-endpoint}
		\|\mathcal{I}_{\mathrm{p},\, \mathrm{sing}}\|_{L^1(\mathbb{R}^2)\to L^1(\mathbb{R}^2)}
		=\operatorname{ess\,sup}_{y\in\mathbb{R}^2}\int_{\mathbb{R}^2}
		|\mathcal{I}_{\mathrm{p},\, \mathrm{sing}}(x,y)|\,dx .
	\end{equation}
	
	We next produce a region on which $|T(x,y)|$ is bounded from below; this is
	where the p-wave resonance enters. Since $H$ has p-wave resonances,
	$(S_2-S_3)D_2(S_2-S_3)$ is nonzero by
	Lemma~\ref{lem:one to one lem}, and $S_2-S_3$ is a finite-rank projection
	by Riesz--Schauder theory. Recall from \eqref{eq:property of S-3} that
	\[
	S_3L^2=\left\{\varphi\in S_2L^2\;\Big|\;
	\int_{\mathbb{R}^2}x_i v(x)\varphi(x)\,dx=0\text{ for }i=1,2\right\}.
	\]
	Since $S_2v=0$, we have $\dim\bigl((S_2-S_3)L^2\bigr)\le 2$.
	We treat the two-dimensional case, the one-dimensional case being simpler.
	Choose an orthonormal basis
	$\{\phi_1, \phi_2\}$ of $(S_2-S_3)L^2$ that diagonalizes
	$(S_2-S_3)D_2(S_2-S_3)$:
	\begin{equation}\label{eq:dia-of-SD2S}
		(S_2-S_3)D_2(S_2-S_3)
		=\sum_{j=1}^r \alpha_j \langle\cdot, \phi_j\rangle\phi_j,
		\qquad \alpha_j \in \mathbb{R}\setminus \{0\},
	\end{equation}
	with $r=2$. Let
	$$a_{ij}=\int_{\mathbb{R}^2} x_i v(x)\phi_j(x)\,dx,\qquad i,j=1,2.$$
	Since $\dim\bigl((S_2-S_3)L^2\bigr)=2$, the vectors
	$(S_2-S_3)(x_1v)$ and $(S_2-S_3)(x_2v)$ form a basis of $(S_2-S_3)L^2$;
	equivalently, the matrix $A=(a_{ij})_{i,j=1,2}$ is invertible. We now
	define a new basis $\{\varphi_1, \varphi_2\}$ by
	\[
	\varphi_1:=a_{22}\,\phi_1-a_{21}\,\phi_2,\qquad
	\varphi_2:=a_{12}\,\phi_1-a_{11}\,\phi_2.
	\]
	A direct computation shows that this basis diagonalizes the two moment
	functionals associated with $x_1v$ and $x_2v$:
	\begin{equation}\label{eq:relation-be-basis}
		\begin{aligned}
			\Bigl|\int_{\mathbb{R}^2} x_1 v(x)\varphi_1(x)\,dx\Bigr|
			=|\det A|>0, \quad
			\Bigl|\int_{\mathbb{R}^2} x_1 v(x)\varphi_2(x)\,dx\Bigr|=0, \\[2mm]
			\Bigl|\int_{\mathbb{R}^2} x_2 v(x)\varphi_2(x)\,dx\Bigr|
			=|\det A|>0, \quad
			\Bigl|\int_{\mathbb{R}^2} x_2 v(x)\varphi_1(x)\,dx\Bigr|=0.
		\end{aligned}
	\end{equation}
	Consequently, we may write
	\[
	(S_2-S_3)D_2(S_2-S_3)
	=\sum_{i,j=1}^2 \beta_{i,j} \langle\cdot, \varphi_i\rangle\varphi_j,
	\qquad \beta_{i,j} \in \mathbb{C},
	\]
	where at least one coefficient $\beta_{i,j}$ is nonzero; relabelling the
	coordinate axes and the basis if necessary, we may assume
	$\beta_{1,1}\neq 0$.
	
	Now write $\hat z:=z/|z|$ for $z\neq 0$. Since
	$\frac{\langle\cdot,z\rangle}{|z|}=(\cdot)\cdot\hat z$, expanding $T(x,y)$
	in the basis $\{\varphi_1,\varphi_2\}$ gives
	\[
	T(x,y)=\sum_{i,j=1}^2\beta_{i,j}\,
	\Bigl\langle v\,(\cdot)\cdot\hat y,\,\varphi_i\Bigr\rangle
	\Bigl\langle\varphi_j,\,v\,(\cdot)\cdot\hat x\Bigr\rangle .
	\]
	By \eqref{eq:relation-be-basis}, at $\hat x=\hat y=(1,0)$ every summand
	vanishes except the $\beta_{1,1}$-term, which equals
	$\beta_{1,1}\,|\det A|^2\neq 0$. Both factors are continuous in the
	angular variables, so there exist $\varepsilon>0$ and $c_0>0$ such that
	\begin{equation}\label{eq:T-lower}
		|T(x,y)|\ge c_0
		\qquad\text{whenever }\ |\hat x-(1,0)|\le\varepsilon,\ \
		|\hat y-(1,0)|\le\varepsilon .
	\end{equation}
	
	Finally, we integrate over the region where both \eqref{eq:k-asymp-lem-0}
	and \eqref{eq:T-lower} apply. For $\mu\ge\mu_0+1$ define the annular
	sector
	\[
	\mathbf{Ann}_{\mu}
	:=\Bigl\{y\in\mathbb{R}^2:\ \bigl|\langle y\rangle-\mu\bigr|\le\tfrac12,\ \
	\bigl|\hat y-(1,0)\bigr|\le\varepsilon\Bigr\},
	\]
	and the cone
	\[
	\Gamma_\mu:=\Bigl\{x\in\mathbb{R}^2:\
	\langle x\rangle-\mu\in
	\bigl[\delta_1,\sqrt{\mu}\,\bigr],\ \
	\bigl|\hat x-(1,0)\bigr|\le\varepsilon\Bigr\}.
	\]
	If $y\in\mathbf{Ann}_{\mu}$ and $x\in\Gamma_{\mu}$ for a large $\mu$, then
	$\langle y\rangle\sim\mu$ and, writing $\nu=\langle x\rangle$ and
	$\delta=\nu-\langle y\rangle$, the bound \eqref{eq:k-asymp-lem-0} gives
	\[
	\bigl|k\bigl(\nu, \langle y\rangle\bigr)\bigr|
	\ge\frac{c}{\langle y\rangle\,\delta\,\log\delta}
	\ge\frac{c}{2\,\mu\,\delta\,\log\delta}.
	\]
	Let $\varepsilon$ be a fixed parameter such that \eqref{eq:T-lower} holds.
	Using polar coordinates $x=r\omega$, in which
	$dx=r\,dr\,d\omega=\nu\,d\nu\,d\omega$, and
	noting that the sector $\{|\omega-(1,0)|\le\varepsilon\}$ has angular
	measure $\sim 2\varepsilon$, we obtain, for every
	$y\in\mathbf{Ann}_{\mu}$,
	\begin{align*}
		\int_{\mathbb{R}^2}|\mathcal{I}_{\mathrm{p},\,\mathrm{sing}}(x,y)|\,dx
		&\ge c_0\int_{\Gamma_{\mu}}
		\bigl|k\bigl(\langle x\rangle, \langle y\rangle\bigr)\bigr|\,dx\\
		&\ge c_0\cdot 2\varepsilon\int_{\delta_{1}}^{\sqrt{\mu}}
		\frac{c\,\nu\,d\delta}{\mu\,\delta\,\log\delta}
		\Big|_{\nu=\langle y\rangle+\delta}\\
		&\ge 2\varepsilon\,c\,c_0\int_{\delta_{1}}^{\sqrt{\mu}}
		\frac{d\delta}{\delta\,\log\delta}
		=2\varepsilon\,c\,c_0\,
		\Bigl(\log\log\sqrt{\mu}-\log\log\delta_1\Bigr)\\
		&\ge c'\,\log\log\mu,
	\end{align*}
	where we used $\langle x\rangle\les\mu$ and
	$\langle y\rangle\sim \mu$. Since $\mathbf{Ann}_{\mu}$ has positive
	measure and $\mu\ge\mu_0+1$ is arbitrary, \eqref{eq:schur-endpoint} yields
	\[
	\|\mathcal{I}_{\mathrm{p},\,\mathrm{sing}}\|_{L^1\to L^1}
	=\operatorname{ess\,sup}_{y}\int_{\mathbb{R}^2}
	|\mathcal{I}_{\mathrm{p},\,\mathrm{sing}}(x,y)|\,dx
	\ge c'\,\log\log\mu\longrightarrow\infty
	\qquad(\mu\to\infty).
	\]
	Hence $\mathcal{I}_{\mathrm{p},\,\mathrm{sing}}$ does not give rise to an
	$L^1$-bounded operator; since the remaining terms are bounded by Step~1,
	$\mathcal{W}_{\mathrm{p},\mathrm{sing}}^{\mathrm{L}}$ is unbounded on
	$L^1(\mathbb{R}^2)$.
\end{proof}

\begin{lemma}\label{lem:k-shoulder-lower}
	Let $k(\nu,\mu)$ be the scalar kernel defined in \eqref{eq:def-ksing}, i.e.
	\[
	k(\nu,\mu)=\frac{i}{8\pi}\int_0^{+\infty}\lambda\, g_1^+(\lambda)^{-1}
	\chi(\lambda/\lambda_0)\,H_1^+(\lambda\nu)\,J_1(\lambda\mu)\, d\lambda .
	\]
	Then there exist a constant $c>0$, $\mu_0\ge 2$ and $\delta_1\ge 2$,
	depending only on $\lambda_0$, $\chi$ and $\beta_+$, such that, for
	$\nu=\mu+\delta$ with $\mu\ge\mu_0$ and $\delta\in[\delta_1,\sqrt{\mu}]$,
	\begin{equation}\label{eq:k-asymp-lem}
		|k(\nu,\mu)|
		\ge \frac{c}{\mu\,\delta\,\log\delta}.
	\end{equation}
\end{lemma}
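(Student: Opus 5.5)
The plan is to extract an explicit asymptotic for $k(\mu+\delta,\mu)$ whose leading term is of exact size $(\mu\,\delta\log\delta)^{-1}$, and to show that every other contribution is of strictly smaller order uniformly for $\delta\in[\delta_1,\sqrt{\mu}\,]$. I split the $\lambda$-integral at $\lambda=1/\mu$. Since $\nu=\mu+\delta\le 2\mu$, on $\lambda\ge 1/\mu$ both $\lambda\nu$ and $\lambda\mu$ are $\gtrsim 1$.

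\emph{Small-$\lambda$ piece.} For $\lambda\le1/\mu$ I use the small-argument bounds $|H_1^+(t)|\lesssim t^{-1}$ and $|J_1(t)|\lesssim t$. The integrand is then bounded by
\[
\lambda\,|g_1^+(\lambda)|^{-1}(\lambda\nu)^{-1}\lambda\mu\lesssim \frac{\lambda}{|\log\lambda|},
\]
so this piece is $\mathcal{O}\bigl(\mu^{-2}(\log\mu)^{-1}\bigr)$. That is $o\bigl((\mu\,\delta\log\delta)^{-1}\bigr)$ because $\delta\le\sqrt\mu$.

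\emph{Large-$\lambda$ piece.} For $\lambda\ge1/\mu$ I insert the Hankel/Bessel asymptotics
\[
H_1^+(t)=\sqrt{\tfrac{2}{\pi t}}\,e^{i(t-3\pi/4)}+\mathcal{O}(t^{-3/2}),\qquad
J_1(t)=\sqrt{\tfrac{2}{\pi t}}\cos(t-3\pi/4)+\mathcal{O}(t^{-3/2}).
\]
The product of the two leading terms is $\frac{1}{\pi\lambda\sqrt{\nu\mu}}\bigl(c_1e^{i\lambda\delta}+c_2e^{i\lambda(\nu+\mu)}\bigr)$ with unimodular constants $c_1,c_2$. The factor $\lambda$ in front of the integrand cancels the $\lambda^{-1}$, which leaves
\[
\frac{c}{\sqrt{\nu\mu}}\int_{1/\mu}^\infty e^{i\lambda\delta}\,\frac{\chi(\lambda/\lambda_0)}{\log\lambda+\beta_+}\,d\lambda
\]
plus an analogous integral with phase $\nu+\mu\ge2\mu$.

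\begin{itemize}
\item For the fast-phase integral, one integration by parts (the symbol has derivative $\mathcal{O}(\lambda^{-1}\log^{-2}\lambda)$) gives $\mathcal{O}\bigl(\mu^{-1}(\log\mu)^{-1}\bigr)$ for the integral, hence $\mathcal{O}\bigl(\mu^{-2}(\log\mu)^{-1}\bigr)$ after the prefactor.
\item The mixed remainder terms are bounded by
\[
\lambda\,(\lambda\mu)^{-1/2}(\lambda\nu)^{-3/2}|\log\lambda|^{-1}\lesssim \frac{1}{\mu^{2}\lambda|\log\lambda|}.
\]
Integrating over $\lambda\ge1/\mu$ gives $\mathcal{O}\bigl(\mu^{-2}\log\log\mu\bigr)$. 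This is again $o\bigl((\mu\,\delta\log\delta)^{-1}\bigr)$ since $\delta\log\delta\lesssim\sqrt\mu\log\mu$.
\end{itemize}

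\emph{Main oscillatory integral (the hard step).} It remains to prove a sharp two-sided asymptotic
\[
\int_0^\infty e^{i\lambda\delta}\,\frac{\chi(\lambda/\lambda_0)}{\log\lambda+\beta_+}\,d\lambda=\frac{-i}{\delta\log\delta}\bigl(1+o(1)\bigr),\qquad \delta\to\infty .
\]
Replacing the lower limit $1/\mu$ by $0$ costs only $\int_0^{1/\mu}|\log\lambda|^{-1}d\lambda\lesssim(\mu\log\mu)^{-1}=o\bigl((\delta\log\delta)^{-1}\bigr)$, because $\delta\le\sqrt{\mu}$. To prove the asymptotic, I integrate by parts once using $e^{i\lambda\delta}=(i\delta)^{-1}\partial_\lambda e^{i\lambda\delta}$; the boundary terms vanish because $1/\log\lambda\to0$ at $0$ and $\chi$ has compact support. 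This reduces the claim to showing that
\[
\frac{i}{\delta}\int_0^\infty e^{i\lambda\delta}\,\frac{\chi(\lambda/\lambda_0)}{\lambda(\log\lambda+\beta_+)^2}\,d\lambda
\]
(together with a harmless term where the derivative hits $\chi$) behaves like $-\,\frac{i}{\delta\log\delta}$. I then split this integral at $\lambda=1/\delta$.
\begin{itemize}
\item On $(0,1/\delta)$ one has $e^{i\lambda\delta}=1+\mathcal{O}(\lambda\delta)$. The constant $1$ contributes exactly $-1/(\log(1/\delta)+\beta_+)=1/\log\delta+\mathcal{O}(\log^{-2}\delta)$, and the error contributes $\mathcal{O}(\log^{-2}\delta)$.
\item On $(1/\delta,\lambda_0)$, a second integration by parts bounds the integral by $\mathcal{O}(\log^{-2}\delta)$.
\end{itemize}
This yields the displayed asymptotic with a nonzero leading constant. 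Multiplying by $(\nu\mu)^{-1/2}\sim\mu^{-1}$ and absorbing all the errors above gives $|k(\nu,\mu)|\ge c\,(\mu\,\delta\log\delta)^{-1}$ once $\delta_1$ and $\mu_0$ are chosen large.

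I expect this last step to be the main obstacle. The delicate points are controlling the errors uniformly in $\delta\le\sqrt\mu$ and preventing the $\mathcal{O}(\log^{-2}\delta)$ corrections from cancelling the leading $1/\log\delta$ term. This is exactly why $\delta_1$ is taken large.
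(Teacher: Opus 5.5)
Your proposal is correct and follows essentially the paper's route: the split at $\lambda\sim 1/\mu$, the Hankel--Bessel product asymptotics with a negligible fast phase and negligible remainders, and the key asymptotic $\int_0^{\infty} e^{i\lambda\delta}\chi(\lambda/\lambda_0)g_1^+(\lambda)^{-1}\,d\lambda=-\frac{i}{\delta\log\delta}(1+o(1))$, obtained by one integration by parts and a split at $1/\delta$, match the paper's Steps 1--3. There are two harmless slips: after that integration by parts the prefactor is $-\frac{i}{\delta}$, not $\frac{i}{\delta}$ (irrelevant for $|k|$), and a single integration by parts only gives $\mathcal{O}(\mu^{-1})$ for the fast-phase integral rather than $\mathcal{O}\bigl(\mu^{-1}(\log\mu)^{-1}\bigr)$, which still suffices since $\delta\log\delta\lesssim\sqrt{\mu}\log\mu$.
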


\begin{proof}
	Since $g_1^+(\lambda)=\log\lambda+\beta_+$ with
	$\beta_+\notin\mathbb{R}$, one has
	\begin{equation}\label{eq:g-comp}
		|g_1^+(\lambda)|\sim |\log\lambda|,
		\quad
		g_1^+(\lambda)^{-2}=(\log\lambda)^{-2}+\mathcal{O}\bigl(|\log\lambda|^{-3}\bigr),
		\qquad \lambda\in(0,\lambda_0].
	\end{equation}
	Recall the standard asymptotics (cf.\ \cite{Abra}): 
	$H_1^{+}(z)=\frac{2}{\pi i z}+\mathcal{O}(1)$ and
	$J_1(w)=\frac{w}{2}+\mathcal{O}(w^{3})$ for $0<z,w\lesssim 1$, while, as 
	$z,w\to+\infty$,
	\begin{equation*}
		H_1^{+}(z)=\Bigl(\frac{2}{\pi z}\Bigr)^{1/2}e^{i(z-3\pi/4)}+\mathcal{O}(z^{-3/2}),
		\qquad
		J_1(w)=\Bigl(\frac{2}{\pi w}\Bigr)^{1/2}
		\cos\Bigl(w-\frac{3\pi}{4}\Bigr)+\mathcal{O}(w^{-3/2}) .
	\end{equation*}
	In particular,
	\begin{equation}\label{eq:bessel-bds}
		|J_1(u)|\lesssim \min\{u,u^{-1/2}\},
		\qquad
		|H_1^{+}(v)|\lesssim \max\{v^{-1},v^{-1/2}\},
		\qquad u,v>0;
	\end{equation}
	and, writing $\cos\theta=\frac12\bigl(e^{i\theta}+e^{-i\theta}\bigr)$ and
	noting that $e^{-3\pi i/2}=i$,
	\begin{equation}\label{eq:HJ-prod}
		H_1^{+}(z)J_1(w)
		=\frac{e^{i(z-w)}+i\,e^{i(z+w)}}{\pi\sqrt{zw}}
		+\mathcal{O}\bigl(z^{-3/2}w^{-1/2}+z^{-1/2}w^{-3/2}\bigr),
		\qquad z,w\ge \tfrac12 .
	\end{equation}
	Throughout the proof we write $\delta:=\nu-\mu$; since the conclusion only
	involves the range $\delta\in[\delta_1,\sqrt{\mu}]$, one has $\delta>0$
	everywhere below.
	
	\medskip\noindent\emph{Step 1: asymptotics of $\mathcal{L}_\chi$.}
	We claim that there exists $\delta_1\ge 2$ such that, for all $\delta\ge\delta_1$,
	\begin{equation}\label{eq:cchi-asy-lem}
		\mathcal{L}_\chi(\delta)=:\int_0^{\lambda_0} g_1^+(\lambda)^{-1}
		\chi(\lambda/\lambda_0)\,e^{i\lambda\delta}\, d\lambda=-\frac{i}{\delta\log\delta}
		+\mathcal{O}\Bigl(\frac{1}{\delta\,(\log\delta)^{2}}\Bigr).
	\end{equation}
	Indeed, set
	\[
	I(\delta):=\int_0^{\lambda_0}
	\frac{\chi(\lambda/\lambda_0)\,e^{i\lambda\delta}}
	{\lambda\,g_1^+(\lambda)^{2}}\,d\lambda .
	\]
	Since $g_1^+(\lambda)^{-1}\chi(\lambda/\lambda_0)\to0$ as $\lambda\to 0^+$
	and $\chi(1)=0$, and since
	$\int_0\frac{d\lambda}{\lambda|\log\lambda|^{2}}<\infty$, an integration by
	parts, using
	$\partial_\lambda g_1^+(\lambda)^{-1}=-\bigl(\lambda g_1^+(\lambda)^2\bigr)^{-1}$, gives
	\[
	\mathcal{L}_\chi(\delta)
	=-\frac{i}{\delta}\,I(\delta)
	+\frac{i}{\delta\lambda_0}\int_0^{\lambda_0}
	\chi'(\lambda/\lambda_0)\,g_1^+(\lambda)^{-1}e^{i\lambda\delta}\,d\lambda .
	\]
	Since $\chi'(\lambda/\lambda_0)$ is smooth and compactly supported in
	$(0,\lambda_0)$, the second integral is $\mathcal{O}(\delta^{-N})$ for
	every $N$, by repeated integration by parts. Hence \eqref{eq:cchi-asy-lem}
	follows once we show that
	\begin{equation}\label{eq:model-log}
		I(\delta)=\frac{1}{\log\delta}+\mathcal{O}\bigl((\log\delta)^{-2}\bigr),
		\qquad \delta\to+\infty .
	\end{equation}
	To this end, assume $\delta\ge 2/\lambda_0$, so that
	$\chi(\lambda/\lambda_0)\equiv 1$ on $(0,1/\delta]$, and split the integral
	at $\lambda=1/\delta$. On $(0,1/\delta]$ one has
	$e^{i\lambda\delta}=1+\mathcal{O}(\lambda\delta)$, and \eqref{eq:g-comp}
	applies; hence
	\begin{align*}
		\int_0^{1/\delta}\frac{e^{i\lambda\delta}}{\lambda\,g_1^+(\lambda)^{2}}\,d\lambda
		&=\int_0^{1/\delta}\frac{d\lambda}{\lambda(\log\lambda)^{2}}
		+\mathcal{O}\Bigl(\int_0^{1/\delta}\frac{d\lambda}{\lambda|\log\lambda|^{3}}
		+\delta\int_0^{1/\delta}\frac{d\lambda}{(\log\lambda)^{2}}\Bigr)\\
		&=\frac{1}{\log\delta}+\mathcal{O}\bigl((\log\delta)^{-2}\bigr).
	\end{align*}
	For the tail, one integration by parts gives
	\begin{align*}
		\int_{1/\delta}^{\lambda_0}
		\frac{\chi(\lambda/\lambda_0)\,e^{i\lambda\delta}}
		{\lambda\,g_1^+(\lambda)^{2}}\,d\lambda
		=\Bigl[\frac{e^{i\lambda\delta}}{i\delta}\,
		\frac{\chi(\lambda/\lambda_0)}{\lambda\,g_1^+(\lambda)^{2}}
		\Bigr]_{1/\delta}^{\lambda_0}
		-\frac{1}{i\delta}\int_{1/\delta}^{\lambda_0}e^{i\lambda\delta}\,
		\partial_\lambda\Bigl[
		\frac{\chi(\lambda/\lambda_0)}{\lambda\,g_1^+(\lambda)^{2}}\Bigr]d\lambda .
	\end{align*}
	The boundary terms are $\mathcal{O}\bigl((\log\delta)^{-2}\bigr)$ (the one
	at $\lambda_0$ in fact vanishes, since $\chi(1)=0$); in the remaining
	integral, the $\chi'$-piece contributes $\mathcal{O}(\delta^{-1})$, and the
	other terms are estimated by
	\[
	\frac{1}{\delta}\int_{1/\delta}^{\lambda_0}
	\frac{d\lambda}{\lambda^{2}|\log\lambda|^{m}}
	=\frac{1}{\delta}\int_{1/\lambda_0}^{\delta}\frac{dt}{(\log t)^{m}}
	\lesssim (\log\delta)^{-m},
	\qquad m=2,3,
	\]
	via the substitution $t=1/\lambda$, the last integral being dominated by the
	top endpoint. The tail is therefore
	$\mathcal{O}\bigl((\log\delta)^{-2}\bigr)$, and \eqref{eq:model-log} follows.
	
	\medskip\noindent\emph{Step 2: decomposition of $k$.}
	We claim that, for all $\mu\ge 2$ and $\nu\in[\mu,2\mu]$,
	\begin{equation}\label{eq:shoulder-lem}
		k(\nu,\mu)=\frac{i\,\mathcal{L}_\chi(\delta)}{8\pi^{2}\,\sqrt{\mu\nu}}
		+\mathcal{J}(\mu,\nu),
		\qquad
		|\mathcal{J}(\mu,\nu)|\le C\,\frac{\log\log\mu}{\mu^{2}} .
	\end{equation}
	To see this, split the integral defining $k$ at $\lambda_1:=1/(2\mu)$ and
	write $k=\frac{i}{8\pi}(k_{<}+k_{>})$ accordingly.
	
	On $(0,\lambda_1]$ one has $\lambda\mu\le \tfrac12$ and
	$\lambda\nu\le 1$; by \eqref{eq:bessel-bds},
	$|J_1(\lambda\mu)|\lesssim \lambda\mu$ and
	$|H_1^{+}(\lambda\nu)|\lesssim (\lambda\nu)^{-1}$. Using
	\eqref{eq:g-comp}, $\nu\in[\mu,2\mu]$ and
	$\int_0^{\varepsilon}\lambda|\log\lambda|^{-1}d\lambda
	\lesssim \varepsilon^{2}/|\log\varepsilon|$, we get
	\[
	|k_{<}|\lesssim \int_0^{\lambda_1}\lambda\,|g_1^+(\lambda)|^{-1}
	(\lambda\nu)^{-1}(\lambda\mu)\,d\lambda
	\lesssim \frac{\mu}{\nu}\cdot\frac{\lambda_1^{2}}{|\log\lambda_1|}
	\lesssim \frac{1}{\mu^{2}\log\mu}.
	\]
	
	On $[\lambda_1,\lambda_0]$ both $\lambda\nu\ge\lambda\mu\ge \tfrac12$, so
	\eqref{eq:HJ-prod} applies with $z=\lambda\nu$, $w=\lambda\mu$:
	\begin{align*}
		k_{>}&=\frac{1}{\pi\sqrt{\mu\nu}}\int_{\lambda_1}^{\lambda_0}
		g_1^+(\lambda)^{-1}\chi(\lambda/\lambda_0)
		\bigl(e^{i\lambda\delta}+i\,e^{i\lambda(\mu+\nu)}\bigr)d\lambda \\
		&\qquad+\int_{\lambda_1}^{\lambda_0}\lambda\, g_1^+(\lambda)^{-1}
		\chi(\lambda/\lambda_0)\,G(\lambda)\,d\lambda ,
	\end{align*}
	where
	$|G(\lambda)|\lesssim (\lambda\nu)^{-3/2}(\lambda\mu)^{-1/2}
	+(\lambda\nu)^{-1/2}(\lambda\mu)^{-3/2}$. We estimate the resulting three
	pieces. First,
	\[
	\int_{\lambda_1}^{\lambda_0}g_1^+(\lambda)^{-1}\chi(\lambda/\lambda_0)
	e^{i\lambda\delta}d\lambda
	=\mathcal{L}_\chi(\delta)
	+\mathcal{O}\Bigl(\int_0^{\lambda_1}\frac{d\lambda}{|\log\lambda|}\Bigr)
	=\mathcal{L}_\chi(\delta)+\mathcal{O}\Bigl(\frac{1}{\mu\log\mu}\Bigr).
	\]
	Second, since $\mu+\nu\ge 2\mu$, \eqref{eq:cchi-asy-lem} applied with
	$\delta$ replaced by $\mu+\nu$ (for $\mu\ge\delta_1/2$) gives
	\[
	\Bigl|\int_{\lambda_1}^{\lambda_0}g_1^+(\lambda)^{-1}\chi(\lambda/\lambda_0)
	e^{i\lambda(\mu+\nu)}d\lambda\Bigr|
	\le |\mathcal{L}_\chi(\mu+\nu)|+\mathcal{O}\Bigl(\frac{1}{\mu\log\mu}\Bigr)
	\lesssim \frac{1}{\mu\log\mu}.
	\]
	Both oscillatory modes carry the prefactor
	$\frac{1}{\pi\sqrt{\mu\nu}}\sim \mu^{-1}$ in $k_{>}$, so their contribution
	beyond the main term $\mathcal{L}_\chi(\delta)/\bigl(\pi\sqrt{\mu\nu}\bigr)$
	is $\mathcal{O}\bigl(\mu^{-2}/\log\mu\bigr)$. Finally, since
	$\int_{\lambda_1}^{\lambda_0}\frac{d\lambda}{\lambda|\log\lambda|}
	=\log\log(2\mu)+\mathcal{O}(1)$ and $\nu\sim \mu$,
	\[
	\Bigl|\int_{\lambda_1}^{\lambda_0}\lambda\, g_1^+(\lambda)^{-1}
	\chi(\lambda/\lambda_0)\,G(\lambda)\,d\lambda\Bigr|
	\lesssim \bigl(\nu^{-3/2}\mu^{-1/2}+\nu^{-1/2}\mu^{-3/2}\bigr)\log\log\mu
	\lesssim \frac{\log\log\mu}{\mu^{2}} .
	\]
	Collecting the three estimates and multiplying by $\frac{i}{8\pi}$ gives
	\eqref{eq:shoulder-lem}.
	
	\medskip\noindent\emph{Step 3: conclusion.}
	Let $\nu=\mu+\delta$ with $\delta\in[\delta_1,\sqrt{\mu}]$, and let
	$\mu\ge\max\{2,\delta_1/2\}$; then $\nu\le 2\mu$, so Step~2 applies. By
	\eqref{eq:cchi-asy-lem}, the main term in \eqref{eq:shoulder-lem} satisfies
	\[
	\frac{|\mathcal{L}_\chi(\delta)|}{8\pi^{2}\sqrt{\mu\nu}}
	=\frac{1}{8\pi^{2}\sqrt{\mu\nu}\,\delta\,\log\delta}
	\Bigl(1+\mathcal{O}\bigl((\log\delta)^{-1}\bigr)\Bigr),
	\]
	while, using $\delta\log\delta\le \sqrt{\mu}\cdot\tfrac12\log\mu$,
	\[
	|\mathcal{J}(\mu,\nu)|\cdot\bigl(\mu\,\delta\,\log\delta\bigr)
	\lesssim \frac{\delta\,\log\delta\,\log\log\mu}{\mu}
	\lesssim \frac{\log\mu\,\log\log\mu}{\sqrt{\mu}}
	=o(1)\qquad(\mu\to\infty),
	\]
	so the remainder is relatively negligible. Enlarging $\delta_1$ if
	necessary so that the $\mathcal{O}\bigl((\log\delta)^{-1}\bigr)$ term is at
	most $\tfrac12$, and then choosing $\mu_0$ so large that
	$|\mathcal{J}(\mu,\nu)|
	\le \frac{1}{16\pi^{2}\sqrt{\mu\nu}\,\delta\,\log\delta}$
	for all $\mu\ge\mu_0$, we obtain from \eqref{eq:shoulder-lem} and
	$\sqrt{\mu\nu}=\mu\sqrt{1+\delta/\mu}\le 2\mu$ that
	\[
	|k(\nu,\mu)|
	\ge \frac{1}{16\pi^{2}\sqrt{\mu\nu}\,\delta\,\log\delta}
	\ge \frac{1}{32\pi^{2}\,\mu\,\delta\,\log\delta},
	\qquad \mu\ge\mu_0,\ \ \delta\in[\delta_1,\sqrt{\mu}].
	\]
	This is \eqref{eq:k-asymp-lem} with $c=1/(32\pi^{2})$.
\end{proof}

We can now complete the proof of statement~\textup{(i)} in
Proposition~\ref{pro:unbound-1}.

\begin{proof}[Completion of the proof of \textup{(i)} in Proposition~\ref{pro:unbound-1}]
By \eqref{eq:dec-of-WL-second}, together with the
$L^1$-boundedness of $\mathcal{W}_{10}^{\mathrm{L}}$ and
$\mathcal{W}_{01}^{\mathrm{L}}$ established in
Subsection~\ref{sec:contri-of-co}, the operator $\mathcal{W}^{\mathrm{L}}$
fails to be bounded on $L^1(\mathbb{R}^2)$ if and only if the same holds
for the sum $\mathcal{W}_{00}^{\mathrm{L}}+\mathcal{W}_{11}^{\mathrm{L}}$. 

Combining the reduction in
Subsection~\ref{sec:unb-for-regular} with the discussion at the beginning
of this subsection, we further deduce that $\mathcal{W}^{\mathrm{L}}$ is
$L^1$-unbounded if and only if
\begin{itemize}
	\item $\mathcal{W}_{\mathrm{sing}}^{\mathrm{L}}$ is $L^1$-unbounded, when
	$H$ has neither p-wave nor s-wave resonances;
	\item $\mathcal{W}_{\mathrm{p},\,\mathrm{sing}}^{\mathrm{L}}$ is
	$L^1$-unbounded, when $H$ has both p-wave and s-wave resonances;
	\item $\mathcal{W}_{\mathrm{sing}}^{\mathrm{L}}+
	\mathcal{W}_{\mathrm{p},\,\mathrm{sing}}^{\mathrm{L}}$ is $L^1$-unbounded,
	when $H$ has p-wave resonances but no s-wave resonances.
\end{itemize}
The first two statements follow from Lemmas~\ref{pro:unb of Wsing} and
\ref{lemm:unb-of-wpsing}, respectively. It remains to prove the third
one. Note that the unboundedness of the two summands does not by itself
imply the unboundedness of the sum, since cancellations between the two
singularities could a priori occur; to rule this out, we test the sum
against a sequence of functions whose Fourier transforms vanish near the
origin.

Let $f_k$ be the sequence of functions defined by
\[
\hat{f_k}(|\xi|) = \chi\Big(\frac{2|\xi|}{\lambda_0}\Big)\bigl(1-\chi(k|\xi|)\bigr).
\]
Each $f_k$ is a Schwartz function, and a simple computation shows that
$\sup_k \|f_k\|_{L^1} \lesssim 1$. Since the divergence in
\eqref{eq-rep-pair} originates from the logarithmic singularity at the
origin, applying the identity \eqref{eq-rep-pair} to $f_k$ yields
\[
\lim_{k\to\infty} \langle \mathcal{W}_{\mathrm{sing}}^{\mathrm{L}} f_k,  1\rangle = \infty.
\]
It therefore suffices to show that the p-wave part annihilates every
$f_k$ in the same pairing, i.e. that
$\langle \mathcal{W}_{\mathrm{p},\,\mathrm{sing}}^{\mathrm{L}} f_k,
1\rangle = 0$ for each $k$.
Indeed, using the diagonalization \eqref{eq:dia-of-SD2S} and arguing as
for \eqref{eq-rep-pair}, we obtain, for each pair $(f_k,1)$,
\begin{align*}
	\langle \mathcal{W}_{\mathrm{p},\,\mathrm{sing}}^{\mathrm{L}} f_k, \, 1 \rangle
	&= \sum_{j} \alpha_{j} \frac{\mathcal{F}(v\phi_j)(0)}{(2\pi)^2}
	\lim_{\varepsilon \to 0+} \int_{\mathbb{R}^2}
	\frac{\overline{\mathcal{F}^{-1}(v\phi_j)(\eta)} \;
		\chi(2|\eta|)\bigl(1-\chi(k|\eta|)\bigr)}
	{g_{1}^{+}(|\eta|)|\eta|^2 (|\eta|^2 + i\varepsilon)} \, d\eta \\
	&= 0,
\end{align*}
where we used that $\mathcal{F}(v\phi_j)(0) = \int_{\mathbb{R}^2}
v\phi_j\,dx = 0$ for each $j$: indeed,
$\phi_j\in (S_2-S_3)L^2\subset S_2L^2$ and $S_2v=0$, so that
$\int_{\mathbb{R}^2}v\phi_j\,dx=\langle S_2v,\phi_j\rangle=0$.
Consequently,
$$\langle (\mathcal{W}_{\mathrm{sing}}^{\mathrm{L}}+
\mathcal{W}_{\mathrm{p},\,\mathrm{sing}}^{\mathrm{L}}) f_k,\,1\rangle
\to\infty$$ as $k\to\infty$, while
$\sup_k\|f_k\|_{L^1}\lesssim 1$. This shows that
$\mathcal{W}_{\mathrm{sing}}^{\mathrm{L}}+
\mathcal{W}_{\mathrm{p},\,\mathrm{sing}}^{\mathrm{L}}$ is unbounded on
$L^1(\mathbb{R}^2)$ and completes the proof.
\end{proof}

\subsection{Proof of \textup{(ii)} in Proposition~\ref{pro:unbound-1}}

This subsection is devoted to the proof of statement~(ii) of 
Proposition~\ref{pro:unbound-1}. Since Yajima~\cite{Yajima-2022} has shown 
that $W_{-}$ fails to be bounded on $L^p(\mathbb{R}^2)$ for any 
$2<p\le \infty$ when $H$ has a p-wave resonance, it follows from 
Proposition~\ref{pro:bound for high} that $\mathcal{W}^{\mathrm{L}}$ is not 
bounded on $L^\infty(\mathbb{R}^2)$ in this case. Therefore, it suffices to 
establish the $L^\infty$-unboundedness of $\mathcal{W}^{\mathrm{L}}$ in the 
absence of p-wave resonances.

Since we also assume that zero is a second-kind threshold singularity or an 
eigenvalue of $H$ in Proposition~\ref{pro:unbound-1}, the operator $H$ must 
possess zero-energy eigenfunctions. This reduces the proof of the 
$L^\infty$-unboundedness of $\mathcal{W}^{\mathrm{L}}$ to the following three 
subcases under the assumption that $H$ has no p-wave resonances:
\begin{itemize}
	\item \emph{Subcase 4:} $H$ has no s-wave resonance and 
	\eqref{eq:condi_for_eigen-0} holds for every zero-energy eigenfunction of $H$.
      \smallskip
	\item \emph{Subcase 5:} $H$ has an s-wave resonance and 
	\eqref{eq:condi_for_eigen-0} does not hold for some zero-energy eigenfunction 
	of $H$.
	\item \emph{Subcase 6:} $H$ has no s-wave resonance and 
	\eqref{eq:condi_for_eigen-0} does not hold for some zero-energy eigenfunction 
	of $H$.
\end{itemize}

If $H$ has no p-wave resonance, then $\mathcal{W}_{10}^{\mathrm{L}}$ and 
$\mathcal{W}_{01}^{\mathrm{L}}$ are both bounded on $L^\infty(\mathbb{R}^2)$. 
Moreover, in this case,
\[
\mathcal{L}_{11}^{+}(\lambda)=
\dfrac{S_3D_3S_3}{\lambda^2}+\tilde{\mathcal{O}}_2(\lambda^{-1+}),
\]
and the contribution of the error term 
$\tilde{\mathcal{O}}_2(\lambda^{-1+})$ is also bounded on 
$L^\infty(\mathbb{R}^2)$ by the proof of Lemma~\ref{lem:contri of L11}.

Collecting these results, it remains to verify that
\begin{itemize}
	\item $\mathcal{W}_{\mathrm{sing}}^{\mathrm{L}}$ is unbounded on 
	$L^\infty(\mathbb{R}^2)$ in Subcase~4;
	\item $\mathcal{W}_{\mathrm{e},\,\mathrm{sing}}^{\mathrm{L}}$ is unbounded 
	on $L^\infty(\mathbb{R}^2)$ in Subcase~5;
	\item $\mathcal{W}_{\mathrm{sing}}^{\mathrm{L}}+
	\mathcal{W}_{\mathrm{e},\,\mathrm{sing}}^{\mathrm{L}}$ is unbounded on 
	$L^\infty(\mathbb{R}^2)$ in Subcase~6.
\end{itemize}

We first establish the unboundedness of 
$\mathcal{W}_{\mathrm{e},\,\mathrm{sing}}^{\mathrm{L}}$.

\begin{lemma}\label{lem:unbound-of-N_4}
	If \eqref{eq:condi_for_eigen-0} does not hold for some zero-energy 
	eigenfunction of $H$, then $\mathcal{W}_{\mathrm{e}, \mathrm{sing}}^{\mathrm{L}}$ 
	is unbounded on $L^\infty(\mathbb{R}^2)$.
\end{lemma}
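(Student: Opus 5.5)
We reuse the decomposition $\mathcal{W}_{\mathrm{e},\mathrm{sing}}^{\mathrm{L}}=\mathcal{N}_{4,m}+\mathcal{N}_{4,r}$ from the proof of Lemma~\ref{lem:bound of N_4}. Step~1 of that proof showed that $\mathcal{N}_{4,r}$ is bounded on $L^\infty(\mathbb{R}^2)$ without using \eqref{eq:condi_for_eigen-0}. Hence it suffices to show that $\mathcal{N}_{4,m}$ is unbounded on $L^\infty$.

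\textbf{Reduction to a multiplier.} The computation in Step~2 of that proof does not use \eqref{eq:condi_for_eigen-0}. It gives, for $f\in L^2\cap L^\infty$,
\[
\mathcal{N}_{4,m}f=\sum_{j=1}^k\Psi_j\,\bigl\langle f,\chi(\sqrt{-\Delta}/\lambda_0)\Psi_j\bigr\rangle .
\]
Since the $\Psi_j$ are linearly independent and bounded, $\mathcal{N}_{4,m}$ is bounded on $L^\infty$ (tested on $L^2\cap L^\infty$) if and only if each functional $f\mapsto\langle f,\chi(\sqrt{-\Delta}/\lambda_0)\Psi_j\rangle$ is. By duality, this happens exactly when $\chi(\sqrt{-\Delta}/\lambda_0)\Psi_j\in L^1(\mathbb{R}^2)$ for every $j$. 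Here we use that the space of such bounded functionals on a weak-$*$ dense class is identified with $L^1$. Concretely, one tests with $f=\overline{\operatorname{sgn}}(g)\mathbf{1}_{B(0,R)}$ and lets $R\to\infty$, which gives $\|g\mathbf{1}_{B(0,R)}\|_{L^1}\to\infty$ if $g\notin L^1$.

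We may choose the orthonormal basis so that some $\Psi_1$ violates \eqref{eq:condi_for_eigen-0}. To isolate it, we pick $f$ nearly orthogonal to the other $\chi(\sqrt{-\Delta}/\lambda_0)\Psi_j$, using a Gram--Schmidt argument on the finitely many functionals.

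\textbf{Non-integrability.} We expand $\widehat{V\Psi_1}=\mathbf P_1+E_1$ as in Lemma~\ref{lem:bound of N_4}. This gives
\[
\chi(\sqrt{-\Delta}/\lambda_0)\Psi_1=-\mathcal{F}^{-1}\Bigl(\frac{\mathbf P_1(\eta)}{|\eta|^2}\chi(|\eta|/\lambda_0)\Bigr)-\mathcal{F}^{-1}(a_1).
\]
The term $\mathcal{F}^{-1}(a_1)$ lies in $L^1$ by the argument given there. We decompose $\mathbf P_1(\eta)/|\eta|^2$ into the constant $-\tfrac14(m_{11}+m_{22})$ plus a degree-two harmonic part $-\frac12\bigl(\tfrac{m_{11}-m_{22}}{2}(\eta_1^2-\eta_2^2)+2m_{12}\eta_1\eta_2\bigr)/|\eta|^2$. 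The constant part has a Schwartz inverse transform. The harmonic part is nonzero precisely because \eqref{eq:condi_for_eigen-0} fails, since $m_{12}\neq0$ or $m_{11}\neq m_{22}$.

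Write $Y(\eta)=P(\eta)/|\eta|^2$, with $P\in\mathcal{A}_2$ nonzero. By the Bochner (Hecke) identity, the inverse Fourier transform of $Y$ is $c\,P(x)/|x|^4$ plus a multiple of $\delta_0$, with $c\neq0$. Therefore
\[
\mathcal{F}^{-1}\bigl(Y\chi(|\cdot|/\lambda_0)\bigr)(x)=c\,\frac{P(x)}{|x|^4}+\mathcal{O}(|x|^{-3}),\qquad |x|\gg1.
\]
This follows by convolving the homogeneous kernel with the Schwartz function $\mathcal{F}^{-1}(\chi(|\cdot|/\lambda_0))$, which has unit mass. The $|x|^{-3}$ error comes from its second moments and from the smoothness of the cutoff. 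The main term $P(x)/|x|^4$ decays like $|x|^{-2}$ with nonvanishing angular profile, so it is not integrable at infinity. Hence $\chi(\sqrt{-\Delta}/\lambda_0)\Psi_1\notin L^1$ and $\mathcal{N}_{4,m}$ is unbounded on $L^\infty$.

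\textbf{Main obstacle.} The delicate point is the exact large-$|x|$ asymptotic of $\mathcal{F}^{-1}(Y\chi)$ with a genuine $|x|^{-3}$ remainder. I would handle it by splitting $\chi=1-\tilde\chi$. The term with $\tilde\chi\,Y$ has a symbol smooth away from the origin and decaying rapidly, so integrating by parts gives $\mathcal{O}(|x|^{-N})$ contributions at infinity. The term with $1$ is the exact homogeneous distribution. A secondary point is the adjoint/duality justification for the functional, which follows directly from the Lebesgue-ball test functions above.
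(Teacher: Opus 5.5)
Your proposal is correct and follows essentially the paper's route. You reduce, via the decomposition from the proof of Lemma~\ref{lem:bound of N_4}, to the functional given by pairing with $\chi(\sqrt{-\Delta}/\lambda_0)\Psi_1$, split off the degree-two harmonic part of $\mathbf{P}_1$, and use the homogeneous-kernel identity for $\mathcal{F}^{-1}\bigl(P(\eta)/|\eta|^2\bigr)$. The paper (via \cite[Proposition~2.2.7]{graf}) tests against $\overline{\Omega}(x/|x|)\bigl(\chi(|x|)-\chi(|x|/\kappa)\bigr)$ to get $\log\kappa$ growth, while you show the $|x|^{-2}$ tail is non-integrable and test with sign functions on balls; this is the same mechanism, and your linear-independence argument for reducing to single functionals is cleaner than the paper's appeal to orthonormality.

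One wording fix: $\tilde\chi(|\eta|/\lambda_0)Y(\eta)$ does not decay rapidly. It is a zero-order symbol whose $N$-th derivatives decay like $|\eta|^{-N}$, and that derivative decay is what yields the $\mathcal{O}(|x|^{-N})$ bound at infinity.
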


\begin{proof}
	Recall from the proof of Lemma~\ref{lem:bound of N_4} that we have the 
	decomposition
	\[
	\mathcal{W}_{\mathrm{e}, \mathrm{sing}}^{\mathrm{L}}f(x)
	=\frac{1}{2}\sum_{j=1}^k \Psi_j(x) 
	\int_{\mathbb{R}^2} 
	\overline{\mathcal{F}^{-1}\left(\frac{\chi(|\cdot|/\lambda_0)\mathbf{P}_j(\cdot)}
		{|\cdot|^2}\right)(w)}\,
	f(w)\,dw + \mathcal{W}_{\mathrm{e}, \mathrm{good}}^{\mathrm{L}} f(x),
	\]
	for any $f\in L^2\cap L^\infty$, where the operator $\mathcal{W}_{\mathrm{e}, \mathrm{good}}^{\mathrm{L}}$ is bounded on 
	$L^\infty(\mathbb{R}^2)$ and $\mathbf{P}_j$ is the homogeneous polynomial
	\[
	\mathbf{P}_j(\eta)=\int_{\mathbb{R}^2} (x\cdot \eta)^2 (V\Psi_j)(x)\,dx.
	\]
	Here, $\{\Psi_{j}\}_{j=1}^k$ is an orthonormal basis of the zero-energy 
	eigenspace of $H$ with $\Psi_{j}\in L^{\infty}(\mathbb{R}^2)$. By the 
	orthonormality of $\{\Psi_j\}$, the $L^\infty$-boundedness of 
	$\mathcal{W}_{\mathrm{e}, \mathrm{sing}}^{\mathrm{L}}$ is equivalent to 
	showing that each map
	\begin{equation}\label{eq:mapping-to-unb}
		f\mapsto \int_{\mathbb{R}^2} 
		\overline{\mathcal{F}^{-1}\left(\frac{\chi(|\cdot|/\lambda_0)\mathbf{P}_j(\cdot)}
			{|\cdot|^2}\right)(w)}\,
		f(w)\,dw
	\end{equation}
	defines a bounded linear functional on $L^\infty(\mathbb{R}^2)$.
	
	To prove the lemma, we argue by contradiction. Without loss of generality, 
	assume that $\Psi_1$ does not satisfy \eqref{eq:condi_for_eigen-0}. Note that 
	the homogeneous polynomials $x_1x_2$ and $x_1^2-x_2^2$ form a basis for the 
	space of homogeneous harmonic polynomials of degree $2$ in two dimensions. 
	Since $\Psi_1$ does not satisfy \eqref{eq:condi_for_eigen-0}, there exist a 
	non-zero homogeneous harmonic polynomial $P_1$ of degree $2$ and a constant 
	$c_0$ such that
	\[
	\mathbf{P}_1(\eta)=P_1(\eta)+c_0|\eta|^2.
	\]
	It is known that the term $c_0|\eta|^2$ contributes a bounded linear functional 
	on $L^\infty(\mathbb{R}^2)$. Since $P_1$ is a non-zero homogeneous harmonic 
	polynomial of degree $2$, one has
	\[
	\int_{\mathbb{S}^1} P_1(\omega)\, d\omega=0.
	\]
	We now decompose
	\[
	\mathcal{F}^{-1}\left(\frac{\chi(|\cdot|/\lambda_0)\, P_1(\cdot)}
	{|\cdot|^2}\right)
	=\mathcal{F}^{-1}\left(\frac{P_1(\cdot)}
	{|\cdot|^2}\right)
	-\mathcal{F}^{-1}\left(\frac{\tilde{\chi}(|\cdot|/\lambda_0)\, P_1(\cdot)}
	{|\cdot|^2}\right),
	\]
	and observe that $\mathcal{F}^{-1}\left(\tilde{\chi}(|\cdot|/\lambda_0)P_1(\cdot/|\cdot|)\right)$ is a rapidly decaying function away from the origin.
	By \cite[Proposition 2.2.7]{graf}, there exist a constant $b$ and a 
	$C^\infty$ function $\Omega$ on $\mathbb{S}^1$ such that, 
	in the sense of distributions,
	\[
	\mathcal{F}^{-1}\left(\frac{P_1(\cdot)}
	{|\cdot|^2}\right)
	=b\delta +W_\Omega,
	\]
	where $W_\Omega$ is the distribution defined by
	\[
	\langle W_\Omega, \varphi\rangle
	=\lim_{\varepsilon\to 0^+}\int_{|x|\ge \varepsilon} 
	\frac{\Omega(x/|x|)}{|x|^2} \varphi(x)\,dx,
	\qquad \varphi \in C_0^\infty(\mathbb{R}^2),
	\]
	with $\int_{\mathbb{S}^1} \Omega(\omega)\, d\omega=0$.
	
	Now, let $f_{\kappa}(x)=\overline{\Omega(x/|x|)}
	\bigl(\chi(|x|)-\chi(|x|/\kappa)\bigr)$, which is a $C_0^\infty$ function. 
	Then, for some non-zero constant $c_1$,
	\[
	\left|\int_{\mathbb{R}^2} 
	\overline{\mathcal{F}^{-1}\left(\frac{\chi(|\cdot|/\lambda_0)P_1(\cdot)}
		{|\cdot|^2}\right)(w)}\,
	f_\kappa(w)\,dw\right|
	\ge c_1 |\log \kappa|
	\]
	as $\kappa\to\infty$. This implies that the map in \eqref{eq:mapping-to-unb} 
	cannot be a bounded linear functional on $L^\infty(\mathbb{R}^2)$, thereby 
	completing the proof.
\end{proof}

We now complete the proof of statement~(ii) of 
Proposition~\ref{pro:unbound-1}.

\begin{proof}[Completion of the proof of statement~\textup{(ii)} of 
	Proposition~\ref{pro:unbound-1}]
	From the reduction at the beginning of this subsection, it remains to show 
	that $\mathcal{W}_{\mathrm{sing}}^{\mathrm{L}}$ is unbounded on 
	$L^\infty(\mathbb{R}^2)$ in Subcase~4; this follows from 
	Lemma~\ref{pro:unb of Wsing}. In Subcase~5, the operator $\mathcal{W}_{\mathrm{e}, \mathrm{sing}}^{\mathrm{L}}$ 
	is unbounded on $L^\infty(\mathbb{R}^2)$ by 
	Lemma~\ref{lem:unbound-of-N_4}. Finally, by Remark~\ref{eq:rem-to-Linfty}, 
	the proof of Lemma~\ref{lem-W0s-bound} shows that 
	$\chi(|\cdot|/r)\mathcal{W}_{\mathrm{sing}}^{\mathrm{L}}$ is bounded on 
	$L^\infty(\mathbb{R}^2)$  for each fixed $r$. However, by the proof of 
	Lemma~\ref{lem:unbound-of-N_4}, the operator 
	$\chi(|\cdot|/r)\mathcal{W}_{\mathrm{e},\,\mathrm{sing}}^{\mathrm{L}}$ is 
	unbounded on $L^\infty(\mathbb{R}^2)$ by Subcase~6 for some large $r$. Consequently, the sum 
	$\mathcal{W}_{\mathrm{sing}}^{\mathrm{L}}+
	\mathcal{W}_{\mathrm{e},\,\mathrm{sing}}^{\mathrm{L}}$ is unbounded on 
	$L^\infty(\mathbb{R}^2)$ in Subcase~6, and the proof is complete.
\end{proof}

\appendix

\section{Proof of Lemma \ref{thm-M inverse-even-1}}\label{sec:proof of exp lemma}

This section is devoted to the proof of Lemma~\ref{thm-M inverse-even-1}. The main tool is the Feshbach inversion formula, which we recall as follows.

\begin{lemma}[{\cite[Lemma 2.3]{JN01}}]\label{lemma-Feshbach formula}
	Let $\mathbb{A}$ be an operator matrix on the orthogonal direct sum of Hilbert spaces $\mathcal{H}_1\oplus\mathcal{H}_2$,
	\[
	\mathbb{A}=
	\begin{pmatrix}
		a_{11} & a_{12}\\[2pt]
		a_{21} & a_{22}
	\end{pmatrix},
	\qquad a_{ij}\colon \mathcal{H}_j\rightarrow\mathcal{H}_i,
	\]
	where $a_{11},a_{22}$ are closed and $a_{12},a_{21}$ are bounded. Assume that $a_{11}$ has a bounded inverse. Then $\mathbb{A}$ has a bounded inverse if and only if the operator $d:= a_{22}-a_{21}a_{11}^{-1}a_{12}$ has a bounded inverse, in which case
	\begin{equation*}
		(\mathbb{A})^{-1}=
		\begin{pmatrix}
			a_{11}^{-1}a_{12}d^{-1}a_{21}a_{11}^{-1}+a_{11}^{-1} & -a_{11}^{-1}a_{12}d^{-1}\\[4pt]
			-d^{-1}a_{21}a_{11}^{-1} & d^{-1}
		\end{pmatrix}.
	\end{equation*}
\end{lemma}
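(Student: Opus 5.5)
The plan is to prove the statement by a block (Schur-complement) factorization of $\mathbb{A}$ into two bounded, boundedly invertible triangular factors and a block-diagonal middle factor. All operators are understood on their natural domain $\mathcal{D}:=\mathcal{D}(a_{11})\oplus\mathcal{D}(a_{22})$. Since $a_{11}^{-1}$ is bounded and $a_{12},a_{21}$ are bounded, the operators
\[
L:=\begin{pmatrix} I & 0\\ a_{21}a_{11}^{-1} & I\end{pmatrix},\qquad
R:=\begin{pmatrix} I & a_{11}^{-1}a_{12}\\ 0 & I\end{pmatrix}
\]
are bounded on $\mathcal{H}_1\oplus\mathcal{H}_2$. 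They have the explicit bounded inverses obtained by flipping the sign of the off-diagonal entry.

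\emph{Step 1: the factorization.} We check directly, entry by entry, that
\[
\mathbb{A}=L\begin{pmatrix} a_{11} & 0\\ 0 & d\end{pmatrix}R
\quad\text{on }\mathcal{D}.
\]
The $(2,2)$ entry of the right-hand side is $a_{21}a_{11}^{-1}a_{12}+d=a_{22}$, which is where the definition of $d$ enters. The remaining entries are immediate. We also need that the product is defined as an identity of operators with domain $\mathcal{D}$. For this, note that $R$ maps $\mathcal{D}$ bijectively onto itself, because $a_{11}^{-1}a_{12}w\in\mathcal{D}(a_{11})$ for every $w\in\mathcal{H}_2$. In addition, $d$ is closed on $\mathcal{D}(a_{22})$, being a bounded perturbation of the closed operator $a_{22}$.

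\emph{Step 2: equivalence of invertibility.} Because $L$ and $R$ are bounded bijections with bounded inverses that preserve the relevant domains, $\mathbb{A}$ has a bounded inverse if and only if $\operatorname{diag}(a_{11},d)$ does. Since $a_{11}^{-1}$ is assumed bounded, this holds if and only if $d$ has a bounded inverse. For the ``only if'' direction we argue as follows. Suppose $\mathbb{A}^{-1}$ is bounded. Then
\[
\operatorname{diag}(a_{11},d)^{-1}=R\,\mathbb{A}^{-1}L,
\]
which is bounded. Its $(2,2)$ block is a bounded two-sided inverse of $d$, since block-diagonal structure is preserved.

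\emph{Step 3: the formula.} Multiply out
\[
\mathbb{A}^{-1}=R^{-1}\operatorname{diag}(a_{11}^{-1},d^{-1})L^{-1}.
\]
Here $R^{-1}$ has off-diagonal entry $-a_{11}^{-1}a_{12}$, and $L^{-1}$ has off-diagonal entry $-a_{21}a_{11}^{-1}$. A two-line matrix multiplication produces exactly the four displayed entries. For instance, the $(1,1)$ entry is $a_{11}^{-1}+a_{11}^{-1}a_{12}d^{-1}a_{21}a_{11}^{-1}$.

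The only point requiring care is the domain bookkeeping for the unbounded closed entries $a_{11},a_{22}$ in Steps 1--2. In particular, we must make sure that the factorization holds with equal domains and not merely on a core. I would handle this exactly as indicated above, by verifying that $R$ maps $\mathcal{D}$ onto $\mathcal{D}$ and that $L$ is a bounded bijection. In the paper's application all entries are bounded on $L^2$, so even this issue disappears.
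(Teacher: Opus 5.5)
Your argument is correct: the factorization $\mathbb{A}=L\,\operatorname{diag}(a_{11},d)\,R$ holds on $\mathcal{D}(a_{11})\oplus\mathcal{D}(a_{22})$, and you checked that $R$ maps this domain onto itself, so it yields the equivalence of invertibility; multiplying out $R^{-1}\operatorname{diag}(a_{11}^{-1},d^{-1})L^{-1}$ then gives exactly the stated block formula. The paper gives no proof of its own here and simply quotes \cite[Lemma 2.3]{JN01}; your Schur-complement factorization is the standard argument behind that result, and its mirror image (factoring through $a_{22}^{-1}$) covers the version with the diagonal blocks interchanged that is actually invoked in Appendix~\ref{sec:proof of exp lemma}.
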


As a first step, we reduce the invertibility of $M^{\pm}(\lambda)$ to that of a $2\times2$ operator matrix adapted to the decomposition $L^2=(I-S_2)L^2\oplus S_2L^2$. Recall that, by Lemma~\ref{lem:one to one lem}, $S_2 \neq 0$ whenever zero is a second-kind threshold singularity or an eigenvalue of $H$; in this case, for each $\lambda>0$ the map
\[
\sigma_{\lambda}\colon L^2 \to (I-S_2)L^2 \oplus S_2L^2, 
\qquad \sigma_{\lambda}\varphi := \bigl((I-S_2)\varphi, \; \lambda^{-1}S_2\varphi\bigr),
\]
is an isomorphism; denoting its adjoint by $\sigma_{\lambda}^*$, whenever $\bigl(\sigma_{\lambda}M^{\pm}(\lambda)\sigma_{\lambda}^*\bigr)^{-1}$ exists we have
\begin{equation}\label{eq:iso to M inverse}
	(M^{\pm}(\lambda))^{-1} = \sigma_{\lambda}^*\bigl(\sigma_{\lambda}M^{\pm}(\lambda)\sigma_{\lambda}^*\bigr)^{-1}\sigma_{\lambda}.
\end{equation}
The proof of Lemma~\ref{thm-M inverse-even-1} is thus reduced to the invertibility and asymptotic behavior of $\sigma_{\lambda}M^{\pm}(\lambda)\sigma_{\lambda}^*$ as $\lambda\to 0$. We proceed in three steps: we first expand $M^{\pm}(\lambda)$ near zero energy, then invert the diagonal part of the resulting operator matrix, and finally recover the full inverse by a Neumann series argument.

\begin{lemma}
	Assume that $|V(x)| \lesssim \langle x\rangle^{-8-}$. Then the following expansion holds in $\mathbb{B}(L^2)$:
	\begin{equation}\label{eq:exp for M at zero}
		M^{\pm}(\lambda) = g_0^{\pm}(\lambda)P + T + g_1^\pm(\lambda)\lambda^2 v G_1 v + \lambda^2 v G_2 v + \tilde{\mathcal{O}}_2(\lambda^{3+}), 
		\qquad \lambda\in (0,1/2).
	\end{equation}	
\end{lemma}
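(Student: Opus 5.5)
The plan is to expand the kernel of $vR_0^\pm(\lambda^2)v$ pointwise using the small-argument series of the Hankel function, organise the result into the stated operators, and then control the remainder as a Hilbert--Schmidt operator together with its $\lambda$-derivatives.

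Recall $M^\pm(\lambda)=U+vR_0^\pm(\lambda^2)v$, whose kernel is $\pm\frac{i}{4}v(x)H_0^\pm(\lambda|x-y|)v(y)$. The first step is to write out the small-argument expansion \eqref{eq:asy-Hanel-0} of $H_0^\pm$ to order $z^2\log z$. Grouping terms by powers of $\lambda$ gives
\[
\pm\tfrac{i}{4}H_0^\pm(\lambda r)=c^\pm-\tfrac{1}{2\pi}\log\lambda-\tfrac{1}{2\pi}\log r+\tfrac{\lambda^2r^2}{8\pi}\bigl(\log\lambda+\beta'_\pm\bigr)+\tfrac{\lambda^2 r^2\log r}{8\pi}+\text{remainder}.
\]
From this the operators are read off as follows.
\begin{itemize}
\item The constant $c^\pm-\frac{1}{2\pi}\log\lambda$, multiplied by $v(x)v(y)$, equals $\|V\|_{L^1}\bigl(c^\pm-\frac{1}{2\pi}\log\lambda\bigr)P$. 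This defines $g_0^\pm(\lambda)P$.
\item The term $-\frac{1}{2\pi}\log r$ produces $vG_0v$, which together with $U$ gives $T$.
\item The $r^2$ terms produce $g_1^\pm(\lambda)\lambda^2vG_1v$ and $\lambda^2vG_2v$. The constants are normalised so that $g_1^\pm=\log\lambda+\beta_\pm$, matching \eqref{eq:def-of-g_1}.
\end{itemize}

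The second step is the remainder estimate. Let $E^\pm(\lambda,r)$ denote $H_0^\pm(\lambda r)$ minus its explicit terms up to order $(\lambda r)^2\log(\lambda r)$. For $\lambda r\lesssim1$ the expansion gives $|\partial_\lambda^k E^\pm|\lesssim \lambda^{-k}(\lambda r)^{3+}$; more precisely it is $O((\lambda r)^4\log)$, which is at least $(\lambda r)^{3+\epsilon}$. For $\lambda r\gtrsim1$ we use the full function, bounded by \eqref{eq:asy-Hanel-1}. The subtracted polynomial-log terms there are of size $(\lambda r)^2\log(\lambda r)$, so each term, and hence $E^\pm$, is $\lesssim (\lambda r)^{3+}$. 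Each $\lambda$-derivative costs $\lambda^{-1}$, together with a factor $r$ from the oscillation $e^{\pm i\lambda r}$; since $\lambda r\gtrsim1$, that factor $r$ is again dominated by $\lambda^{-1}(\lambda r)$.

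These bounds combine into the uniform estimate
\[
|\partial_\lambda^k E^\pm(\lambda,r)|\lesssim \lambda^{3+\epsilon-k} r^{3+\epsilon}\langle r\rangle^{\theta},\qquad k=0,1,2,
\]
where $\theta$ is a small loss (one could even take $\theta=0$ by interpolating the exponent). Using $r\le\langle x\rangle\langle y\rangle$, the kernel $v(x)\partial_\lambda^kE^\pm v(y)$ has Hilbert--Schmidt norm at most $\lambda^{3+\epsilon-k}\,\|v\langle\cdot\rangle^{3+\epsilon}\|_{L^2}^2$. This is finite exactly when $v\lesssim\langle x\rangle^{-4-}$, that is, when $|V|\lesssim\langle x\rangle^{-8-}$. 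This is where the decay hypothesis enters.

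The main obstacle is the large-$\lambda r$ regime. There one must check that the subtracted terms $(\lambda r)^2\log(\lambda r)$, once differentiated, still satisfy the $\tilde{\mathcal{O}}_2(\lambda^{3+})$ bound. I would handle this by first choosing $\epsilon<1$ and then verifying the estimate term by term: each subtracted term is a monomial times a log, for which $\partial_\lambda$ is trivially $\lambda^{-1}$-homogeneous.
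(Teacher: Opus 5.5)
Your proposal is correct and follows the paper's proof. You expand $\pm\frac{i}{4}H_0^\pm(\lambda|x-y|)$ to order $z^2\log z$, read off $g_0^\pm(\lambda)P$, $T$, $g_1^\pm(\lambda)\lambda^2vG_1v$ and $\lambda^2vG_2v$, and control the remainder and its first two $\lambda$-derivatives in Hilbert--Schmidt norm; that last step is exactly where $\langle\cdot\rangle^{3+}v\in L^2$, i.e.\ $|V|\lesssim\langle x\rangle^{-8-}$, is used.

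Your large-$\lambda r$ step is cleanest written as $\partial_\lambda^k[E(\lambda r)]=\lambda^{-k}z^kE^{(k)}(z)$ with $z^k|E^{(k)}(z)|\lesssim z^{3+\epsilon}$ for all $z>0$ and $k\le 2$. This holds because $\frac{d^k}{dz^k}H_0^\pm(z)=\mathcal{O}(z^{-1/2})$ and $\frac{d^k}{dz^k}(z^2\log z)=\mathcal{O}(z^{2-k}(1+\log z))$ for $z\gtrsim1$, so no loss $\theta$ is needed.
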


\begin{proof}
	By the asymptotic properties of the Hankel functions summarized in \eqref{eq:asy-Hanel-0}--\eqref{eq:est for Theta-0}, the kernel of the free resolvent expands as
	\[
	R_0^{\pm}(\lambda^2)(x,y) = \pm\frac{i}{4} H_0^{\pm}(z) 
	= -\frac{1}{2\pi}\log\!\left(\frac{z}{2}\right) \pm \frac{i}{4} - \frac{\gamma}{2\pi} 
	+ \frac{1}{8\pi}\bigl(z^2\log z + \beta_{\pm} z^2\bigr) + r^{\pm}(z),
	\qquad z = \lambda|x-y|,
	\]
	where the remainder satisfies, for every $k\in \mathbb{N}_0$, the bound $\bigl|d^k r^{\pm}(z)/dz^k\bigr| \lesssim |z|^{3-k+}$ if $|z| \lesssim 1$ and $\bigl|d^k r^{\pm}(z)/dz^k\bigr| \lesssim |z|^{\max\{3-k+,\,-1/2\}}$ if $|z| \gtrsim 1$. Substituting this expansion into the definition of $M^{\pm}(\lambda)$ and estimating the resulting Hilbert--Schmidt norms, we obtain \eqref{eq:exp for M at zero}.
\end{proof}

For $\lambda>0$, we introduce the notation
\begin{equation*}
	\begin{aligned}
		m_{00}^\pm(\lambda)&=(I-S_2)M^{\pm}(\lambda)(I-S_2), 
		& m_{01}^{\pm}(\lambda)&= \lambda^{-1}(I-S_2)M^{\pm}(\lambda)S_2,\\ 
		m_{10}^{\pm}(\lambda)&= \lambda^{-1}S_2M^{\pm}(\lambda)(I-S_2), 
		& m_{11}^\pm(\lambda)&=\lambda^{-2}S_2M^{\pm}(\lambda)S_2.
	\end{aligned}
\end{equation*}
By the definition of $S_2$ in \eqref{eq-def-for-S2}, $(g_0^{\pm}(\lambda)P+T)S_2 = S_2(g_0^{\pm}(\lambda)P+T)=0$; hence \eqref{eq:exp for M at zero} yields the expansions
\begin{equation}\label{eq:asy of m_ij}
	\begin{aligned}
		m_{00}^\pm(\lambda)&= (I-S_2)\bigl(g_0^{\pm}(\lambda)P+T\bigr)(I-S_2) 
		+ \lambda^2(I-S_2)\bigl(g_1^\pm(\lambda)vG_1v + vG_2v\bigr)(I-S_2) 
		+ \tilde{\mathcal{O}}_2(\lambda^{3+}), \\
		m_{11}^\pm(\lambda)&= S_2\bigl(g_1^\pm(\lambda)vG_1v + vG_2v\bigr)S_2 
		+ \tilde{\mathcal{O}}_2(\lambda^{1+}), \\
		m_{01}^{\pm}(\lambda) &= \lambda\,(I-S_2)\bigl( g_1^\pm(\lambda)vG_1v 
		+ vG_2v \bigr)S_2 + \tilde{\mathcal{O}}_2(\lambda^{2+}), \\
		m_{10}^{\pm}(\lambda) &= \lambda \, S_2\bigl(g_1^\pm(\lambda)vG_1v 
		+ vG_2v \bigr)(I-S_2) + \tilde{\mathcal{O}}_2(\lambda^{2+}).
	\end{aligned}
\end{equation}
Accordingly, we decompose
\begin{equation}\label{eq:exp for AMA^*}
	\sigma_{\lambda}M^{\pm}(\lambda)\sigma_{\lambda}^*
	=\begin{pmatrix} 
		m_{00}^\pm(\lambda)  & m_{01}^\pm(\lambda) \\[2pt] 
		m_{10}^\pm(\lambda) & m_{11}^\pm(\lambda)
	\end{pmatrix}
	=:\Lambda^{\pm}(\lambda) + L^{\pm}(\lambda),
\end{equation}
where $\Lambda^{\pm}(\lambda)$ and $L^{\pm}(\lambda)$ denote the diagonal and off-diagonal parts, respectively. By \eqref{eq:asy of m_ij}, $L^{\pm}(\lambda)$ is a higher-order perturbation of $\Lambda^{\pm}(\lambda)$; we therefore first invert $\Lambda^{\pm}(\lambda)$ and then recover the full inverse by perturbation theory.

\begin{lemma}\label{lem:inve of Lambda}
Assume that $|V(x)|\lesssim \langle x\rangle^{-8-}$ and that zero is either a second-kind threshold singularity or an eigenvalue of $H$. Then there exists a sufficiently small $\lambda_1>0$ such that, for all $\lambda\in (0,\lambda_1)$, the operator $\Lambda^{\pm}(\lambda)$ is invertible on $(I-S_2)L^2\oplus S_2L^2$ and
\begin{equation*}
\Lambda^{\pm}(\lambda)^{-1}=
\begin{pmatrix} 
m_{00}^\pm(\lambda)^{-1}  & 0 \\[2mm] 
0 & m_{11}^\pm(\lambda)^{-1}
\end{pmatrix},
\end{equation*}
	where
	\begin{equation}\label{eq:inverse of m00} 
		m_{00}^\pm(\lambda)^{-1} = 
		\begin{cases}
			h_{\pm}(\lambda)^{-1}N + QD_{0}Q +\tilde{\mathcal{O}}_2(\lambda^{3/2}), 
			& \text{if no s-wave is present}, \\[3mm]
			\begin{aligned}[t]
				&-\bigl(h_{\pm}(\lambda)+N\bigr)(S_{1}-S_2)D_{1}(S_{1}-S_2) 
				- (S_{1}-S_2)D_{1}(S_{1}-S_2)N \\
				&\qquad + QD_{0}Q +\tilde{\mathcal{O}}_2(\lambda^{3/2}),
			\end{aligned}
			& \text{if an s-wave is present},
		\end{cases}
	\end{equation}
	and
	\begin{equation}\label{eq:inverse of m11}
		m_{11}^\pm(\lambda)^{-1}=
		\begin{cases}
			S_3 D_3 S_3+\tilde{\mathcal{O}}_2(\lambda^{1+}), 
			& \text{if no p-wave is present},\\[3mm]
			S_3 D_3 S_3 + g_1^\pm(\lambda)^{-1}N_1 
			+ \tilde{\mathcal{O}}_2(g_1^\pm(\lambda)^{-2}), 
			& \text{if a p-wave is present}.
		\end{cases}
	\end{equation}
	Here the operators $N$ and $N_1$ are defined in \eqref{eq:def of K} and \eqref{eq:def of K_1}, respectively.
\end{lemma}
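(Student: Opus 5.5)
The plan is to treat the two diagonal blocks separately; since $\Lambda^\pm(\lambda)$ is block diagonal, it is invertible exactly when both $m_{00}^\pm(\lambda)$ and $m_{11}^\pm(\lambda)$ are, with the stated diagonal form of the inverse.

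\emph{The block $m_{00}^\pm(\lambda)$.} By \eqref{eq:asy of m_ij}, $m_{00}^\pm(\lambda)$ restricted to $(I-S_2)L^2$ equals $(I-S_2)(g_0^\pm(\lambda)P+T)(I-S_2)+\tilde{\mathcal O}_2(\lambda^2\log\lambda)$. On the space $(I-S_2)L^2$, the operator $T$ has no kernel coming from $S_2$. The leading part is therefore precisely the operator that is inverted in Lemma~\ref{thm-M inverse-even}, namely in the regular case when $S_1=S_2$ and in the first-kind case with $S_1$ replaced by $S_1-S_2$. The projections $Q$ and $S_1-S_2$ play their usual roles: $QTQ+S_1$ is invertible, and when $S_1\neq S_2$ the operator $(S_1-S_2)TPT(S_1-S_2)$ is invertible on $(S_1-S_2)L^2$ by the definition of $S_2$.

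I would rerun the argument of \cite[Lemma 2.3]{Erdogan-Goldberg-Green-JFA-2018} on $(I-S_2)L^2$. First apply Feshbach with respect to $P\oplus Q(I-S_2)$, using $D_0$ for the $Q$-block. In the s-wave case, apply a second Feshbach step with $S_1-S_2$. This produces \eqref{eq:inverse of m00}. The perturbation of size $\tilde{\mathcal O}_2(\lambda^{2}\log\lambda)$ is absorbed by a Neumann series. It remains inside $\tilde{\mathcal O}_2(\lambda^{3/2})$ even after it is multiplied by the worst factor $h_\pm(\lambda)$, which is only logarithmic.

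\emph{The block $m_{11}^\pm(\lambda)$.} Write $m_{11}^\pm=g_1^\pm(\lambda)A+B+\tilde{\mathcal O}_2(\lambda^{1+})$, where $A=S_2vG_1vS_2$ and $B=S_2vG_2vS_2$ are finite rank on $S_2L^2$. By \eqref{eq:property of S-3}, $A$ vanishes on $S_3L^2$, and $(S_2-S_3)A(S_2-S_3)$ is negative definite, hence invertible. I would use the splitting $S_2L^2=(S_2-S_3)L^2\oplus S_3L^2$.

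If there is no p-wave resonance, then $S_2=S_3$. In that case $m_{11}^\pm=S_3vG_2vS_3+\tilde{\mathcal O}_2(\lambda^{1+})$, and the invertibility of $S_3vG_2vS_3$ gives the inverse $D_3+\tilde{\mathcal O}_2(\lambda^{1+})$ by a Neumann series.

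If a p-wave resonance is present, I apply Feshbach with $a_{11}=(S_2-S_3)(g_1^\pm A+B)(S_2-S_3)$. Its inverse is $g_1^{-1}D_2+\tilde{\mathcal O}_2(g_1^{-2})$, where the definition $D_2=((S_2-S_3)vG_1v(S_2-S_3))^{-1}$ is used up to normalisation. The off-diagonal entries are $(S_2-S_3)vG_2vS_3$ and its adjoint, since $A$ kills $S_3$. The Schur complement is therefore $d=S_3vG_2vS_3-g_1^{-1}S_3vG_2v(S_2-S_3)D_2(S_2-S_3)vG_2vS_3+\tilde{\mathcal O}_2(g_1^{-2})$. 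This gives $d^{-1}=D_3+g_1^{-1}D_3(\cdots)D_3+\tilde{\mathcal O}_2(g_1^{-2})$.

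Assembling the Feshbach matrix, the $g_1^{-1}$ terms are exactly the four terms of $N_1$, and everything else is $\tilde{\mathcal O}_2(g_1^{-2})$. The derivative bounds follow from $\partial_\lambda g_1^{-1}=-\lambda^{-1}g_1^{-2}$.

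\emph{Main obstacle.} The hardest part is the bookkeeping of the logarithmic orders and their $\lambda$-derivatives in the p-wave block. In particular, I need the $\lambda^{1+}$ remainder to stay negligible relative to $g_1^{-2}$, and the cross terms to reproduce $N_1$ exactly, with the correct signs.
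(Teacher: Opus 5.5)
Your proposal is correct and follows the paper's proof. You handle $m_{00}^\pm$ by rerunning Lemma~\ref{thm-M inverse-even} on $(I-S_2)L^2$ with $S_1$ replaced by $S_1-S_2$; in the s-wave case the $D_0$-step must be applied to $m_{00}^\pm+(S_1-S_2)$ before the Jensen--Nenciu correction, since $(Q-S_2)T(Q-S_2)$ has kernel $(S_1-S_2)L^2$. You invert $m_{11}^\pm$ by Feshbach on $(S_2-S_3)L^2\oplus S_3L^2$, followed by a Neumann series absorbing the $\tilde{\mathcal O}_2(\lambda^{1+})$ remainder.

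The only difference is the pivot. You invert the $(S_2-S_3)$-block first and treat the Schur complement on $S_3L^2$ as a perturbation of $S_3vG_2vS_3$. The paper instead pivots on the $\lambda$-independent block $S_3vG_2vS_3$ and inverts the Schur complement $g_1^\pm(\lambda)(S_2-S_3)vG_1v(S_2-S_3)+\mathcal O(1)$ on $(S_2-S_3)L^2$ with a single Neumann series. Both orderings produce exactly the four terms of $N_1$.
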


\begin{proof}
	Let $D_1 = \bigl((S_1-S_2)TPT(S_1-S_2)\bigr)^{-1} = \bigl(S_1TPTS_1\bigr)^{-1}$ and let $N$ be given by \eqref{eq:def of K}. Applying Lemma~\ref{thm-M inverse-even} with $S_1$ replaced by $S_1-S_2$, we find that $m_{00}^\pm(\lambda)$ is invertible on $(I-S_2)L^2$ for all sufficiently small $\lambda$, with inverse given by \eqref{eq-M expansion-even} or \eqref{eq-M expansion-even-1-1}, accordingly. This proves \eqref{eq:inverse of m00}.
	
	We next turn to $m_{11}^\pm(\lambda)$ on $S_2L^2$, and first treat the case in which p-wave resonances and zero-energy eigenfunctions are simultaneously present, i.e. $S_2 \neq S_3$ and $S_3 \neq 0$; the two degenerate cases are settled at the end. Introduce the isomorphism
	\[
	\sigma_{1,1}\colon S_2L^2 \to (S_2-S_3)L^2 \oplus S_3L^2, 
	\qquad \sigma_{1,1}\varphi := \bigl((S_2 - S_3)\varphi, \, S_3\varphi\bigr). 
	\]
	By \eqref{eq:asy of m_ij}, the invertibility of $m_{11}^\pm(\lambda)$ is governed by its leading term $S_2(g_1^\pm(\lambda)vG_1v + vG_2v)S_2$, which is invertible on $S_2L^2$ if and only if the block matrix
	\begin{align*} 
		\mathfrak{M}^\pm(\lambda)
		:=&\sigma_{1,1}\bigl(S_2(g_1^\pm(\lambda)vG_1v + vG_2v)S_2\bigr)\sigma_{1,1}^* \\
		=&
		\begin{pmatrix} 
			(S_2-S_3)\bigl(g_1^\pm(\lambda)vG_1v + vG_2v\bigr)(S_2-S_3) 
			& (S_2-S_3)vG_2vS_3 \\[2mm] 
			S_3vG_2v(S_2-S_3) & S_3vG_2vS_3
		\end{pmatrix}
	\end{align*}
	is invertible on $(S_2-S_3)L^2 \oplus S_3L^2$; here the second equality follows from $vG_1vS_3 = S_3vG_1v=0$, a consequence of the definition of $S_3$. Write $\mathfrak{M}^{\pm}(\lambda)=(a_{ij})_{i,j=1}^2$. Since $a_{22}=S_3vG_2vS_3$ is invertible on $S_3L^2$ (see \cite[Lemma 5.4]{Erd-Gre-13} or \cite{JN01}), Lemma~\ref{lemma-Feshbach formula}, applied with the roles of the two diagonal blocks interchanged, shows that the invertibility of $\mathfrak{M}^{\pm}(\lambda)$ is equivalent to the invertibility, on $(S_2-S_3)L^2$, of the Schur complement
	\begin{align*}
		d &= a_{11} - a_{12}a_{22}^{-1}a_{21} \\
		&=(S_2-S_3)\bigl(g_1^\pm(\lambda)vG_1v + vG_2v\bigr)(S_2-S_3) 
		- (S_2-S_3)vG_2vS_3D_3 S_3vG_2v(S_2-S_3),
	\end{align*}
	where $D_3 := (S_3vG_2vS_3)^{-1}$, in which case
	\begin{equation*}
		\mathfrak{M}^\pm(\lambda)^{-1}=
		\begin{pmatrix}
			d^{-1} & -d^{-1} a_{12} a_{22}^{-1} \\[2pt]
			-a_{22}^{-1} a_{21} d^{-1} & a_{22}^{-1} a_{21} d^{-1} a_{12} a_{22}^{-1} + a_{22}^{-1}
		\end{pmatrix}. 
	\end{equation*}
	It therefore remains to invert $d$. Since $vG_1v$ is self-adjoint and $S_3L^2 = \ker\bigl(S_2vG_1vS_2\bigr)$, the operator $(S_2-S_3)vG_1v(S_2-S_3)$ is invertible on $(S_2-S_3)L^2$, so the leading term of $d$ is $g_1^\pm(\lambda)(S_2-S_3)vG_1v(S_2-S_3)$; as $g_1^\pm(\lambda) = \log\lambda + \beta_\pm$ with $\beta_\pm\in \mathbb{C}\setminus \mathbb{R}$, a Neumann series argument yields
	\[
	d^{-1} = g_1^\pm(\lambda)^{-1} D_2 + \tilde{\mathcal{O}}_2(g_1^\pm(\lambda)^{-2})
	\]
	for all sufficiently small $\lambda$. Inserting this expansion into the formula above and conjugating the result back to $S_2L^2$ by $\sigma_{1,1}$, a direct computation gives
	\begin{equation*}
		\bigl(S_2(g_1^\pm(\lambda)vG_1v + vG_2v)S_2\bigr)^{-1}
		= S_3 D_3 S_3 + g_1^\pm(\lambda)^{-1}N_1 
		+ \tilde{\mathcal{O}}_2(g_1^\pm(\lambda)^{-2}),
	\end{equation*}
	where $N_1$ is given by \eqref{eq:def of K_1}. In view of \eqref{eq:asy of m_ij}, another Neumann series argument, after shrinking $\lambda_1$ if necessary, then yields the second branch of \eqref{eq:inverse of m11}.
	
	It remains to consider the two degenerate cases. If $H$ has p-wave resonances but no zero-energy eigenfunctions, then $S_2\neq0$ and $S_3 = 0$, so $S_2vG_1vS_2$ is invertible on $S_2L^2$ since $S_3L^2=\ker\bigl(S_2vG_1vS_2\bigr)$ by \eqref{eq-def-for-S3}; the same argument gives $m_{11}^{\pm}(\lambda)^{-1} = g_1^\pm(\lambda)^{-1}S_2D_2S_2 + \tilde{\mathcal{O}}_2(g_1^\pm(\lambda)^{-2})$, which again matches the second branch of \eqref{eq:inverse of m11}, as $S_3 D_3 S_3 = 0$ and $N_1= S_2 D_2 S_2$ in this case by \eqref{eq:def of K_1}. If $H$ has zero-energy eigenfunctions but no p-wave resonances, then $S_2=S_3\neq0$, and \eqref{eq:asy of m_ij} reduces to $m_{11}^{\pm}(\lambda)=S_3vG_2vS_3 + \tilde{\mathcal{O}}_2(\lambda^{1+})$; since $S_3vG_2vS_3$ is invertible on $S_3L^2$, we obtain $m_{11}^{\pm}(\lambda)^{-1} = S_3D_3S_3 +\tilde{\mathcal{O}}_2(\lambda^{1+})$, i.e. the first branch of \eqref{eq:inverse of m11}. This establishes \eqref{eq:inverse of m11} in all cases and completes the proof of the lemma.
\end{proof}

\begin{proof}[Completion of the proof of Lemma~\ref{thm-M inverse-even-1}]
	By \eqref{eq:asy of m_ij} and Lemma~\ref{lem:inve of Lambda}, $\Lambda^\pm(\lambda)^{-1}L^{\pm}(\lambda)=\tilde{\mathcal{O}}_2(\lambda^{1-})$; hence, in view of \eqref{eq:exp for AMA^*}, the Neumann series converges, and there exists $\lambda_0 > 0$ such that, for all $\lambda\in (0, \lambda_0)$, the operator $\sigma_{\lambda}M^{\pm}(\lambda)\sigma_{\lambda}^*$ is invertible on $(I - S_2)L^2 \oplus S_2L^2$ with
	\begin{equation}\label{eq:exp inverse of AmA*}
		\begin{aligned}
			\bigl(\sigma_{\lambda}M^{\pm}(\lambda)\sigma_{\lambda}^*\bigr)^{-1}
			&= \Lambda^\pm(\lambda)^{-1} 
			-\Lambda^\pm(\lambda)^{-1} L^{\pm}(\lambda) \Lambda^\pm(\lambda)^{-1}
			+\tilde{\mathcal{O}}_2(\lambda^{2-}) \\
			&= 
			\begin{pmatrix} 
				m_{00}^\pm(\lambda)^{-1}  & -\ell_{01}^\pm(\lambda) \\[2mm] 
				-\ell_{10}^\pm(\lambda) & m_{11}^\pm(\lambda)^{-1}
			\end{pmatrix}
			+ \tilde{\mathcal{O}}_2(\lambda^{2-}),
		\end{aligned}
	\end{equation}
	where
	\[
	\ell_{01}^\pm(\lambda) 
	= m_{00}^{\pm}(\lambda)^{-1} m_{01}^{\pm}(\lambda)\, 
	m_{11}^{\pm}(\lambda)^{-1}, \qquad
	\ell_{10}^\pm(\lambda) 
	= m_{11}^{\pm}(\lambda)^{-1} m_{10}^{\pm}(\lambda)\, 
	m_{00}^{\pm}(\lambda)^{-1}.    
	\]
	Accordingly, we set
	\begin{equation}\label{eq:def of mathcal L}
		\begin{cases}
			\mathcal{L}_{00}^{\pm}(\lambda)
			= m_{00}^{\pm}(\lambda)^{-1}+\tilde{\mathcal{O}}_2(\lambda^{2-}),  \quad
			\mathcal{L}_{01}^{\pm}(\lambda)
			= \dfrac{1}{\lambda}\left(-\ell_{01}^\pm(\lambda)
			+\tilde{\mathcal{O}}_2(\lambda^{2-})\right), 
			\\[4mm]
			\mathcal{L}_{10}^{\pm}(\lambda)
			= \dfrac{1}{\lambda}\left(-\ell_{10}^\pm(\lambda)
			+\tilde{\mathcal{O}}_2(\lambda^{2-})\right), \quad
			\mathcal{L}_{11}^{\pm}(\lambda)
			= \dfrac{1}{\lambda^2}\left(m_{11}^\pm(\lambda)^{-1}
			+\tilde{\mathcal{O}}_2(\lambda^{2-})\right).	
		\end{cases}
	\end{equation}
	The identity \eqref{eq:exp inverse of AmA*}, together with \eqref{eq:iso to M inverse} and Lemma~\ref{lem:inve of Lambda}, yields the desired expansions \eqref{eq-M expansion-even-2} and \eqref{eq:mathcalL00}; moreover, \eqref{eq:inverse of m11} implies \eqref{eq:mathcalL11}. It remains to prove \eqref{eq:mathcalL01 10} and \eqref{eq:special mathcalL10}.
	
	\par\medskip
	\noindent\textbf{Proof of \eqref{eq:mathcalL01 10}.} From \eqref{eq:asy of m_ij} and Lemma~\ref{lem:inve of Lambda} we read off
	\[
	m_{01}^{\pm}(\lambda),\, m_{10}^{\pm}(\lambda)
	=\tilde{\mathcal{O}}_2\bigl(\lambda g_1^{\pm}(\lambda)\bigr), 
	\qquad
	m_{00}^{\pm}(\lambda)^{-1}=\tilde{\mathcal{O}}_2(\log\lambda),
	\qquad
	m_{11}^{\pm}(\lambda)^{-1}
	=S_3D_3S_3+\tilde{\mathcal{O}}_2\bigl(g_1^{\pm}(\lambda)^{-1}\bigr). 	
	\]
	Since $g_1^{\pm}(\lambda)\sim \log\lambda$ for $\lambda\in (0,\lambda_0)$, combining these estimates gives
	\begin{equation*}
		\ell_{01}^\pm(\lambda),\, \ell_{10}^\pm(\lambda)
		= \tilde{\mathcal{O}}_2\bigl(\lambda\log^{2}\lambda\bigr),
	\end{equation*}
	which proves \eqref{eq:mathcalL01 10}.
	
	\par\medskip
	\noindent\textbf{Proof of \eqref{eq:special mathcalL10}.} Assume now that $S_1-S_2\neq0$ and $S_2=S_3$; in this case the rough bound above no longer identifies the leading term of $\mathcal{L}_{10}^{\pm}(\lambda)$, and a more precise expansion of $\ell_{10}^\pm(\lambda)$ is required. Substituting \eqref{eq:asy of m_ij} and the relevant branches of \eqref{eq:inverse of m00}--\eqref{eq:inverse of m11} (s-wave present, no p-wave) into the definition of $\ell_{10}^\pm(\lambda)$, and using $S_1vG_1vS_3=S_3vG_1vS_1=0$ from \eqref{eq-def-for-QS}--\eqref{eq:property of S-3}, we obtain
	\begin{equation*}
		\begin{aligned}
			\ell_{10}^\pm(\lambda)
			&= m_{11}^{\pm}(\lambda)^{-1}m_{10}^{\pm}(\lambda)
			\bigl[-h_{\pm}(\lambda)-N\bigr](S_1-S_2)D_{1}(S_{1}-S_2)\\
			&\quad +m_{11}^{\pm}(\lambda)^{-1}m_{10}^{\pm}(\lambda) QD_{0}Q
			+\lambda S_3D_3S_3vG_2v(S_{1}-S_2)D_{1}(S_{1}-S_2)N
			+\tilde{\mathcal{O}}_2(\lambda^{2}).
		\end{aligned}
	\end{equation*}
	Since $m_{11}^{\pm}(\lambda)^{-1}m_{10}^{\pm}(\lambda)=\tilde{\mathcal{O}}_2(\lambda\log\lambda)$ and $h_{\pm}(\lambda)\sim\log\lambda$, the first two terms on the right-hand side are $\tilde{\mathcal{O}}_2(\lambda\log^2\lambda)(S_1-S_2)$ and $\tilde{\mathcal{O}}_2(\lambda\log\lambda)Q$, respectively. For the last term, the definition of $N$ in \eqref{eq:def of K} and the fact that $(S_{1}-S_2)P=0$ give
	\begin{align*}
	 \lambda S_3D_3S_3vG_2v(S_{1}-S_2)D_{1}(S_{1}-S_2)N
	=& \lambda\,\Xi P 
	+ \lambda S_3D_3S_3vG_2v(S_{1}-S_2)D_{1}(S_{1}-S_2)D_0QTPTQD_0Q \\
	= &\lambda\,\Xi P+\tilde{\mathcal{O}}_2(\lambda)Q,   
	\end{align*}
	with $\Xi :=-S_3D_3S_3vG_2v(S_{1}-S_2)D_{1}(S_{1}-S_2)D_0QT$. Since $S_1, S_2 \leq Q$, the factor $(S_1-S_2)$ on the right of the first term may be replaced by $Q$; collecting the above estimates, we arrive at
	\[
	\ell_{10}^\pm(\lambda)
	=\tilde{\mathcal{O}}_2(\lambda\log^2\lambda)Q+\lambda\,\Xi P
	+\tilde{\mathcal{O}}_2(\lambda^{2}).
	\]
	In view of \eqref{eq:def of mathcal L}, this yields \eqref{eq:special mathcalL10} and completes the proof of Lemma~\ref{thm-M inverse-even-1}.
\end{proof}

\section{Proofs of Lemmas in Section~\ref{sec:Pre}}\label{sec:pro for lemm in sec2}
This appendix contains the proofs of Lemmas~\ref{lem-oscillatory estimate}--\ref{lem-Fou-multiplier} stated in Section~\ref{sec:Pre}.

\begin{proof}[Proof of Lemma \ref{lem-oscillatory estimate}]
	For $|x|\le 1$, the estimate \eqref{eq.oscillatory-est} follows from the trivial bound $|I(x)|\lesssim 1$, since the right-hand side of \eqref{eq.oscillatory-est} is bounded from below by a positive constant in this regime.
	For $|x|>1$, we split the integral in \eqref{eq.oscillatory-est} at the scale $\lambda\sim\langle x\rangle^{-1}$ and write $I(x)=I_{1}(x)+I_{2}(x)$, where
	\begin{align*}
		I_{1}(x)=\int_0^\infty e^{i\lambda x}m(\lambda)\chi(\lambda/\lambda_0)\chi(\lambda\langle x\rangle)\,d\lambda, \quad
		I_{2}(x)=\int_0^\infty e^{i\lambda x}m(\lambda)\chi(\lambda/\lambda_0)\tilde{\chi}(\lambda\langle x\rangle)\,d\lambda.
	\end{align*}
	On the support of the integrand of $I_1$ one has $\lambda\lesssim\langle x\rangle^{-1}$; the bound $|m(\lambda)|\lesssim|\log\lambda|^{-a}\lambda^{b}$ and the change of variables $\mu=\lambda\langle x\rangle$ therefore give
	\begin{equation*}
		\left| I_{1}(x)\right| \lesssim \int_0^{\min\{\langle x\rangle^{-1},\lambda_0\}} \left| \log\lambda\right|^{-a} \lambda^{b}\,d\lambda \lesssim \left( 1+\log \langle x\rangle \right)^{-a} \langle x\rangle^{-(b+1)}.
	\end{equation*}
	For $I_2$, integrating by parts $k=\lfloor b\rfloor+2$ times, using the derivative bounds $|m^{(j)}(\lambda)|\lesssim|\log\lambda|^{-a}\lambda^{b-j}$ and noting that all boundary terms vanish, we obtain
	\begin{equation*}
		\left| I_{2}(x)\right| \lesssim \langle x\rangle^{-k} \int_{\frac 12\langle x\rangle^{-1}}^{\lambda_0} \left| \log\lambda\right|^{-a} \lambda^{b-k}\,d\lambda \lesssim \left(1+ \log\langle x\rangle \right)^{-a} \langle x\rangle^{-(b+1)},
	\end{equation*}
	where the last inequality uses $b-k<-1$. Combining these estimates yields \eqref{eq.oscillatory-est}.
\end{proof}

\begin{proof}[Proof of Lemma \ref{lem-oscillatory estimate-h}]
	We again split the integral in \eqref{eq.oscillatory-est-h} at the scale $\lambda\sim\langle x\rangle^{-1}$:
	\begin{align*}
		I_{1}(x)=\int_0^\infty e^{i\lambda x}m(\lambda)\tilde{\chi}(\lambda/\lambda_0)\chi(\lambda\langle x\rangle)\,d\lambda, \quad
		I_{2}(x)=\int_0^\infty e^{i\lambda x}m(\lambda)\tilde{\chi}(\lambda/\lambda_0)\tilde{\chi}(\lambda\langle x\rangle)\,d\lambda.
	\end{align*}
	The two terms are then estimated exactly as in the proof of Lemma~\ref{lem-oscillatory estimate}: $I_1$ by the support property and a change of variables, and $I_2$ by repeated integration by parts. We omit the details.
\end{proof}

\begin{proof}[Proof of Lemma \ref{eq-fourier-mulitiplier-1}]
	Let $K$ denote the convolution kernel,
	\[
	K(x):=\bigl(m(|\xi|)\chi(|\xi|/\lambda_0)\bigr)^{\vee}(x), \qquad x\in\mathbb{R}^n.
	\]
	We claim that
	\begin{equation}\label{eq-fou-L1-0}
		K \in L^1(\mathbb{R}^n).
	\end{equation}
	The $L^p$-boundedness for $1 \leq p \leq \infty$ then follows from Young's inequality. For the $\mathcal{H}^1$-boundedness, we use the characterization
	\[
	\|f\|_{\mathcal{H}^1} \sim \|f\|_{L^1} + \sum_{k=1}^n \|\mathbf{R}_k f\|_{L^1} \quad (n \geq 2), \qquad \|f\|_{\mathcal{H}^1} \sim \|f\|_{L^1} + \|\mathbf{H} f\|_{L^1} \quad (n=1),
	\]
	where $\mathbf{R}_k$ denotes the $k$-th Riesz transform and $\mathbf{H}$ the Hilbert transform (see \cite[Theorem~2.4.6]{graf2}). Since $m(\sqrt{-\Delta})\chi(\sqrt{-\Delta}/\lambda_0)$ commutes with the Riesz transforms (with the Hilbert transform when $n=1$), its $\mathcal{H}^1$-boundedness is immediate from \eqref{eq-fou-L1-0}.
	
	It remains to prove \eqref{eq-fou-L1-0}. By the Hankel inversion formula for radial functions,
	\[
	K(x) = \frac{1}{(2\pi)^{\frac{n}{2}}|x|^{\frac{n}{2}-1}} \int_0^\infty m(\lambda)\chi(\lambda/\lambda_0)\lambda^{\frac{n}{2}}J_{\frac{n}{2}-1}(\lambda|x|)\,d\lambda, \qquad x\in \mathbb{R}^n,
	\]
	where $J_{n/2-1}$ is the Bessel function of the first kind. Inserting its asymptotic representation
	\begin{equation}\label{eq:dec of Bessel J}
		J_{\frac{n}{2}-1}(z) = e^{iz}G_{\frac{n}{2}-1}^+(z)+e^{-iz}G_{\frac{n}{2}-1}^-(z), \quad z\in(0,\infty),
	\end{equation}
	where $G_{\frac{n}{2}-1}^\pm(z)=\mathcal{O}_{\infty}(z^{-1/2})$, and applying Lemma~\ref{lem-oscillatory estimate} to each of the resulting two integrals, we obtain
	\[
	\bigl|K(x) \bigr| \lesssim \bigl(1 + \log\langle x\rangle\bigr)^{-a} |x|^{-\frac{n-1}{2}} \langle x\rangle^{-\frac{n+1}{2}},
	\]
	which implies \eqref{eq-fou-L1-0}.
\end{proof}

\begin{proof}[Proof of Lemma \ref{lem-Fou-multiplier}]
	Let
	\[
	h_a(x)=\mathcal{F}^{-1}\!\left((\log|\cdot|)^{-a}\chi\!\left(\tfrac{|\cdot|}{\lambda_0}\right)\right)(x).
	\]
	The key step is to establish the pointwise decay
	\begin{equation}\label{eq:bound of inv}
		h_a(x)
		= \mathcal{O}\!\left(\frac{1}{|x|\log^{a+1}|x|}\right), \quad |x|\gg 1.
	\end{equation}
	
	We begin with the case $n\ge 2$. By the Hankel inversion formula,
	\begin{equation}\label{eq:inver by Bessel}
		h_a(x)=\frac{1}{(2\pi)^{\frac{n}{2}}|x|^{\frac{n}{2}-1}} \int_0^\infty \frac{\chi(\lambda/\lambda_0)}{\log^a \lambda}\,\lambda^{\frac{n}{2}}J_{\frac{n}{2}-1}(\lambda|x|)\,d\lambda.
	\end{equation}
	The Bessel recurrence relation
	$\frac{d}{dt}\bigl[t^{\nu}J_{\nu}(t)\bigr]=t^{\nu}J_{\nu-1}(t)$
	with $\nu=\frac n2$ gives the identity
	\[
	\lambda^{\frac{n}{2}}J_{\frac{n}{2}-1}(\lambda|x|)
	= \frac{1}{|x|}\,\frac{d}{d\lambda}\Bigl[\lambda^{\frac{n}{2}}J_{\frac{n}{2}}(\lambda|x|)\Bigr],
	\]
	with which we integrate by parts in \eqref{eq:inver by Bessel} and obtain
	\begin{align*}
		h_a(x)
		&=\frac{1}{(2\pi)^{\frac{n}{2}}|x|^{\frac{n}{2}}} \int_0^\infty \frac{\chi(\lambda/\lambda_0)}{\log^a \lambda}\,
		\frac{d}{d\lambda}\Bigl[\lambda^{\frac{n}{2}}J_{\frac{n}{2}}(\lambda|x|)\Bigr]\,d\lambda \\
		&=\frac{1}{(2\pi)^{\frac{n}{2}}|x|^{\frac{n}{2}}}
		\int_0^\infty \left(\frac{a\,\chi(\lambda/\lambda_0)}{\lambda\log^{a+1}\lambda}
		-\frac{\chi'(\lambda/\lambda_0)}{\lambda_0\log^a\lambda}\right)
		\lambda^{\frac{n}{2}}J_{\frac{n}{2}}(\lambda|x|)\,d\lambda \\
		&=:I_1^n(x)-I_2^n(x),
	\end{align*}
	where the boundary terms vanish: at $\lambda=0$ because $\lambda^{\frac{n}{2}}J_{\frac{n}{2}}(\lambda|x|)=\mathcal{O}(\lambda^{n})$, and at the upper end because $\chi(\lambda/\lambda_0)$ vanishes for $\lambda\ge\lambda_0$.
	
	The integrand of $I_2^n(x)$ is smooth and supported in $[\lambda_0/2,\lambda_0]$, away from the origin; repeated integration by parts therefore gives
	\[
	I_2^n(x)=\mathcal{O}(|x|^{-N}) \quad\text{for any $N$ as $|x|\to\infty$.}
	\]
	For $I_1^n(x)$, we insert the asymptotics \eqref{eq:dec of Bessel J} of $J_{\frac n2}$ and apply Lemma~\ref{lem-oscillatory estimate} (with $a$ replaced by $a+1$ and $b=\frac{n-1}{2}$) to deduce
	\[
	I_1^n(x)=\mathcal{O}\!\left(\frac{1}{|x|^{n}\log^{a+1}|x|}\right) \quad\text{for $|x|\gg 1$.}
	\]
	Combining the estimates for $I_1^n$ and $I_2^n$ yields \eqref{eq:bound of inv} for all $n\ge 2$.
	
	\medskip
	It remains to treat the one-dimensional case. We first reduce to an even cutoff: let $\psi\in C_c^\infty(\R)$ be even with $\psi(\lambda)=1$ for $|\lambda|\le\frac14$, and decompose
	\[
	\chi\!\left({\lambda}/{\lambda_0}\right)
	=\chi\!\left({\lambda}/{\lambda_0}\right)\left(1-\psi\!\left({10\lambda}/{\lambda_0}\right)\right)
	+ \psi\!\left({10\lambda}/{\lambda_0}\right),
	\]
	which is valid because $\chi(\lambda/\lambda_0)=1$ on the support of $\psi(10\lambda/\lambda_0)$. The first summand is smooth and compactly supported away from the origin, where the factor $(\log|\xi|)^{-a}$ is smooth; its inverse Fourier transform is therefore a Schwartz function. It thus suffices to treat the second summand, that is, to assume that $\chi$ is even. In that case
	\[
	h_a(x)=\frac{1}{\pi} \int_0^\infty \frac{\chi(\lambda/\lambda_0)}{\log^a \lambda}\cos(\lambda x)\,d\lambda.
	\]
	Integrating by parts once gives
	\[
	h_a(x)=\frac{1}{\pi x} \int_0^\infty \left(\frac{a\,\chi(\lambda/\lambda_0)}{\lambda\log^{a+1}\lambda}
	-\frac{\chi'(\lambda/\lambda_0)}{\lambda_0\log^{a}\lambda}\right)\sin(\lambda x)\,d\lambda
	=:I_{1}^1(x)-I_{2}^1(x).
	\]
	As before, $I_{2}^1(x)=\mathcal{O}(|x|^{-N})$ for any $N$ as $|x|\to\infty$. For $I_{1}^1(x)$, we split the integral at $\lambda=|x|^{-1}$:
	\[
	I_{1}^1(x)=\frac{a}{\pi x}\int_0^{\frac{1}{|x|}} \frac{\chi(\lambda/\lambda_0)}{\lambda\log^{a+1}\lambda}\sin(\lambda x)\,d\lambda
	+ \frac{a}{\pi x}\int_{\frac{1}{|x|}}^{\infty} \frac{\chi(\lambda/\lambda_0)}{\lambda\log^{a+1}\lambda}\sin(\lambda x)\,d\lambda.
	\]
	In the first piece, using $|\sin(\lambda x)|\lesssim \lambda|x|$ and the monotonicity of $\log^{-(a+1)}\!(\lambda)$ near the origin, we obtain
	\[
	\frac{a}{\pi x}\int_0^{\frac{1}{|x|}} \frac{\chi(\lambda/\lambda_0)}{\lambda\log^{a+1}(\lambda)}\sin(\lambda x)\,d\lambda
	=\mathcal{O}\!\left(\frac{1}{|x|\log^{a+1}(|x|)}\right).
	\]
	For the second piece, one more integration by parts yields the same bound:
	\[
	\frac{a}{\pi x}\int_{\frac{1}{|x|}}^{\infty} \frac{\chi(\lambda/\lambda_0)}{\lambda\log^{a+1}(\lambda)}\sin(\lambda x)\,d\lambda
	=\mathcal{O}\!\left(\frac{1}{|x|\log^{a+1}|x|}\right).
	\]
	Thus \eqref{eq:bound of inv} also holds for $n=1$.
	
	\medskip
	Finally, $h_a\in L^1(\mathbb{R}^n)$: the elementary bound $|h_a(x)|\lesssim 1$ controls the region $|x|\lesssim1$, while \eqref{eq:bound of inv} controls $|x|\gg1$. The $L^p$-boundedness of $(\log\sqrt{-\Delta})^{-a}\chi(\sqrt{-\Delta}/\lambda_0)$ for $1\le p\le\infty$ now follows from Young's inequality, and the endpoint bounds on $\mathcal{H}^1$ and $\BMO$ follow from the same commutation argument as in the proof of Lemma~\ref{eq-fourier-mulitiplier-1}. This completes the proof.
\end{proof}

{\bf Acknowledgements:}    This Project was supported by the National Key R\&D program of China: No. 2021YFA1002500.
    H. Cheng was supported by CPSF under grant 2025M784444. C. Miao was supported by NSFC under grants 12371095 and 12531005.
   X. Yao was supported by NSFC (Grants No.~12531005  and 12671120).


\begin{thebibliography}{BAKS00}
	
\bibitem{Abra}
M.~Abramowitz and I.~A.~Stegun, \emph{Handbook of Mathematical Functions with Formulas, Graphs and Mathematical Tables}, U.S. Government Printing Office, Washington, DC, 1965.

\bibitem{Agmon}
S.~Agmon, \emph{Spectral properties of Schr\"odinger operators and scattering theory}, Ann. Scuola Norm. Sup. Pisa Cl. Sci. (4) \textbf{2} (1975), no.~2, 151--218.

\bibitem{CSWY25}
H.~Cheng, A.~Soffer, Z.~Wu and X.~Yao, \emph{The $L^p$-boundedness of wave operators for higher order Schr\"odinger operator in low odd dimensions}, arXiv:2505.07009, 2025.

\bibitem{AF06}
P.~D'Ancona and L.~Fanelli, \emph{$L^p$-boundedness of the wave operator for the one dimensional Schr\"odinger operator}, Comm. Math. Phys. \textbf{268} (2006), no.~2, 415--438.

\bibitem{Beceanu-Schlag-20}
M.~Beceanu and W.~Schlag, \emph{Structure formulas for wave operators}, Amer. J. Math. \textbf{142} (2020), no.~3, 751--807.

\bibitem{Chen-Pusat}
G.~Chen and F.~Pusateri, \emph{On the 1d cubic NLS with a non-generic potential}, Comm. Math. Phys. \textbf{405} (2024), no.~2, Paper No.~35, 59 pp.

\bibitem{Donninger-Krie}
R.~Donninger and J.~Krieger, \emph{A vector field method on the distorted Fourier side and decay for wave equations with potentials}, Mem. Amer. Math. Soc. \textbf{241} (2016), no.~1142.

\bibitem{Deift-T}
P.~Deift and E.~Trubowitz, \emph{Inverse scattering on the line}, Comm. Pure Appl. Math. \textbf{32} (1979), 121--251.

\bibitem{Erdogan-Goldberg-Green-JFA-2018}
M.~B.~Erdo\u{g}an, M.~Goldberg and W.~R.~Green, \emph{On the $L^p$ boundedness of wave operators for two-dimensional Schr\"odinger operators with threshold obstructions}, J. Funct. Anal. \textbf{274} (2018), 2139--2161.

\bibitem{EG22}
M.~B.~Erdo\u{g}an and W.~R.~Green, \emph{The $L^p$-continuity of wave operators for higher order Schr\"odinger operators}, Adv. Math. \textbf{404} (2022), Paper No.~108450, 41 pp.

\bibitem{EG23}
M.~B.~Erdo\u{g}an and W.~R.~Green, \emph{The $L^p$-continuity of wave operators for higher order Schr\"odinger operators}, J. Differential Equations \textbf{355} (2023), 144--161.

\bibitem{Erd-Gre-13}
M.~B.~Erdo\u{g}an and W.~R.~Green, \emph{Dispersive estimates for Schr\"odinger operators in dimension two with obstructions at zero energy}, Trans. Amer. Math. Soc. \textbf{365} (2013), 6403--6440.

\bibitem{Erd-Gre-13-cmp}
M.~B.~Erdo\u{g}an and W.~R.~Green, \emph{A weighted dispersive estimate for Schr\"odinger operators in dimension two}, Comm. Math. Phys. \textbf{319} (2013), no.~3, 791--811.

\bibitem{Erd-Gre-arxiv}
M.~B.~Erdo\u{g}an, W.~R.~Green and K.~Lamaster, \emph{$L^p$ boundedness of wave operators for higher order Schr\"odinger operators with threshold eigenvalues}, Trans. Amer. Math. Soc., doi:10.1090/tran/9898.

\bibitem{Finco-Yajima-2006-even}
D.~Finco and K.~Yajima, \emph{The $L^p$ boundedness of wave operators for Schr\"odinger operators with threshold singularities II. The even dimensional case}, J. Math. Sci. Univ. Tokyo \textbf{13} (2006), no.~3, 277--346.

\bibitem{Gal-Yaj-00}
A.~Galtbayar and K.~Yajima, \emph{The $L^p$-continuity of wave operators for one dimensional Schr\"odinger operators}, J. Math. Sci. Univ. Tokyo \textbf{7} (2000), no.~2, 221--240.

\bibitem{GY24}
A.~Galtbayar and K.~Yajima, \emph{The $L^p$-boundedness of wave operators for fourth order Schr\"odinger operators on $\mathbf{R}^4$}, J. Spectr. Theory \textbf{14} (2024), no.~1, 271--354.

\bibitem{GY26}
A.~Galtbayar and K.~Yajima, \emph{The $L^p$-boundedness of wave operators for fourth-order Schr\"odinger operators on $\mathbb{R}^2$}, I: Regular case, Rev. Math. Phys. (2026), Paper No.~2650006, 59 pp.

\bibitem{Ger-Pu-Rou}
P.~Germain, F.~Pusateri and F.~Rousset, \emph{The nonlinear Schr\"odinger equation with a potential}, Ann. Inst. H. Poincar\'e Anal. Non Lin\'eaire \textbf{35} (2018), no.~6, 1477--1530.

\bibitem{Goldberg-Green-2016}
M.~Goldberg and W.~Green, \emph{The $L^p$ boundedness of wave operators for Schr\"odinger operators with threshold singularities}, Adv. Math. \textbf{303} (2016), 360--389.

\bibitem{Goldberg-Green-2017}
M.~Goldberg and W.~Green, \emph{On the $L^p$ boundedness of wave operators for four-dimensional Schr\"odinger operators with a threshold eigenvalue}, Ann. Henri Poincar\'e \textbf{18} (2017), 1269--1288.

\bibitem{GG21}
M.~Goldberg and W.~Green, \emph{$L^p$ boundedness of wave operators for fourth order Schr\"odinger operators}, Trans. Amer. Math. Soc. \textbf{374} (2021), 4075--4092.

\bibitem{GV06}
M.~Goldberg and M.~Visan, \emph{A counterexample to dispersive estimates for Schr\"odinger operators in higher dimensions}, Comm. Math. Phys. \textbf{266} (2006), 211--238.

\bibitem{GR02}
I.~S.~Gradshteyn and I.~M.~Ryzhik, \emph{Table of Integrals, Series, and Products}, 6th ed., Academic Press, San Diego, 2002.

\bibitem{graf}
L.~Grafakos, \emph{Classical Fourier Analysis}, 3rd ed., Graduate Texts in Mathematics 249, Springer, New York, 2014.

\bibitem{graf2}
L.~Grafakos, \emph{Modern Fourier Analysis}, 3rd ed., Graduate Texts in Mathematics 250, Springer, New York, 2014.

\bibitem{Schlag-2D-dis}
W.~Schlag, \emph{Dispersive estimates for Schr\"odinger operators in dimension two}, Comm. Math. Phys. \textbf{257} (2005), 87--117.

\bibitem{Huang-Yao-2026}
S.~Huang and X.~Yao, \emph{Counterexamples to the $L^1$ and $L^\infty$ boundedness of the one-dimensional wave operators}, preprint, arXiv:2606.17898.

\bibitem{JN01}
A.~Jensen and G.~Nenciu, \emph{A unified approach to resolvent expansions at thresholds}, Rev. Math. Phys. \textbf{13} (2001), no.~6, 717--754.

\bibitem{Jensen-Yajima-02-CMP}
A.~Jensen and K.~Yajima, \emph{A remark on $L^p$-boundedness of wave operators for two-dimensional Schr\"odinger operators}, Comm. Math. Phys. \textbf{225} (2002), no.~3, 633--637.

\bibitem{Jensen-Yajima-08-PLMS}
A.~Jensen and K.~Yajima, \emph{On $L^p$ boundedness of wave operators for 4-dimensional Schr\"odinger operators with threshold singularities}, Proc. Lond. Math. Soc. \textbf{96} (2008), no.~1, 136--162.

\bibitem{Kuroda}
S.~T.~Kuroda, \emph{Scattering theory for differential operators. I. Operator theory}, J. Math. Soc. Japan \textbf{25} (1973), no.~1, 75--104.

\bibitem{MWY23}
H.~Mizutani, Z.~Wan and X.~Yao, \emph{$L^p$ boundedness of wave operators for fourth-order Schr\"odinger operators on the line}, Adv. Math. \textbf{451} (2024), Paper No.~109806.

\bibitem{MWY23-1}
H.~Mizutani, Z.~Wan and X.~Yao, \emph{$L^p$ boundedness of wave operators for fourth-order Schr\"odinger operators with zero resonance on $\mathbb{R}^3$}, J. Funct. Anal. \textbf{289} (2025), no.~8, Paper No.~111013, 68 pp.

\bibitem{MWY23-2}
H.~Mizutani, Z.~Wan and X.~Yao, \emph{Counterexamples and weak $(1,1)$ estimates of wave operators for fourth-order Schr\"odinger operators in dimension three}, J. Spectr. Theory \textbf{14} (2024), no.~4, 1409--1450.

\bibitem{Murata-82}
M.~Murata, \emph{Asymptotic expansions in time for solutions of Schr\"odinger-type equations}, J. Funct. Anal. \textbf{49} (1982), no.~1, 10--56.

\bibitem{Reed-Simon-III}
M.~Reed and B.~Simon, \emph{Methods of Modern Mathematical Physics III: Scattering Theory}, Academic Press, New York, 1979.

\bibitem{Pusa-Soffer}
F.~Pusateri and A.~Soffer, \emph{Bilinear estimates in the presence of a large potential and a critical NLS in 3D}, Mem. Amer. Math. Soc. \textbf{299} (2024), no.~1498, 107 pp.

\bibitem{Weder-99-CMP}
R.~Weder, \emph{The $W^{k,p}$-continuity of the Schr\"odinger wave operator on the line}, Comm. Math. Phys. \textbf{208} (1999), no.~2, 507--520.

\bibitem{Yajima-JMSJ-95}
K.~Yajima, \emph{The $W^{k,p}$-continuity of wave operators for Schr\"odinger operators}, J. Math. Soc. Japan \textbf{47} (1995), no.~3, 551--581.

\bibitem{Yajima-1995-even}
K.~Yajima, \emph{The $W^{k,p}$-continuity of wave operators for Schr\"odinger operators. III. Even-dimensional cases $m\ge4$}, J. Math. Sci. Univ. Tokyo \textbf{2} (1995), no.~2, 311--346.

\bibitem{Yajima-99-CMP}
K.~Yajima, \emph{$L^p$-boundedness of wave operators for two-dimensional Schr\"odinger operators}, Comm. Math. Phys. \textbf{208} (1999), no.~3, 125--152.

\bibitem{Yajima-JMSJ-2006}
K.~Yajima, \emph{The $L^p$ boundedness of wave operators for Schr\"odinger operators with threshold singularities I. The odd dimensional case}, J. Math. Sci. Univ. Tokyo \textbf{13} (2006), no.~1, 43--93.

\bibitem{Yajima-2016}
K.~Yajima, \emph{Remark on the $L^p$-boundedness of wave operators for Schr\"odinger operators with threshold singularities}, Doc. Math. \textbf{21} (2016), 391--443.

\bibitem{Yajima-2016-3d}
K.~Yajima, \emph{$L^1$ and $L^\infty$-boundedness of wave operators for three dimensional Schr\"odinger operators with threshold singularities}, Tokyo J. Math. \textbf{41} (2018), no.~2, 385--406.

\bibitem{Yajima-2022}
K.~Yajima, \emph{The $L^p$-boundedness of wave operators for two dimensional Schr\"odinger operators with threshold singularities}, J. Math. Soc. Japan \textbf{74} (2022), no.~4, 1169--1217.

\bibitem{Yajima-2022-4}
K.~Yajima, \emph{The $L^p$-boundedness of wave operators for four dimensional Schr\"odinger operators with threshold resonances}, in \emph{The Physics and Mathematics of Elliott Lieb---The 90th Anniversary. Vol.~II}, 517--563, EMS Press, Berlin, 2022.
\end{thebibliography}
\end{document}